%% file: main.tex
\documentclass{article}
\usepackage{graphicx} 
\usepackage{amsfonts,amssymb,amsmath,amsthm,dsfont,txfonts}
\usepackage{caption}
\usepackage{hyperref}
\usepackage{orcidlink}
\usepackage[left=3cm,top=4cm,right=3cm,bottom=4cm]{geometry}
\numberwithin{equation}{section}
\numberwithin{figure}{section}
\numberwithin{table}{section}
\newtheorem{theorem}{Theorem}[section]

\theoremstyle{definition}

\newtheorem{remark}{Remark}
\numberwithin{remark}{section}
\usepackage{float} 
\usepackage{hhline}

\usepackage[
    backend=biber,
    style=numeric,
    maxcitenames=999,
    mincitenames=1,
    maxbibnames=999,
    minbibnames=1,
    sorting=none,
]{biblatex}

\DeclareFieldFormat*{title}{#1}

\DeclareFieldFormat{journaltitle}{\mkbibitalic{#1}}
\DeclareFieldFormat{booktitle}{\mkbibitalic{#1}}

\DeclareFieldFormat[article,incollection,inproceedings]{volume}{#1}
\DeclareFieldFormat[article,incollection,inproceedings]{number}{\mkbibparens{#1}}

\DeclareFieldFormat[misc]{volume}{#1}
\DeclareFieldFormat[misc]{number}{\mkbibitalic{#1}}

\renewbibmacro{in:}{}

\DefineBibliographyStrings{english}{%
  page = {},
  pages = {},
}

\DeclareBibliographyDriver{article}{%
  \usebibmacro{bibindex}%
  \usebibmacro{begentry}%
  \usebibmacro{author/translator+others}%
  \setunit{\labelnamepunct}\newblock
  \printfield{title}%
  \newunit\newblock
  \printfield{journaltitle}%
  \setunit{\addcomma\space}
  \printfield{volume}%
  \printfield{number}%
  \setunit{\addcolon\space}
  \printfield{pages}%
  \setunit{\addcomma\space}
  \printfield{year}%
  \usebibmacro{finentry}%
}

\DeclareBibliographyDriver{incollection}{%
  \usebibmacro{bibindex}%
  \usebibmacro{begentry}%
  \usebibmacro{author/translator+others}%
  \setunit{\labelnamepunct}\newblock
  \printfield{title}%
  \newunit\newblock
  \printfield{booktitle}%
  \setunit{\addcomma\space}
  \printfield{volume}%
  \printfield{number}%
  \setunit{\addcolon\space}
  \printfield{pages}%
  \setunit{\addcomma\space}
  \printfield{year}%
  \usebibmacro{finentry}%
}

\DeclareBibliographyDriver{inproceedings}{%
  \usebibmacro{bibindex}%
  \usebibmacro{begentry}%
  \usebibmacro{author/translator+others}%
  \setunit{\labelnamepunct}\newblock
  \printfield{title}%
  \newunit\newblock
  \printfield{booktitle} 
  \setunit{\addcomma\space}
  \printfield{volume}%
  \printfield{number}%
  \setunit{\addcolon\space}
  \printfield{pages}%
  \setunit{\addcomma\space}
  \printfield{year}%
  \usebibmacro{finentry}%
}

\DeclareBibliographyDriver{misc}{%
  \usebibmacro{bibindex}%
  \usebibmacro{begentry}%
  \usebibmacro{author/translator+others}%
  \setunit{\labelnamepunct}\newblock
  \printfield{title}%
  \newunit\newblock
  \printfield{journaltitle}%
  \setunit{\addcomma\space}
  \printfield{volume}%
  \setunit{\addcomma\space}
  \printfield{number}%
  \setunit{\addcolon\space}
  \printfield{pages}%
  \setunit{\addcomma\space}
  \printfield{year}%
  \usebibmacro{finentry}%
}

\DeclareBibliographyDriver{software}{%
  \usebibmacro{bibindex}%
  \usebibmacro{begentry}%
  \usebibmacro{author}%
  \setunit{\labelnamepunct}\newblock
  \printfield{title}%
  \setunit{\addcomma\space}
  \printfield{version}%
  \setunit{\addcomma\space}
  \printfield{year}%
  \setunit{\addperiod\space}
  \printfield{url}%
  \usebibmacro{finentry}%
}

\usepackage{authblk}

\title{On Hyperbolic Stochastic Galerkin Projections of Shallow Water Linearised Moment Equations}
\author[1,2]{Safiere Kuijpers \orcidlink{0009-0007-8251-4601}}
\author[1,2,3]{Julian Koellermeier \orcidlink{0000-0002-8822-461X}}
\affil[1]{Bernoulli Institute, University of Groningen, Groningen, The Netherlands (s.a.kuijpers@rug.nl).}
\affil[2]{CogniGron (Groningen Cognitive Systems and Materials Centre), University of Groningen, Groningen, The Netherlands.}
\affil[3]{Department of Mathematics, Computer Science and Statistics, Ghent University, Ghent, Belgium.}
\date{\today}

\begin{document}

\maketitle

\section*{Abstract}
\input{Sections/00_Abstract}

\vspace{1em}
\noindent\textbf{Keywords} Uncertainty quantification $\cdot$ Free-surface flow $\cdot$ Hyperbolic regularisation $\cdot$ Generalised polynomial chaos $\cdot$ Pseudospectral product\\

\vspace{0.5em}
\noindent\textbf{Mathematics Subject Classification (2020)} 35L60 $\cdot$ 35Q35 $\cdot$ 35R60 $\cdot$ 76M12

\section{Introduction}\label{section:intro}
\input{Sections/01_Introduction}

\section{Extended Shallow Water Model Equations}\label{section:SWME}
\input{Sections/02_Shallow_Water_Moment_Equations}

\section{Stochastic Galerkin Projection}\label{section:SG}
\input{Sections/03_Stochastic_Galerkin}

\section{Stochastic Galerkin Projection of the SWLME}\label{section:SGSWME}
\input{Sections/04_SG_For_SWLME}

\section{Analysis}\label{section:ANAL}
\input{Sections/05_Analysis}

\section{Numerics}\label{section:NUM}
\input{Sections/06_Numerics}

\section{Results}\label{section:RES}
\input{Sections/07_Results}

\section{Conclusion}\label{section:CON}
\input{Sections/08_Conclusion}

\section*{Data Availability Statement}
The datasets generated and analysed during this study are available in a Github repository \cite{kuijpers_sgswlme_2026}.

\section*{Author Contributions}
Safiere Kuijpers: Conceptualisation, methodology, analysis, investigation, software, data curation, visualisation, manuscript - initial draft;
Julian Koellermeier: Conceptualisation, methodology, analysis, funding acquisition, supervision, manuscript - review;
All authors read and approved the final manuscript.

\section*{Funding}
The authors would like to acknowledge the financial support of the CogniGron research
centre and the Ubbo Emmius Funds (University of Groningen). JK is supported by the Dutch Research Council (NWO) through the ENW Vidi project HiWAVE with file number VI.Vidi.233.066.

\appendix
\section{Convergence of Monte Carlo Sampling}\label{section:appendix}
\input{Sections/A_Convergence_MC}

\printbibliography

\end{document}

%% file: Sections/00_Abstract.tex
In this work, we present an intrusive stochastic Galerkin formulation of the one-dimensional shallow water linearised moment equations expressed in conservative variables, using the pseudospectral product for generalised polynomial chaos expansions. The shallow water linearised moment equations constitute a hyperbolic system of partial differential equations with an arbitrary number of equations that enhance the accuracy of the standard shallow water equations. Without loss of generality, we assume for both the theoretical analysis and the simulations that the uncertain parameter is the friction coefficient.
For the new stochastic Galerkin shallow water linearised moment equations, we derive an energy equation, analyse the hyperbolicity - since this property is not preserved by the stochastic Galerkin projection - and introduce a regularisation to ensure hyperbolicity for the linear case of the shallow water linearised moment equations. Through numerical tests, we demonstrate the accuracy of the new stochastic Galerkin formulation in comparison with a non-intrusive Monte Carlo method, showing that the stochastic Galerkin approach achieves comparable accuracy with significantly faster run times. 

%% file: Sections/01_Introduction.tex
The Shallow Water Equations (SWE) are a simplification of the Navier-Stokes equations \cite{kowalski_moment_2019}. They can be used in cases where the horizontal length scale is much greater than the vertical length scale, for example, in weather forecasting and simulation-based natural hazard assessment. The SWE can be derived from integrating the Navier-Stokes equations over the depth, also called depth-averaging. As a consequence, the SWE use a single-variable simplification of the underlying vertical velocity profile, which is the depth-averaged velocity. Due to this simplification, the SWE are, of course, less accurate in describing the motion of viscous fluid substances than the original Navier-Stokes equations. However, the reduced model complexity brings about the benefit of computational efficiency.

As applications of the SWE are limited due to the information loss from depth-averaging, multiple attempts have been made to (partially) keep the computational efficiency of the SWE, while increasing accuracy. One of these approaches is a multilayer shallow water model, which yields a piecewise-constant velocity profile to model changes in the vertical direction \cite{fernandez-nieto_multilayer_2016}. However, the discontinuous vertical velocity complicates coupling the interfaces and ensuring the hyperbolicity of the resulting system \cite{fernandez-nieto_multilayer_2016}. Another approach to improving the accuracy of the SWE is inspired by the moment method used in kinetic theory \cite{struchtrup_macroscopic_2005}. This method applies an expansion around the depth-averaged velocity in a series of polynomials. The coefficients of the polynomials in this series are the so-called moments. Evolution equations for these additional moments are derived by a Galerkin projection of the momentum equation of the Navier-Stokes equations. These Shallow Water Moment Equations (SWME) were introduced in \cite{kowalski_moment_2019}, while hyperbolicity (after different regularisations) was shown in \cite{koellermeier_analysis_2020} and \cite{koellermeier_steady_2022}, resulting in the Hyperbolic Shallow Water Moment Equations (HSWME) and Shallow Water Linearised Moment Equations (SWLME), respectively.

The friction effects in the SWE and SWME can be modelled using different approaches. In \cite{kowalski_moment_2019}, a Newtonian slip friction model parametrised by the friction coefficient and slip length was used, whereas other friction models exist \cite{garres-diaz_shallow_2021}. 

To perform simulations using SWE and SWME, some parameters, such as the friction coefficient, gravitational constant, slip length, and bottom topography, need to be known a priori. However, in practice, it is often the case that one or more of these parameters are not exactly known due to natural variability, measurement errors, or incomplete knowledge of the system. Instead of ruling out uncertainty in these parameters, it is therefore necessary to take this uncertainty into account. (Forward) uncertainty quantification is the process of quantifying how uncertainty in parameters propagates to uncertainty in simulation results. In this way, one can determine how likely different outcomes are if some aspects of the system are not exactly known.

Uncertainty quantification can be performed using two distinct approaches: (1) intrusive methods and (2) non-intrusive methods. Intrusive methods derive a new deterministic system of equations that incorporates uncertainty, while non-intrusive methods reuse existing deterministic equations and obtain error bounds via sampling techniques, such as Monte Carlo or stochastic collocation. The deterministic system derived using an intrusive method has the advantage of requiring only a single solve to obtain the desired results. In contrast, non-intrusive methods require solving the original system multiple times. Additionally, intrusive methods provide knowledge of the entire random space, whereas non-intrusive methods only yield solutions at specific samples or quadrature points \cite{gerster_entropies_2020}. This allows for adaptivity in the stochastic space \cite{meyer_posteriori_2020, burger_hybrid_2017} and the construction of numerical schemes that are well-balanced at any location in the random space \cite{jin_well-balanced_2016}. Non-intrusive methods can enforce this property only at the collocation nodes. Once derived, intrusive methods are generally faster and more computationally efficient, although non-intrusive methods can also be accelerated using techniques such as low-rank multilevel Monte Carlo \cite{patwardhan_dynamical_2026}. On the other hand, non-intrusive methods do not require deriving a new deterministic system, allowing reuse of existing code frameworks. Consequently, implementation is typically more costly for intrusive methods. Another key difference is that intrusive methods allow for analysis of the resulting system, enabling analytical derivation of its properties. This is not possible for non-intrusive approaches.

For the SWE, which consist of a hyperbolic system of two partial differential equations (PDEs), an intrusive stochastic Galerkin projection has been derived in several studies. This intrusive method relies on generalised polynomial chaos expansions \cite{field_jr_accuracy_2004}. For instance, \cite{bender_entropy-conservative_2024} developed an entropy-conservative discontinuous Galerkin method to solve the stochastic shallow water equations within a stochastic Galerkin framework using a Roe variable transformation, while \cite{dai_hyperbolicity-preserving_2021} designed a second-order, energy-stable, and well-balanced scheme with a specific projection step incorporated into the Galerkin projection. A comparison of these methods is provided in \cite{offner_high-order_2026}. In general, the stochastic Galerkin projection does not preserve hyperbolicity \cite{despres_robust_2013}. Other methods to address this loss of hyperbolicity - aside from the Roe variable transformation combined with Haar wavelets, projection techniques, and high-order quadrature rules - include limiting strategies \cite{schlachter_hyperbolicity-preserving_2018,durrwachter_hyperbolicity-preserving_2020}, filtering techniques \cite{kusch_filtered_2020}, linearisation methods \cite{wu_stochastic_2017}, and entropic variable representations \cite{poette_uncertainty_2009}.

This paper extends the work of \cite{dai_hyperbolicity-preserving_2021} by applying an intrusive stochastic Galerkin projection to the SWLME, which consist of a system of an arbitrarily large number of PDEs. We derive the stochastic Galerkin projection of the SWLME, derive an energy equation, and investigate its hyperbolicity. Additionally, we numerically test the resulting system and compare it with a reference Monte Carlo simulation, allowing us to assess its accuracy and demonstrate its faster run time compared with non-intrusive Monte Carlo simulations. Without loss of generality, we focus on the case of an uncertain friction coefficient, one of the most influential parameters in the Newtonian model derived in \cite{kowalski_moment_2019}. The extension to other uncertain parameters is straightforward.

The rest of this paper is structured as follows: in Section \ref{section:SWME}, we recall the SWE and the derivation of the SWME and SWLME. Since \cite{dai_hyperbolicity-preserving_2021} works in convective variables, as a first contribution, we will rewrite the SWME and SWLME systems, their eigenvalues and eigenvectors in convective variables. This allows us to remain within the framework of \cite{dai_hyperbolicity-preserving_2021} and extend the analysis to the more complex case of the SWLME. The special attention to the eigenvalues and -vectors of the systems is due to their later importance in analysing hyperbolicity. In Section \ref{section:SG}, we introduce terminology and notation of the stochastic Galerkin projection. The core of this work is Section \ref{section:SGSWME}, where a stochastic Galerkin projection is performed on the SWLME, including an analysis with respect to hyperbolicity and energy of the resulting system in Section \ref{section:ANAL}. After a brief description of the numerical scheme used in Section \ref{section:NUM}, we show numerical experiments with the SWE and SWLME including uncertainty in Section \ref{section:RES}.

%% file: Sections/02_Shallow_Water_Moment_Equations.tex
In this section, we recall the model equations used later in the paper. These form the basis for uncertainty quantification using the stochastic Galerkin projection in the next two sections. We start with the SWE in Section \ref{subsection:SWE}, then extend this to the SWME in Section \ref{subsection:SWME}, before considering as a special case the SWLME in Section \ref{subsection:SWLME}.

\subsection{Shallow Water Equations}\label{subsection:SWE}
We first recall the shallow water equations (SWE) in one dimension and for a flat bottom. The extension to two dimensions can be done similar to \cite{dai_hyperbolicity-preserving_2022} as well as the inclusion of non-flat bottom bathymetries in one dimension \cite{dai_hyperbolicity-preserving_2021} and two dimensions \cite{dai_hyperbolicity-preserving_2022}. For a flat bottom, the one-dimensional SWE for the height of the water $h=h(t,x)$ and the mean velocity $u_m=u_m(t,x)$ are given by
\begin{subequations}
    \begin{alignat}{1}
        \partial_th+\partial_x(hu_m)&=0,\label{SWME-eq:SWE_a}\\
        \partial_t(hu_m)+\partial_x\left(\frac{gh^2}{2}+hu_m^2\right) &= -\frac{\nu}{\lambda}u_m,\label{SWME-eq:SWE_b}
    \end{alignat}
\end{subequations}
where $g$ is the gravitational constant, $\nu$ is the friction coefficient, and $\lambda$ is the slip length, modelling the friction at the bottom proportional to the velocity slip at the bottom.

A system of first-order partial differential equations of the form 
\begin{equation}\label{SWME-eq:hyp}
    \frac{\partial \boldsymbol{W}(t,x)}{\partial t}+A(\boldsymbol{W})\frac{\partial \boldsymbol{W}(t,x)}{\partial x}=0 \quad\text{for }t\in\mathbb{R}_{>0},x\in\mathbb{R},
\end{equation}
with $\boldsymbol{W}\colon \mathbb{R}_{\geq 0}\times\mathbb{R}\rightarrow\mathbb{R}^D$ the vector of unknowns of length $D$ and the system matrix $A\colon \mathbb{R}_{\geq 0}\times\mathbb{R}\rightarrow\mathbb{R}^{D\times D}$ is said to be globally hyperbolic if $A(\boldsymbol{W})$ is diagonalisable with real eigenvalues for all states $\boldsymbol{W}$.

The solutions to hyperbolic equations are ``wavelike" with finite propagation speeds given by the eigenvalues of $A(\boldsymbol{W})$. Furthermore, hyperbolic systems that are symmetrisable can be shown to be well-posed, and hyperbolic systems can be readily solved with a variety of well-established numerical methods. Hyperbolicity is therefore a desirable property for time-dependent first-order PDEs.

Rewriting the SWE \eqref{SWME-eq:SWE_a}-\eqref{SWME-eq:SWE_b} in the form of equation \eqref{SWME-eq:hyp} using the water discharge $q_0\coloneqq hu_m$, we find
\begin{equation}\label{SWME-eq:SWE_matrix}
    \partial_t
    \begin{pmatrix}
        h \\ q_0
    \end{pmatrix}
    + \underbrace{
    \begin{pmatrix}
        0 & 1\\
        gh-\left(\frac{q_0}{h}\right)^2 & 2\frac{q_0}{h}\\
    \end{pmatrix}}_{\eqqcolon A_{\text{SWE}}}
    \partial_x
    \begin{pmatrix}
        h \\ q_0
    \end{pmatrix}
    = -\frac{\nu}{\lambda}
    \begin{pmatrix}
        0 \\ \frac{q_0}{h}
    \end{pmatrix},
\end{equation}
for $\boldsymbol{W}_0(t,x)=(h(t,x),q_0(t,x))^T\in\mathbb{R}^2$. The eigenvalues of the system matrix $A_{\text{SWE}}$ are
\begin{equation}\label{SWE-eq:SWE_ev}
    \sigma\left(A_{\text{SWE}}\right)=\left\{\frac{q_0}{h}-\sqrt{gh},\:\frac{q_0}{h}+\sqrt{gh}\right\}.
\end{equation}
Since these are two distinct real eigenvalues if the height of the water $h$ is positive, it follows that the system \eqref{SWME-eq:SWE_matrix} is diagonalisable and therefore hyperbolic. The eigenvectors of the system matrix $A_{\text{SWE}}$ in \eqref{SWME-eq:SWE_matrix} are given by the columns of the following matrix
\begin{equation}
    \begin{pmatrix}
        1 & 1\\
        \frac{q_0}{h}-\sqrt{gh} & \frac{q_0}{h}+\sqrt{gh}\\
    \end{pmatrix},
\end{equation}
where we see the eigenvalues of \eqref{SWE-eq:SWE_ev} appear in the second row.

\subsection{Shallow Water Moment Equations}\label{subsection:SWME}
The simplification for the velocity profile in the $z$-direction of the SWE, which results from integrating the Navier-Stokes equations over the depth, is not observed in nature. Often, the water speed is slower at the bottom, for example, due to friction with sand, stones, and plants. A way to reintroduce an explicit reconstruction of the velocity profile in the $z$-direction, while still largely benefitting from the computational efficiency of the SWE, is described in \cite{kowalski_moment_2019}. There, the so-called Shallow Water Moment Equations (SWME) are introduced, which rely on a polynomial expansion of the velocity profile in the $z$-direction:
\begin{equation}\label{SWME-eq:expansion}
    u(t,x,z)=u_m(t,x)+\sum^N_{j=1}u_j(t,x)\phi_j\left(\frac{z}{h(t,x)}\right).
\end{equation}
Here $\phi_j\colon[0,1]\rightarrow\mathbb{R}$ is the Legendre polynomial of degree $j$ scaled to the interval $[0,1]$ and $u_j\colon[0,T]\times\mathbb{R}\rightarrow\mathbb{R}$ are the corresponding basis coefficients, also called moments. Since Legendre polynomials $\left\{\phi_j\right\}_{j=0}^N$ form an orthogonal set of polynomials on $[0,1]$, any polynomial velocity profile of degree $N$ can be written in the form of \eqref{SWME-eq:expansion}. $N$ is called the order of the model.

Additional evolution equations for the coefficients, the so-called moment equations, can be derived by inserting the expansion of \eqref{SWME-eq:expansion} into the momentum balance equation of the Navier-Stokes system, multiplying by $\phi_i$ and integrating the resulting expression over the interval $[0,1]$ for every $i=0,\ldots,N$. Using the orthogonality property of the Legendre polynomials, the resulting equations can be greatly simplified. As shown in \cite{kowalski_moment_2019}, for general order $N$, the SWME system of $N+2$ equations reads
\begin{subequations}
    \begin{alignat}{1}
        \partial_th+\partial_x(hu_m)&=0,\label{SWME-eq:SWME_N_a}\\
        \partial_t(hu_m)+\partial_x\left(\frac{gh^2}{2}+hu_m^2+h\sum^N_{k=1}\frac{u_k^2}{2k+1}\right)&=-\frac{\nu}{\lambda}\left(u_m+\sum^N_{k=1}u_k\right),\label{SWME-eq:SWME_N_b}\\
        \partial_t(hu_j)+\partial_x\left(2hu_mu_j+h\sum_{k,l=1}^NA_{jkl}u_ku_l\right)&=u_m\partial_x(hu_j)-\sum_{k,l=1}^NB_{jkl}u_l\partial_x(hu_k)\nonumber\\&\quad-\frac{(2j+1)\nu}{\lambda}\left(u_m+\sum^N_{k=1}\left(1+\frac{\lambda}{h}C_{jk}\right)u_k\right)\label{SWME-eq:SWME_N_c},
    \end{alignat}
\end{subequations}
for $j=1,\ldots,N$, where
\begin{subequations}
    \begin{alignat}{1}
        A_{jkl}&=(2j+1)\int_0^1\phi_j\phi_k\phi_ld\zeta,\label{SWME-eq:SWME_coef_a}\\
        B_{jkl}&=(2j+1)\int_0^1\partial_{\zeta}\phi_j\left(\int^{\zeta}_0\phi_kd\hat{\zeta}\right)\phi_ld\zeta,\label{SWME-eq:SWME_coef_b}\\
        C_{jk}&=\int^1_0\partial_\zeta\phi_j\partial_\zeta\phi_kd\zeta\label{SWME-eq:SWME_coef_c}.
    \end{alignat}
\end{subequations}

Note the emergence of the non-conservative terms $u_m\partial_x(hu_j)-\sum_{k,l=1}^NB_{jkl}u_l\partial_x(hu_k)$ on the right-hand side of \eqref{SWME-eq:SWME_N_c}. While this looks non-standard, we note that in typical shallow water settings even without friction, already a non-flat bottom would lead to a non-conservative product term, meaning that momentum is transferred between the bottom and the fluid. In the field of SWE, non-conservative products are, therefore, nothing unusual.

Multiple extensions exist for the SWME, for example, using Manning friction \cite{garres-diaz_shallow_2021} or different modifications for larger $N$ \cite{koellermeier_analysis_2020,koellermeier_steady_2022}.

For investigating hyperbolicity in Section \ref{section:ANAL}, we will need the eigenvalues and eigenvectors of the system of equations \eqref{SWME-eq:SWME_N_a}-\eqref{SWME-eq:SWME_N_c}. However, for the SWME they cannot be given in explicit form for arbitrary order $N$. Therefore, we display the case of order $N=1$ individually. We will also write the system of equations, eigenvalues and -vectors in convective variables to remain in the framework of \cite{dai_hyperbolicity-preserving_2021}. By writing down the shallow water models in these variables, we can also later compare their structure to their stochastic Galerkin projection in Sections \ref{section:SGSWME} and \ref{section:ANAL}.

For $N=1$, i.e., for the linear velocity profile $u(t,x,z)=u_m(t,x)+u_1(t,x)\phi_1\left(\frac{z}{h(t,x)}\right)$, the SWME1 read
\begin{subequations}
    \begin{alignat}{1}
        \partial_th+\partial_x(hu_m)&=0,\label{SWME-eq:SWME_N=1_a}\\
        \partial_t(hu_m)+\partial_x\left(\frac{gh^2}{2}+hu_m^2+\frac{hu_1^2}{3}\right)&=-\frac{\nu}{\lambda}(u_m+u_1),\label{SWME-eq:SWME_N=1_b}\\
        \partial_t(hu_1)+\partial_x(2hu_mu_1)&=u_m\partial_x(hu_1)-\frac{3\nu}{\lambda}(u_m+u_1)-\frac{12\nu u_1}{h}.\label{SWME-eq:SWME_N=1_c}
    \end{alignat}
\end{subequations}

This system of equations \eqref{SWME-eq:SWME_N=1_a}-\eqref{SWME-eq:SWME_N=1_c} can be rewritten in matrix form \eqref{SWME-eq:hyp} as
\begin{equation}\label{SWME-eq:SWME_N=1_matrix1}
    \partial_t
    \begin{pmatrix}
        h \\ hu_m \\ hu_1
    \end{pmatrix}
    +
    \begin{pmatrix}
        0 & 1 & 0\\
        -h-u_m^2+\frac{u_1^2}{3} & 2u_m & \frac{2}{3}u_1\\
        -2u_mu_1 & 2u_1 & 2u_m
    \end{pmatrix}
    \partial_x
    \begin{pmatrix}
        h \\ hu_m \\ hu_1
    \end{pmatrix}
    = \underbrace{
    \begin{pmatrix}
        0 \\ 0 \\ u_m\partial_x(hu_1)
    \end{pmatrix}}_{\eqqcolon \boldsymbol{Q_1}}
    + \underbrace{
    \begin{pmatrix}
        0 \\
        -\frac{\nu}{\lambda}(u_m+u_1) \\
        -\frac{3\nu}{\lambda}(u_m+u_1)-\frac{12\nu u_1}{h}
    \end{pmatrix}}_{\eqqcolon \boldsymbol{S_1}}.
\end{equation}
On the right-hand side we have separated the non-conservative terms $\boldsymbol{Q_1}$ from the friction $\boldsymbol{S_1}$.

Similarly to the SWE, we rewrite the SWME1 \eqref{SWME-eq:SWME_N=1_matrix1} in the form of equation \eqref{SWME-eq:hyp} using $q_1\coloneqq hu_1$. We then find
\begin{equation}\label{SWME-eq:SWME_N=1_matrix3}
    \partial_t
    \begin{pmatrix}
        h \\ q_0\\ q_1
    \end{pmatrix}
    + \underbrace{
    \begin{pmatrix}
        0 & 1 & 0\\
        gh-\left(\frac{q_0}{h}\right)^2-\frac{1}{3}\left(\frac{q_1}{h}\right)^2 & 2\frac{q_0}{h} & \frac{2}{3}\frac{q_1}{h}\\
        -2\frac{q_0q_1}{h^2} & 2\frac{q_1}{h} & \frac{q_0}{h}
    \end{pmatrix}}_{\eqqcolon A_1}
    \partial_x
    \begin{pmatrix}
        h \\ q_0 \\ q_1
    \end{pmatrix}
    = -\frac{\nu}{\lambda}
    \begin{pmatrix}
        0 \\ 
        \frac{q_0+q_1}{h} \\
        3\frac{q_0+q_1}{h} + \frac{12\lambda q_1}{h^2}
    \end{pmatrix}
\end{equation}
for $\boldsymbol{W}_1=(h(t,x),q_0(t,x),q_1(t,x))^T\in\mathbb{R}^3$. The eigenvalues of $A_1$ are
\begin{equation}\label{SWE-eq:SWME_ev}
    \sigma\left(A_1\right)=\left\{\frac{q_0}{h}-\sqrt{gh+\left(\frac{q_1}{h}\right)^2},\:\frac{q_0}{h},\:\frac{q_0}{h}+\sqrt{gh+\left(\frac{q_1}{h}\right)^2}\right\},
\end{equation}
i.e., the system is again hyperbolic if the water height $h$ is positive. The eigenvectors of the system matrix in equation \eqref{SWME-eq:SWME_N=1_matrix3} are given by the columns of the matrix
\begin{equation}\label{SWME-eq:SWME_EV}
    \begin{pmatrix}
        1 & 1 & 1\\
        \frac{q_0}{h}-\sqrt{gh+\left(\frac{q_1}{h}\right)^2} & \frac{q_0}{h} & \frac{q_0}{h}+\sqrt{gh+\left(\frac{q_1}{h}\right)^2}\\
        2\frac{q_1}{h} & \frac{-3gh^3+q_1^2}{2hq_1} & 2\frac{q_1}{h}
    \end{pmatrix},
\end{equation}
where we again see that the eigenvalues \eqref{SWE-eq:SWME_ev} appear in the second row.

For $N\geq2$ the SWME system is not hyperbolic for all states \cite{koellermeier_steady_2022}. In this paper, therefore, we will not extend the SWME to incorporate uncertainty, but one of its regularisations. We choose to work with the Shallow Water Linearised Moment Equations (SWLME) of \cite{koellermeier_steady_2022}, because of its ease to extend it from $N=1$ to the general case of arbitrary $N$.

\subsection{Shallow Water Linearised Moment Equations}\label{subsection:SWLME}
The SWLME are a hyperbolic regularisation of the SWME derived first in \cite{koellermeier_steady_2022}. The SWLME are obtained after a linearisation of terms appearing during the derivation of the moment equations, which effectively results in setting $A_{ijk}=0=B_{ijk}$ \eqref{SWME-eq:SWME_coef_a}-\eqref{SWME-eq:SWME_coef_b} in the SWME \eqref{SWME-eq:SWME_N_a}-\eqref{SWME-eq:SWME_N_c}, while keeping the same friction term as well as the same first two equations from the full SWME. As shown in \cite{koellermeier_steady_2022}, the SWLME allow for an analytical computation of steady states in addition to analytical eigenvalues and eigenvectors of the system matrix even for general $N$. This is the main reason we focus on the study of Stochastic Galerkin models with $N>1$ to the SWLME, as the hyperbolicity of the Stochastic Galerkin version of the SWLME (later called SGSWLME) can be obtained based on the hyperbolicity of the SWLME themselves. We note that the SWLME were widely used in the literature, see \cite{koellermeier_new_2026, caballero-cardenas_semi-implicit_2025, careaga_entropy_2026, fan_well-balanced_2026, cao_flux_2026}.

For general $N$, this leads to the following system, see also \cite{koellermeier_steady_2022}
\begin{equation}\label{SWME-eq:SWLME_N_matrix}
    \partial_t
    \begin{pmatrix}
        h \\ q_0 \\ q_1 \\ \vdots \\ q_N
    \end{pmatrix}
    +\partial_x
    \begin{pmatrix}
        q_0 \\
        \frac{gh^2}{2}+\sum\limits_{j=0}^N\frac{1}{2j+1}\frac{q_j^2}{h}\\
        2\frac{q_0q_1}{h} \\
        \vdots \\
        2\frac{q_0q_N}{h}
    \end{pmatrix}
    = \underbrace{
    \begin{pmatrix}
        0 \\ 0 \\ \frac{q_0}{h}\partial_xq_1 \\ \vdots \\ \frac{q_0}{h}\partial_xq_N
    \end{pmatrix}}_{\eqqcolon \boldsymbol{Q}}+\boldsymbol{S},
\end{equation}
where $q_j\coloneqq hu_j$ and the entries of $\boldsymbol{S}$ are given by
\begin{equation}\label{SWME-eq:S2}
    \begin{split}
        S_0 &= 0,\\
        S_{j+1}&=-\frac{(2j+1)\nu}{\lambda h}\left(\sum_{k=0}^Nq_k\right)-\frac{4(2j+1)\nu}{h^2}\left(\sum_{k=0}^Na_{j,k}q_k\right),\quad\text{for }j=0, \ldots, N,
    \end{split}
\end{equation}
where the constants $a_{j,k}$ are computed by
\begin{equation}\label{SWME-eq:ajk}
    a_{j,k}=
    \begin{cases}
        0 & \text{if }j+k=\text { odd}, \\
        \frac{\min(j,k)(\min(j,k)+1)}{2}\quad & \text{if } j+k=\text { even}.
    \end{cases}
\end{equation}
The vector $\boldsymbol{S}$ of \eqref{SWME-eq:S2} is unchanged with respect to the SWME \eqref{SWME-eq:SWME_N_a}-\eqref{SWME-eq:SWME_N_c}. The system matrix $A_{\text{SWLME}}$ for general $N$ reads
\begin{equation}\label{SWME-eq:SWLME_A}
    A_{\text{SWLME}} =\begin{pmatrix}
    0&1&0&\cdots&0\\
    gh -\left(\frac{q_0}{h}\right)^2-\sum\limits_{j=1}^N\frac{1}{2j+1}\left(\frac{q_j}{h}\right)^2 & 2\frac{q_0}{h} & \frac{2}{3}\frac{q_1}{h} & \cdots & \frac{2}{2N+1}\frac{q_N}{h} \\
    -2\frac{q_0q_1}{h^2} &2\frac{q_1}{h}&\frac{q_0}{h} & &\\
    \vdots &\vdots & & \ddots & \\
    -2\frac{q_0q_N}{h^2} &2\frac{q_N}{h} & & &\frac{q_0}{h}
    \end{pmatrix} \in \mathbb{R}^{(N+2) \times (N+2)}.
\end{equation}

Note that here, for the first time, we write the SWLME equations in convective variables $q_i$, which is not done before in the literature. According to \cite[Theorem 1]{koellermeier_steady_2022}, the eigenvalues of $A_{\text{SWLME}}$ can be explicitly computed to be 
\begin{equation}\label{SWME-eq:SWLME_ev}
    \lambda_{1,2}=\frac{q_0}{h}\pm\sqrt{gh+\sum_{j=1}^N\frac{3}{2j+1}\left(\frac{q_j}{h}\right)^2}\quad\text{ and }\quad\lambda_{j+2}=\frac{q_0}{h},\quad\text{ for }j=1,\ldots,N.
\end{equation}
The corresponding eigenvectors $v_{1}, v_2$, and $v_{j+2}$ for $j=1, \ldots, N$ that will be used later are given in the same reference as 
\begin{equation}
    v_{1,2} = \begin{pmatrix}
    1 \\
    \frac{q_0}{h} \pm \sqrt{gh + \sum\limits_{j=1}^N \frac{3}{2j+1}\left(\frac{q_j}{h}\right)^2} \\
    2\frac{q_1}{h} \\
    \vdots \\
    2\frac{q_N}{h}
    \end{pmatrix}, \quad v_{j+2} = \begin{pmatrix}
    1\\
    \frac{q_0}{h}\\
    \frac{-3gh + \sum\limits_{j=1}^N \frac{3}{2j+1}\left(\frac{q_j}{h}\right)^2}{\frac{6}{2(n+1-j)+1}\frac{q_{n+1-j}}{h}}\delta_{n+3-j,3}  \\
    \vdots \\
    \frac{-3gh + \sum\limits_{j=1}^N \frac{3}{2j+1}\left(\frac{q_j}{h}\right)^2}{\frac{6}{2(n+1-j)+1}\frac{q_{n+1-j}}{h}}\delta_{n+3-j,N} 
    \end{pmatrix}
\end{equation}
using the Kronecker delta $\delta_{i,j}$. We again see that the eigenvalues of \eqref{SWME-eq:SWLME_ev} appear as the second entry of all eigenvectors. Note that for $N=1$ the SWME and SWLME coincide.

For the SWE, uncertainty quantification using stochastic Galerkin projection has already been performed in \cite{dai_hyperbolicity-preserving_2021}. Now we extend this to the SWLME for arbitrary $N$.

%% file: Sections/03_Stochastic_Galerkin.tex
To study the effect of uncertain parameters on the water height and velocity variables in the SWE and SWLME, we use a stochastic Galerkin projection \cite{constantine_primer_2007}. This approach relies heavily on the generalised Polynomial Chaos Expansion (gPCE). For the coefficients of this expansion, new equations are derived. The stochastic Galerkin approach is therefore an intrusive approach, meaning a new system of equations is established to incorporate the uncertainty. Uncertainty quantification tasks such as moment estimation and sensitivity analysis can be performed by simply post-processing the gPCE coefficients \cite{knio_uncertainty_2006, sudret_global_2008,crestaux_polynomial_2009}. The stochastic Galerkin approach also gives a way to represent knowledge of the whole random space, instead of only at particular sample points, and makes it possible to design numerical schemes that are well-balanced at any location in space, instead of only at the collocation nodes \cite{jin_well-balanced_2016}. Stochastic Galerkin methods also allow for adaptivity in the stochastic space \cite{meyer_posteriori_2020, burger_hybrid_2017}. In this section, we will introduce the stochastic Galerkin setting and its notation. In the following section, we will apply it to the SWLME for arbitrary $N$ (including SWME1), following the results of \cite{dai_hyperbolicity-preserving_2021}.

The basis of the stochastic Galerkin approach lies in the generalised Polynomial Chaos Expansion (gPCE).

\begin{theorem}\label{SG-th:gPCE}[modified from \cite{ernst_convergence_2012}]
    Let $\boldsymbol{\xi}=(\xi_1,\ldots,\xi_D)$ be a vector of $D\in\mathbb{N}$ independent random variables and $(\Omega,\sigma(\boldsymbol{\xi}),\mathds{P})$ be a probability space. We assume that $\boldsymbol{\xi}$ satisfies
    \begin{itemize}
        \item The distribution functions $F_{\xi_d}(x):=\mathds{P}(\xi_d\leq x)$ of the basic random variables are continuous,
        \item Each basic random variable $\xi_d$ possesses finite moments of all order, i.e., $\langle|\xi_d|^k\rangle=\int_\mathbb{R}|x|^kdF_{\xi_d}(dx)<\infty$ for all $k\in\mathbb{N}$.
    \end{itemize}
    Let  $\{\psi_j^{(d)}\}_{j\in\mathbb{N}_0}$, $d=1,\ldots,D$ be the sequences of polynomials orthonormal with respect to the distribution of $\xi_d$. Then the set of polynomials is given by
    \begin{equation*}
        \psi_{\boldsymbol{\alpha}}(\boldsymbol{\xi})=\prod^D_{d=1}\psi^{(d)}_{\alpha_d}(\xi_d),\quad\boldsymbol{\alpha}\in\mathbb{N}_0^D,
    \end{equation*}
    is an orthonormal basis of the space $L^2(\Omega,\sigma(\boldsymbol{\xi}),\mathds{P})$ if and only if the distribution function is uniquely determined by the sequence of its moments
    \begin{equation*}
        \mu_{d,k} :=\langle\xi_d^k\rangle=\int_\mathbb{R}x^kF_{\xi_d}(dx),\quad k\in\mathbb{N}_0,
    \end{equation*}
    for each random variable $\xi_d$, $d=1,\ldots,D$. In this case any random variable $\eta\in L^2(\Omega,\sigma(\boldsymbol{\xi}),\mathds{P})$ can be expanded in an abstract Fourier series of multivariate orthonormal polynomials $\psi_{\boldsymbol{\alpha}}$ in the basic random variables $\boldsymbol{\xi}$, the generalised polynomial chaos expansion
    \begin{equation}\label{SG-eq:gPCE}
        \eta=\sum_{\boldsymbol{\alpha}\in\mathbb{N}^D_0}\hat{\eta}_{\boldsymbol{\alpha}}\psi_{\boldsymbol{\alpha}}(\boldsymbol{\xi})\quad\text{with coefficients}\quad \hat{\eta}_{\boldsymbol{\alpha}}=\langle\eta\: \psi_{\boldsymbol{\alpha}}(\boldsymbol{\xi})\rangle.
    \end{equation}
\end{theorem}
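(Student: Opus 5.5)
This is the Ernst--Mugler--Starkloff--Ullmann theorem on generalised polynomial chaos. I would prove it by reducing the multivariate statement to the univariate one and then handling the univariate case through a density argument for polynomials in $L^2$.

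\emph{Orthonormality and reduction to the image space.} Orthonormality of the products $\psi_{\boldsymbol{\alpha}}$ is immediate from independence: the expectation $\langle\psi_{\boldsymbol{\alpha}}\psi_{\boldsymbol{\beta}}\rangle$ factorises as $\prod_d\langle\psi^{(d)}_{\alpha_d}\psi^{(d)}_{\beta_d}\rangle=\prod_d\delta_{\alpha_d\beta_d}$. The finiteness of all moments guarantees that each $\psi^{(d)}_j$ lies in $L^2$. Continuity of $F_{\xi_d}$ means the distribution has infinite support, so none of these polynomials vanishes in $L^2$ and the sequences are infinite.

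By the Doob--Dynkin lemma, every $\eta\in L^2(\Omega,\sigma(\boldsymbol{\xi}),\mathds{P})$ has the form $\eta=f(\boldsymbol{\xi})$ with $f\in L^2(\mathbb{R}^D,\mu)$, where $\mu=\bigotimes_d\mu_d$ is the joint law. Completeness of $\{\psi_{\boldsymbol{\alpha}}\}$ is therefore equivalent to density of the polynomials in $L^2(\mathbb{R}^D,\mu)$. Once completeness holds, the expansion \eqref{SG-eq:gPCE} with coefficients $\hat\eta_{\boldsymbol{\alpha}}=\langle\eta\psi_{\boldsymbol{\alpha}}\rangle$ follows from Parseval's identity in a Hilbert space with an orthonormal basis.

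\emph{The univariate equivalence.} This is the main step. I would invoke Riesz's classical theorem (1923): if $\mu_d$ is determinate, meaning uniquely fixed by its moments, then polynomials are dense in $L^2(\mu_d)$. For the converse direction, I would use the Hamburger-moment theory of M.~Riesz and Akhiezer: when $\mu_d$ is indeterminate, polynomials are dense in $L^2(\mu_d)$ only for N-extremal measures. Those measures are discrete, with support equal to the zero set of an entire function, so their distribution function is a step function. A continuous $F_{\xi_d}$ therefore rules them out, and density fails. This is precisely where the continuity assumption enters, and citing this characterisation correctly is the main obstacle.

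\emph{From univariate to multivariate.} If each univariate polynomial system is complete, then the tensor products form a complete orthonormal system in $L^2(\mu)\cong\bigotimes_d L^2(\mu_d)$. This is the standard fact that the tensor product of orthonormal bases is an orthonormal basis of the Hilbert tensor product, and it uses only the product structure of $\mu$, not joint determinacy. Conversely, suppose some $\mu_{d}$ is indeterminate. By the previous step there is a nonzero $g\in L^2(\mu_{d})$ orthogonal to all polynomials in $\xi_{d}$. The function $g(\xi_{d})$ is then orthogonal to every $\psi_{\boldsymbol{\alpha}}$, since the integral factorises and the $d$-th factor vanishes. Hence the system is not complete, which gives the "only if" direction.
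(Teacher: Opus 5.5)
The paper does not prove this statement; it imports it essentially verbatim from \cite{ernst_convergence_2012}. Your argument is correct and follows the same route as the proof there: Doob--Dynkin to pass to $L^2(\mathbb{R}^D,\mu)$, M.~Riesz's density theorem in the determinate case, the N-extremal characterisation (excluded by continuity of $F_{\xi_d}$) in the indeterminate case, the tensor-product structure of $L^2$ of a product measure for the ``if'' direction, and your construction of $g(\xi_d)$ orthogonal to every $\psi_{\boldsymbol{\alpha}}$ for the ``only if'' direction.
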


\begin{remark}
    Examples of probability distributions for which the moment problem is uniquely solvable are uniform, beta, gamma, and normal distributions. By contrast, the moment problem is not uniquely solvable for the lognormal distribution, so that the sequence of random variables $\{\psi_n(\xi)\}_{n\in\mathbb{N}_0}$ for a lognormal random variable $\boldsymbol{\xi}$ does not constitute a basis of the Hilbert space $L^2(\Omega,\sigma(\boldsymbol{\xi}),\mathds{P})$, and therefore there will be some elements in this space that are not the limit of their generalised polynomial chaos expansion. Other examples of random variables with indeterminate distributions are certain powers of random variables with normal or gamma distribution \cite{ernst_convergence_2012}.
\end{remark}

Taking into account uncertainty, the variables of the SWLME depend not only on time $t$ and space $x$, but also on a random parameter of the system $\xi$. In principle, the effect of multiple random parameters on the system could be investigated, in which case $\xi$ would be a vector. However, in this paper, we will only analyse the effect of a single random parameter and therefore, for the sake of simplicity, we take $\xi$ to be only a one-dimensional scalar. The generalised polynomial chaos expansion of equation \eqref{SG-eq:gPCE} separates the dependence on time and space from the dependence on random parameters assuming that the random variable $\xi$ is such that the assumptions of Theorem \ref{SG-th:gPCE} are satisfied and that the moment problem is uniquely solvable.

If we define $P_K\coloneqq\text{span}\left\{\zeta^k\:|\:k=0,\dots,K\right\}$ for $K\in\mathbb{N}_0$, the space of polynomials of degree less or equal to $K$, then the $P_K$-truncated gPCE of a random variable $\eta$ that also depends on time and space is given by
\begin{equation*}
    \eta(t,x,\cdot)\approx\sum_{k=0}^K\hat{\eta}_{k}(t,x)\psi_{k}(\xi).
\end{equation*}
This $P_K$-truncated gPCE can be used to derive a deterministic system for stochastic Fourier coefficients $\hat{\eta}_k$, as this is not possible for the infinite sum. For square-integrable and $K$-times continuously differentiable, square-integrable functions with a normal random variable, an estimation of the accuracy of the truncated gPCE is given in \cite{field_jr_accuracy_2004}.

For brevity, we introduce a linear projection operator for this $P_K$-trucated gPCE $\mathcal{G}_{K}\colon L^2(\Omega,\sigma(\xi),\mathds{P})\rightarrow P_K$ defined as
\begin{equation*}
    \mathcal{G}_K[\eta](t,x,\xi)=\sum_{k=0}^K\hat{\eta}_{k}(t,x)\psi_{k}(\xi).
\end{equation*}

Polynomial statistics of the gPCE can easily be shown to be equal to
\begin{equation}
    \mathds{E}[\mathcal{G}_K[\eta](t,x,\xi)]=\hat{\eta}_0(t,x),\quad \text{Var}[\mathcal{G}_K[\eta](t,x,\xi)]=\sum_{k=1}^K\hat{\eta}_k^2(t,x),
\end{equation}
where $\mathds{E}$ is the expectation and $\text{Var}$ is the variance.

The SWE and SWLME of equations \eqref{SWME-eq:SWE_a}-\eqref{SWME-eq:SWE_b} and \eqref{SWME-eq:SWLME_N_matrix}, respectively, also contain products and ratios of functions. We therefore need to compute nonlinear gPCE's, which is not straightforward. We follow the (notational) framework of \cite{dai_hyperbolicity-preserving_2021}. Consider the two stochastic processes $\eta$ and $\mu$ with their respective $P_K$-truncated gPCE representations
\begin{equation*}
    \mathcal{G}_K[\eta](t,x,\xi)=\sum_{k=0}^K\hat{\eta}_{k}(t,x)\psi_{k}(\xi),\quad\mathcal{G}_K[\mu](t,x,\xi)=\sum_{k=0}^K\hat{\mu}_{k}(t,x)\psi_{k}(\xi).
\end{equation*}
Since there is no direct way to compute the basis expansion coefficients $\beta_{k}$'s of the product $\beta=\eta\mu$ of these stochastic processes, we assume  the following
\begin{equation}\label{SG-eq:product}
    \mathcal{G}_K[\beta](t,x,\xi) =\sum_{k=0}^K\hat{\beta}_{k}(t,x)\psi_{k}(\xi)  =\sum_{l=0}^K\hat{\eta}_l(t,x)\psi_l(\xi)\sum_{m=0}^K\hat{\mu}_m(t,x)\psi_m(\xi).
\end{equation}
Note that $\beta$ is expressed as a polynomial of the polynomial space $P_K$ with degree $K$, although the product of the gPCE's of $\eta$ and $\mu$ is in $P_{K^2}$ with degree $2K$. The $\hat{\beta}_{k}$ are therefore obtained by a Galerkin projection, which minimises the error of the resulting gPCE representation and the space spanned by the basis functions of $P_K$. Multiplying both sides of equation \eqref{SG-eq:product} by $\psi_{k}$ and taking expectations gives
\begin{equation*}
    \hat{\beta}_{k}(t,x) = \sum_{l=0}^K\sum_{m=0}^K\hat{\eta}_l(t,x)\hat{\mu}_m(t,x)\langle\psi_l\psi_m\psi_k\rangle\quad\text{for all }k=0,\ldots,K
\end{equation*}
using that the $\psi_k$'s are orthonormal. This gives rise to the notation
\begin{equation}
    \mathcal{G}_K[\eta,\mu](t,x,\xi)\coloneqq \sum_{k=0}^K\left(\sum_{l=0}^K\sum_{m=0}^K\hat{\eta}_l(t,x)\hat{\mu}_m(t,x)\langle\psi_l\psi_m\psi_k\rangle\right)\psi_k(\xi).
\end{equation}
This approximation is sometimes called the pseudospectral product.

To shorten the notation, we denote by $\boldsymbol{\hat{\eta}}\in\mathbb{R}^{K+1}$ the vector of gPCE expansion coefficients of $\eta$. Furthermore, we introduce the symmetric matrix $\mathcal{M}_k$ defined as
\begin{equation*}
    \left(\mathcal{M}_k\right)_{lm}=\langle\psi_k\psi_l\psi_m\rangle,\quad\mathcal{M}_k\in\mathbb{R}^{(K+1)\times(K+1)},
\end{equation*}
for each $k\in\{0,\ldots,K\}$ and the linear operator $\mathcal{P}\colon\mathbb{R}^{K+1}\rightarrow\mathbb{R}^{(K+1)\times (K+1)}$
\begin{equation}\label{SG-eq:GalerkinMatrix}
    \mathcal{P}(\boldsymbol{\hat{\eta}})\coloneqq\sum_{k=0}^K\hat{\eta}_k\mathcal{M}_k.
\end{equation}
Then the following properties hold
\begin{equation*}
    \widehat{\mathcal{G}_K[\eta,\mu]}=\mathcal{P}(\boldsymbol{\hat{\eta}})\boldsymbol{\hat{\mu}},\quad\mathcal{P}(\boldsymbol{\hat{\eta}})\boldsymbol{\hat{\mu}}=\mathcal{P}(\boldsymbol{\hat{\mu}})\boldsymbol{\hat{\eta}}.
\end{equation*}

Calculating the ratio of two stochastic processes $\eta$ and $\mu$ is also not so straightforward. If $\mu$ is a single-signed process, we note that the following expression is exact
\begin{equation*}
    \mathcal{G}_K[\eta](t,x,\xi)=\mathcal{G}_K\left[\mu\frac{\eta}{\mu}\right](t,x,\xi).
\end{equation*}
This motivates us to assume
\begin{equation*}
    \mathcal{G}_K[\eta](t,x,\xi)=\mathcal{G}_K\left[\mu,\frac{\eta}{\mu}\right](t,x,\xi),
\end{equation*}
which can be proved to imply
\begin{equation*}
    \mathcal{P}(\boldsymbol{\hat{\mu}})\widehat{\left(\frac{\eta}{\mu}\right)}=\boldsymbol{\hat{\eta}}.
\end{equation*}
This gives rise to the definition of a new operator
\begin{equation}\label{SG-eq:fraction}
    \mathcal{G}_K^\dagger\left[\frac{\eta}{\mu}\right](t,x,\xi)\coloneqq\sum_{k=0}^K\left(\mathcal{P}(\boldsymbol{\hat{\mu}}(t,x))^{-1}\boldsymbol{\hat{\eta}}(t,x)\right)_k\psi_k(\xi),
\end{equation}
assuming $\mathcal{P}(\boldsymbol{\hat{\mu}}(t,x))$ is invertible.

%% file: Sections/04_SG_For_SWLME.tex
Taking into account uncertainty, the SWLME variables $h, q_0,q_1,\ldots,q_N$ no longer depend on time and space alone, but also on a random parameter of the system. The generalised polynomial chaos expansion gives a way to separate the time and space dependence from the dependence on the random parameter. If we assume that the random variable $\xi$ is such that the assumptions of Theorem \ref{SG-th:gPCE} are satisfied and that the moment problem is uniquely solvable, then we can use the gPCE from Theorem \ref{SG-th:gPCE} for the SWLME variables
\begin{equation}\label{SGSWME-eq:gPCE_hq}
        h(t,x,\cdot)\approx\sum_{k=0}^K\hat{h}_k(t,x)\psi_k(\xi),\quad
        q_0(t,x,\cdot)\approx\sum_{k=0}^K\hat{q}_{0,k}(t,x)\psi_k(\xi), \quad\ldots,\quad
        q_N(t,x,\cdot)\approx\sum_{k=0}^K\hat{q}_{N,k}(t,x)\psi_k(\xi),
\end{equation}
if $h(t,x,\cdot),q_0(t,x,\cdot),\ldots,q_N(t,x,\cdot)\in L^2(\Omega,\sigma(\xi),\mathds{P})$. Note that we choose to expand $q_0,q_1,\ldots,q_N$ instead of $u_m,u_1,\ldots,u_N$. In this way, we extend the set of convective variables used in \cite{dai_hyperbolicity-preserving_2021} to stay within their framework. Furthermore, this may have slight benefits in the performance of conservative numerical methods, since in the original SWE case the product $q_0=hu_m$ is a conserved quantity, but not $u_m$ itself.

We now insert the expansions \eqref{SGSWME-eq:gPCE_hq} into the system \eqref{SWME-eq:SWLME_N_matrix}, but first we have to make crucial assumptions about how we approximate the nonlinear terms $\frac{q_j^2}{h}$, $\frac{2q_0q_j}{h}$ and $\frac{q_j}{h^2}$, since this might impact the approximations. In \cite{dai_hyperbolicity-preserving_2021}, the only nonlinear term that occurred was $\frac{q_0^2}{h}$, which was projected as
\begin{equation*}
    \mathcal{G}_K\left[\frac{q_0^2}{h}\right]=\mathcal{G}_K\left[q_0,\mathcal{G}^\dagger_K\left[\frac{q_0}{h}\right]\right]
\end{equation*}
using the definition of the operator $\mathcal{G}_K^\dagger$ from \eqref{SG-eq:fraction}. This led to a symmetric formulation that later ensured hyperbolicity. In a similar fashion, we observe
\begin{equation*}
    \begin{split}
        \mathcal{G}_K\left[q_i,\mathcal{G}^\dagger_K\left[\frac{q_j}{h}\right]\right] = \sum_{k=0}^K\left(\mathcal{P}(\boldsymbol{\hat{q_i}})\mathcal{P}(\boldsymbol{\hat{h}})^{-1}\boldsymbol{\hat{q_j}}\right)_k\psi_k(\xi)&=
        \sum_{k=0}^K\left(\mathcal{P}(\mathcal{P}(\boldsymbol{\hat{h}})^{-1}\boldsymbol{\hat{q_j}})\boldsymbol{\hat{q_i}}\right)_k\psi_k(\xi) = \mathcal{G}_K\left[\mathcal{G}^\dagger_K\left[\frac{q_j}{h}\right],q_i\right]\\
        &\neq \sum_{k=0}^K\left(\mathcal{P}(\boldsymbol{\hat{q_j}})\mathcal{P}(\boldsymbol{\hat{h}})^{-1}\boldsymbol{\hat{q_i}}\right)_k\psi_k(\xi)=\mathcal{G}_K\left[q_j,\mathcal{G}^\dagger_K\left[\frac{q_i}{h}\right]\right],\\
    \end{split}
\end{equation*}
and
\begin{equation*}
    \mathcal{G}_K\left[\mathcal{G}^\dagger_K\left[\frac{1}{h}\right],\mathcal{G}^\dagger_K\left[\frac{q_j}{h}\right]\right] = \sum_{k=0}^K\left(\mathcal{P}(\mathcal{P}(\boldsymbol{\hat{h}})^{-1}\boldsymbol{\hat{1}})\mathcal{P}(\boldsymbol{\hat{h}})^{-1}\boldsymbol{\hat{q_j}}\right)_k\psi_k(\xi) \neq 
    \sum_{k=0}^K\left(\mathcal{P}(\boldsymbol{\hat{h}})^{-1}\mathcal{P}(\boldsymbol{\hat{h}})^{-1}\boldsymbol{\hat{q_j}}\right)_k\psi_k(\xi) = \mathcal{G}^\dagger_K\left[\frac{\mathcal{G}^\dagger_K\left[\frac{q_j}{h}\right]}{h}\right].
\end{equation*}
    
Inspired by the treatment in \cite{dai_hyperbolicity-preserving_2021}, we propose here the use of the following projections:
\begin{equation}\label{SGSWME-eq:conventions}
    \begin{split}
        \frac{q_j^2}{h}=q_j\frac{q_j}{h}\quad&\Rightarrow\quad\mathcal{G}_K\left[\frac{q_j^2}{h}\right]=\mathcal{G}_K\left[q_j,\mathcal{G}^\dagger_K\left[\frac{q_j}{h}\right]\right],\\
        \frac{2q_0q_j}{h}=q_0\frac{q_j}{h}+q_j\frac{q_0}{h}\quad&\Rightarrow\quad\mathcal{G}_K\left[\frac{2q_0q_j}{h}\right]=\mathcal{G}_K\left[q_0,\mathcal{G}^\dagger_K\left[\frac{q_j}{h}\right]\right]+\mathcal{G}_K\left[q_j,\mathcal{G}^\dagger_K\left[\frac{q_0}{h}\right]\right],\\
        \frac{q_j}{h^2}=\frac{\frac{q_j}{h}}{h}\quad&\Rightarrow\quad\mathcal{G}_K\left[\frac{q_j}{h^2}\right]=\mathcal{G}^\dagger_K\left[\frac{\mathcal{G}^\dagger_K\left[\frac{q_j}{h}\right]}{h}\right].
    \end{split}
\end{equation}
The conventions chosen in equation \eqref{SGSWME-eq:conventions} also make it possible to reduce the final system of equations back to the system of equations already derived for the SWE case in \cite{dai_hyperbolicity-preserving_2021}, when setting $N=0$.

To now derive a system of equations for the stochastic Fourier coefficients $\boldsymbol{\hat{h}}, \boldsymbol{\hat{q_0}},\ldots,\boldsymbol{\hat{q_N}}$, we have to choose a parameter from which we will analyse the effect of its uncertainty on the solutions of the SWLME. We choose the friction coefficient $\nu$, which arguably affects both the friction inside the fluid and the friction at the bottom, which is one of the most important sources of uncertainty \cite{kreitmair_effect_2019}. Therefore, the uncertainty in $\nu$ is expected to propagate to the uncertainty in the flow solution. Conveniently, $\nu$ is not in the denominator, which helps to compute the projections of the source terms on the right-hand side. In \cite{dai_hyperbolicity-preserving_2021}, an uncertain bottom topography is used, which is modelled as a finite-dimensional random field. In that case, the random variable is multidimensional, making the resulting system of equations more complicated. Another possible choice for a random variable could be the slip length $\lambda$. However, since the slip length is in the denominator, one cannot rely on the general three-term recurrence relation that exists for every family of orthogonal polynomials:
\begin{equation}\label{SGSWME-eq:recurrence}
    \psi_{k+1}(\xi)=(\alpha_k\xi+\beta_k)\psi_k(\xi)+\gamma_k\psi_{k-1}(\xi),\quad\text{for }k=0,\ldots,K,
\end{equation}
where $\alpha_k\neq0$ for all $k$. This three-term recurrence relation is often used to simplify the right-hand side result of a stochastic Galerkin projection.

Using the conventions of equation \eqref{SGSWME-eq:conventions}, we can insert the gPCEs of $\boldsymbol{\hat{h}}, \boldsymbol{\hat{q_0}},\ldots,\boldsymbol{\hat{q_N}}$ of equation \eqref{SGSWME-eq:gPCE_hq} into the system of equations \eqref{SWME-eq:SWLME_N_matrix} and then take the inner product with test functions $\psi_k$ for $k=0,\ldots,K$. Note that for some combinations of products and fractions of functions taking derivatives and calculating a stochastic Galerkin projection do not commute
\begin{equation*}
    \begin{split}
        \partial_x\left(\frac{q_iq_j}{h}\right)=\frac{q_i}{h}\partial_xq_j+\frac{q_j}{h}\partial_xq_i-\frac{q_iq_j}{h^2}\partial_xh\quad\Rightarrow\quad&\mathcal{P}(\mathcal{P}(\boldsymbol{\hat{h}})^{-1}\boldsymbol{\hat{q_i}})\partial_x\boldsymbol{\hat{q_j}}+\mathcal{P}(\mathcal{P}(\boldsymbol{\hat{h}})^{-1}\boldsymbol{\hat{q_j}})\partial_x\boldsymbol{\hat{q_i}}\\
        &-\mathcal{P}(\mathcal{P}(\mathcal{P}(\boldsymbol{\hat{h}})^{-1}\boldsymbol{\hat{q_i}})\mathcal{P}(\boldsymbol{\hat{h}})^{-1}\boldsymbol{\hat{q_i}})\partial_x\boldsymbol{\hat{h}},\\
        \partial_x\left(\frac{q_iq_j}{h}\right)=\partial_x\left(q_i\frac{q_j}{h}\right)\quad\Rightarrow\quad&\partial_x\left(\mathcal{P}(\boldsymbol{\hat{q_i}})\mathcal{P}(\boldsymbol{\hat{h}})^{-1}\boldsymbol{\hat{q_j}}\right) = \mathcal{P}(\boldsymbol{\hat{q_i}})\mathcal{P}(\boldsymbol{\hat{h}})^{-1}\partial_x\boldsymbol{\hat{q_j}}\\
        &+\mathcal{P}(\mathcal{P}(\boldsymbol{\hat{h}})^{-1}\boldsymbol{\hat{q_j}})\partial_x\boldsymbol{\hat{q_i}}-\mathcal{P}(\boldsymbol{\hat{q_i}})\mathcal{P}(\boldsymbol{\hat{h}})^{-1}\mathcal{P}(\mathcal{P}(\boldsymbol{\hat{h}})^{-1}\boldsymbol{\hat{q_j}})\partial_x\boldsymbol{\hat{h}},
    \end{split}  
\end{equation*}
such that we must make sure to first perform the stochastic Galerkin projection and then calculate the system matrix of the stochastic Galerkin system. 

Assuming normalised orthogonal polynomials $\psi_k$, we find the following system
\begin{equation}\label{SGSWME-eq:almost_final}
    \partial_t
    \begin{pmatrix}
        \boldsymbol{\hat{h}} \\ \boldsymbol{\hat{q_0}} \\ \boldsymbol{\hat{q_1}} \\ \vdots \\ \boldsymbol{\hat{q_N}}
    \end{pmatrix}
    + \partial_x
    \begin{pmatrix}
        \boldsymbol{\hat{q_0}} \\
        \frac{g}{2}\mathcal{P}(\boldsymbol{\hat{h}})\boldsymbol{\hat{h}}+\sum\limits_{j=0}^N\frac{\mathcal{P}(\boldsymbol{\hat{q_j}})\boldsymbol{\hat{u_j}}}{2j+1}\\
        \mathcal{P}(\boldsymbol{\hat{q_0}})\boldsymbol{\hat{u_1}}+\mathcal{P}(\boldsymbol{\hat{q_1}})\boldsymbol{\hat{u_0}}\\
        \vdots\\
        \mathcal{P}(\boldsymbol{\hat{q_0}})\boldsymbol{\hat{u_N}}+\mathcal{P}(\boldsymbol{\hat{q_N}})\boldsymbol{\hat{u_0}}\\
    \end{pmatrix}
    = \boldsymbol{\hat{Q}}\left(\boldsymbol{\hat{h}}, \boldsymbol{\hat{q_0}},\ldots, \boldsymbol{\hat{q_N}}\right) + \boldsymbol{\hat{S}}\left(\boldsymbol{\hat{h}}, \boldsymbol{\hat{q_0}},\ldots,\boldsymbol{\hat{q_N}}\right),
\end{equation}
where $\boldsymbol{\hat{Q}}$ and $\boldsymbol{\hat{S}}$ denote the non-conservative terms and the friction vectors, respectively, just as in equation \eqref{SWME-eq:SWLME_N_matrix}. Here we make use of the notation
\begin{equation*}
    \boldsymbol{\hat{u_0}}\coloneqq\mathcal{P}(\boldsymbol{\hat{h}})^{-1}\boldsymbol{\hat{q_0}},\quad\boldsymbol{\hat{u_1}}\coloneqq\mathcal{P}(\boldsymbol{\hat{h}})^{-1}\boldsymbol{\hat{q_1}},\quad\ldots,\quad\boldsymbol{\hat{u_N}}\coloneqq\mathcal{P}(\boldsymbol{\hat{h}})^{-1}\boldsymbol{\hat{q_N}}.
\end{equation*}
For vector $\boldsymbol{\hat{Q}}\in\mathbb{R}^{(N+2)(K+1)}$ we cannot rely on a conservative formulation and calculate the derivative after the stochastic Galerkin projection. We project the vector $\boldsymbol{\hat{Q}}$ of equation \eqref{SWME-eq:SWLME_N_matrix} as
\begin{equation}
    \boldsymbol{\hat{Q}}\left(\boldsymbol{\hat{h}},\boldsymbol{\hat{q_0}},\ldots, \boldsymbol{\hat{q_N}}\right) = 
    \begin{pmatrix}
        \boldsymbol{0} \\ \boldsymbol{0} \\ \mathcal{P}(\boldsymbol{\hat{u_0}})\partial_x\boldsymbol{\hat{q_1}} \\ \mathcal{P}(\boldsymbol{\hat{u_0}})\partial_x\boldsymbol{\hat{q_2}} \\ \vdots \\ \mathcal{P}(\boldsymbol{\hat{u_0}})\partial_x\boldsymbol{\hat{q_N}}
    \end{pmatrix},
\end{equation}
such that it will cancel out some terms of the stochastic Galerkin SWLME system matrix later. For the friction vector $\boldsymbol{\hat{S}}\in\mathbb{R}^{(N+2)(K+1)}$, where the friction coefficient follows some distribution $\nu\sim\Phi(\mu,\sigma)$, we find
\begin{equation}\label{SGSWME-eq:S2}
    \boldsymbol{\hat{S}}\left(\boldsymbol{\hat{h}}, \boldsymbol{\hat{q_0}},\ldots, \boldsymbol{\hat{q_N}}\right) = 
    \begin{pmatrix}
        \boldsymbol{0} \\
        -\frac{T}{\lambda}\left(\sum\limits_{j=0}^N\boldsymbol{\hat{u_j}}\right) \\
        -\frac{3T}{\lambda}\left(\sum\limits_{j=0}^N\boldsymbol{\hat{u_j}}\right)-12T\left(\sum\limits_{j=1}^Na_{1,j}\mathcal{P}(\boldsymbol{\hat{h}})^{-1}\boldsymbol{\hat{u_j}}\right) \\
        \vdots \\
        -\frac{(2N+1)T}{\lambda}\left(\sum\limits_{j=0}^N\boldsymbol{\hat{u_j}}\right)-4(2N+1)T\left(\sum\limits_{j=1}^Na_{N,j}\mathcal{P}(\boldsymbol{\hat{h}})^{-1}\boldsymbol{\hat{u_j}}\right)
    \end{pmatrix},
\end{equation}
where $T$ is the $(K+1)\times(K+1)$-tridiagonal matrix originating from the recurrence formula \eqref{SGSWME-eq:recurrence} given by
\begin{equation*}
    T =
    \begin{pmatrix}
        \mu-\frac{\sigma\beta_0}{\alpha_0} & -\frac{\sigma\gamma_1}{\alpha_1}\\
        \frac{\sigma}{\alpha_0} & \mu-\frac{\sigma\beta_1}{\alpha_1} & -\frac{\sigma\gamma_2}{\alpha_2}\\
         & \ddots & \ddots & \ddots \\
         & & \frac{\sigma}{\alpha_{K-2}}& \mu-\frac{\sigma\beta_{K-1}}{\alpha_{K-1}} & -\frac{\sigma\gamma_K}{\alpha_K}\\
         & & & \frac{\sigma}{\alpha_{K-1}}& \mu-\frac{\sigma\beta_K}{\alpha_K}
    \end{pmatrix}
\end{equation*}
with the $\alpha_k$, $\beta_k$ and $\gamma_k$ for $k=0,\ldots,K$ as in equation \eqref{SGSWME-eq:recurrence} for the appropriate set of polynomials for the distribution of $\nu$ as in Theorem \ref{SG-th:gPCE} and the $a_{j,k}$ coefficients given by \eqref{SWME-eq:ajk}.
We call the system of $(N+2)(K+1)$ equations of \eqref{SGSWME-eq:almost_final} the Stochastic Galerkin Shallow Water Linearised Moment Equations (SGSWLME). Writing the system in the form of equation \eqref{SWME-eq:hyp}, the system matrix $A_{\text{SGSWLME}}$ of this system of equations with respect to the variable vector $\boldsymbol{\hat{W}}=(\boldsymbol{\hat{h}},\boldsymbol{\hat{q_0}},\boldsymbol{\hat{q_1}},\ldots,\boldsymbol{\hat{q_N}})^T$ is given by
\begin{equation}\label{SGSWME-eq:SGSWLME_flux}
    \resizebox{\textwidth}{!}{$
    \begin{pmatrix}
        O & I & O & \cdots & O\\
        g\mathcal{P}(\boldsymbol{\hat{h}})-\sum\limits_{j=0}^N\frac{\mathcal{P}(\boldsymbol{\hat{q_j}})\mathcal{P}(\boldsymbol{\hat{h}})^{-1}\mathcal{P}(\boldsymbol{\hat{u_j}})}{2j+1}& \mathcal{P}(\boldsymbol{\hat{q_0}})\mathcal{P}(\boldsymbol{\hat{h}})^{-1}+\mathcal{P}(\boldsymbol{\hat{u_0}}) & \frac{\mathcal{P}(\boldsymbol{\hat{q_1}})\mathcal{P}(\boldsymbol{\hat{h}})^{-1}+\mathcal{P}(\boldsymbol{\hat{u_1}})}{3} & \cdots & \frac{\mathcal{P}(\boldsymbol{\hat{q_N}})\mathcal{P}(\boldsymbol{\hat{h}})^{-1}+\mathcal{P}(\boldsymbol{\hat{u_N}})}{2N+1}\\
        -\mathcal{P}(\boldsymbol{\hat{q_0}})\mathcal{P}(\boldsymbol{\hat{h}})^{-1}\mathcal{P}(\boldsymbol{\hat{u_1}})-\mathcal{P}(\boldsymbol{\hat{q_1}})\mathcal{P}(\boldsymbol{\hat{h}})^{-1}\mathcal{P}(\boldsymbol{\hat{u_0}})& \mathcal{P}(\boldsymbol{\hat{q_1}})\mathcal{P}(\boldsymbol{\hat{h}})^{-1} + \mathcal{P}(\boldsymbol{\hat{u_1}}) & \mathcal{P}(\boldsymbol{\hat{q_0}})\mathcal{P}(\boldsymbol{\hat{h}})^{-1} & & \\
        \vdots & \vdots &  & \ddots & \\
        -\mathcal{P}(\boldsymbol{\hat{q_0}})\mathcal{P}(\boldsymbol{\hat{h}})^{-1}\mathcal{P}(\boldsymbol{\hat{u_N}})-\mathcal{P}(\boldsymbol{\hat{q_N}})\mathcal{P}(\boldsymbol{\hat{h}})^{-1}\mathcal{P}(\boldsymbol{\hat{u_0}})& \mathcal{P}(\boldsymbol{\hat{q_N}})\mathcal{P}(\boldsymbol{\hat{h}})^{-1}+\mathcal{P}(\boldsymbol{\hat{u_N}}) &  & & \mathcal{P}(\boldsymbol{\hat{q_0}})\mathcal{P}(\boldsymbol{\hat{h}})^{-1}
    \end{pmatrix}.$}
\end{equation}
For $N=0$ the SWLME reduce to the SWE. Consistently, the newly derived system matrix of the SGSWLME also reduces to the system matrix of the SGSWE given by
\begin{equation}\label{SGSWME-eq:SGSWE_flux}
    A_{\text{SGSWE}}(\boldsymbol{\hat{W}})=
    \begin{pmatrix}
        O & I\\
        g\mathcal{P}(\boldsymbol{\hat{h}})-\mathcal{P}(\boldsymbol{\hat{q_0}})\mathcal{P}(\boldsymbol{\hat{h}})^{-1}\mathcal{P}(\boldsymbol{\hat{u_0}}) & \mathcal{P}(\boldsymbol{\hat{q_0}})\mathcal{P}(\boldsymbol{\hat{h}})^{-1}+\mathcal{P}(\boldsymbol{\hat{u_0}})\\
    \end{pmatrix},
\end{equation}
in \cite{dai_hyperbolicity-preserving_2021}. The final system of equations for the SGSWLME then reads
\begin{equation}\label{SGSWME-eq:final}
    \partial_t
    \begin{pmatrix}
        \boldsymbol{\hat{h}} \\ \boldsymbol{\hat{q_0}} \\ \boldsymbol{\hat{q_1}} \\ \vdots \\ \boldsymbol{\hat{q_N}}
    \end{pmatrix}
    +  A_{\text{SGSWLME}}\left(\boldsymbol{\hat{h}}, \boldsymbol{\hat{q_0}},\boldsymbol{\hat{q_1}},\ldots, \boldsymbol{\hat{q_N}}\right) \partial_x
    \begin{pmatrix}
        \boldsymbol{\hat{h}} \\ \boldsymbol{\hat{q_0}} \\ \boldsymbol{\hat{q_1}} \\ \vdots \\ \boldsymbol{\hat{q_N}}
    \end{pmatrix}
    = \boldsymbol{\hat{S}}\left(\boldsymbol{\hat{h}}, \boldsymbol{\hat{q_0}},\boldsymbol{\hat{q_1}},\ldots, \boldsymbol{\hat{q_N}}\right).
\end{equation}

%% file: Sections/05_Analysis.tex
As an advantage of the intrusive SG formulation, the analytically derived SGSWLME from Section \ref{section:SGSWME} allow for an in-depth analysis with respect to the properties of the resulting system. Here we focus on two important properties: energy preservation in Section \ref{subsection:energy} and hyperbolicity in Section \ref{subsection:hyperbolicity}. In Section \ref{subsection:explicit} we write down the SGSWLME for a specific choice of basis and $K=1$ and observe its similarity to the standard SWLME.

\subsection{Energy Equation}\label{subsection:energy}
For the SWE \eqref{SWME-eq:SWE_matrix}, it is well known that an energy equation can be derived \cite{gassner_well_2016}. In conserved quantities, it is given by
\begin{equation*}
    \partial_t\left(\frac{gh^2}{2}+\frac{q_0^2}{2h}\right)+\partial_x\left(ghq_0+\frac{q_0^3}{2h^2}\right)=-\frac{\nu}{\lambda}\left(\frac{q_0}{h}\right)^2,
\end{equation*}
in which the energy $e=\frac{gh^2}{2}+\frac{q_0^2}{2h}$ is a conserved quantity and also serves as an entropy, while $f=ghq_0+\frac{q_0^3}{2h^2}$ is the entropy flux. The existence of an energy equation allows for tailored numerical methods to preserve energy discretely during numerical simulations \cite{tadmor_numerical_1987, bohm_entropy_2020, ersing_entropy_2024, ersing_entropy_2025, fan_well-balanced_2026, gassner_well_2016}.

The energy equation for the SWLME is derived in \cite{koellermeier_new_2026,caballero-cardenas_semi-implicit_2025,careaga_entropy_2026}. For the SWLME of equation \eqref{SWME-eq:SWLME_N_matrix} the energy equation in conserved quantities is given by
\begin{equation}\label{ANAL-eq:energy_SWLME}
    \partial_t\left(\frac{gh^2}{2}+\frac{1}{2h}\sum_{j=0}^N \frac{q_j^2}{2 j+1}\right)+\partial_x\left(ghq_0+\frac{q_0^3}{2h^2}+\frac{3q_0}{2h^2}\sum_{j=1}^N\frac{q_j^2}{2j+1}\right)=-\frac{\nu}{\lambda h^2}\sum_{i,j=0}^N(2j+1)q_iq_j-\frac{4\nu}{h^3}\sum_{i,j=1}^N(2j+1)a_{i,j}q_iq_j,
\end{equation}
where we denote the total energy $\displaystyle e=\frac{gh^2}{2}+\frac{1}{2h}\sum_{j=0}^N \frac{q_j^2}{2 j+1}$. It can be seen directly that the total energy $e$ is a conserved quantity and an entropy function with entropy flux $\displaystyle f=ghq_0+\frac{q_0^3}{2h^2}+\frac{3q_0}{2h^2}\sum_{j=1}^N\frac{q_j^2}{2j+1}$.

Based on the energy equation of the SWLME \eqref{ANAL-eq:energy_SWLME}, we can now derive the energy equation for the SGSWLME \eqref{SGSWME-eq:final} as stated in the following theorem.
\begin{theorem}\label{ANAL-th:energy_SGSWLME}
    The energy equation for the SGSWLME \eqref{SGSWME-eq:final} reads
    \begin{equation}\label{ANAL-eq:energy_SGSWLME}
        \begin{split}
            \partial_t\left(\frac{g\boldsymbol{\hat{h}}^T\boldsymbol{\hat{h}}}{2}+\frac{1}{2}\sum_{j=0}^N\frac{\boldsymbol{\hat{u_j}}^T\mathcal{P}(\boldsymbol{\hat{h}})\boldsymbol{\hat{u_j}}}{2j+1}\right)
            + \partial_x\left(g\boldsymbol{\hat{h}}^T\mathcal{P}(\boldsymbol{\hat{h}})\boldsymbol{\hat{u_0}}+\frac{1}{2}\sum_{j=0}^N\frac{\boldsymbol{\hat{u_j}}^T\mathcal{P}(\boldsymbol{\hat{q_0}})\boldsymbol{\hat{u_j}}}{2j+1}+\sum_{j=1}^N\frac{\boldsymbol{\hat{u_j}}^T\mathcal{P}(\boldsymbol{\hat{q_j}})\boldsymbol{\hat{u_0}}}{2j+1}\right) \\
            = -\sum_{i,j=0}^N\frac{(2j+1)\boldsymbol{\hat{u_i}}^TT\boldsymbol{\hat{u_j}}}{\lambda}-4\sum_{i,j=1}^N(2j+1)a_{i,j}\boldsymbol{\hat{u_i}}^TT \mathcal{P}(\boldsymbol{\hat{h}})^{-1}\boldsymbol{\hat{u_j}}.
        \end{split}
    \end{equation}
    This means that the total energy is given by $\displaystyle e=\frac{g\boldsymbol{\hat{h}}^T\boldsymbol{\hat{h}}}{2}+\frac{1}{2}\sum_{j=0}^N\frac{\boldsymbol{\hat{u_j}}^T\mathcal{P}(\boldsymbol{\hat{h}})\boldsymbol{\hat{u_j}}}{2j+1}$ and the energy flux is equal to $\displaystyle f=g\boldsymbol{\hat{h}}^T\mathcal{P}(\boldsymbol{\hat{h}})\boldsymbol{\hat{u_0}}+\frac{1}{2}\sum_{j=0}^N\frac{\boldsymbol{\hat{u_j}}^T\mathcal{P}(\boldsymbol{\hat{q_0}})\boldsymbol{\hat{u_j}}}{2j+1}+\sum_{j=1}^N\frac{\boldsymbol{\hat{u_j}}^T\mathcal{P}(\boldsymbol{\hat{q_j}})\boldsymbol{\hat{u_0}}}{2j+1}$. 
\end{theorem}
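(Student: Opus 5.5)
The plan is the standard entropy-variable argument: identify the gradient $\boldsymbol{\hat v}=\partial e/\partial\boldsymbol{\hat W}$ of the proposed energy, take the inner product of \eqref{SGSWME-eq:final} with $\boldsymbol{\hat v}$, and show that $\boldsymbol{\hat v}^TA_{\text{SGSWLME}}\partial_x\boldsymbol{\hat W}=\partial_x f$ and that $\boldsymbol{\hat v}^T\boldsymbol{\hat S}$ is the stated right-hand side.

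\emph{Algebraic tools.} The only structure I will use is that the pseudospectral product is fully symmetric. For any $\boldsymbol a,\boldsymbol b,\boldsymbol c\in\mathbb{R}^{K+1}$,
\begin{equation*}
\boldsymbol b^T\mathcal{P}(\boldsymbol a)\boldsymbol c=\sum_{k,l,m}a_kb_lc_m\langle\psi_k\psi_l\psi_m\rangle .
\end{equation*}
This expression is invariant under every permutation of $\boldsymbol a,\boldsymbol b,\boldsymbol c$. Consequently each $\mathcal{P}(\boldsymbol a)$ is symmetric, and $\mathcal{P}(\boldsymbol a)\boldsymbol b=\mathcal{P}(\boldsymbol b)\boldsymbol a$. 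The product is also linear, so
\begin{equation*}
\partial\,\mathcal{P}(\boldsymbol{\hat h})=\mathcal{P}(\partial\boldsymbol{\hat h}), \qquad \partial\bigl(\mathcal{P}(\boldsymbol{\hat h})^{-1}\bigr)=-\mathcal{P}(\boldsymbol{\hat h})^{-1}\mathcal{P}(\partial\boldsymbol{\hat h})\mathcal{P}(\boldsymbol{\hat h})^{-1}.
\end{equation*}
Finally, when $\nu$ is expanded in the same orthonormal basis with $\hat\nu_0=\mu$, $\hat\nu_1=\sigma$ and $\hat\nu_k=0$ otherwise, $T$ coincides with $\mathcal{P}(\boldsymbol{\hat\nu})$ in the orthonormal normalisation. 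I will use this identification mainly to handle the friction term.

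\emph{Entropy variables.} Write $e=\frac g2\boldsymbol{\hat h}^T\boldsymbol{\hat h}+\frac12\sum_j\frac{\boldsymbol{\hat q_j}^T\mathcal{P}(\boldsymbol{\hat h})^{-1}\boldsymbol{\hat q_j}}{2j+1}$. The two derivatives are
\begin{equation*}
\frac{\partial e}{\partial\boldsymbol{\hat q_j}}=\frac{\boldsymbol{\hat u_j}}{2j+1}, \qquad \frac{\partial e}{\partial\boldsymbol{\hat h}}=g\boldsymbol{\hat h}-\frac12\sum_j\frac{\mathcal{P}(\boldsymbol{\hat u_j})\boldsymbol{\hat u_j}}{2j+1}.
\end{equation*}
For the second one, the $k$-th component of the derivative of $\boldsymbol{\hat q_j}^T\mathcal{P}(\boldsymbol{\hat h})^{-1}\boldsymbol{\hat q_j}$ is $-\boldsymbol{\hat u_j}^T\mathcal{M}_k\boldsymbol{\hat u_j}=-(\mathcal{P}(\boldsymbol{\hat u_j})\boldsymbol{\hat u_j})_k$. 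Then $\partial_t e=\boldsymbol{\hat v}^T\partial_t\boldsymbol{\hat W}$, and it remains to evaluate $\boldsymbol{\hat v}^TA_{\text{SGSWLME}}\partial_x\boldsymbol{\hat W}$.

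\emph{Flux part, term by term.}
\begin{itemize}
\item The $g$-terms are $g\boldsymbol{\hat h}^T\partial_x\boldsymbol{\hat q_0}+g\boldsymbol{\hat u_0}^T\mathcal{P}(\boldsymbol{\hat h})\partial_x\boldsymbol{\hat h}$. By full symmetry this is $\partial_x(g\boldsymbol{\hat h}^T\mathcal{P}(\boldsymbol{\hat h})\boldsymbol{\hat u_0})$, after rewriting $\boldsymbol{\hat q_0}=\mathcal{P}(\boldsymbol{\hat h})\boldsymbol{\hat u_0}$ and splitting $\partial_x$ over the trilinear form.
\item For the kinetic terms, first rewrite each block in primitive form. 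I would express $\partial_x\boldsymbol{\hat q_j}=\mathcal{P}(\boldsymbol{\hat h})\partial_x\boldsymbol{\hat u_j}+\mathcal{P}(\boldsymbol{\hat u_j})\partial_x\boldsymbol{\hat h}$. The flux rows then become expressions in $\partial_x\boldsymbol{\hat h}$ and $\partial_x\boldsymbol{\hat u_j}$, and the $\mathcal{P}(\boldsymbol{\hat h})^{-1}$ factors cancel against $\mathcal{P}(\boldsymbol{\hat h})$. The non-conservative term $\mathcal{P}(\boldsymbol{\hat u_0})\partial_x\boldsymbol{\hat q_j}$ has been moved into $A$, which leaves the diagonal entry $\mathcal{P}(\boldsymbol{\hat q_0})\mathcal{P}(\boldsymbol{\hat h})^{-1}$ in the moment rows.
\item The contraction with $\boldsymbol{\hat u_j}/(2j+1)$ then produces trilinear forms such as $\boldsymbol{\hat u_j}^T\mathcal{P}(\boldsymbol{\hat q_0})\partial_x\boldsymbol{\hat u_j}$, $\boldsymbol{\hat u_j}^T\mathcal{P}(\boldsymbol{\hat u_j})\partial_x\boldsymbol{\hat q_0}$ and $\boldsymbol{\hat u_j}^T\mathcal{P}(\boldsymbol{\hat q_j})\partial_x\boldsymbol{\hat u_0}$.
\item Permutation symmetry lets these be regrouped as $\partial_x\bigl(\frac12\boldsymbol{\hat u_j}^T\mathcal{P}(\boldsymbol{\hat q_0})\boldsymbol{\hat u_j}\bigr)$ and $\partial_x\bigl(\boldsymbol{\hat u_j}^T\mathcal{P}(\boldsymbol{\hat q_j})\boldsymbol{\hat u_0}\bigr)$.
\item The $-\frac12\mathcal{P}(\boldsymbol{\hat u_j})\boldsymbol{\hat u_j}$ part of $\partial e/\partial\boldsymbol{\hat h}$ multiplies $\partial_x\boldsymbol{\hat q_0}$. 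It is exactly what cancels the leftover terms carrying $\partial_x\boldsymbol{\hat h}$.
\end{itemize}
I would first run this bookkeeping for $N=0$, where it must reproduce the SGSWE energy of \cite{dai_hyperbolicity-preserving_2021}. I would then treat one fixed $j\ge1$, since the moment blocks decouple apart from their coupling to $\boldsymbol{\hat h}$ and $\boldsymbol{\hat q_0}$.

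\emph{Friction.} The first row of $\boldsymbol{\hat S}$ is zero, so $\partial e/\partial\boldsymbol{\hat h}$ does not contribute. For the rest, $\sum_j\frac{\boldsymbol{\hat u_j}^T}{2j+1}\boldsymbol{\hat S}_{j+1}$ directly gives the stated sums $\boldsymbol{\hat u_i}^TT\boldsymbol{\hat u_j}$ and $a_{i,j}\boldsymbol{\hat u_i}^TT\mathcal{P}(\boldsymbol{\hat h})^{-1}\boldsymbol{\hat u_j}$, after relabelling indices and using the symmetry of $T$ and of $a_{i,j}$.

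The main obstacle is the flux step. The difficulty is that $\mathcal{P}(\boldsymbol{\hat q_0})\mathcal{P}(\boldsymbol{\hat h})^{-1}$ and $\mathcal{P}(\boldsymbol{\hat u_0})$ do not commute, so the non-symmetric cross terms must be shown to cancel exactly. I expect this to go through by always contracting against $\boldsymbol{\hat u}$-type vectors, which turns every such product into a symmetric trilinear form. If a residual term survives, I would re-examine the exact form of $\boldsymbol{\hat Q}$ and of the second-column entries of $A_{\text{SGSWLME}}$, since those were chosen precisely so that this cancellation occurs.
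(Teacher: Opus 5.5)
Your route is the paper's argument in different packaging. The paper forms $\text{(P)}=g\boldsymbol{\hat{h}}^T\text{(C)}$ and $\text{(K)}=\boldsymbol{\hat{u_0}}^T\text{(M)}-\tfrac12\boldsymbol{\hat{u_0}}^T\mathcal{P}(\boldsymbol{\hat{u_0}})\text{(C)}$, builds (K$u_i$) the same way, and sums with weights $1/(2i+1)$. That is exactly the contraction of the system with your $\partial e/\partial\boldsymbol{\hat W}$.

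Your flux step does go through, and more easily than you fear. Keep $\partial_x\boldsymbol{\hat{q_j}}$ unexpanded and use $A_{\text{SGSWLME}}\partial_x\boldsymbol{\hat W}=\partial_x F-\boldsymbol{\hat Q}$. Then every term is a fully symmetric trilinear form. The $\boldsymbol{\hat Q}$-contribution $-\frac{1}{2i+1}\boldsymbol{\hat{u_i}}^T\mathcal{P}(\boldsymbol{\hat{u_0}})\partial_x\boldsymbol{\hat{q_i}}$ cancels the $\mathcal{P}(\partial_x\boldsymbol{\hat{q_i}})\boldsymbol{\hat{u_0}}$ part of $\partial_x(\mathcal{P}(\boldsymbol{\hat{q_i}})\boldsymbol{\hat{u_0}})$, so non-commutativity never enters. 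The primitive-variable rewriting you propose is unnecessary. It actually creates non-symmetric quadrilinear terms such as $\boldsymbol{\hat{u_j}}^T\mathcal{P}(\boldsymbol{\hat{u_j}})\mathcal{P}(\boldsymbol{\hat{h}})\partial_x\boldsymbol{\hat{u_0}}$, which only recombine via $\mathcal{P}(\boldsymbol{\hat{h}})\partial_x\boldsymbol{\hat{u_0}}+\mathcal{P}(\boldsymbol{\hat{u_0}})\partial_x\boldsymbol{\hat{h}}=\partial_x\boldsymbol{\hat{q_0}}$.

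The step that fails as you state it is the friction step. The factor $2i+1$ in row $i$ of $\boldsymbol{\hat S}$ cancels against the $1/(2i+1)$ in $\partial e/\partial\boldsymbol{\hat{q_i}}$, so your contraction gives
\[
\sum_{i=0}^N\frac{\boldsymbol{\hat{u_i}}^T}{2i+1}\boldsymbol{\hat S}_{i+1}=-\frac{1}{\lambda}\sum_{i,j=0}^N\boldsymbol{\hat{u_i}}^TT\boldsymbol{\hat{u_j}}-4\sum_{i,j=1}^N a_{i,j}\boldsymbol{\hat{u_i}}^TT\mathcal{P}(\boldsymbol{\hat{h}})^{-1}\boldsymbol{\hat{u_j}}.
\]
No relabelling or symmetry of $T$ and $a_{i,j}$ turns this into the stated sums with weights $2j+1$. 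For instance, $\sum_{i,j}(2j+1)\boldsymbol{\hat{u_i}}^TT\boldsymbol{\hat{u_j}}=\bigl(\sum_i\boldsymbol{\hat{u_i}}\bigr)^TT\bigl(\sum_j(2j+1)\boldsymbol{\hat{u_j}}\bigr)$, which is a different quantity.

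Since $\partial_te+\partial_xf=\boldsymbol{\hat v}^T(\partial_t\boldsymbol{\hat W}+A_{\text{SGSWLME}}\partial_x\boldsymbol{\hat W})$ holds identically, the right-hand side is forced to be $\boldsymbol{\hat v}^T\boldsymbol{\hat S}$, i.e. your weightless expression. The statement as printed is therefore wrong. The paper's proof picks up the spurious factor when it divides only the left-hand side of (K$u_i$) by $2i+1$. The theorem then writes $2j+1$ where the proof ends with $2i+1$, and the deterministic formula quoted before the theorem carries the same error.

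A sanity check with $K=0$, $N=1$: the SWME1 equations give the dissipation $-\frac{\nu}{\lambda}(u_m+u_1)^2-\frac{4\nu}{h}u_1^2$ (bottom slip plus viscous shear). The printed weights instead give $-\frac{\nu}{\lambda}(u_m+u_1)(u_m+3u_1)-\frac{12\nu}{h}u_1^2$, whose first term is not even sign-definite. So you should state and prove the corrected right-hand side rather than claim agreement.

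Two minor points. The identification $T=\mathcal{P}(\boldsymbol{\hat\nu})$ is not needed for the energy identity; note also that in fact $\hat\nu_1=\sigma\langle\xi\psi_1\rangle$, e.g. $\sigma/\sqrt{3}$ for Legendre, not $\sigma$. The symmetry of $T$ is likewise not needed here; it only matters if you go on to discuss dissipativity.
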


\begin{proof}
    To derive the energy equation for the SGSWLME, we follow the steps of \cite{koellermeier_new_2026}. We first write the SGSWLME for arbitrary $N$ \eqref{SGSWME-eq:almost_final}.
    \begin{subequations}
        \begin{alignat}{2}
            \text{(C)}:\quad&\partial_t\boldsymbol{\hat{h}}+\partial_x\boldsymbol{\hat{q_0}}&&=\boldsymbol{0},\label{ANAL-eq:SWME_N=1_a}\\
            \text{(M)}:\quad&\partial_t\boldsymbol{\hat{q_0}}+\partial_x\left(\frac{g}{2}\mathcal{P}(\boldsymbol{\hat{h}})\boldsymbol{\hat{h}}+\sum_{j=0}^N\frac{\mathcal{P}(\boldsymbol{\hat{q_j}})\boldsymbol{\hat{u_j}}}{2j+1}\right)&&=-\frac{T}{\lambda}\left(\sum_{j=0}^N\boldsymbol{\hat{u_j}}\right),\label{ANAL-eq:SWME_N=1_b}\\
            \text{($u_i$)}:\quad&\partial_t\boldsymbol{\hat{q_i}}+\partial_x\left(\mathcal{P}(\boldsymbol{\hat{q_0}})\boldsymbol{\hat{u_i}}+\mathcal{P}(\boldsymbol{\hat{q_i}})\boldsymbol{\hat{u_0}}\right)&&=\mathcal{P}\left(\boldsymbol{\hat{u_0}}\right)\partial_x\boldsymbol{\hat{q_i}}-\frac{(2i+1)T}{\lambda}\left(\sum_{j=0}^N\boldsymbol{\hat{u_j}}\right)-4(2i+1)T \left(\sum_{j=1}^Na_{i,j}\mathcal{P}(\boldsymbol{\hat{h}})^{-1}\boldsymbol{\hat{u_j}}\right).\label{ANAL-eq:SWME_N=1_c}
        \end{alignat}
    \end{subequations}
    Here, (C) indicates the continuity equation, (M) the momentum balance, and ($u_i$) the moment coefficient $i$ balance for $i=1,\ldots,N$. The potential energy (P) is now obtained by computing $\text{(P)}=g\boldsymbol{\hat{h}}^T\cdot\text{(C)}$, which results in
    \begin{equation}
        \text{(P)}:\quad\partial_t\left(\frac{1}{2}g\boldsymbol{\hat{h}}^T\boldsymbol{\hat{h}}\right)+g\boldsymbol{\hat{h}}^T\partial_x\left(\mathcal{P}(\boldsymbol{\hat{h}})\boldsymbol{\hat{u_0}}\right)=0.
    \end{equation}
    Secondly, our aim is to derive an equation for the (averaged) kinetic energy of the momentum balance. To find this, we first compute $\text{(A)}=\text{(M)}-\mathcal{P}(\boldsymbol{\hat{u_0}})\cdot\text{(C)}$. 
    \begin{equation}
        \begin{split}
            \underbrace{\partial_t\left(\mathcal{P}(\boldsymbol{\hat{h}})\boldsymbol{\hat{u_0}}\right)-\mathcal{P}(\boldsymbol{\hat{u_0}})\partial_t\boldsymbol{\hat{h}}}+\underbrace{\partial_x\left(\mathcal{P}(\boldsymbol{\hat{q_0}})\boldsymbol{\hat{u_0}}\right)-\mathcal{P}(\boldsymbol{\hat{u_0}})\partial_x\boldsymbol{\hat{q_0}}}+\partial_x\left(\frac{g}{2}\mathcal{P}(\boldsymbol{\hat{h}})\boldsymbol{\hat{h}}+\sum_{j=1}^N\frac{\mathcal{P}(\boldsymbol{\hat{q_j}})\boldsymbol{\hat{u_j}}}{2j+1}\right) &=-\frac{T}{\lambda}\left(\sum_{j=0}^N\boldsymbol{\hat{u_j}}\right),\\
            \text{(A)}:\quad\quad\quad\quad\mathcal{P}(\boldsymbol{\hat{h}})\partial_t\boldsymbol{\hat{u_0}}\quad\quad\quad\quad+\mathcal{P}(\boldsymbol{\hat{q_0}})\partial_x\boldsymbol{\hat{u_0}}\quad\quad\quad+g\mathcal{P}(\boldsymbol{\hat{h}})\partial_x\boldsymbol{\hat{h}}+\partial_x\left(\sum_{j=1}^N\frac{\mathcal{P}(\boldsymbol{\hat{q_j}})\boldsymbol{\hat{u_j}}}{2j+1}\right) &=-\frac{T}{\lambda}\left(\sum_{j=0}^N\boldsymbol{\hat{u_j}}\right).\\
        \end{split}
    \end{equation}
    Here we used the definition of $\boldsymbol{\hat{u_0}}$, the chain rule, and the fact that the following properties hold $\partial_t\left(\mathcal{P}(\boldsymbol{\hat{h}})\right)\boldsymbol{\hat{u_0}}=\mathcal{P}(\boldsymbol{\hat{u_0}})\partial_t\boldsymbol{\hat{h}}$ and $\partial_x\left(\mathcal{P}(\boldsymbol{\hat{q_0}})\right)\boldsymbol{\hat{u_0}}=\mathcal{P}(\boldsymbol{\hat{u_0}})\partial_x\boldsymbol{\hat{q_0}}$.
    
    Next, we average (A) and (M) and multiply by $\boldsymbol{\hat{u_0}}^T$ to obtain an equation for the kinetic energy of the momentum balance $\text{(K)}=\frac{1}{2}\boldsymbol{\hat{u_0}}^T\left[\text{(A)}+\text{(M)}\right]$
    \begin{equation}
        \resizebox{\textwidth}{!}{$
        \begin{aligned}
            \frac{1}{2}\left[\underbrace{\boldsymbol{\hat{u_0}}^T\mathcal{P}(\boldsymbol{\hat{h}})\partial_t\boldsymbol{\hat{u_0}}+\boldsymbol{\hat{u_0}}^T\partial_t\left(\mathcal{P}(\boldsymbol{\hat{h}})\boldsymbol{\hat{u_0}}\right)}+\underbrace{\boldsymbol{\hat{u_0}}^T\mathcal{P}(\boldsymbol{\hat{q_0}})\partial_x\boldsymbol{\hat{u_0}}+\boldsymbol{\hat{u_0}}^T\partial_x\left(\mathcal{P}(\boldsymbol{\hat{q_0}})\boldsymbol{\hat{u_0}}\right)}\right]+g\boldsymbol{\hat{u_0}}^T\mathcal{P}(\boldsymbol{\hat{h}})\partial_x\boldsymbol{\hat{h}}+\boldsymbol{\hat{u_0}}^T\partial_x\left(\sum_{j=1}^N\frac{\mathcal{P}(\boldsymbol{\hat{q_j}})\boldsymbol{\hat{u_j}}}{2j+1}\right)&= -\frac{\boldsymbol{\hat{u_0}}^TT}{\lambda}\left(\sum_{j=0}^N\boldsymbol{\hat{u_j}}\right),\\
            \text{(K)}:\quad\quad\quad\quad\quad\partial_t\left(\frac{\boldsymbol{\hat{u_0}}^T\mathcal{P}(\boldsymbol{\hat{h}})\boldsymbol{\hat{u_0}}}{2}\right)\quad\quad\quad\quad\quad\quad+\partial_x\left(\frac{\boldsymbol{\hat{u_0}}^T\mathcal{P}(\boldsymbol{\hat{q_0}})\boldsymbol{\hat{u_0}}}{2}\right)\quad\quad\quad+g\boldsymbol{\hat{u_0}}^T\mathcal{P}(\boldsymbol{\hat{h}})\partial_x\boldsymbol{\hat{h}}+\boldsymbol{\hat{u_0}}^T\partial_x\left(\sum_{j=1}^N\frac{\mathcal{P}(\boldsymbol{\hat{q_j}})\boldsymbol{\hat{u_j}}}{2j+1}\right)&= -\frac{\boldsymbol{\hat{u_0}}^TT}{\lambda}\left(\sum_{j=0}^N\boldsymbol{\hat{u_j}}\right).\\
        \end{aligned}$}
    \end{equation}
    Third, we compute an (averaged) kinetic energy for the balance of the momentum of coefficient $i$ ($u_i$) by following the same procedure as above. We start by computing $\text{(A$u_i$)}=\text{($u_i$)}-\mathcal{P}(\boldsymbol{\hat{u_i}})\cdot\text{(C)}$
    \begin{equation}
        \resizebox{\textwidth}{!}{$
        \begin{aligned}
            \underbrace{\partial_t\left(\mathcal{P}(\boldsymbol{\hat{h}})\boldsymbol{\hat{u_i}}\right)-\mathcal{P}(\boldsymbol{\hat{u_i}})\partial_t\boldsymbol{\hat{h}}}+\underbrace{\partial_x\left(\mathcal{P}(\boldsymbol{\hat{q_0}})\boldsymbol{\hat{u_i}}\right)-\mathcal{P}(\boldsymbol{\hat{u_i}})\partial_x\boldsymbol{\hat{q_0}}}+\underbrace{\partial_x\left(\mathcal{P}(\boldsymbol{\hat{q_i}})\boldsymbol{\hat{u_0}}\right)-\mathcal{P}\left(\boldsymbol{\hat{u_0}}\right)\partial_x\boldsymbol{\hat{q_i}}}&=-\frac{(2i+1)T}{\lambda}\left(\sum_{j=0}^N\boldsymbol{\hat{u_j}}\right)-4(2i+1)T \left(\sum_{j=1}^Na_{i,j}\mathcal{P}(\boldsymbol{\hat{h}})^{-1}\boldsymbol{\hat{u_j}}\right),\\
            \text{(A$u_i$)}:\quad\quad\mathcal{P}(\boldsymbol{\hat{h}})\partial_t\boldsymbol{\hat{u_i}}\quad\quad\quad\quad\quad+\mathcal{P}(\boldsymbol{\hat{q}})\partial_x\boldsymbol{\hat{u_i}}\quad\quad\quad\quad\quad\quad\quad+\mathcal{P}(\boldsymbol{\hat{q_i}})\partial_x\boldsymbol{\hat{u_0}}\quad\quad\quad&=-\frac{(2i+1)T}{\lambda}\left(\sum_{j=0}^N\boldsymbol{\hat{u_j}}\right)-4(2i+1)T \left(\sum_{j=1}^Na_{i,j}\mathcal{P}(\boldsymbol{\hat{h}})^{-1}\boldsymbol{\hat{u_j}}\right).
        \end{aligned}$}
    \end{equation}
    Then, again, we average $\text{(A$u_i$)}$ and $\text{($u_i$)}$ and multiply by $\boldsymbol{\hat{u_i}}^T$ to obtain an equation for the kinetic energy of the momentum balance of coefficient $i$ $\text{(K$u_i$)}=\frac{1}{2}\boldsymbol{\hat{u_i}}^T\text{[(A$u_i$)+($u_i$)]}$
    \begin{equation}
        \resizebox{\textwidth}{!}{$
        \begin{aligned}
            \frac{1}{2}\left[\underbrace{\boldsymbol{\hat{u_i}}^T\mathcal{P}(\boldsymbol{\hat{h}})\partial_t\boldsymbol{\hat{u_i}}+\boldsymbol{\hat{u_i}}^T\partial_t\left(\mathcal{P}(\boldsymbol{\hat{h}})\boldsymbol{\hat{u_i}}\right)}+\underbrace{\boldsymbol{\hat{u_i}}^T\mathcal{P}(\boldsymbol{\hat{q_0}})\partial_x\boldsymbol{\hat{u_i}}+\boldsymbol{\hat{u_i}}^T\partial_x\left(\mathcal{P}(\boldsymbol{\hat{q_0}})\boldsymbol{\hat{u_i}}\right)}\right.\quad\quad&\\
            \left.+\underbrace{\boldsymbol{\hat{u_i}}^T\mathcal{P}(\boldsymbol{\hat{q_i}})\partial_x\boldsymbol{\hat{u_0}}+\boldsymbol{\hat{u_i}}^T\partial_x\left(\mathcal{P}(\boldsymbol{\hat{q_i}})\boldsymbol{\hat{u_0}}\right)-\boldsymbol{\hat{u_i}}^T\mathcal{P}\left(\boldsymbol{\hat{u_0}}\right)\partial_x\boldsymbol{\hat{q_i}}}\right]&=-\frac{(2i+1)\boldsymbol{\hat{u_i}}^TT}{\lambda}\left(\sum_{j=0}^N\boldsymbol{\hat{u_j}}\right)-4(2i+1)\boldsymbol{\hat{u_i}}^TT \left(\sum_{j=1}^Na_{i,j}\mathcal{P}(\boldsymbol{\hat{h}})^{-1}\boldsymbol{\hat{u_j}}\right)\\
            \partial_t\left(\frac{\boldsymbol{\hat{u_i}}^T\mathcal{P}(\boldsymbol{\hat{h}})\boldsymbol{\hat{u_i}}}{2}\right)\quad\quad\quad\quad+\partial_x\left(\frac{\boldsymbol{\hat{u_i}}^T\mathcal{P}(\boldsymbol{\hat{q_0}})\boldsymbol{\hat{u_i}}}{2}\right)\quad\quad+\boldsymbol{\hat{u_i}}^T\mathcal{P}(\boldsymbol{\hat{q_i}})\partial_x\boldsymbol{\hat{u_0}}&=-\frac{(2i+1)\boldsymbol{\hat{u_i}}^TT}{\lambda}\left(\sum_{j=0}^N\boldsymbol{\hat{u_j}}\right)-4(2i+1)\boldsymbol{\hat{u_i}}^TT \left(\sum_{j=1}^Na_{i,j}\mathcal{P}(\boldsymbol{\hat{h}})^{-1}\boldsymbol{\hat{u_j}}\right),
        \end{aligned}$}
    \end{equation}
    or equivalently
    \begin{equation}
        \begin{split}
             \text{(K$u_i$)}:\quad\partial_t\left(\frac{\boldsymbol{\hat{u_i}}^T\mathcal{P}(\boldsymbol{\hat{h}})\boldsymbol{\hat{u_i}}}{2(2i+1)}\right)+\partial_x\left(\frac{\boldsymbol{\hat{u_i}}^T\mathcal{P}(\boldsymbol{\hat{q_0}})\boldsymbol{\hat{u_i}}}{2(2i+1)}\right)+\frac{\boldsymbol{\hat{u_i}}^T\mathcal{P}(\boldsymbol{\hat{q_i}})\partial_x\boldsymbol{\hat{u_0}}}{2i+1}=&-\frac{(2i+1)\boldsymbol{\hat{u_i}}^TT}{\lambda}\left(\sum\limits_{j=0}^N\boldsymbol{\hat{u_j}}\right)\\
             &\quad\quad-4(2i+1)\boldsymbol{\hat{u_i}}^TT \left(\sum\limits_{j=1}^Na_{i,j}\mathcal{P}(\boldsymbol{\hat{h}})^{-1}\boldsymbol{\hat{u_j}}\right).
         \end{split}
    \end{equation}
    We can then compute an equation for the total kinetic energy (K$u$) using the formula $\text{(K$u$)}=\text{(K)}+\displaystyle \sum_{i=1}^N\text{(K$u_i$)}$
    \begin{equation}
        \resizebox{\textwidth}{!}{$
        \begin{aligned}
            \partial_t\left(\frac{1}{2}\sum_{j=0}^N\frac{\boldsymbol{\hat{u_j}}^T\mathcal{P}(\boldsymbol{\hat{h}})\boldsymbol{\hat{u_j}}}{2j+1}\right)&+\partial_x\left(\frac{1}{2}\sum_{j=0}^N\frac{\boldsymbol{\hat{u_j}}^T\mathcal{P}(\boldsymbol{\hat{q_0}})\boldsymbol{\hat{u_j}}}{2j+1}\right)+g\boldsymbol{\hat{u_0}}^T\mathcal{P}(\boldsymbol{\hat{h}})\partial_x\boldsymbol{\hat{h}}\\
            &+\underbrace{\boldsymbol{\hat{u_0}}^T\partial_x\left(\sum_{j=1}^N\frac{\mathcal{P}(\boldsymbol{\hat{q_j}})\boldsymbol{\hat{u_j}}}{2j+1}\right)+\sum_{j=1}^N\frac{\boldsymbol{\hat{u_j}}^T\mathcal{P}(\boldsymbol{\hat{q_j}})}{2j+1}\partial_x\boldsymbol{\hat{u_0}}}=-\sum_{i,j=0}^N\frac{(2i+1)\boldsymbol{\hat{u_i}}^TT\boldsymbol{\hat{u_j}}}{\lambda}-4\sum_{i,j=1}^N(2i+1)a_{i,j}\boldsymbol{\hat{u_i}}^TT \mathcal{P}(\boldsymbol{\hat{h}})^{-1}\boldsymbol{\hat{u_j}},
        \end{aligned}$}
    \end{equation}
    such that we can write for the total kinetic energy (K$u$)
    \begin{equation}
        \begin{split}
            \partial_t\left(\frac{1}{2}\sum_{j=0}^N\frac{\boldsymbol{\hat{u_j}}^T\mathcal{P}(\boldsymbol{\hat{h}})\boldsymbol{\hat{u_j}}}{2j+1}\right)&+\partial_x\left(\frac{1}{2}\sum_{j=0}^N\frac{\boldsymbol{\hat{u_j}}^T\mathcal{P}(\boldsymbol{\hat{q_0}})\boldsymbol{\hat{u_j}}}{2j+1}+\sum_{j=1}^N\frac{\boldsymbol{\hat{u_j}}^T\mathcal{P}(\boldsymbol{\hat{q_j}})\boldsymbol{\hat{u_0}}}{2j+1}\right)\\
            &+g\boldsymbol{\hat{u_0}}^T\mathcal{P}(\boldsymbol{\hat{h}})\partial_x\boldsymbol{\hat{h}}=-\sum_{i,j=0}^N\frac{(2i+1)\boldsymbol{\hat{u_i}}^TT\boldsymbol{\hat{u_j}}}{\lambda}-4\sum_{i,j=1}^N(2i+1)a_{i,j}\boldsymbol{\hat{u_i}}^TT \mathcal{P}(\boldsymbol{\hat{h}})^{-1}\boldsymbol{\hat{u_j}}.
        \end{split}
    \end{equation}
    By adding the potential energy equation (P) and the total kinetic energy equation (K$u$), we obtain the equation for the total energy $\text{(E)}=\text{(P)}+\text{(K$u$)}$
    \begin{equation}
        \begin{split}
            \partial_t\left(\frac{g\boldsymbol{\hat{h}}^T\boldsymbol{\hat{h}}}{2}\right.&\left.+\frac{1}{2}\sum_{j=0}^N\frac{\boldsymbol{\hat{u_j}}^T\mathcal{P}(\boldsymbol{\hat{h}})\boldsymbol{\hat{u_j}}}{2j+1}\right)+\partial_x\left(\frac{1}{2}\sum_{j=0}^N\frac{\boldsymbol{\hat{u_j}}^T\mathcal{P}(\boldsymbol{\hat{q_0}})\boldsymbol{\hat{u_j}}}{2j+1}+\sum_{j=1}^N\frac{\boldsymbol{\hat{u_j}}^T\mathcal{P}(\boldsymbol{\hat{q_j}})\boldsymbol{\hat{u_0}}}{2j+1}\right)\\
            &+\underbrace{g\boldsymbol{\hat{h}}^T\partial_x\left(\mathcal{P}(\boldsymbol{\hat{h}})\boldsymbol{\hat{u_0}}\right)+g\boldsymbol{\hat{u_0}}^T\mathcal{P}(\boldsymbol{\hat{h}})\partial_x\boldsymbol{\hat{h}}}=-\sum_{i,j=0}^N\frac{(2i+1)\boldsymbol{\hat{u_i}}^TT\boldsymbol{\hat{u_j}}}{\lambda}-4\sum_{i,j=1}^N(2i+1)a_{i,j}\boldsymbol{\hat{u_i}}^TT \mathcal{P}(\boldsymbol{\hat{h}})^{-1}\boldsymbol{\hat{u_j}},
        \end{split}
    \end{equation}
    which can be rewritten as the sought energy equation for the SGSWLME system.
\end{proof}
The explicit derivation of the entropy and entropy flux allows us to compute the entropy variables like in \cite{koellermeier_new_2026}. Furthermore, it can be shown that the energy $e$ of Theorem \ref{ANAL-th:energy_SGSWLME} is convex and that the right-hand side energy source of \eqref{ANAL-eq:energy_SGSWLME} is dissipative, similar to \cite{bender_entropy-conservative_2024,epshteyn_structure-preserving_2026,careaga_entropy_2026}.

As shown, the intrusive SG approach allows the analytical derivation of an energy equation as a useful property. This can be taken into account in numerical schemes for the preservation of energy \cite{tadmor_numerical_1987, bohm_entropy_2020, ersing_entropy_2024, ersing_entropy_2025, fan_well-balanced_2026, gassner_well_2016}.

\subsection{Hyperbolicity}\label{subsection:hyperbolicity}
For the SGSWLME system matrix \eqref{SGSWME-eq:SGSWLME_flux}, hyperbolicity is an important property. If a system matrix is hyperbolic, pre-established numerical methods can be used to solve the system of PDEs. Furthermore, hyperbolicity guarantees finite propagation speeds, which makes sense in the context of shallow water flows. Choosing a set of basis polynomials and deciding the orders $N$ and $K$, the system matrix can be calculated exactly so that the hyperbolicity can be checked.

Ideally, hyperbolicity can be proven for general basis and general $N$ and $K$, just as was done in \cite{dai_hyperbolicity-preserving_2021} for the SWE for general $K$, which corresponds to the case of SWLME with $N=0$. In \cite{dai_hyperbolicity-preserving_2021}, block-matrix versions of the eigenvectors of the system matrix $A_{\text{SGSWE}}(\boldsymbol{\hat{W}})$ \eqref{SGSWME-eq:SGSWE_flux} were used to postulate a matrix $P$, such that $P^{-1}A_{\text{SGSWE}}(\boldsymbol{\hat{W}})P$ is a symmetric matrix. Therefore, the system matrix $A_{\text{SGSWE}}(\boldsymbol{\hat{W}})$ is similar to a diagonalisable matrix with real eigenvalues and thus itself real diagonalisable.

To prove the hyperbolicity of the system matrix of the SGSWLME \eqref{SGSWME-eq:SGSWLME_flux} we opt for the same approach and follow the notation of \cite{dai_hyperbolicity-preserving_2021}. We take order $N=1$ and construct a symmetrizer matrix $P$ for the system matrix of the SGSWLME. Looking at the eigenvectors of the deterministic SWLME in equation \eqref{SWME-eq:SWME_EV}, we define
\begin{equation}\label{ANAL-eq:blocks}
    G\coloneqq\sqrt{g\mathcal{P}(\boldsymbol{\hat{h}})+\mathcal{P}(\boldsymbol{\hat{u_1}})^2}, \quad A\coloneqq\mathcal{P}(\boldsymbol{\hat{u_1}}),\quad B\coloneqq\mathcal{P}(\boldsymbol{\hat{u_0}}),\quad C\coloneqq g\mathcal{P}(\boldsymbol{\hat{u_1}})^{-1}\mathcal{P}(\boldsymbol{\hat{h}}),
\end{equation}
where $\sqrt{M}$ is the (unique) symmetric positive definite square root of a symmetric positive definite matrix $M$, and suggest the following block matrix $P\in\mathbb{R}^{3(K+1)\times3(K+1)}$
\begin{equation}
    P = 
    \begin{pmatrix}
        I & I & I\\
        B-G & B & B+G\\
        2A & \frac{1}{2}A-\frac{3}{2}C & 2A\\
    \end{pmatrix}.
\end{equation}
The inverse of $P$ is then given by
\begin{equation}
    P^{-1}=-\frac{1}{6}(A+C)^{-1}
    \begin{pmatrix}
        A-3C-3(A+C)G^{-1}B & 3(A+C)G^{-1} & -2\\
        -8A & 0 & 4\\
        A-3C+3(A+C)G^{-1}B & -3(A+C)G^{-1} & -2\\
    \end{pmatrix},
\end{equation}
assuming $(A+C)$ is invertible. Ideally, the product $P^{-1}A_{\text{SGSWLME}}(\boldsymbol{\hat{W}})P$ produces a symmetric matrix, such that hyperbolicity is proved as in \cite{dai_hyperbolicity-preserving_2021}. However, this is not the case, unless we make the following four assumptions
\begin{equation}\label{ANAL-eq:assump}
    \begin{split}
        \mathcal{P}(\boldsymbol{\hat{u_0}})=\mathcal{P}(\boldsymbol{\hat{q_0}})\mathcal{P}(\boldsymbol{\hat{h}})^{-1},\quad&\mathcal{P}(\boldsymbol{\hat{u_0}})\mathcal{P}(\boldsymbol{\hat{u_1}})=\mathcal{P}(\boldsymbol{\hat{u_1}})\mathcal{P}(\boldsymbol{\hat{u_0}}),\\
        \mathcal{P}(\boldsymbol{\hat{u_1}})=\mathcal{P}(\boldsymbol{\hat{q_1}})\mathcal{P}(\boldsymbol{\hat{h}})^{-1},\quad&\mathcal{P}(\boldsymbol{\hat{u_1}})^{-1}\mathcal{P}(\boldsymbol{\hat{h}})\mathcal{P}(\boldsymbol{\hat{u_0}})=\mathcal{P}(\boldsymbol{\hat{u_0}})\mathcal{P}(\boldsymbol{\hat{u_1}})^{-1}\mathcal{P}(\boldsymbol{\hat{h}}).
    \end{split}
\end{equation}
Then $P^{-1}A_{\text{SGSWLME}}(\boldsymbol{\hat{W}})P$ can be computed as:
\begin{equation}
    P^{-1}A_{\text{SGSWLME}}P=-\frac{1}{2}(A+C)^{-1}
    \begin{pmatrix}
        A^{-1}G(2G^2-GB-BG) & O & A^{-1}G(BG-GB)\\
        O & -2BA^{-1}G^2 & O\\
        A^{-1}G(BG-GB) & O & -A^{-1}G(2G^2+GB+BG)
    \end{pmatrix},
\end{equation}
which is a symmetric matrix, if the blocks in the block matrix are also symmetric themselves. Thus, with the assumptions of equation \eqref{ANAL-eq:assump} and the assumption that the blocks in the matrix are symmetric, $A_{\text{SGSWLME}}$ is similar to a diagonalisable matrix with real eigenvalues, and so is itself real diagonalisable.

The assumptions of equation \eqref{ANAL-eq:assump} and of the symmetric blocks may seem limiting and, in general, are not automatically satisfied. However, for $K=1$ and $\nu\sim\mathcal{U}(\mu-\sigma,\mu+\sigma)$ (uniform distribution, physical case) or $\nu\sim\mathcal{N}(\mu,\sigma)$ (normal distribution, non-physical case), they can be verified to all be true. This is not the case for $K>1$.

The assumptions of equation \eqref{ANAL-eq:assump} and of the symmetric blocks can be imposed by linearising the system matrix $A_{\text{SGSWLME}}$ for $K>1$ as follows
\begin{equation}\label{ANAL-eq:linearisation}
    \hat{h_j}=0\quad\text{for }j\geq1,\quad\quad\hat{q_{0j}}=0\quad\text{for }j\geq1,\quad\quad\hat{q_{1j}}=0\quad\text{for }j\geq1.
\end{equation}
This condition is checked for $\nu\sim\mathcal{U}(\mu-\sigma,\mu+\sigma)$ and $\nu\sim\mathcal{N}(\mu,\sigma)$ for up to $K=10$, as it is unlikely that $K>10$ since this implies a system of more than 33 equations. The approach of regularising the system matrix to obtain hyperbolicity is inspired by \cite{koellermeier_analysis_2020}, where a similar method is used to make the SWME system matrix hyperbolic.

We thus have found the following theorem:
\begin{theorem}\label{ANAL-th:hyp_N=1}
    The system \eqref{SGSWME-eq:final} is hyperbolic for $\nu\sim\mathcal{U}(\mu-\sigma,\mu+\sigma)$ and $\nu\sim\mathcal{N}(\mu,\sigma)$ and $N=1$ for up to $K=10$ if for $K>1$ the system matrix $A_{\text{SGSWLME}}(\boldsymbol{\hat{W}})$ is linearised according to equation \eqref{ANAL-eq:linearisation}.
\end{theorem}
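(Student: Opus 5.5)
The plan is to reduce the statement to the similarity computation just carried out. If the four commutation and consistency conditions \eqref{ANAL-eq:assump} hold, and the diagonal and off-diagonal blocks of $P^{-1}A_{\text{SGSWLME}}P$ are symmetric, then $P^{-1}A_{\text{SGSWLME}}(\boldsymbol{\hat{W}})P$ is a real symmetric matrix. Hence it is orthogonally diagonalisable with real eigenvalues, and $A_{\text{SGSWLME}}(\boldsymbol{\hat{W}})$, being similar to it via $P$, is real diagonalisable. By the definition \eqref{SWME-eq:hyp} this is hyperbolicity of \eqref{SGSWME-eq:final}. Note that the source $\boldsymbol{\hat{S}}$ plays no role, and for $N=1$ the non-conservative term has already been absorbed into the system matrix. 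The proof therefore splits into two tasks. First, show that $P$ is well defined and invertible. Second, verify \eqref{ANAL-eq:assump} and the block symmetry in the two regimes $K=1$ and $1<K\le 10$.

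For well-definedness I would use that $\mathcal{P}(\boldsymbol{\hat{h}})$ is symmetric positive definite when the random water height is positive. Then $g\mathcal{P}(\boldsymbol{\hat{h}})+\mathcal{P}(\boldsymbol{\hat{u_1}})^2$ is symmetric positive definite, so $G$ exists and is invertible. Invertibility of $\mathcal{P}(\boldsymbol{\hat{u_1}})$ and of $A+C$ is needed for $C$ and $P^{-1}$; these I would state as nondegeneracy conditions on the state. They are the analogue of $q_1\neq 0$ in the deterministic eigenvector \eqref{SWME-eq:SWME_EV}.

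For $K=1$ with the Legendre basis (uniform $\nu$) or the Hermite basis (normal $\nu$), I would compute the triple products $\langle\psi_k\psi_l\psi_m\rangle$ explicitly. For these orthonormal bases, $\mathcal{M}_0=I$ and $\mathcal{M}_1=\begin{pmatrix}0&1\\1&0\end{pmatrix}$. Every $\mathcal{P}(\boldsymbol{\hat{\eta}})=\hat{\eta}_0I+\hat{\eta}_1\mathcal{M}_1$ therefore lies in the commutative algebra generated by $\mathcal{M}_1$. This algebra is closed under inverses and square roots, since all such matrices are simultaneously diagonalised by the fixed eigenvectors of $\mathcal{M}_1$. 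Consequently $\mathcal{P}(\mathcal{P}(\boldsymbol{\hat{h}})^{-1}\boldsymbol{\hat{q}})=\mathcal{P}(\boldsymbol{\hat{q}})\mathcal{P}(\boldsymbol{\hat{h}})^{-1}$, which gives the first and third conditions of \eqref{ANAL-eq:assump}. All of $A,B,C,G$ commute, which gives the other two conditions. Every block is then a product of commuting symmetric matrices and is therefore symmetric; in fact $BG-GB=O$.

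For $1<K\le10$, the linearisation \eqref{ANAL-eq:linearisation} replaces $\boldsymbol{\hat{h}},\boldsymbol{\hat{q_0}},\boldsymbol{\hat{q_1}}$ inside $A_{\text{SGSWLME}}$ by vectors supported on the first coefficient. Since $\mathcal{M}_0=I$, every $\mathcal{P}(\cdot)$ appearing in the matrix becomes a scalar multiple of the identity, so all assumptions hold trivially. Read this way, the argument works for every $K$.

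The main obstacle is the gap between this abstract argument and the statement as actually claimed. The theorem asserts the result only up to $K=10$, and says the conditions were ``checked''. This suggests the intended proof is a symbolic verification for each $K$ and each basis, carried out by computing $\mathcal{M}_k$ and checking the identities directly. I would present the scalar-identity argument as the proof, and mention the symbolic verification as a consistency check for the stated range. The remaining delicate point is the nondegeneracy of $\mathcal{P}(\boldsymbol{\hat{u_1}})$ and of $A+C$. Here I would try to show that, in the $K=1$ eigenbasis of $\mathcal{M}_1$, the eigenvalues of $A+C$ are $a+gh/a$ with $h>0$. These have the sign of $a$ and are nonzero whenever the eigenvalues $a$ of $A$ are nonzero, so invertibility of $A$ should suffice.
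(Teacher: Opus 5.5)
Your proposal is correct and uses the paper's skeleton: the eigenvector-based block matrix $P$, the conditions \eqref{ANAL-eq:assump} plus block symmetry, and real diagonalisability via similarity to a symmetric matrix. Where you differ is in how you discharge the hypotheses.

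The paper only asserts that they ``can be verified'' for $K=1$, and reports checking the linearised system case by case up to $K=10$. You give structural reasons instead:
\begin{itemize}
\item For $K=1$, every $\mathcal{P}(\cdot)$ lies in the commutative algebra $\mathrm{span}\{I,\mathcal{M}_1\}$ with fixed eigenvectors. This is the same mechanism the paper exploits in Section~\ref{subsection:explicit} and Remark~\ref{ANAL-rem:constant_ev}.
\item After \eqref{ANAL-eq:linearisation}, every $\mathcal{P}(\cdot)$ is a multiple of $\mathcal{M}_0=I$.
\end{itemize}
The second point gives a strictly stronger result: the bound $K\le 10$ is superfluous, and the paper's expectation for general $K$ follows immediately. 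In fact the linearised matrix is just $A_{\text{SWLME}}(\hat{h}_0,\hat{q}_{0,0},\hat{q}_{1,0})\otimes I_{K+1}$, up to reordering of unknowns. Hyperbolicity then follows from the deterministic result without $P$ and without requiring $\hat{q}_{1,0}\neq 0$.

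Three refinements.

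First, your ``delicate point'' reduces to one line. The identity $A(A+C)=A^2+g\mathcal{P}(\boldsymbol{\hat{h}})=G^2$ holds for every state. Hence $A+C=A^{-1}G^2$, which is invertible exactly when $A$ is, for every $K$ and not only $K=1$. In the commuting case the paper's formula then collapses to $P^{-1}A_{\text{SGSWLME}}P=\mathrm{diag}(B-G,\,B,\,B+G)$.

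Second, the remaining need for $\mathcal{P}(\boldsymbol{\hat{u_1}})$ to be invertible is an artifact of the chosen eigenvectors. The middle column of \eqref{SWME-eq:SWME_EV} already blows up as $q_1\to 0$ in the deterministic case. For $K=1$ the requirement disappears if you use the $V$-block diagonalisation of Section~\ref{subsection:explicit}, which needs only $\hat{h}_0\pm\hat{h}_1>0$.

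Third, that condition, i.e.\ $\mathcal{P}(\boldsymbol{\hat{h}})$ symmetric positive definite, is the admissibility hypothesis you should state. Pointwise positivity of the truncated water-height expansion is stronger than needed. For the Hermite basis it would even force $\hat{h}_1=0$.
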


We expect that Theorem \ref{ANAL-th:hyp_N=1} holds for general $K$, but a general proof is out of scope for now and could be the topic of future work.

\begin{remark}
    The conditions in equation \eqref{ANAL-eq:assump} cannot be avoided by choosing different conventions in equation \eqref{SGSWME-eq:conventions} and/or by defining different $G$, $A$, $B$ and $C$ in equation \eqref{ANAL-eq:blocks} in the sense that writing convex combinations, for example,
    \begin{equation*}
        \frac{q_0}{h}\quad\Rightarrow\quad B\coloneqq \alpha_1\mathcal{P}(\boldsymbol{\hat{q_0}})\mathcal{P}^{-1}(\boldsymbol{\hat{h}})+ (1-\alpha_1)\mathcal{P}(\boldsymbol{\hat{u_0}})\quad\text{for }\alpha_1\in[0,1],
    \end{equation*}
    does not offer a set of appropriate convex scaling constants $\alpha_i$ such that $P^{-1}A_{\text{SGSWLME}}(\boldsymbol{\hat{W}})P$ is symmetric without regularisation.
\end{remark}

\begin{remark}
    Of course, hyperbolicity can also be checked immediately by calculating the eigenvalues of the system matrix $A_{\text{SGSWLME}}(\boldsymbol{\hat{W}})$. However, for larger systems of equations this is a tedious procedure and if a matrix is not hyperbolic regularisation techniques are not immediately clear.
\end{remark}

The approach of this subsection is likely to be extendable to $N>1$ by finding the right analogues to the criteria of \eqref{ANAL-eq:assump}. Due to the increasing complexity in calculating the matrix product $P^{-1}A_{\text{SGSWLME}}P$ for $N>1$, this is left for future work. Instead, in the next section, we consider the case of $K=1$ and general order $N$.

\subsection{Explicit System}\label{subsection:explicit}

In this section, we show that the SGSWLME equations for an uniformly distributed friction coefficient $\nu\sim\mathcal{U}(\mu-\sigma,\mu+\sigma)$ using Legendre polynomials for $K=1$ but general $N$ can be written concisely in explicit form using a set of subsequently transformed variables. 

For that, we start from the general expression of $A_{\text{SGSWLME}}$ \eqref{SGSWME-eq:SGSWLME_flux}.
Firstly, we note that for the special case $K=1$, the matrix $\mathcal{P}(\boldsymbol{\hat{\eta}})$, $\boldsymbol{\hat{\eta}}\in\mathbb{R}^{K+1}$ \eqref{SG-eq:GalerkinMatrix},  e.g., for $\boldsymbol{\hat{\eta}}=\boldsymbol{\hat{h}},\boldsymbol{\hat{q_j}}$, $j=0,\ldots,N$ is a simple two-by-two matrix given by
\begin{equation}\label{ANAL-eq:projectionK1}
    \mathcal{P}(\boldsymbol{\hat{\eta}}) =
    \begin{pmatrix}
        \hat{\eta_0} & \hat{\eta_1} \\
        \hat{\eta_1} & \hat{\eta_0}
    \end{pmatrix},
\end{equation}
with simple eigenvalues $\{\hat{\eta_0}+\hat{\eta_1},\hat{\eta_0}-\hat{\eta_1}\}$ and constant eigenvectors $(1,1)^T$ and $(1,-1)^T$. The existence of constant eigenvectors is crucial for the following derivations, as discussed again in Remark \ref{ANAL-rem:constant_ev}. We use the constant eigenvectors to diagonalise $\mathcal{P}(\boldsymbol{\hat{\eta}})$ using the matrix 
\begin{equation}
    V =
    \begin{pmatrix}
        1 & 1 \\
        1 & -1
    \end{pmatrix}
    ,\quad\text{ with  }\quad V^{-1} =
    \begin{pmatrix}
        \frac{1}{2} & \frac{1}{2} \\
        \frac{1}{2} & -\frac{1}{2}
    \end{pmatrix},
\end{equation}
so that 
\begin{equation}\label{ANAL-eq:diagonalP}
    \mathcal{D}(\boldsymbol{\hat{\eta}})\coloneqq V\mathcal{P}(\boldsymbol{\hat{\eta}})V^{-1}=
    \begin{pmatrix}
        \hat{\eta_0}+\hat{\eta_1} & 0 \\
        0 & \hat{\eta_0}-\hat{\eta_1}
    \end{pmatrix}
    =
    \begin{pmatrix}
        \hat{\eta^+} & 0 \\
        0 & \hat{\eta^-}
    \end{pmatrix},
\end{equation}
when using the new transformed variables
\begin{equation}
    \begin{pmatrix}
        \hat{\eta^+} \\
        \hat{\eta^-}
    \end{pmatrix}
    = V
    \begin{pmatrix}
        \hat{\eta_0}\\
        \hat{\eta_1}
    \end{pmatrix}
    =
    \begin{pmatrix}
        \hat{\eta_0}+\hat{\eta_1}\\
        \hat{\eta_0}-\hat{\eta_1}
    \end{pmatrix},
    \quad\text{ i.e., }\quad
    \begin{pmatrix}
        \hat{\eta_0}\\
        \hat{\eta_1}
    \end{pmatrix}
    = V^{-1}
    \begin{pmatrix}
        \hat{\eta^+}\\
        \hat{\eta^-}
    \end{pmatrix}
    =
    \begin{pmatrix}
        \frac{\hat{\eta^+}+\hat{\eta^-}}{2}  \\
        \frac{\hat{\eta^+}-\hat{\eta^-}}{2}
    \end{pmatrix}.
\end{equation}
Note that this relates to setting
\begin{equation}
    \hat{h^\pm}\coloneqq \hat{h_0}\pm\hat{h_1},\quad\hat{q_j^\pm}\coloneqq \hat{q_{j,0}}\pm\hat{q_{j,1}}.
\end{equation}

To apply the diagonalisation defined by \eqref{ANAL-eq:diagonalP} block-wise to the whole system matrix $A_{\text{SGSWLME}}$ in \eqref{SGSWME-eq:SGSWLME_flux}, we define the block-diagonal matrix $V_{N+2} =\text{diag}(V,\ldots,V)\in\mathbb{R}^{2(N+2)\times2(N+2)}$ that has the same size as $A_{\text{SGSWLME}}$. 

We can then simply compute the transformed matrix $V_{N+2}A_{\text{SGSWLME}}V_{N+2}^{-1}$ block by block. In addition to the simple \eqref{ANAL-eq:diagonalP}, this leads to the following new entries
\begin{equation}
    V\mathcal{P}(\boldsymbol{\hat{q_j}})\mathcal{P}(\boldsymbol{\hat{h}})^{-1}V^{-1}=V\mathcal{P}(\boldsymbol{\hat{q_j}})V^{-1}V\mathcal{P}(\boldsymbol{\hat{h}})^{-1}V^{-1}=\mathcal{D}(\boldsymbol{\hat{q_j}})\mathcal{D}(\boldsymbol{\hat{h}})^{-1}=
    \begin{pmatrix}
        \frac{\hat{q_j^+}}{\hat{h^+}} & 0 \\
        0 & \frac{\hat{q_j^-}}{\hat{h^-}}
    \end{pmatrix}
    =\mathcal{D}(\boldsymbol{\hat{u_j}}),
\end{equation}
where we used the properties of the diagonal matrix $\mathcal{D}(\cdot)$ and the new variables
\begin{equation*}
    \hat{u_j^\pm}\coloneqq \frac{\hat{q_j^\pm}}{\hat{h^\pm}}.
\end{equation*}

In the same fashion, we obtain
\begin{equation}
    V\mathcal{P}(\boldsymbol{\hat{q_j}})\mathcal{P}(\boldsymbol{\hat{h}})^{-1}\mathcal{P}(\boldsymbol{\hat{u_j}})V^{-1}=V\mathcal{P}(\boldsymbol{\hat{q_j}})V^{-1}V\mathcal{P}(\boldsymbol{\hat{h}})^{-1}V^{-1}V\mathcal{P}(\boldsymbol{\hat{u_j}})V^{-1}=\mathcal{D}(\boldsymbol{\hat{q_j}})\mathcal{D}(\boldsymbol{\hat{h}})^{-1}\mathcal{D}(\boldsymbol{\hat{u_j}})=\mathcal{D}(\boldsymbol{\hat{u_j}}^2),
\end{equation}
where we used that
\begin{equation*}
    V\mathcal{P}(\boldsymbol{\hat{u_j}})V^{-1}=V\mathcal{P}(\mathcal{P}(\boldsymbol{\hat{h}})^{-1}\boldsymbol{\hat{q_j}})V^{-1}=V\mathcal{P}\left(
    \begin{pmatrix}
        \frac{\hat{h_0}\hat{q_{j,0}}-\hat{h_1}\hat{q_{j,1}}}{\hat{h_0}^2-\hat{h_1}^2}\\
        \frac{\hat{h_0}\hat{q_{j,1}}-\hat{h_1}\hat{q_{j,0}}}{\hat{h_0}^2-\hat{h_1}^2}
    \end{pmatrix}
    \right)V^{-1} = 
    \begin{pmatrix}
        \frac{(\hat{h_0}-\hat{h_1})(\hat{q_{j,0}}+\hat{q_{j,1}})}{(\hat{h_0}-\hat{h_1})(\hat{h_0}+\hat{h_1})} & 0\\
        0 & \frac{(\hat{h_0}+\hat{h_1})(\hat{q_{j,0}}-\hat{q_{j,1}})}{(\hat{h_0}+\hat{h_1})(\hat{h_0}-\hat{h_1})}
    \end{pmatrix}
    =\mathcal{D}(\boldsymbol{\hat{u_j}}).
\end{equation*}

The transformed system matrix $A^\mathcal{D}_{\text{SGSWLME}} = V_{N+2} A_{\text{SGSWLME}}V_{N+2}^{-1}$ then reads 
\begin{equation}\label{ANAL-eq:A^D_nSGWSLME}
    A^\mathcal{D}_{SGSWLME} =
    \begin{pmatrix}
        O & I & O & \cdots & O\\
        g\mathcal{D}(\boldsymbol{\hat{h}})-\sum\limits_{j=0}^N\frac{\mathcal{D}(\boldsymbol{\hat{u_j}}^2)}{2j+1} & 2\mathcal{D}(\boldsymbol{\hat{u_0}}) & \frac{2\mathcal{D}(\boldsymbol{\hat{u_1}})}{3} & \cdots & \frac{2\mathcal{D}(\boldsymbol{\hat{u_N}})}{2N+1} \\
        -2\mathcal{D}(\boldsymbol{\hat{u_0}}\boldsymbol{\hat{u_1}}) & 2\mathcal{D}(\boldsymbol{\hat{u_1}}) & \mathcal{D}(\boldsymbol{\hat{u_0}}) & &\\
        \vdots &\vdots & & \ddots & \\
        -2\mathcal{D}(\boldsymbol{\hat{u_0}}\boldsymbol{\hat{u_N}}) & 2\mathcal{D}(\boldsymbol{\hat{u_N}}) & & & \mathcal{D}(\boldsymbol{\hat{u_0}})
    \end{pmatrix}.
\end{equation}
Knowing that $A^\mathcal{D}_{\text{SGSWLME}}$ consists of diagonal blocks, we can reorder the odd and even rows and columns, respectively, via a permutation 
resulting in the permuted system matrix
\begin{equation}\label{ANAL-eq:A^P_nSGWSLME}
    A^P_{\text{SGSWLME}} =
    \begin{pmatrix}
        0 & 1 & 0 & \cdots & 0 & \\
        g\hat{h^-}-\sum\limits_{j=0}^N\frac{\hat{u_j^-}^2}{2j+1}& 2\hat{u_0^-} & \frac{2\hat{u_1^-}}{3} & \cdots & \frac{2\hat{u_N^-}}{2N+1} &&&&&\\
        -2\hat{u_0^-}\hat{u_1^-} & 2\hat{u_1^-} & \hat{u_0^-} & & &&&&&\\
        \vdots &\vdots & & \ddots & &&&&&\\
        -2\hat{u_0^-}\hat{u_N^-} & 2\hat{u_N^-} & & & \hat{u_0^-} &&&&& \\
        &&&&& 0 & 1 & 0 & \cdots & 0 \\
        &&&&& g\hat{h^+}-\sum\limits_{j=0}^N\frac{\hat{u_j^+}^2}{2j+1} & 2\hat{u_0^+} & \frac{2\hat{u_1^+}}{3} & \cdots & \frac{2\hat{u_N^+}}{2N+1}\\
        &&&&& -2\hat{u_0^+}\hat{u_1^+} & 2\hat{u_1^+}& \hat{u_0^+} & & \\
        &&&&& \vdots &\vdots & & \ddots & \\
        &&&&& -2\hat{u_0^+}\hat{u_N^+} & 2\hat{u_N^+} & & & \hat{u_0^+} 
    \end{pmatrix}.
\end{equation}
The permuted system matrix $A^P_{\text{SGSWLME}}$ \eqref{ANAL-eq:A^P_nSGWSLME} is simply a block diagonal version of the original SWLME system matrix $A_{\text{SWLME}}$ \eqref{SWME-eq:SWLME_A} depending block-wise only on the new primitive variables $\hat{h^+},\hat{u_j^+}$ and $\hat{h^-},\hat{u_j^-}$, respectively. This means that its eigenvalues are similarly given by the eigenvalues of the original SWLME system matrix \eqref{SWME-eq:SWLME_ev}, evaluated at, respectively, $\hat{h^+},\hat{u_j^+}$ and $\hat{h^-},\hat{u_j^-}$. The eigenvalues are thus all real.

The stochastic Galerkin system for $K=1$ and general $N$ is therefore hyperbolic. 
We remark that this proof appears difficult to generalize to $K>1$ as the projection matrix $\mathcal{P}(\boldsymbol{\hat{\eta}}) \in \mathbb{R}^{(K+1)\times(K+1)}$ is not diagonalisable in a trivial way in that case. 

\begin{remark}\label{ANAL-rem:constant_ev}
    The fact that the eigenvectors of $\mathcal{P}(\boldsymbol{\hat{\eta}})$ for $K=1$ and Legendre polynomials are constant is important, because then we can use the following properties
    \begin{itemize}
        \item $\mathcal{P}(\boldsymbol{\hat{\eta}})$ and $\mathcal{P}(\boldsymbol{\hat{\mu}})$ commute for all $
        \boldsymbol{\hat{\eta}},\boldsymbol{\hat{\mu}}\in\mathbb{R}^{K+1}$ \cite{gerster_entropies_2020},
        \item $\mathcal{P}(\boldsymbol{\hat{\eta}})$ and $\mathcal{P}(\boldsymbol{\hat{\mu}})^{-1}$ commute for all $
        \boldsymbol{\hat{\eta}},\boldsymbol{\hat{\mu}}\in\mathbb{R}^{K+1}$,
        \item$\mathcal{P}(\mathcal{P}(\boldsymbol{\hat{\mu}})\boldsymbol{\hat{\eta}}) = \mathcal{P}(\boldsymbol{\hat{\mu}})\mathcal{P}(\boldsymbol{\hat{\eta}})$ is satisfied for all $
        \boldsymbol{\hat{\eta}},\boldsymbol{\hat{\mu}}\in\mathbb{R}^{K+1}$ \cite{gerster_haar-type_2025},
        \item$\mathcal{P}(\mathcal{P}(\boldsymbol{\hat{\mu}})^{-1}\boldsymbol{\hat{\eta}}) = \mathcal{P}(\boldsymbol{\hat{\mu}})^{-1}\mathcal{P}(\boldsymbol{\hat{\eta}})$ is satisfied for all $
        \boldsymbol{\hat{\eta}},\boldsymbol{\hat{\mu}}\in\mathbb{R}^{K+1}$.
    \end{itemize}
    The found criteria for hyperbolicity for the $N=1$ case in \eqref{ANAL-eq:assump} would, for example, be automatically satisfied for a $\mathcal{P}(\boldsymbol{\hat{\eta}})$ with constant eigenvectors. A way to extend the hyperbolicity proof of this subsection to general $K$ would be to design some regularisation of $\mathcal{P}(\boldsymbol{\hat{\eta}})$ so that its eigenvectors are constant for all $K$.
\end{remark}

%% file: Sections/06_Numerics.tex
The implicit time-splitting path-conservative finite volume scheme used to solve the SGSWLME is described in this section. Following \cite{verbiest_model-adaptive_2025}, the solution to equation \eqref{SGSWME-eq:final} is discretised in its computational domain $\Omega_x$ with grid size $\Delta x$, producing semi-discretised cell-averaged vectors
\begin{equation*}
    \boldsymbol{\hat{W}}^i\coloneqq\left(\boldsymbol{\hat{h}}^i(t),\boldsymbol{\hat{q_0}}^i(t), \boldsymbol{\hat{q_1}}^i(t),\ldots,\boldsymbol{\hat{q_N}}^i(t)\right)^T \in \mathbb{R}^{3(K+1)}
\end{equation*}
in grid cells $\left[x_i-\Delta x / 2, x_i+\Delta x / 2\right]$ with equidistant cell centres $x_i$, $i=1,2, \ldots, N_x$. Additionally, discretising these cell-averaged vectors in time with time step $\Delta t$ produces fully discretised cell-averaged vectors
\begin{equation*}
    \boldsymbol{\hat{W}}^{i,n}=\left(\boldsymbol{\hat{h}}^{i,n},\boldsymbol{\hat{q_0}}^{i,n}, \boldsymbol{\hat{q_1}}^{i,n},\ldots,\boldsymbol{\hat{q_N}}^{i,n}\right)^T \in \mathbb{R}^{3(K+1)}
\end{equation*}
at discrete times $t_n$, $n=0,1, \ldots, N_t$.

The numerical method used to solve equation \eqref{SGSWME-eq:final} is based on a first-order time-splitting scheme introduced in \cite{huang_equilibrium_2022}. Equation \eqref{SGSWME-eq:final} is split into a transport part and a source term part. This results in two subproblems that are solved subsequently in each time step
\begin{subequations}
    \begin{alignat}{2}
        &\partial_t  \boldsymbol{\hat{W}}+A_{\text{SGSWLME}}\left( \boldsymbol{\hat{W}}\right) \partial_x \boldsymbol{\hat{W}} && =\boldsymbol{0}, \label{NUM-eq:splitting_a}\\
        &\partial_t \boldsymbol{\hat{W}} && = \boldsymbol{\hat{S}}\left(\boldsymbol{\hat{W}}\right) \label{NUM-eq:splitting_b} .
    \end{alignat}
\end{subequations}
For the numerical simulation of the transport step of equation \eqref{NUM-eq:splitting_a}, we consider the Polynomial Viscosity Method (PVM). This is common in the field of SWME, because it is generally applicable to non-conservative systems and includes well-known schemes such as the path-conservative PRICE scheme \cite{canestrelli_well-balanced_2009}, which will be used here. For the simulation of the transport step \eqref{NUM-eq:splitting_a} of the SGSWLME the PVM reads
\begin{equation}\label{NUM-eq:PVM}
    \boldsymbol{\hat{W}}^{i,n+1}=\boldsymbol{\hat{W}}^{i,n}-\frac{\Delta t}{\Delta x}\left(\boldsymbol{D}_{i-\frac{1}{2}}^{-}\left(\boldsymbol{\hat{W}}^{i,n}, \boldsymbol{\hat{W}}^{i,n+1}\right)+\boldsymbol{D}_{i+\frac{1}{2}}^{+}\left(\boldsymbol{\hat{W}}^{i,n}, \boldsymbol{\hat{W}}^{i,n+1}\right)\right), \quad \text{for }i=1, \ldots, N_x,
\end{equation}
with fluctuations $\boldsymbol{D}_{k+\frac{1}{2}}^{ \pm}$ given by
\begin{equation*}
    \boldsymbol{D}_{k+\frac{1}{2}}^{\pm}\left(\boldsymbol{\hat{W}}_l,\boldsymbol{\hat{W}}_r\right)=\frac{1}{2}\left(A_{\boldsymbol{\Phi}}\left(\boldsymbol{\hat{W}}_l,\boldsymbol{\hat{W}}_r\right) \left(\boldsymbol{\hat{W}}_r-\boldsymbol{\hat{W}}_l\right) \pm Q_{\boldsymbol{\Phi}}\left(\boldsymbol{\hat{W}}_l,\boldsymbol{\hat{W}}_r\right) \left(\boldsymbol{\hat{W}}_r-\boldsymbol{\hat{W}}_l\right)\right).
\end{equation*}
Here, the generalised Roe matrix $A_{\boldsymbol{\Phi}}\colon\mathbb{R}^{3(K+1)}\times\mathbb{R}^{3(K+1)}\rightarrow\mathbb{R}^{3(K+1)\times 3(K+1)}$ reads
\begin{equation*}
    A_{\boldsymbol{\Phi}}\left(\boldsymbol{\hat{W}}_l,\boldsymbol{\hat{W}}_r\right) \left(\boldsymbol{\hat{W}}_r-\boldsymbol{\hat{W}}_l\right)=\int_0^1 J\left(\boldsymbol{\Phi}\left(s ; \boldsymbol{\hat{W}}_l,\boldsymbol{\hat{W}}_r\right)\right) \frac{\partial \boldsymbol{\Phi}}{\partial s}\left(s ; \boldsymbol{\hat{W}}_l,\boldsymbol{\hat{W}}_r\right) d s
\end{equation*}
and the polynomial viscosity matrix $Q_{\boldsymbol{\Phi}}\colon\mathbb{R}^{3(K+1)}\times\mathbb{R}^{3(K+1)}\rightarrow\mathbb{R}^{3(K+1)\times 3(K+1)}$ 
\begin{equation}\label{NUM-eq:viscosity_matrix}
   Q_{\boldsymbol{\Phi}}\left(\boldsymbol{\hat{W}}_l,\boldsymbol{\hat{W}}_r\right)=R\left(A_{\boldsymbol{\Phi}}\left(\boldsymbol{\hat{W}}_l,\boldsymbol{\hat{W}}_r\right)\right) 
\end{equation}
is determined by a polynomial $R\colon\mathbb{R}^{3(K+1)\times3(K+1)}\rightarrow\mathbb{R}^{3(K+1)\times 3(K+1)}$ of the generalised Roe matrix. The path $\boldsymbol{\Phi}\left(\cdot \:;\boldsymbol{\hat{W}}_l,\boldsymbol{\hat{W}}_r\right)\colon \mathbb{R} \rightarrow \mathbb{R}^{3(K+1)}$ connects the left state $\boldsymbol{\hat{W}}_l \in \mathbb{R}^{3(K+1)}$ to the right state $\boldsymbol{\hat{W}}_r \in \mathbb{R}^{3(K+1)}$ at the cell interface, such that $\boldsymbol{\Phi}\left(0 ; \boldsymbol{\hat{W}}_l, \boldsymbol{\hat{W}}_r\right)=\boldsymbol{\hat{W}}_l$ and $\boldsymbol{\Phi}\left(1 ; \boldsymbol{\hat{W}}_l, \boldsymbol{\hat{W}}_r\right)=\boldsymbol{\hat{W}}_r$. In numerical simulations, we opt for a simple linear path $\boldsymbol{\Phi}\left(\cdot \:;\boldsymbol{\hat{W}}_l,\boldsymbol{\hat{W}}_r\right)$ and calculate the path integral for the matrix $A_{\boldsymbol{\Phi}}$ using the midpoint rule.

For the numerical simulations presented in the next section, the transport step of equation \eqref{NUM-eq:splitting_a} is numerically solved using the PRICE scheme \cite{canestrelli_well-balanced_2009}, which is a PVM scheme \eqref{NUM-eq:PVM} with polynomial viscosity matrix \eqref{NUM-eq:viscosity_matrix}
\begin{equation*}
    Q_{\boldsymbol{\Phi}}\left(\boldsymbol{\hat{W}}_l,\boldsymbol{\hat{W}}_r\right)=\frac{\Delta x}{2 \Delta t} I+\frac{\Delta t}{2 \Delta x} A_{\boldsymbol{\Phi}}^2\left(\boldsymbol{\hat{W}}_l,\boldsymbol{\hat{W}}_r\right) .
\end{equation*}

The second step of the time-splitting scheme, the source term part of equation \eqref{NUM-eq:splitting_b} is solved using implicit Euler, because of its unconditional stability \cite{huang_equilibrium_2022}. The choice of implicit Euler over explicit Euler is based on the fact that the right-hand side of equation \eqref{SGSWME-eq:final} can be stiff due to the factors $\mathcal{P}(\boldsymbol{\hat{h}})^{-1}$ and $\mathcal{P}(\boldsymbol{\hat{h}})^{-2}$.

Similar to \cite{huang_equilibrium_2022}, we can write due to the specific form of the source term in the SGSWLME in equation \eqref{SGSWME-eq:S2},
\begin{equation}
    \boldsymbol{\hat{S}}\left(\boldsymbol{\hat{W}}\right) = \left(\hat{S}_1\left(\boldsymbol{\hat{W}}\right)+\hat{S}_2\left(\boldsymbol{\hat{W}}\right)\right)\boldsymbol{\hat{W}},
\end{equation}
for two matrices $\hat{S}_1,\hat{S}_2\in\mathbb{R}^{(N+2)(K+1)\times(N+2)(K+1)}$ given by
\begin{equation}
    \hat{S}_1\left(\boldsymbol{\hat{W}}\right) = -\frac{T\mathcal{P}(\boldsymbol{\hat{h}})^{-1}}{\lambda}
    \begin{pmatrix}
        O & O & O & \cdots & O\\
        O & I & I & \cdots & I\\
        O & 3I & 3I & \cdots & 3I\\
        \vdots & \vdots & \vdots & \ddots & \vdots\\
        O & (2N+1)I & (2N+1)I & \cdots & (2N+1)I\\
    \end{pmatrix}
\end{equation}
and
\begin{equation}
    \hat{S}_2\left(\boldsymbol{\hat{W}}\right)=-\frac{4T\mathcal{P}(\boldsymbol{\hat{h}})^{-2}}{\lambda}
    \begin{pmatrix}
        O & O & O & \cdots & O\\
        O & O & O & \cdots & O\\
        O & O & 3a_{1,1}I & \cdots & 3a_{1,N}I\\
        \vdots & \vdots & \vdots & \ddots & \vdots\\
        O & O & (2N+1)a_{N,1}I & \cdots & (2N+1)a_{N,N}I\\
    \end{pmatrix}.
\end{equation}
Note that both matrices $\hat{S}_1$ and $\hat{S}_2$ can be assumed to be constant during the second step of the time-splitting scheme \eqref{NUM-eq:splitting_b}, since the water height variables $\boldsymbol{\hat{h}}$ are not changing during the friction step. We obtain
\begin{equation}\label{NUM-eq:exp_imp_Euler}
    \begin{split}
        \frac{\boldsymbol{\hat{W}}^{n+1}-\boldsymbol{\hat{W}}^n}{\Delta t} &= \boldsymbol{\hat{S}}\left(\boldsymbol{\hat{W}}^{n+1}\right)\\
        \Rightarrow\quad\frac{\boldsymbol{\hat{W}}^{n+1}-\boldsymbol{\hat{W}}^n}{\Delta t} &= \left(\hat{S}_1\left(\boldsymbol{\hat{W}}^{n+1}\right)+\hat{S}_2\left(\boldsymbol{\hat{W}}^{n+1}\right)\right)\boldsymbol{\hat{W}}^{n+1}\\
        \Rightarrow\quad\quad\quad\:\:\:\boldsymbol{\hat{W}}^{n+1}&=\left(I-\Delta t\left(\hat{S}_1\left(\boldsymbol{\hat{W}}^n\right)+\hat{S}_2\left(\boldsymbol{\hat{W}}^n\right)\right)\right)^{-1}\boldsymbol{\hat{W}}^n.\\
    \end{split}
\end{equation}
The inverse of $\left(I-\Delta t\left(\hat{S}_1\left(\boldsymbol{\hat{W}}^n\right)+\hat{S}_2\left(\boldsymbol{\hat{W}}^n\right)\right)\right)$ can be precomputed, such that only $\boldsymbol{\hat{h}}^n$ needs to be substituted. This enables efficient evaluation of the implicit time step, avoiding repeated inversion when computing $\boldsymbol{\hat{W}}^{n+1}$. Note that the final source update in \eqref{NUM-eq:exp_imp_Euler} is an explicit, unconditionally stable scheme.

%% file: Sections/07_Results.tex
To numerically analyse the behaviour of the SGSWLME in simulations, we consider two test cases: (1) a dam break and (2) a smooth periodic wave, motivated by \cite{koellermeier_analysis_2020}. These cases differ in their initial conditions for the water height and the velocity profile, as well as potentially in simulation end time and boundary conditions.

For the dam break test case, the initial condition is assumed to be deterministic, meaning all higher stochastic Galerkin orders are zero. The uncertainty arises solely from the friction coefficient $\nu$, which is modelled as a uniformly distributed random variable. The dam is located in the centre of the computational domain ($x=0$), with a discontinuous initial water height at $x=0$. More details are provided in Table \ref{RES-tab:dambreak}.

\begin{table}[H]
    \centering
    \begin{tabular}{|l|l|}
        \hline Slip length & $\lambda = 0.1$ \\
        \hline Friction coefficient & $\nu\sim\mathcal{U}([0.05,0.15])$ \\
        \hline Spatial domain & $x \in[-1,1]$ \\
        \hline Spatial resolution & $\Delta x=0.002$ \\
        \hline Time resolution & $\Delta t=0.0005$ \\
        \hline End time & $t_{\text {end }}=0.2$ \\
        \hline Initial condition & $
        \begin{aligned}
            \mathds{E}[\mathcal{G}_K[h](0,x,\xi)] &= \begin{cases}
                h_d \quad& x<0\\
                1 \quad& x\geq 0
            \end{cases} \\
            \mathds{E}[\mathcal{G}_K[q_0](0,x,\xi)] &= \begin{cases}
                0.25h_d \quad& x<0\\
                0.25 \quad & x\geq 0
            \end{cases} \\
            \mathds{E}[\mathcal{G}_K[q_1](0,x,\xi)] &= \begin{cases}
                -0.25h_d \quad& x<0\\
                -0.25\quad & x\geq 0
            \end{cases} \\
            \mathds{E}[\mathcal{G}_K[q_2](0,x,\xi)] &= 0
        \end{aligned}
        $ \\
        \hline Boundary condition & Inflow-outflow \\
        \hline
    \end{tabular}
    \caption{Simulation setup for dam break test case.}
    \label{RES-tab:dambreak}
\end{table}
\vspace{-1em}
Here, $h_d$ denotes the height of the dam. We set $h_d=1.5$ for a low dam and $h_d=5.0$ for a high dam. This dam break test case represents a challenging test case due to the presence of a shock wave.

For the smooth periodic wave test case, the initial condition is again assumed to be deterministic, with the friction coefficient $\nu$ modelled as a uniformly distributed random variable. More details are provided in Table \ref{RES-tab:smooth_wave}.

\begin{table}[H]
    \centering
    \begin{tabular}{|l|l|}
        \hline Slip length & $\lambda = 0.1$ \\
        \hline Friction coefficient & $\nu\sim\mathcal{U}([0.05,0.15])$ \\
        \hline Spatial domain & $x \in[-1,1]$ \\
        \hline Spatial resolution & $\Delta x=0.002$ \\
        \hline Time resolution & $\Delta t=0.0005$ \\
        \hline End time & $t_{\text {end }}=2.0$ \\
        \hline Initial condition & $
        \begin{aligned}
            \mathds{E}[\mathcal{G}_K[h](0,x,\xi)] &= 1+\exp(3\cos(\pi(x+0.5)))/\exp(4) \\
            \mathds{E}[\mathcal{G}_K[q_0](0,x,\xi)] &= 0.25(1+\exp(3\cos(\pi(x+0.5)))/\exp(4)) \\
            \mathds{E}[\mathcal{G}_K[q_1](0,x,\xi)] &= -0.25(1+\exp(3\cos(\pi(x+0.5)))/\exp(4)) \\
            \mathds{E}[\mathcal{G}_K[q_2](0,x,\xi)] &= 0
        \end{aligned}
        $ \\
        \hline Boundary condition & Periodic \\
        \hline
    \end{tabular}
    \caption{Simulation setup for smooth periodic wave test case.}
    \label{RES-tab:smooth_wave}
\end{table}

The implementation of the SGSWLME solver used in this paper is available in a GitHub repository \cite{kuijpers_sgswlme_2026}.

\subsection{Comparison with Monte Carlo}
We begin by comparing SGSWLME solutions with SWLME Monte Carlo solutions of the same order $N$, which serve as a reference for assessing the accuracy of the SGSWLME across different stochastic Galerkin orders $K$. As demonstrated in Appendix \ref{section:appendix}, convergence is achieved for $S=100-350$ samples in the dam break test cases and for $S=550-900$ samples in the smooth periodic wave test case, meaning that obtaining a converged Monte Carlo solution requires a significant run time. For both the accuracy assessment of our new SGSWLME and the comparison of respective run times, we use converged Monte Carlo reference solutions for benchmarking and error computation. The relative errors in Figures \ref{RES-fig:SG_N1_low}-\ref{RES-fig:SG_N2_wave} are computed as the relative difference between the stochastic Galerkin solutions and the reference Monte Carlo solution.

\begin{figure*}[!htb]
    \centering
    \minipage{0.33\textwidth}
        \centering
        \includegraphics[width=\linewidth]{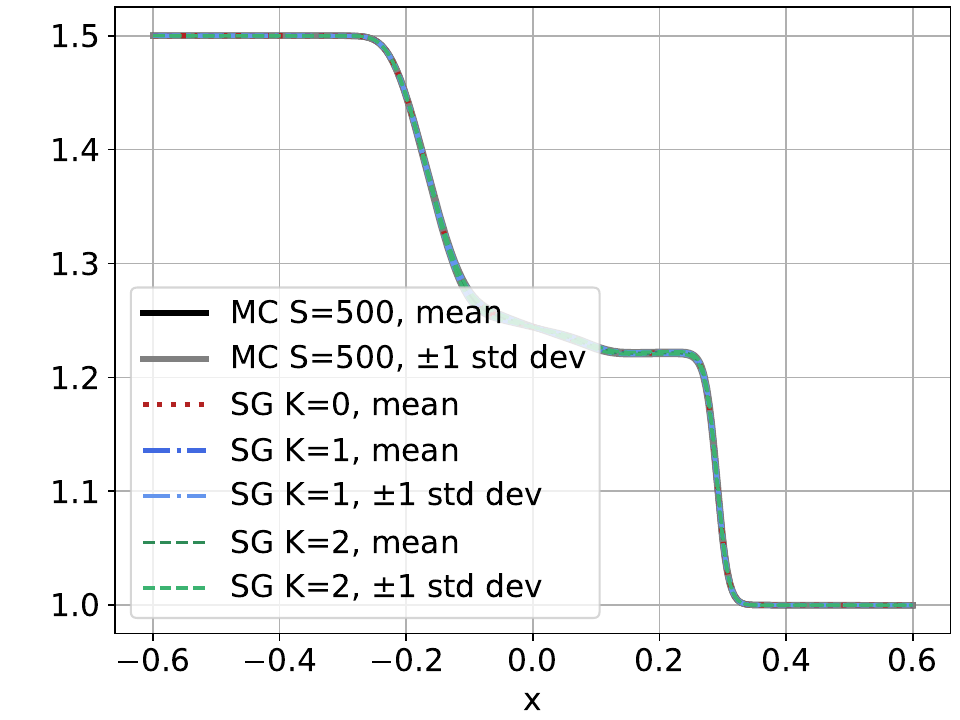}
        \vspace{-2em}
        \caption*{(a) Water height $h$}
        \includegraphics[width=\linewidth]{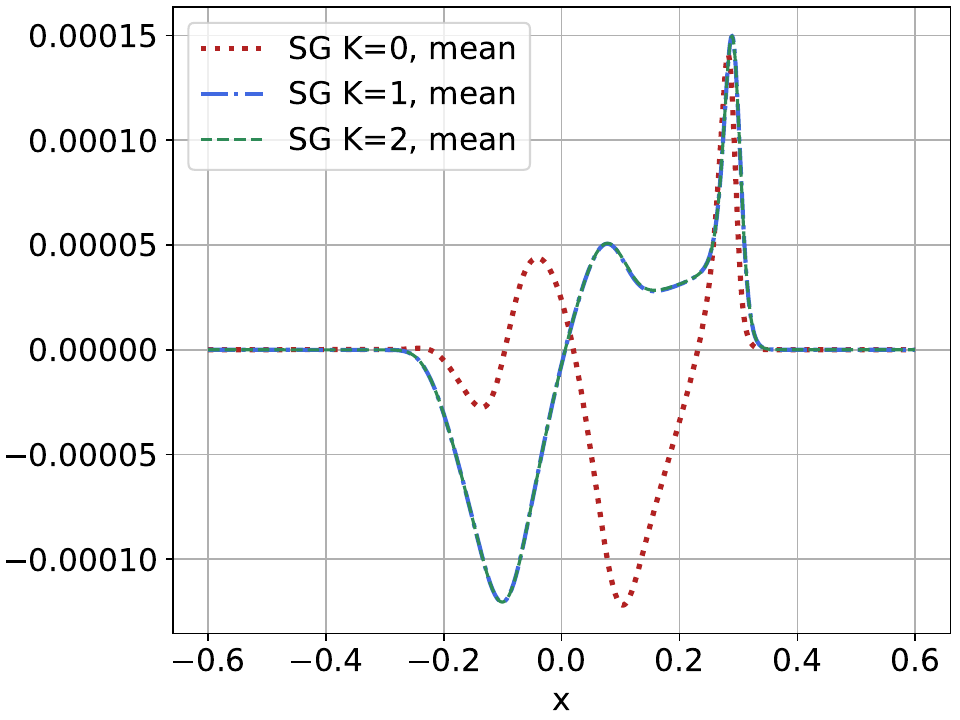}
        \vspace{-2em}
        \caption*{(d) Relative error mean of $h$}
        \includegraphics[width=\linewidth]{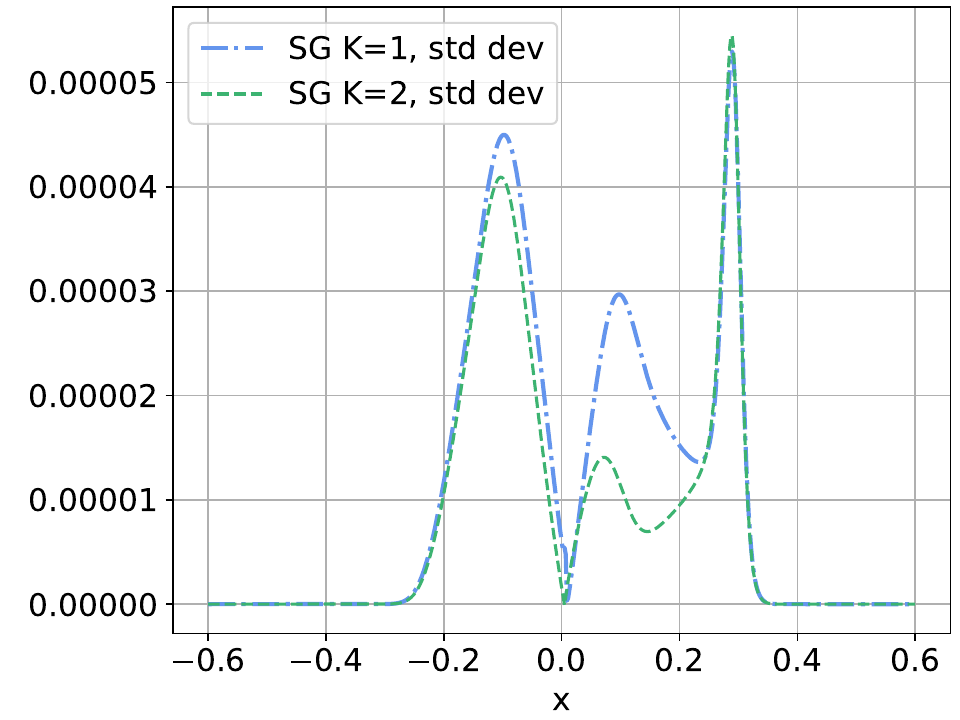}
        \vspace{-2em}
        \caption*{(g) Relative error std. dev. of $h$}
    \endminipage
    \minipage{0.33\textwidth}
        \centering
        \includegraphics[width=\linewidth]{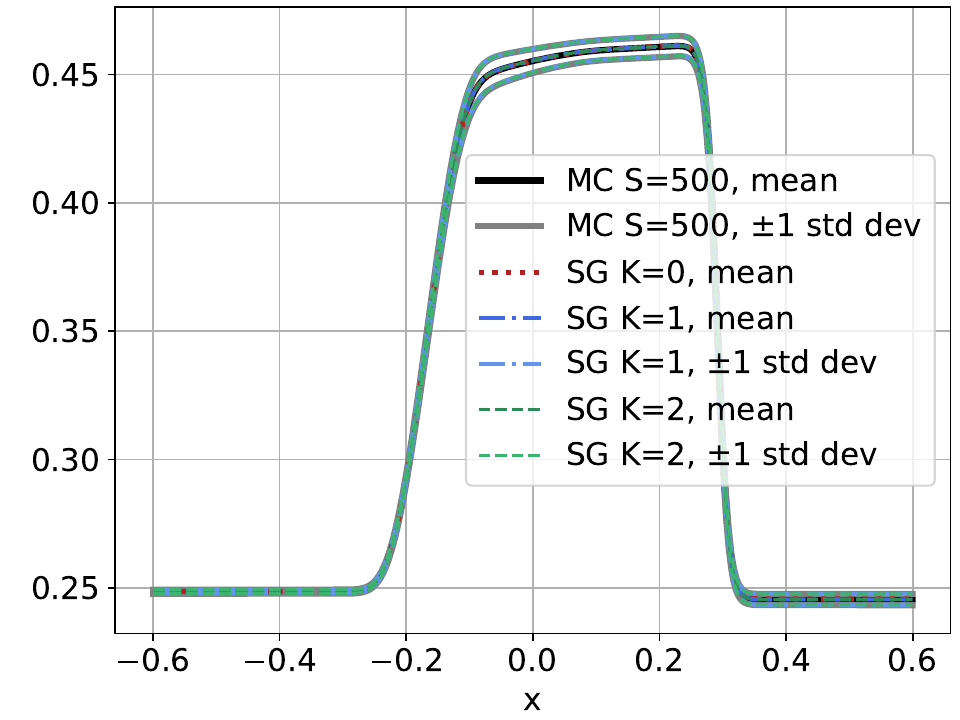}
        \vspace{-2em}
        \caption*{(b) Average velocity $u_m$}
        \includegraphics[width=\linewidth]{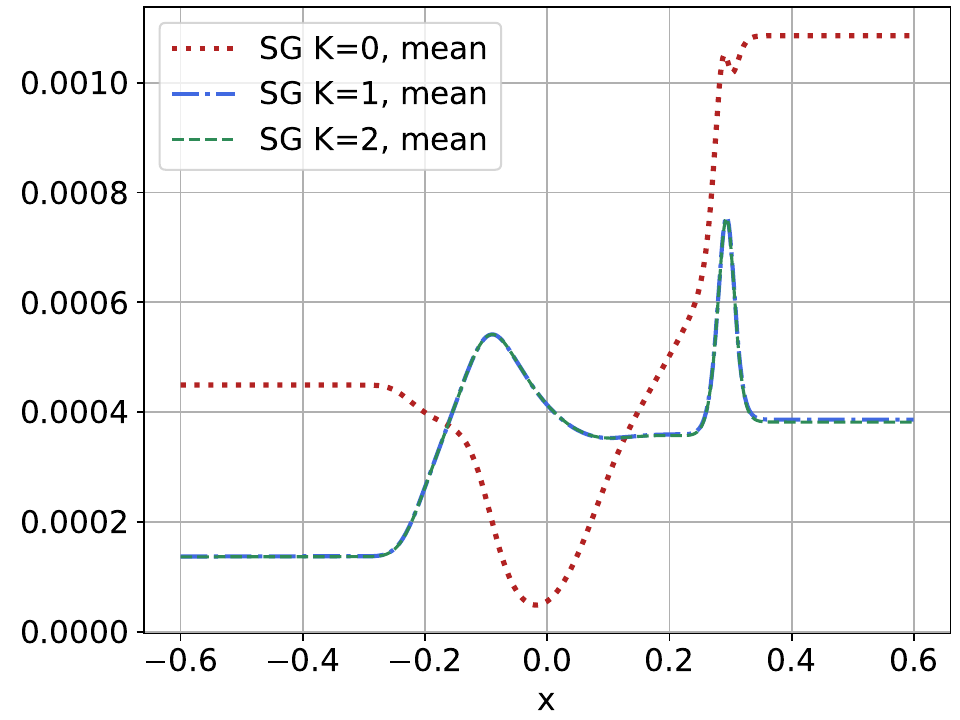}
        \vspace{-2em}
        \caption*{(e) Relative error mean of $u_m$}
        \includegraphics[width=\linewidth]{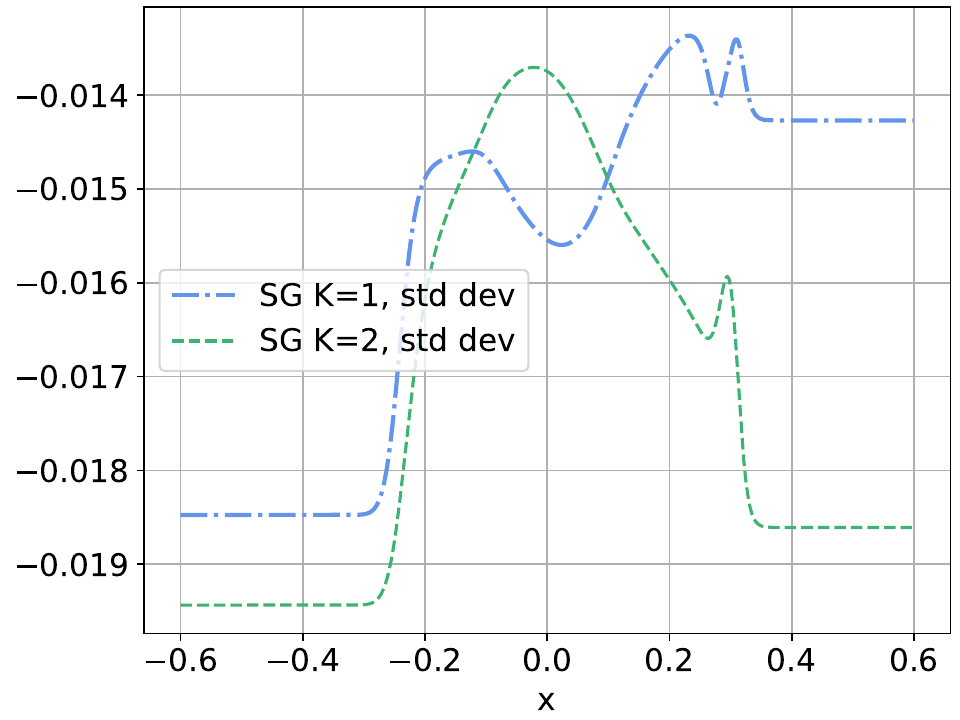}
        \vspace{-2em}
        \caption*{(h) Relative error std. dev. of $u_m$}
    \endminipage
    \minipage{0.33\textwidth}
        \centering
        \includegraphics[width=\linewidth]{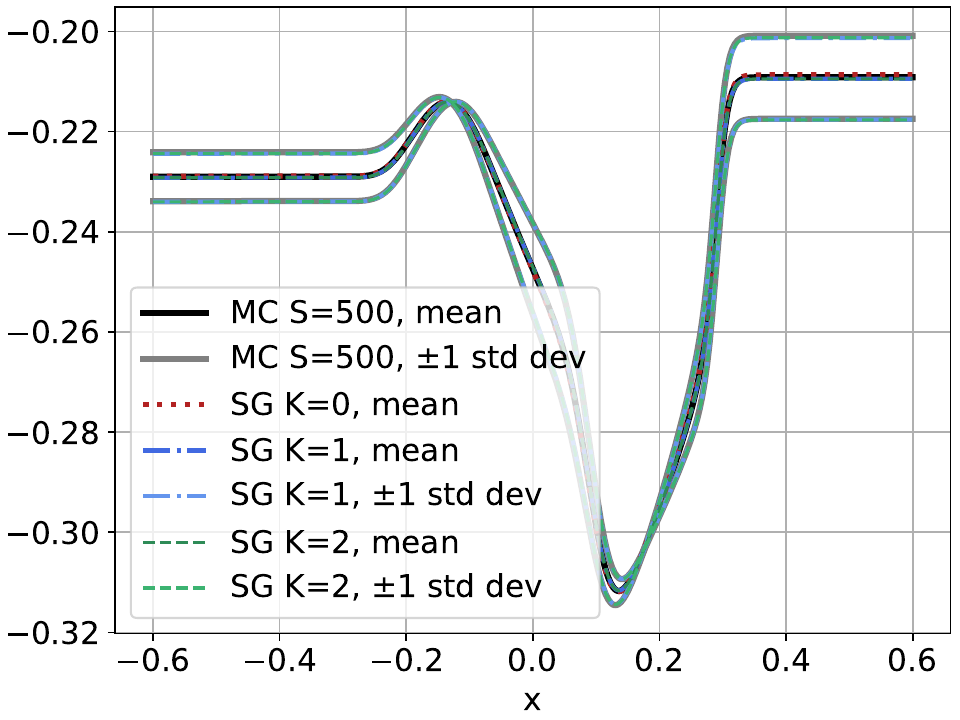}
        \vspace{-2em}
        \caption*{(c) First moment $u_1$}
        \includegraphics[width=\linewidth]{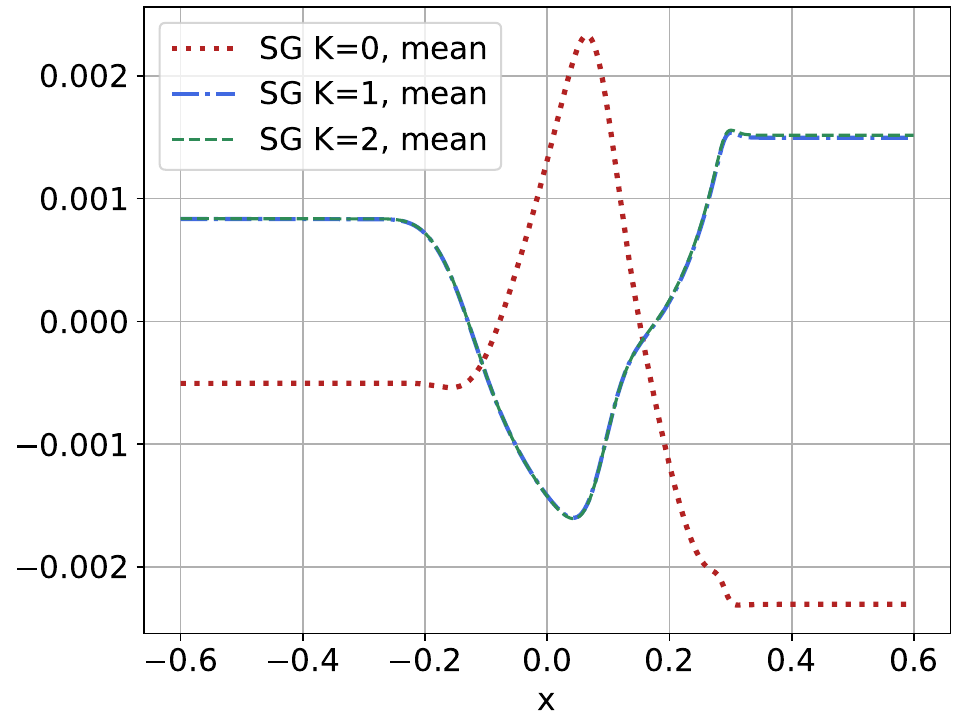}
        \vspace{-2em}
        \caption*{(f) Relative error mean of $u_1$}
        \includegraphics[width=\linewidth]{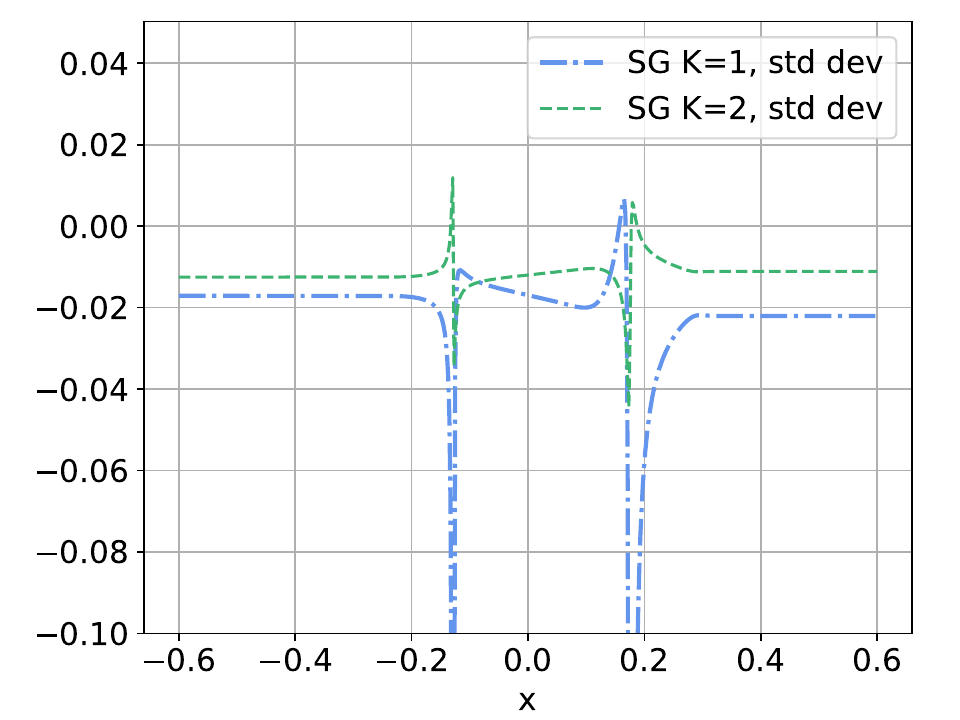}
        \vspace{-2em}
        \caption*{(i) Relative error std. dev. of $u_1$}
    \endminipage
    \caption{Stochastic Galerkin accuracy test for the low dam break test case with (SG)SWLME $N=1$ using different stochastic Galerkin orders $K$. Higher orders improve the accuracy of the solution.}\label{RES-fig:SG_N1_low}
\end{figure*}

In Figure \ref{RES-fig:SG_N1_low}, we compare a converged Monte Carlo solution with stochastic Galerkin solutions of orders $K=0$, $K=1$, and $K=2$ for the low dam break test case of Table \ref{RES-tab:dambreak} for $N=1$. In general, the relative error in the mean decreases as the order $K$ increases from 0 to 1 or 2. In addition, the errors in the mean remain consistently small.

The relative error in the standard deviation decreases slightly when increasing the order from $K=1$ to $K=2$, particularly in the magnitude of the two significant peaks in the first moment $u_1$. In general, errors in the standard deviations are larger than those in the mean, reaching nearly two percent for the average velocity $u_m$ and exhibiting similar magnitudes for the first moment $u_1$ when disregarding the significant peaks. However, these peaks occur at points where the standard deviation is small, so even a small absolute error is amplified in the relative error. 

The error in the water height $h$ is the largest for both the mean and the standard deviation around the shock. This suggests that the uncertainty is primarily concentrated around the shock, where the position of the shock significantly influences the errors.

In Figure \ref{RES-fig:SG_N2_low}, we plot a converged Monte Carlo solution alongside stochastic Galerkin solutions of different orders, and we compute the relative error in the mean and standard deviation for the low dam break case of Table \ref{RES-tab:dambreak}, this time for the $N=2$ systems. Unlike in Figure \ref{RES-fig:SG_N1_low}, we observe in Figure \ref{RES-fig:SG_N2_low} that increasing the order $K$ does not consistently reduce the relative error for all functions. For the average velocity $u_m$ and the second moment $u_2$, the $K=1$ solution is more accurate than the $K=2$ solution.

This behaviour may be attributed to two potential causes. First, there could be a loss of machine precision, as the entries of the system matrix in equation \eqref{SGSWME-eq:SGSWLME_flux} for $N=2$ and $K=2$ include a large number of terms, as does the right hand side friction vector in equation \eqref{SGSWME-eq:S2}. Solving an $8\times8$-system is also less computationally intensive than solving a $12\times12$ system. Second, it could result from a possible lack of hyperbolicity in the transport matrix of equation \eqref{SGSWME-eq:SGSWLME_flux} for $N=2$ and $K=2$.

However, the error remains reasonably small in all cases, disregarding the significant peaks in Figure \ref{RES-fig:SG_N2_low}(i), which were also present in the $N=1$ case. This is particularly notable given that the second moment $u_2$ is of small magnitude in any case.

\begin{figure*}[!htb]
    \centering
    \minipage{0.33\textwidth}
        \centering
        \includegraphics[width=0.9\linewidth]{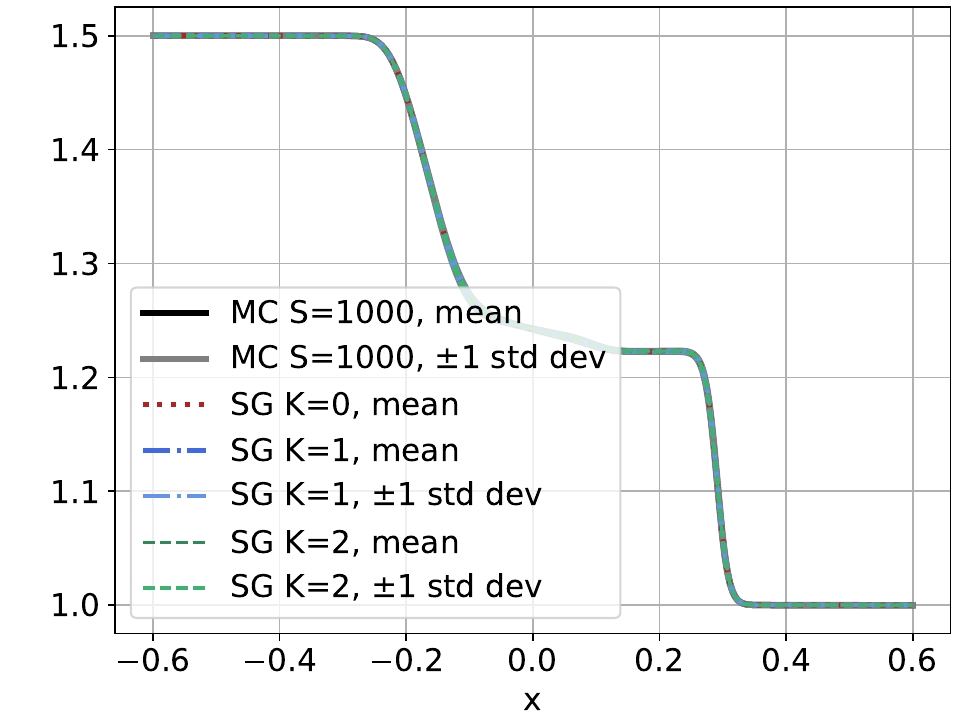}
        \vspace{-1em}
        \caption*{(a) Water height $h$}
        \includegraphics[width=0.9\linewidth]{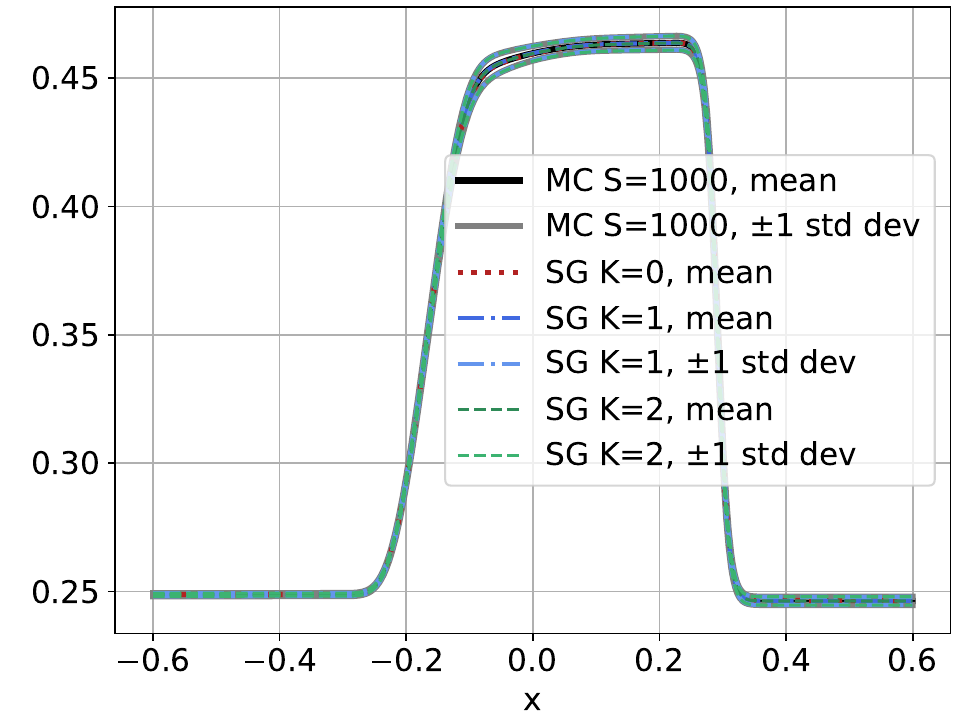}
        \vspace{-1em}
        \caption*{(d) Average velocity $u_m$}
        \includegraphics[width=0.9\linewidth]{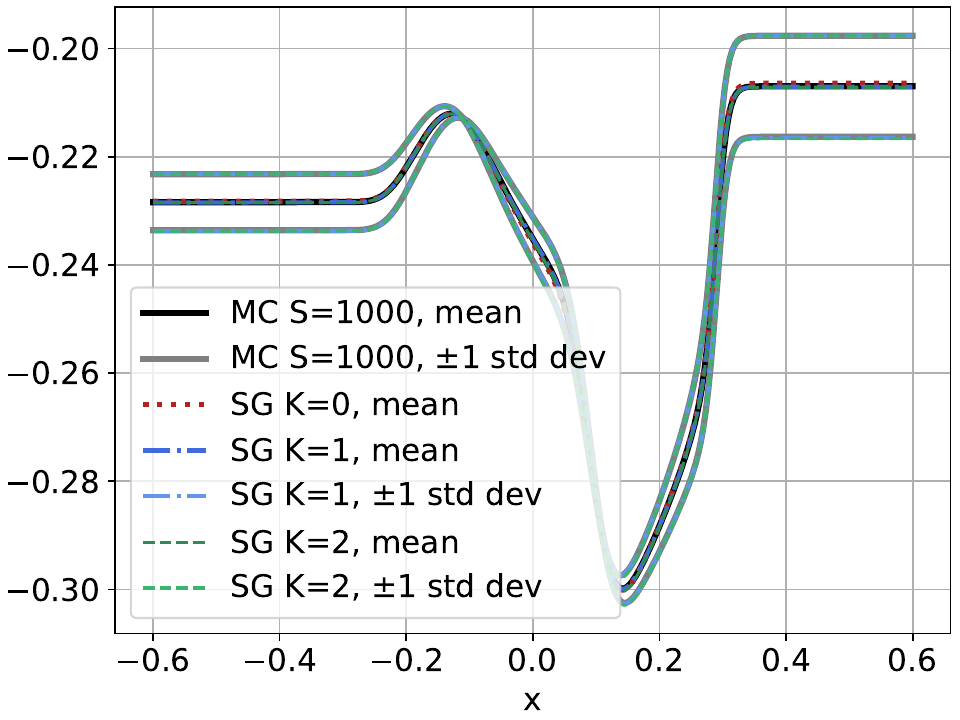}
        \vspace{-1em}
        \caption*{(g) First moment $u_1$}
        \includegraphics[width=0.9\linewidth]{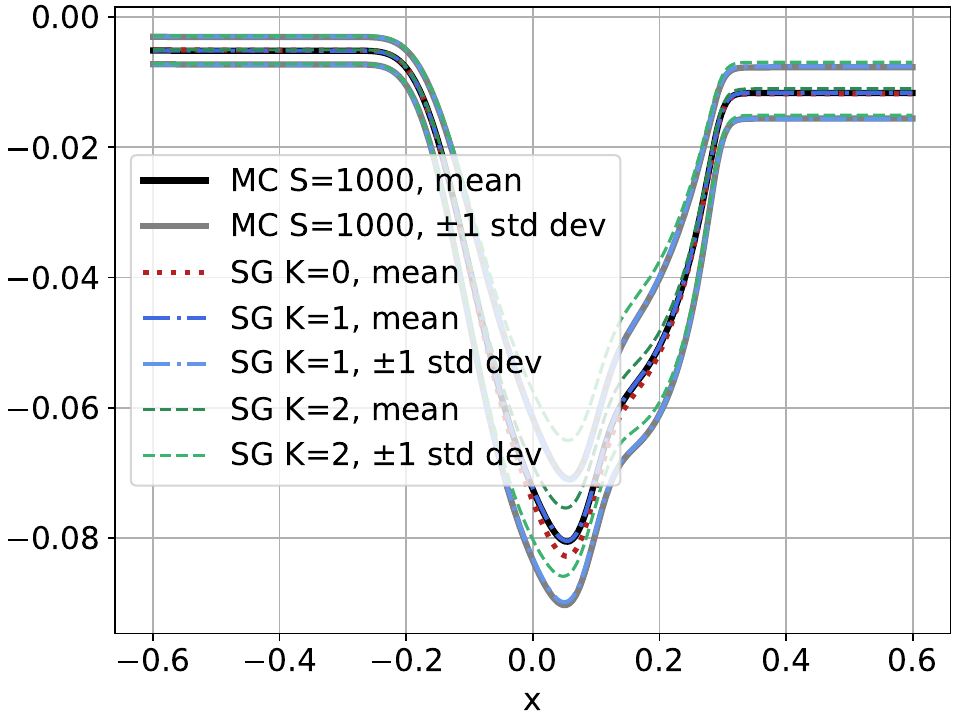}
        \vspace{-1em}
        \caption*{(j) Second moment $u_2$}
    \endminipage
    \minipage{0.33\textwidth}
        \centering
        \includegraphics[width=0.9\linewidth]{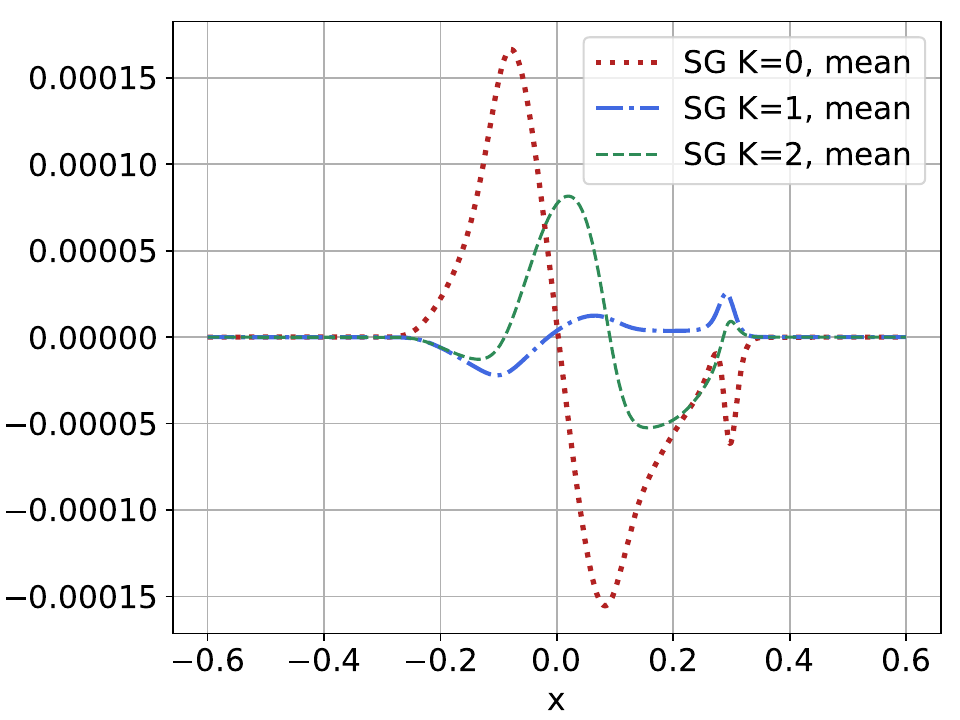}
        \vspace{-1em}
        \caption*{(b) Relative error mean of $h$}
        \includegraphics[width=0.9\linewidth]{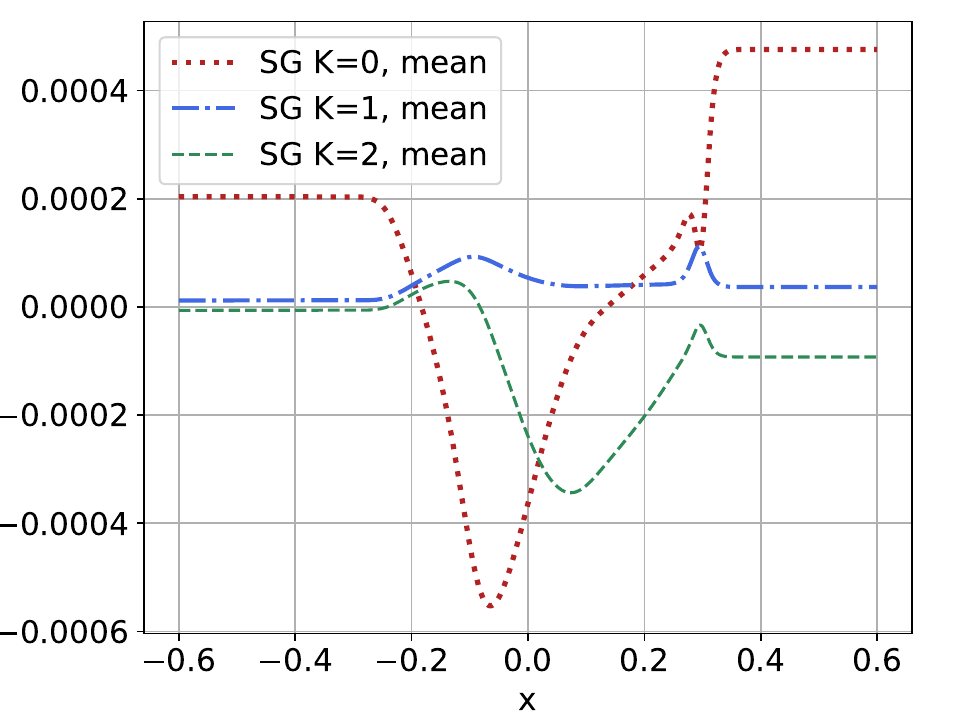}
        \vspace{-1em}
        \caption*{(e) Relative error mean of $u_m$}
        \includegraphics[width=0.9\linewidth]{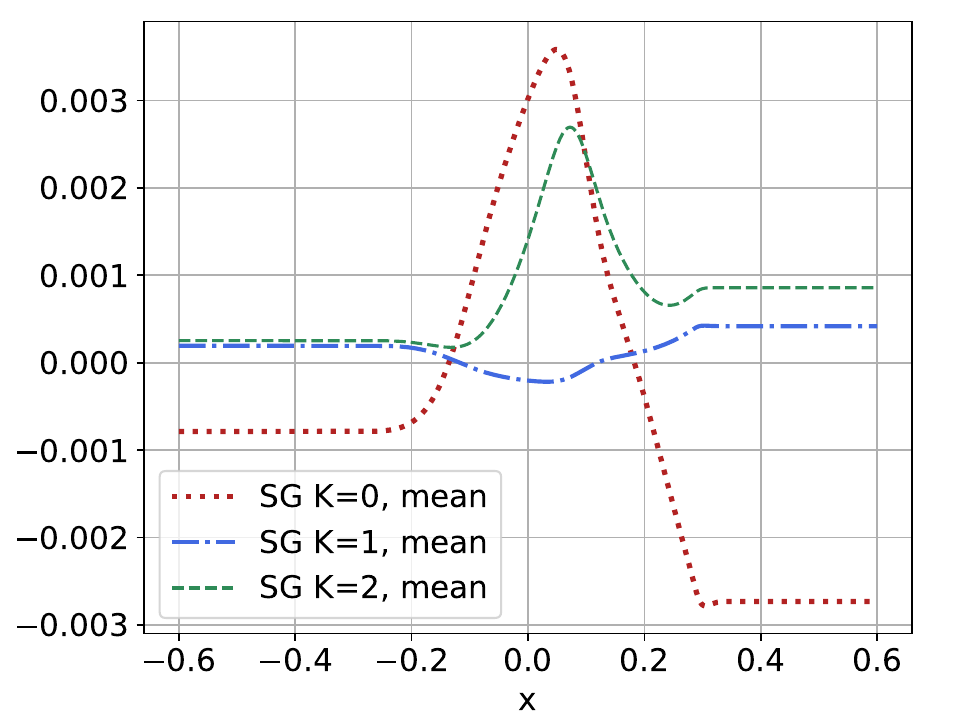}
        \vspace{-1em}
        \caption*{(h) Relative error mean of $u_1$}
        \includegraphics[width=0.9\linewidth]{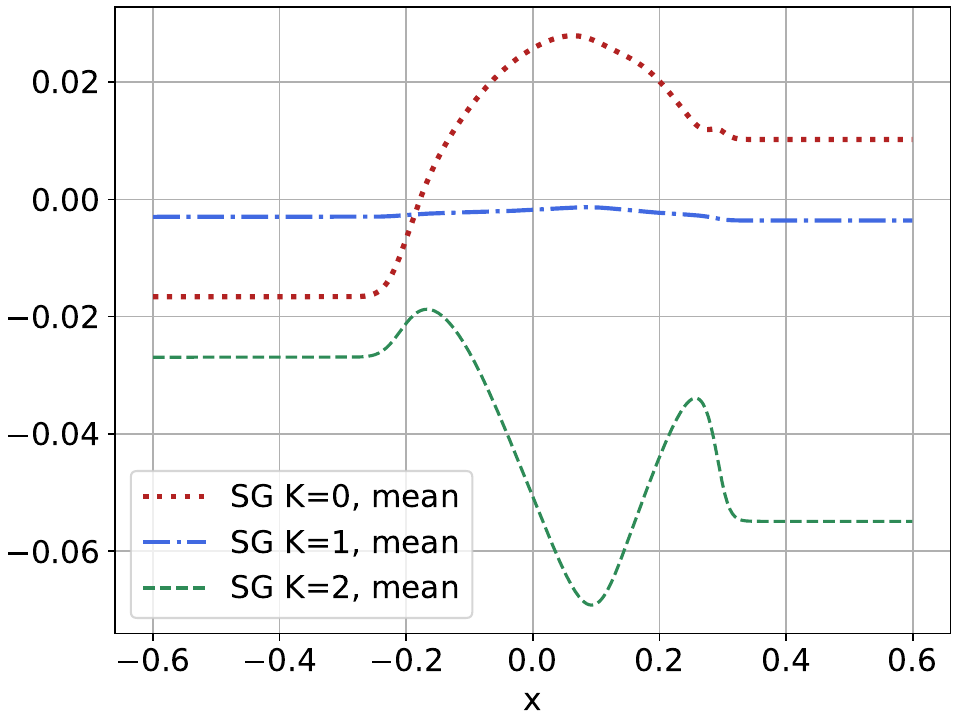}
        \vspace{-1em}
        \caption*{(k) Relative error mean of $u_2$}
    \endminipage
    \minipage{0.33\textwidth}
        \centering
        \includegraphics[width=0.9\linewidth]{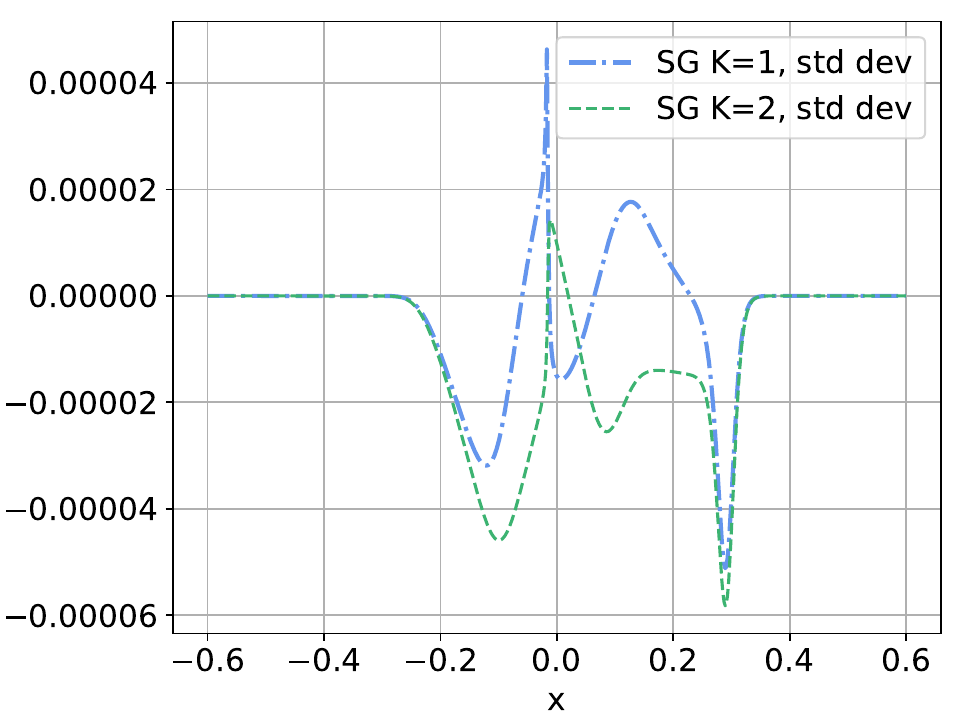}
        \vspace{-1em}
        \caption*{(c) Relative error std. dev. of $h$}
        \includegraphics[width=0.9\linewidth]{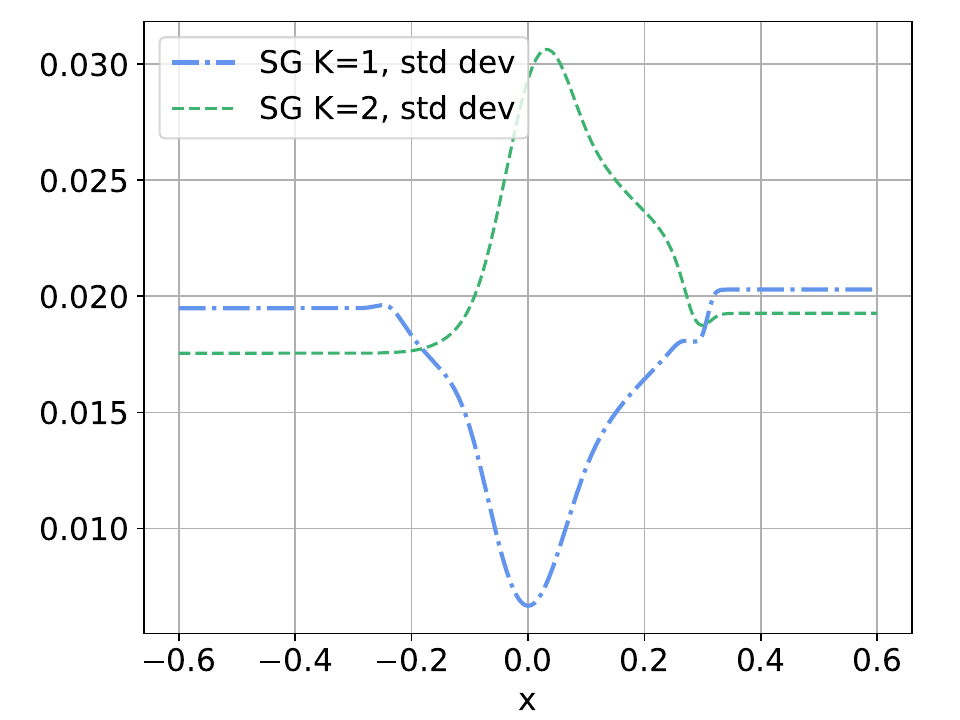}
        \vspace{-1em}
        \caption*{(f) Relative error std. dev. of $u_m$}
        \includegraphics[width=0.9\linewidth]{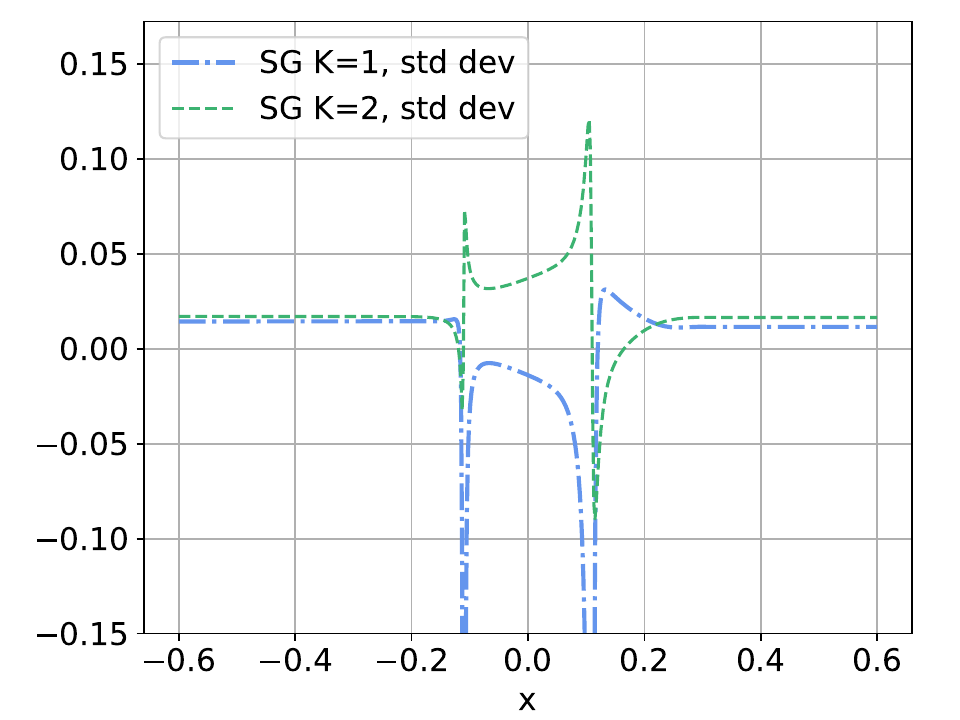}
        \vspace{-1em}
        \caption*{(i) Relative error std. dev. of $u_1$}
        \includegraphics[width=0.9\linewidth]{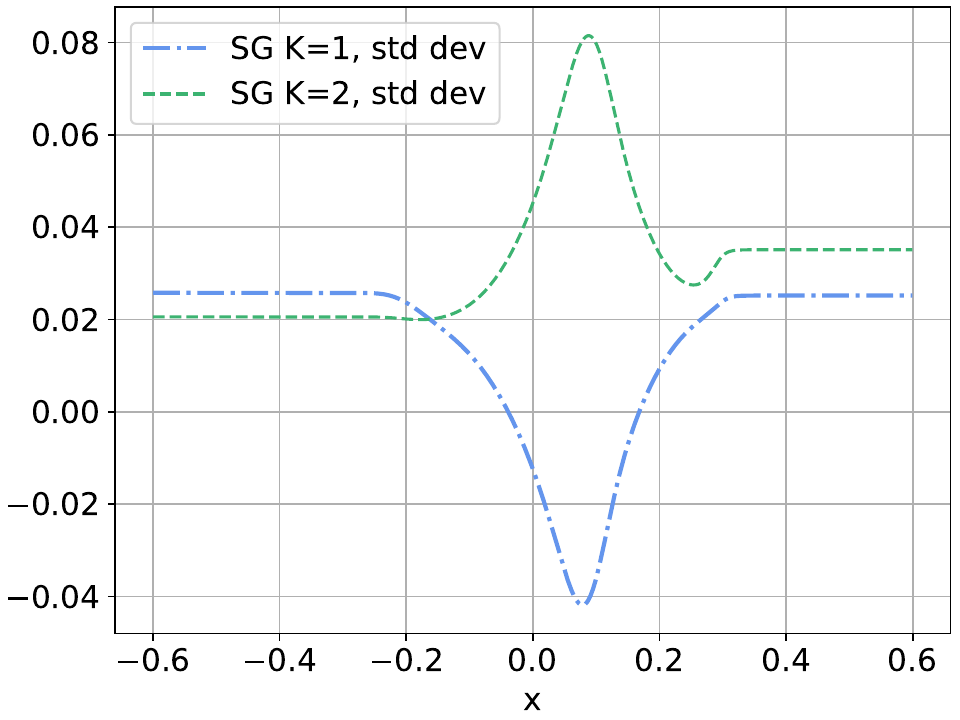}
        \vspace{-1em}
        \caption*{(l) Relative error std. dev. of $u_2$}
    \endminipage
    \caption{Stochastic Galerkin accuracy test for the low dam break test case with (SG)SWLME $N=2$ using different stochastic Galerkin orders $K$. For the average velocity $u_m$ and the second moment $u_2$ the curve corresponding to $K=1$ gives a smaller maximum error than the $K=2$ curve, but overall the errors are small.}\label{RES-fig:SG_N2_low}
\end{figure*}

\newpage
In the case of the higher dam (with $h_d=5.0$ in Table \ref{RES-tab:dambreak}), as shown in Figure \ref{RES-fig:SG_N1_high} for $N=1$ systems, all three variables exhibit a clear peak of inaccuracy at the shock location of the dam break. It is evident that a higher stochastic Galerkin order is required to accurately capture the uncertainty at this point. However, the absolute uncertainty at the shock remains small.

For the average velocity $u_m$ and the first moment $u_1$, the standard deviation is captured less accurately than the mean in comparison with the Monte Carlo solution. Additionally, it is clear that the magnitude of the inaccuracy peaks decreases with increasing order, both for the mean and for the standard deviation.

\begin{figure*}[!htb]
    \centering
    \minipage{0.33\textwidth}
        \centering
        \includegraphics[width=\linewidth]{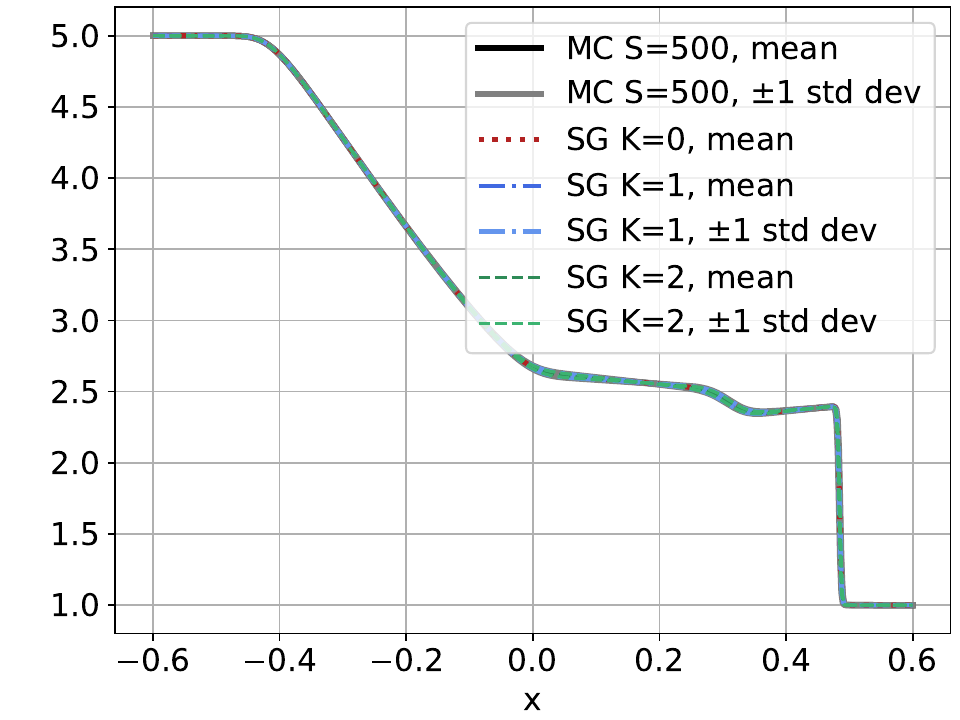}
        \vspace{-2em}
        \caption*{(a) Water height $h$}
        \includegraphics[width=\linewidth]{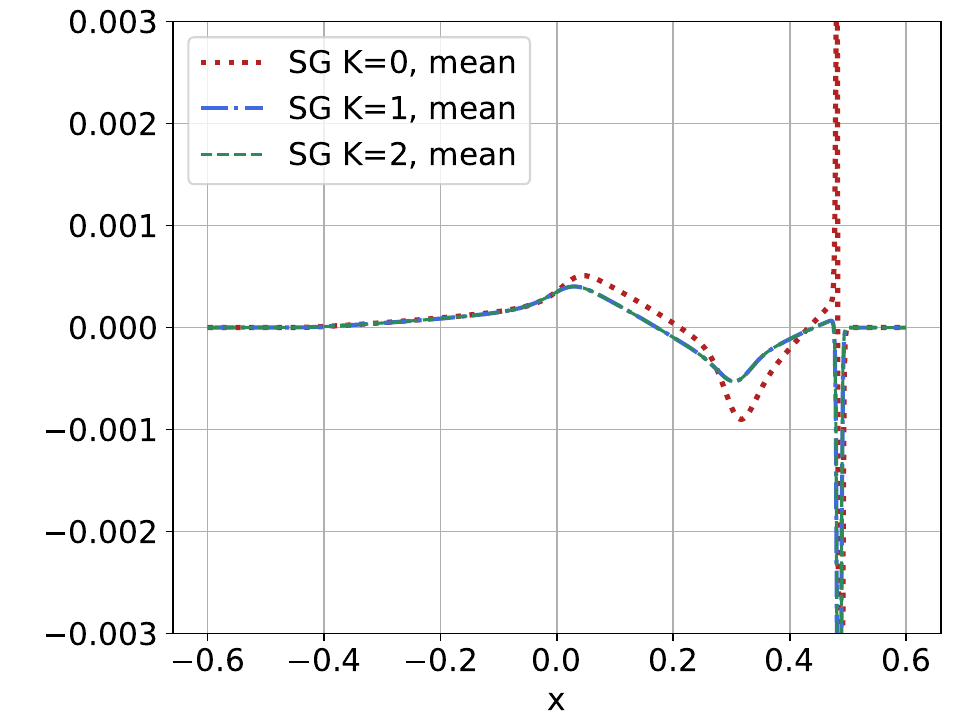}
        \vspace{-2em}
        \caption*{(d) Relative error mean of $h$}
        \includegraphics[width=\linewidth]{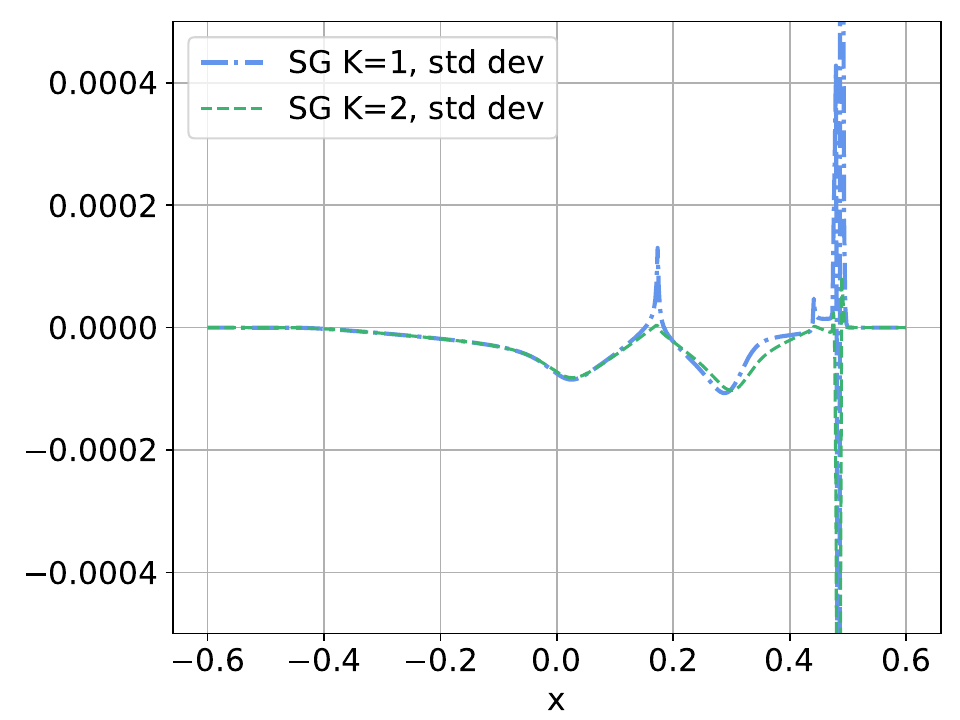}
        \vspace{-2em}
        \caption*{(g) Relative error std. dev. of $h$}
    \endminipage
    \minipage{0.33\textwidth}
        \centering
        \includegraphics[width=\linewidth]{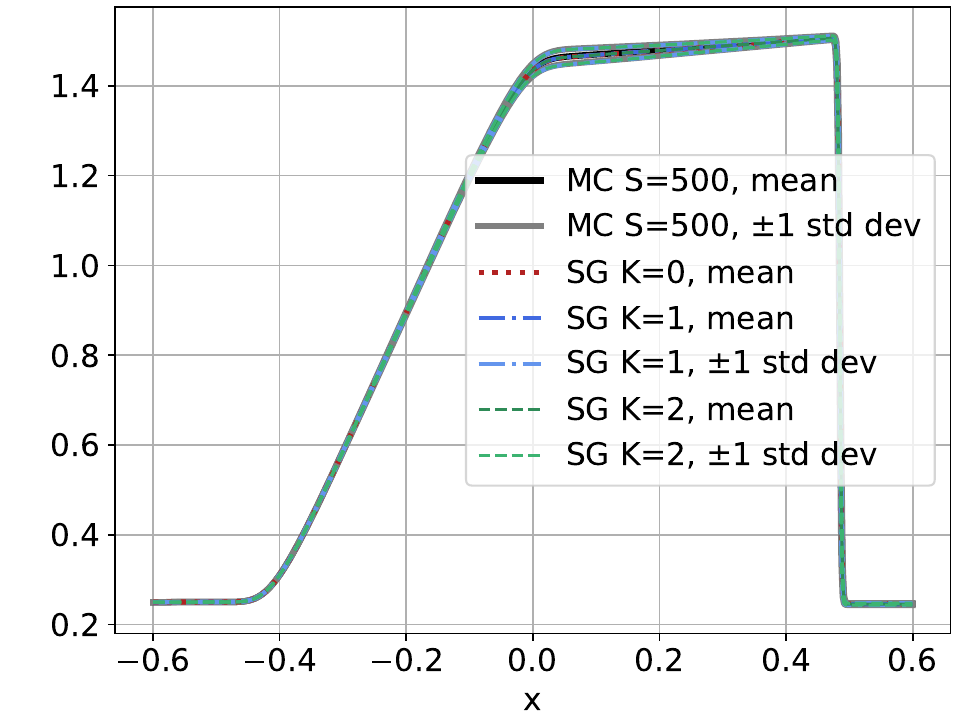}
        \vspace{-2em}
        \caption*{(b) Average velocity $u_m$}
        \includegraphics[width=\linewidth]{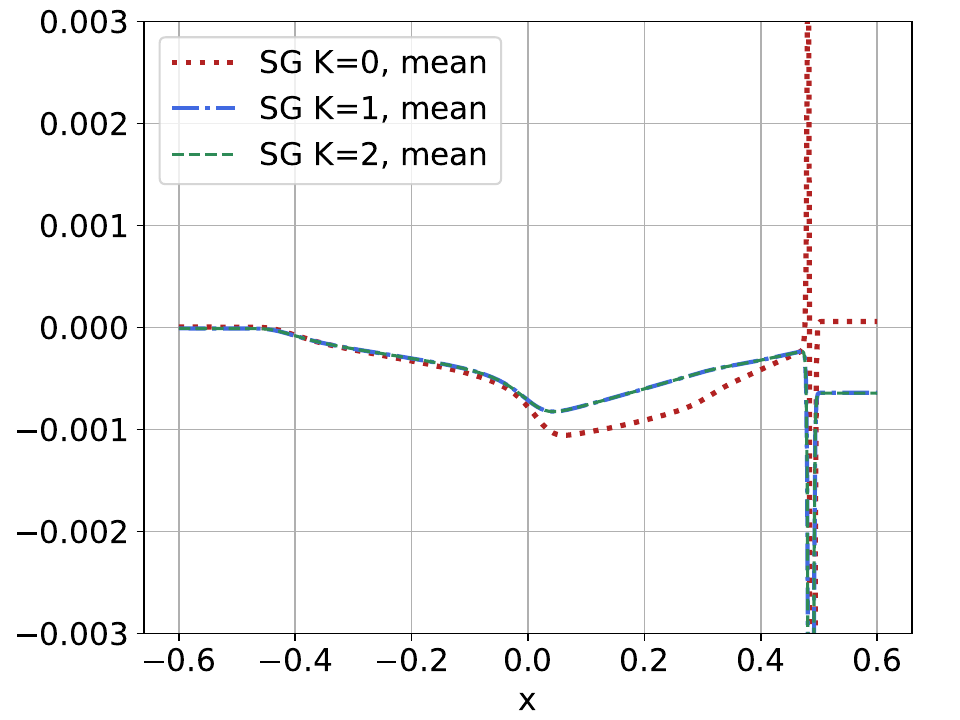}
        \vspace{-2em}
        \caption*{(e) Relative error mean of $u_m$}
        \includegraphics[width=\linewidth]{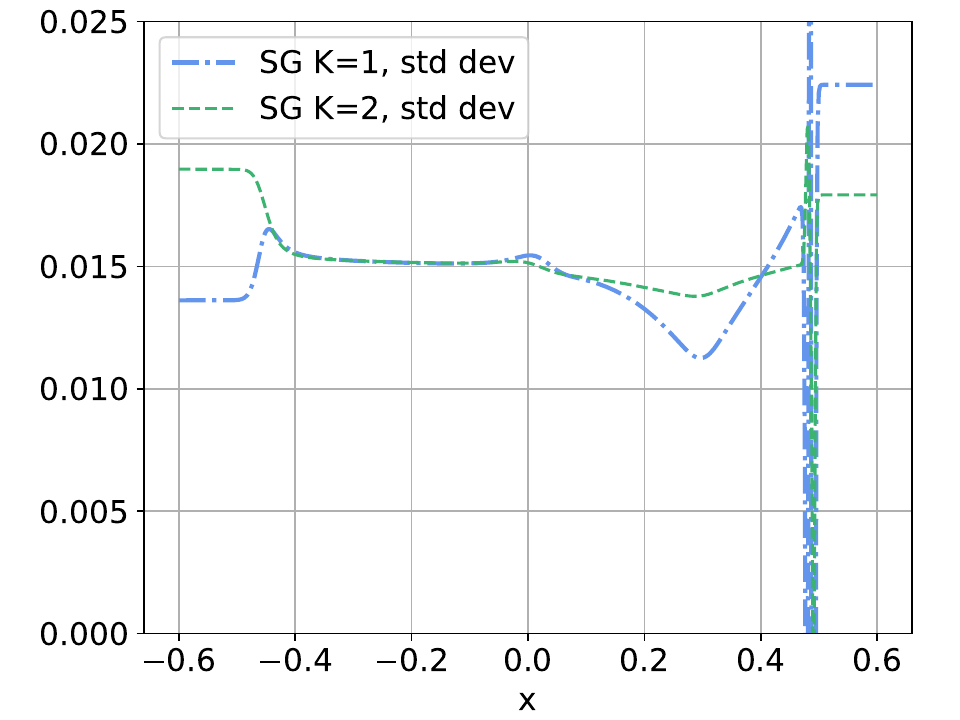}
        \vspace{-2em}
        \caption*{(h) Relative error std. dev. of $u_m$}
    \endminipage
    \minipage{0.33\textwidth}
        \centering
        \includegraphics[width=\linewidth]{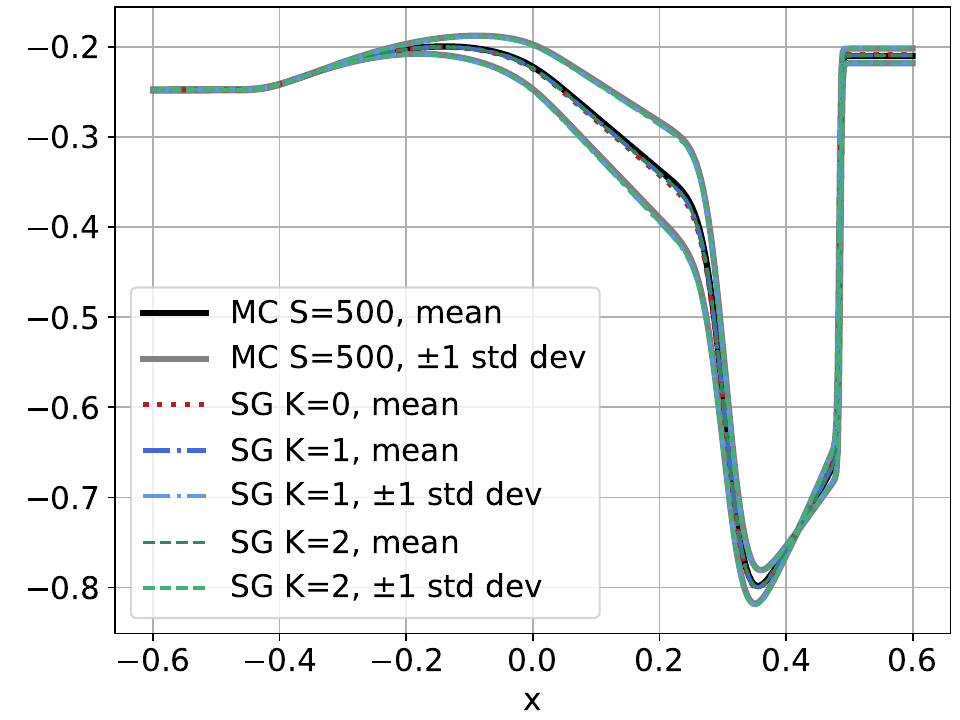}
        \vspace{-2em}
        \caption*{(c) First moment $u_1$}
        \includegraphics[width=\linewidth]{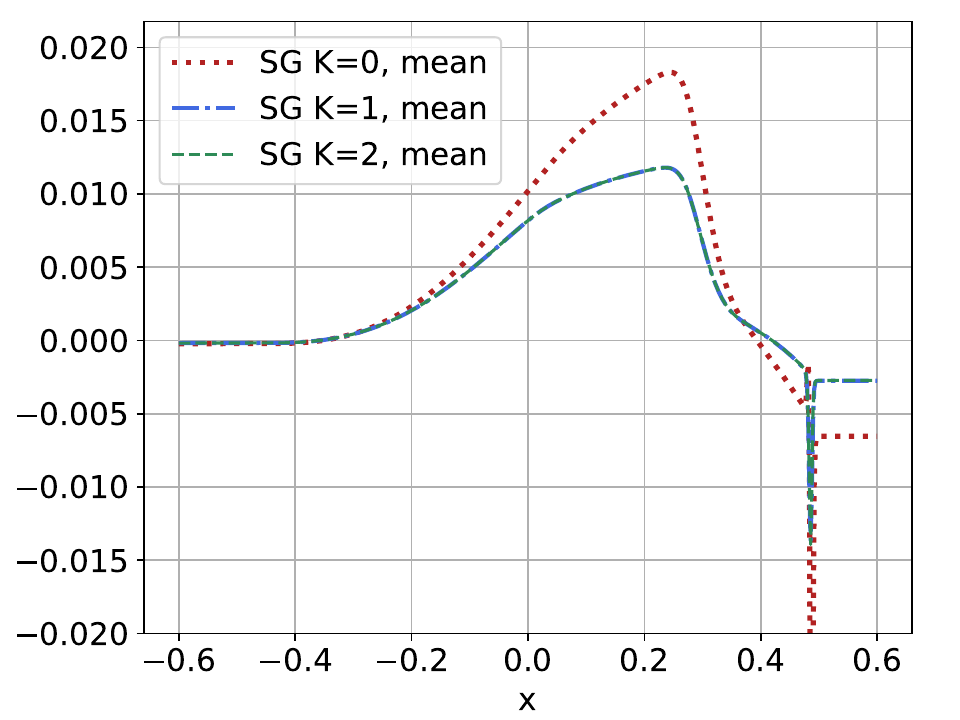}
        \vspace{-2em}
        \caption*{(f) Relative error mean of $u_1$}
        \includegraphics[width=\linewidth]{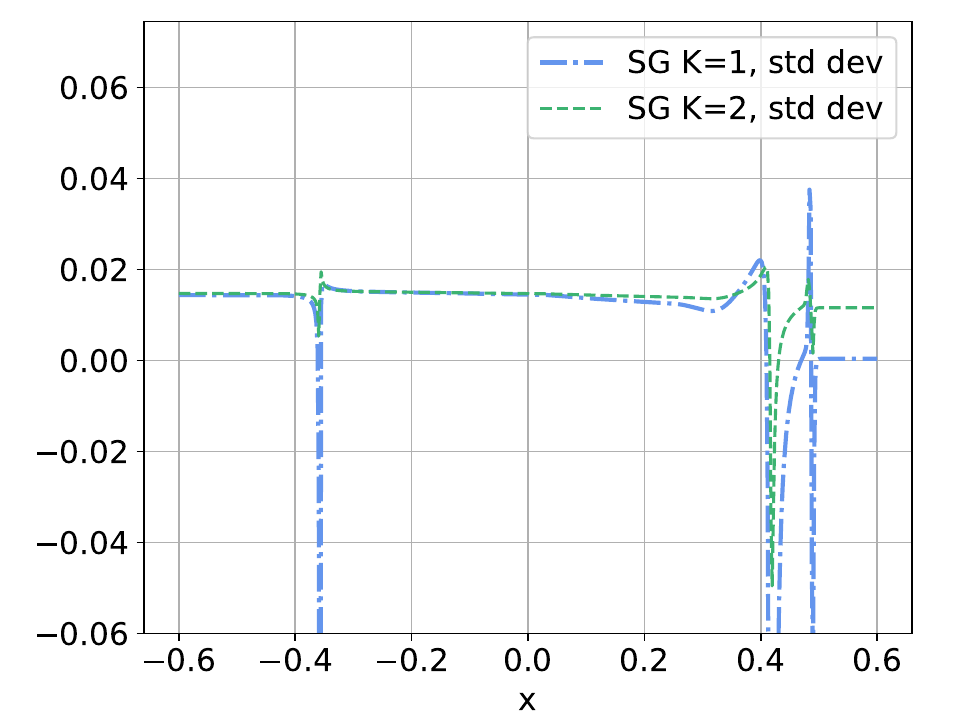}
        \vspace{-2em}
        \caption*{(i) Relative error std. dev. of $u_1$}
    \endminipage
    \caption{Stochastic Galerkin accuracy test for the high dam break test case with (SG)SWLME $N=1$ using different stochastic Galerkin orders $K$. Higher orders improve the accuracy of the solution.}\label{RES-fig:SG_N1_high}
\end{figure*}

For the high dam with $N=2$ systems, as shown in Figure \ref{RES-fig:SG_N2_high}, we observe - just as in Figure \ref{RES-fig:SG_N1_high} - a significant peak in inaccuracy around the shock of the dam break. Additionally, consistent with Figure \ref{RES-fig:SG_N2_low}, the $K=2$ solution does not always produce a more accurate result than the $K=1$ solution, although it does improve upon the deterministic curve ($K=0$).

\begin{figure*}[!htb]
    \centering
    \minipage{0.33\textwidth}
        \centering
        \includegraphics[width=0.9\linewidth]{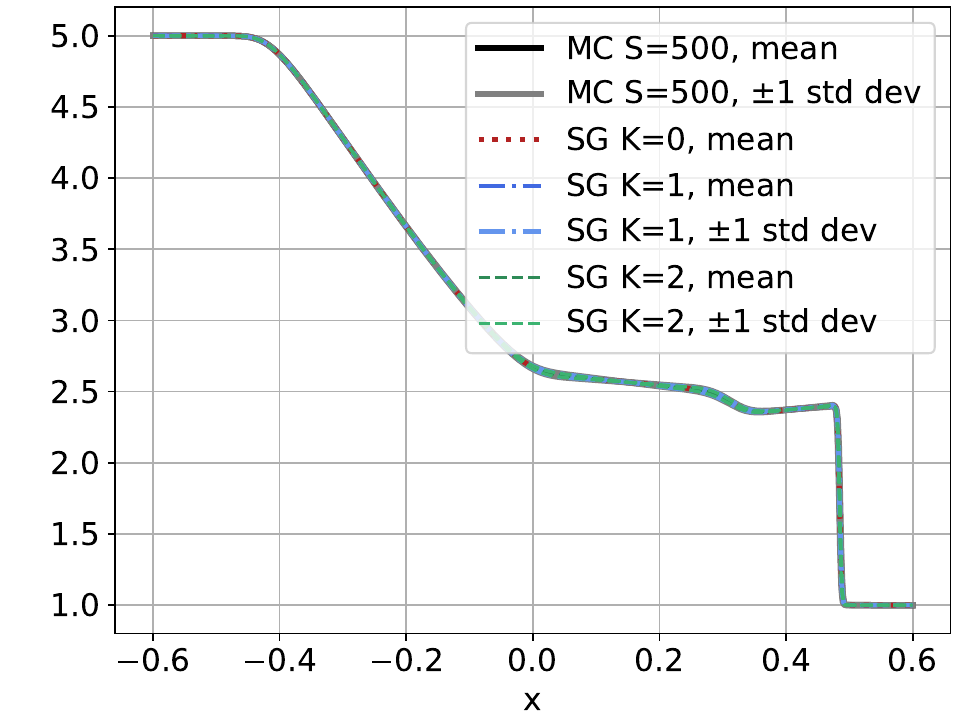}
        \vspace{-1em}
        \caption*{(a) Water height $h$}
        \includegraphics[width=0.9\linewidth]{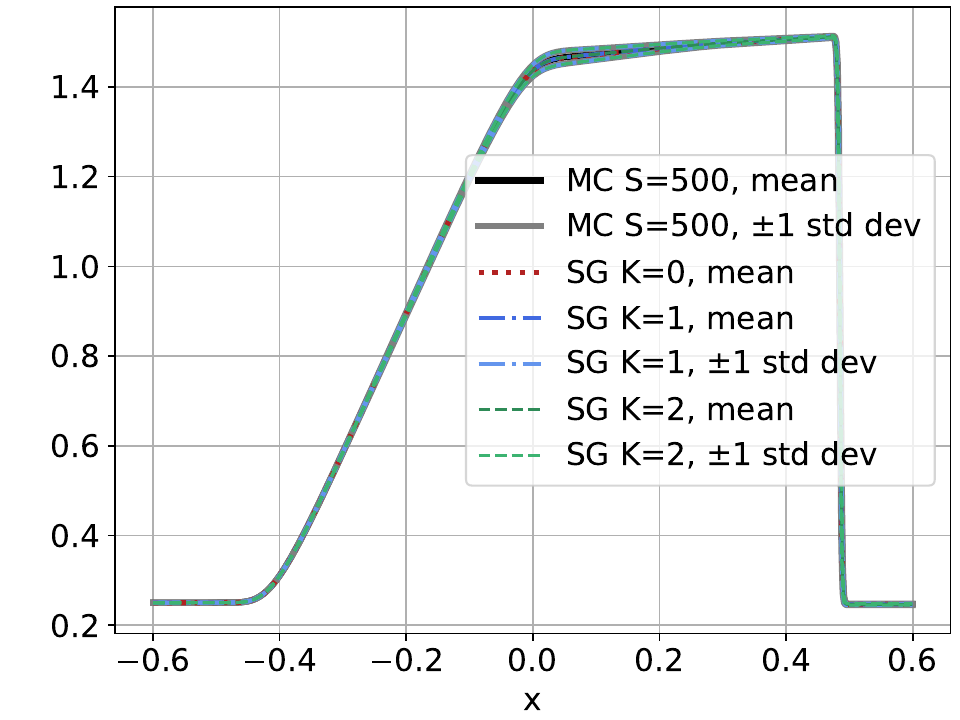}
        \vspace{-1em}
        \caption*{(d) Average velocity $u_m$}
        \includegraphics[width=0.9\linewidth]{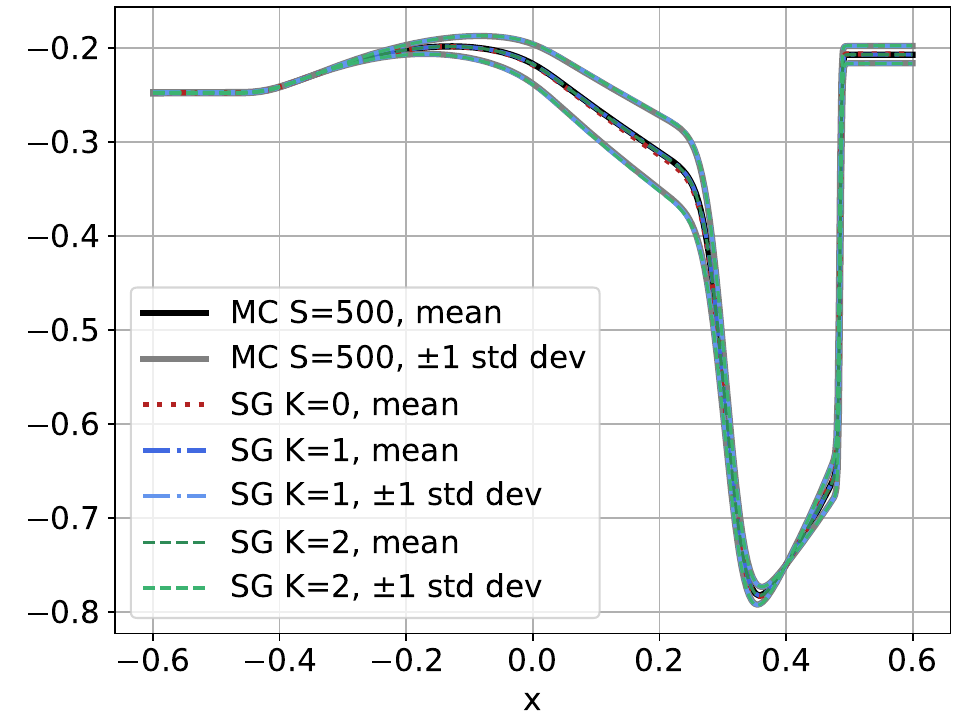}
        \vspace{-1em}
        \caption*{(g) First moment $u_1$}
        \includegraphics[width=0.9\linewidth]{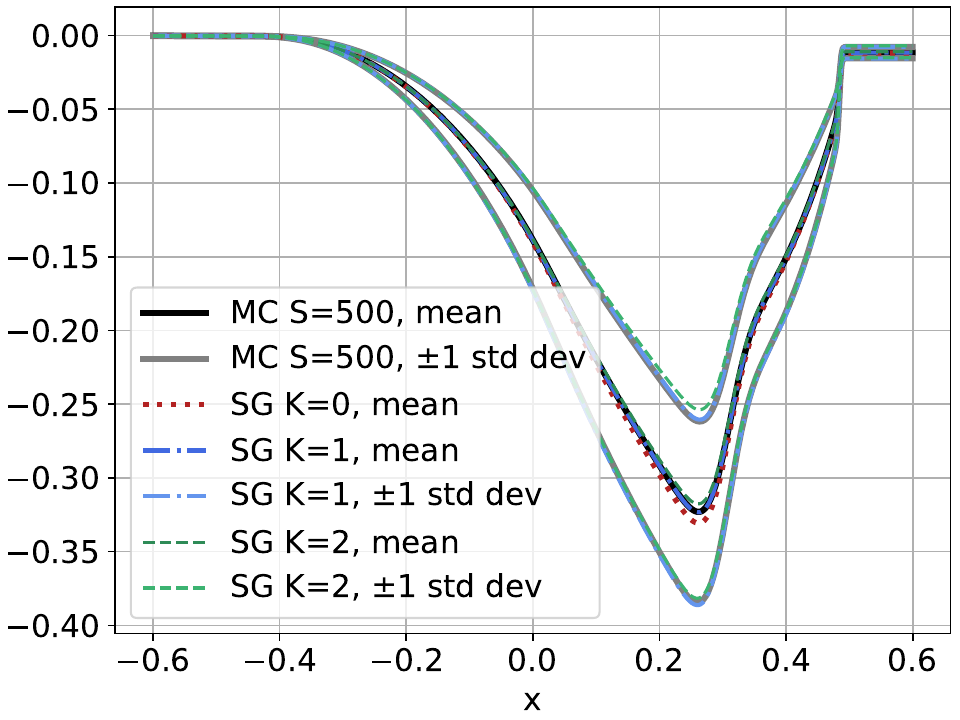}
        \vspace{-1em}
        \caption*{(j) Second moment $u_2$}
    \endminipage
    \minipage{0.33\textwidth}
        \centering
        \includegraphics[width=0.9\linewidth]{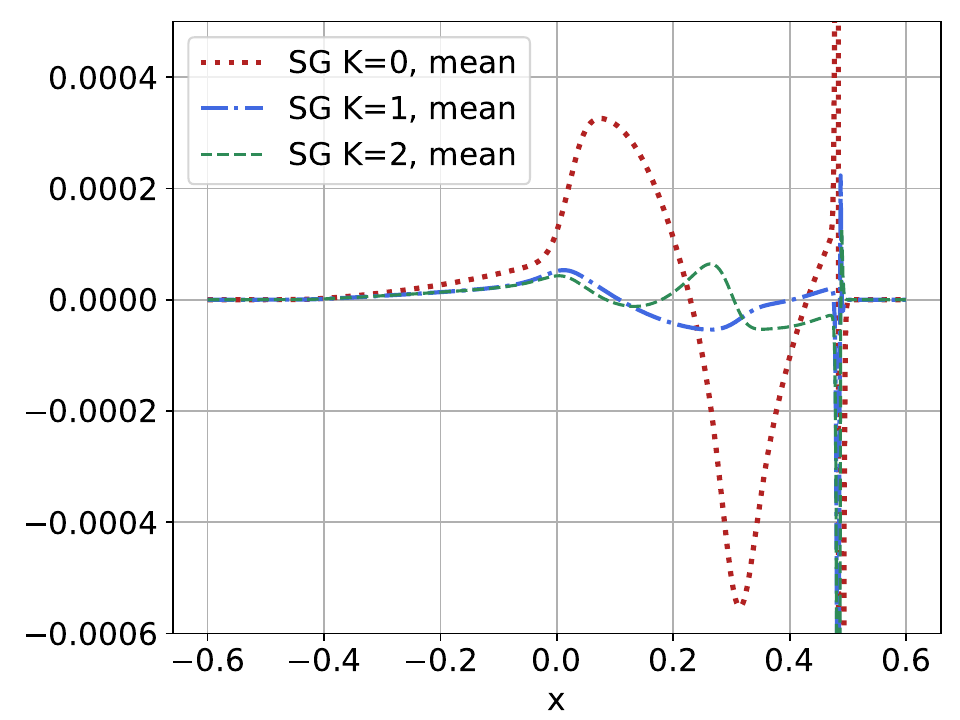}
        \vspace{-1em}
        \caption*{(b) Relative error mean of $h$}
        \includegraphics[width=0.9\linewidth]{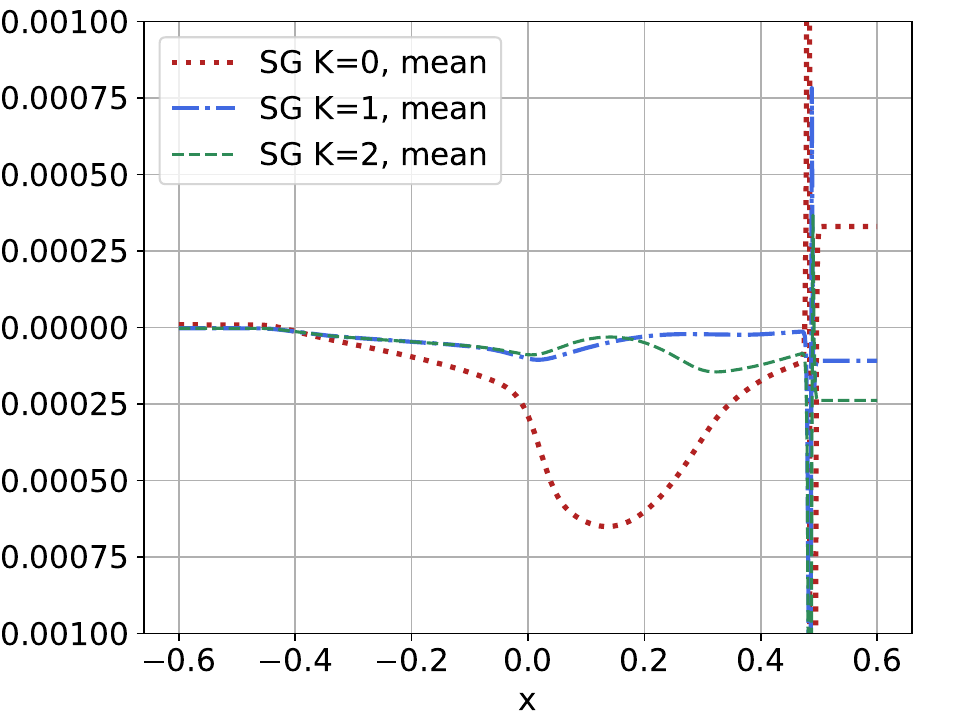}
        \vspace{-1em}
        \caption*{(e) Relative error mean of $u_m$}
        \includegraphics[width=0.9\linewidth]{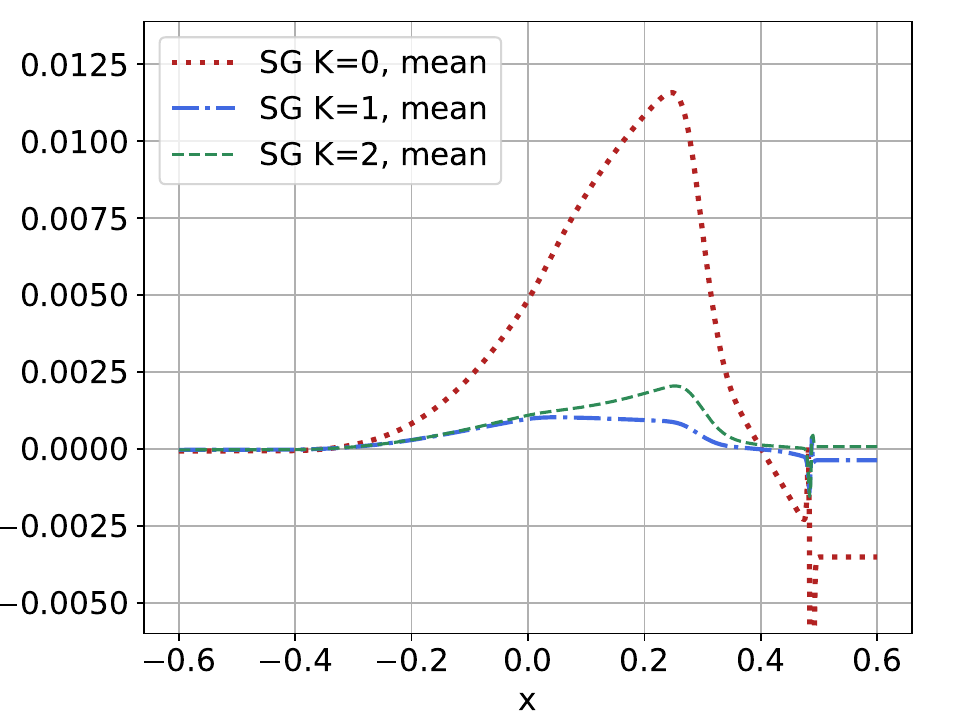}
        \vspace{-1em}
        \caption*{(h) Relative error mean of $u_1$}
        \includegraphics[width=0.9\linewidth]{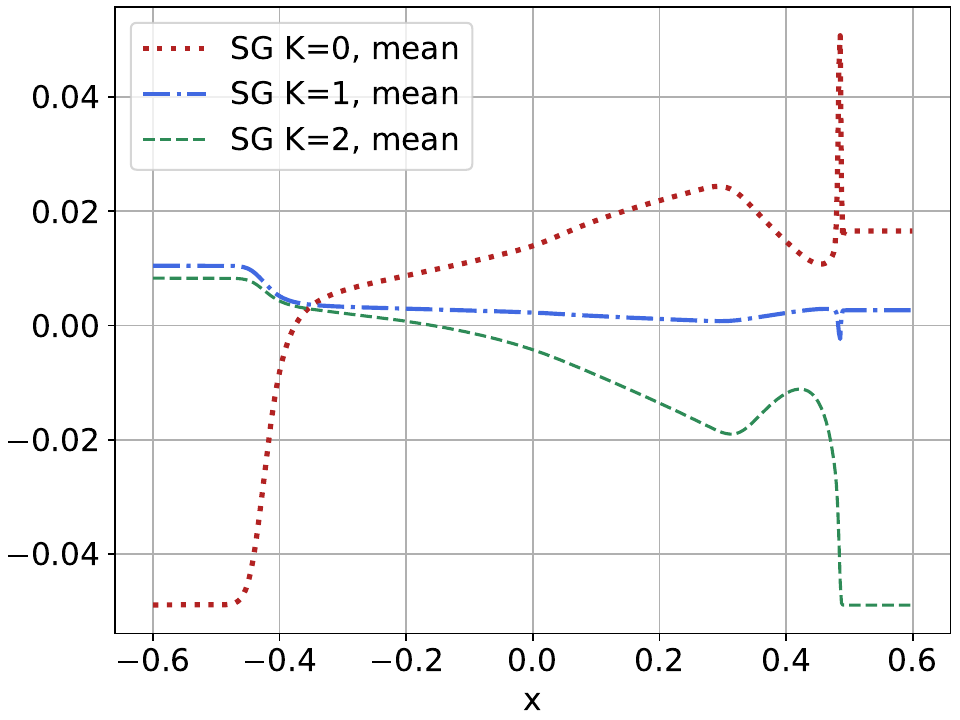}
        \vspace{-1em}
        \caption*{(k) Relative error mean of $u_2$}
    \endminipage
    \minipage{0.33\textwidth}
        \centering
        \includegraphics[width=0.9\linewidth]{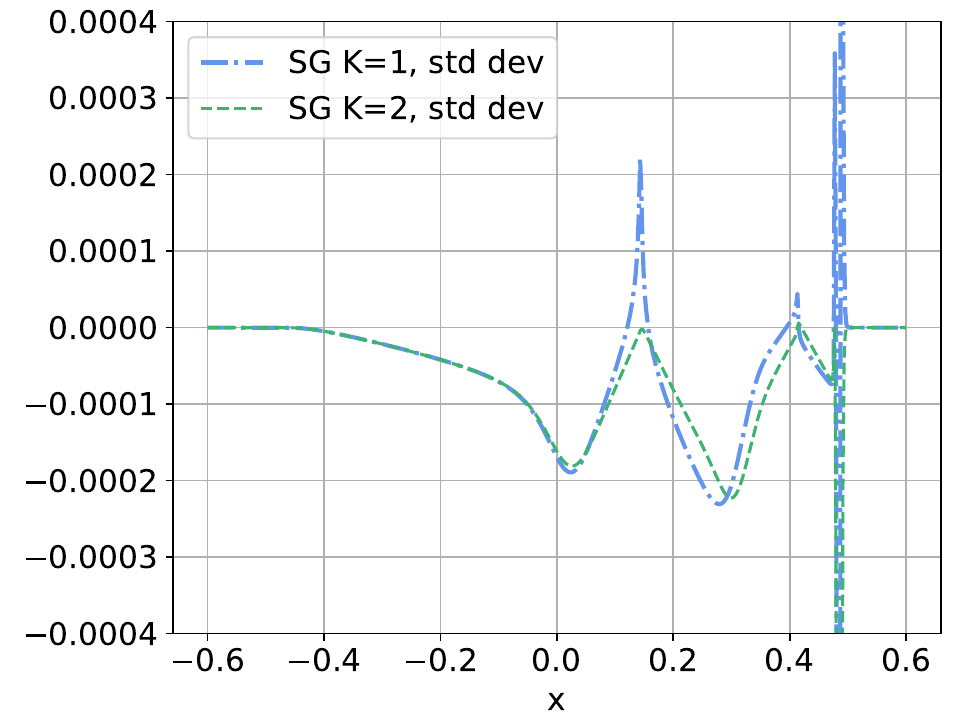}
        \vspace{-1em}
        \caption*{(c) Relative error std. dev. of $h$}
        \includegraphics[width=0.9\linewidth]{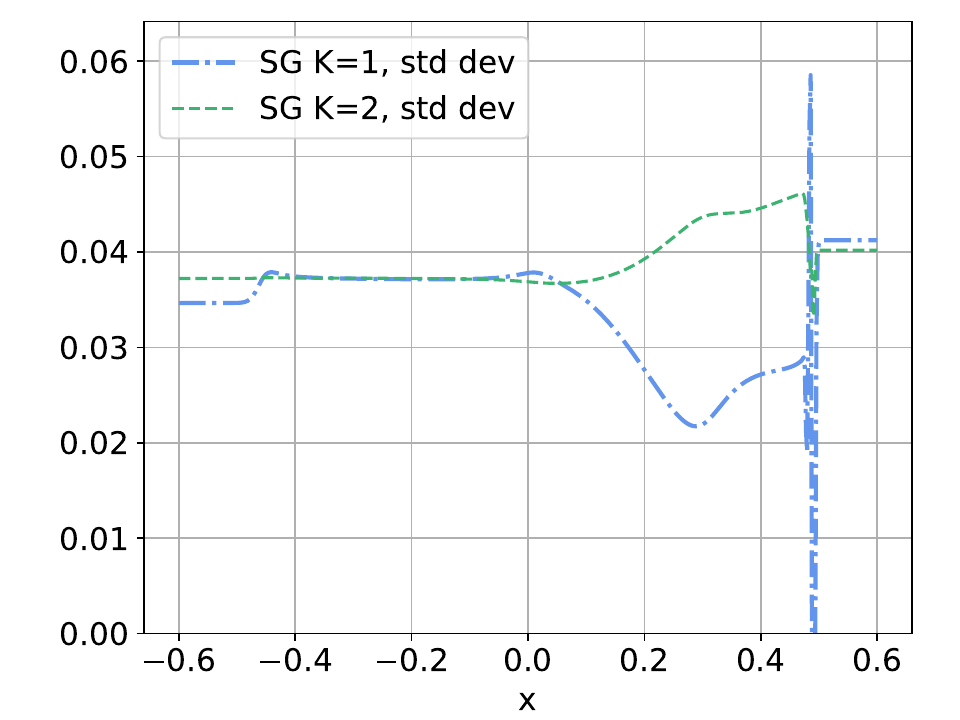}
        \vspace{-1em}
        \caption*{(f) Relative error std. dev. of $u_m$}
        \includegraphics[width=0.9\linewidth]{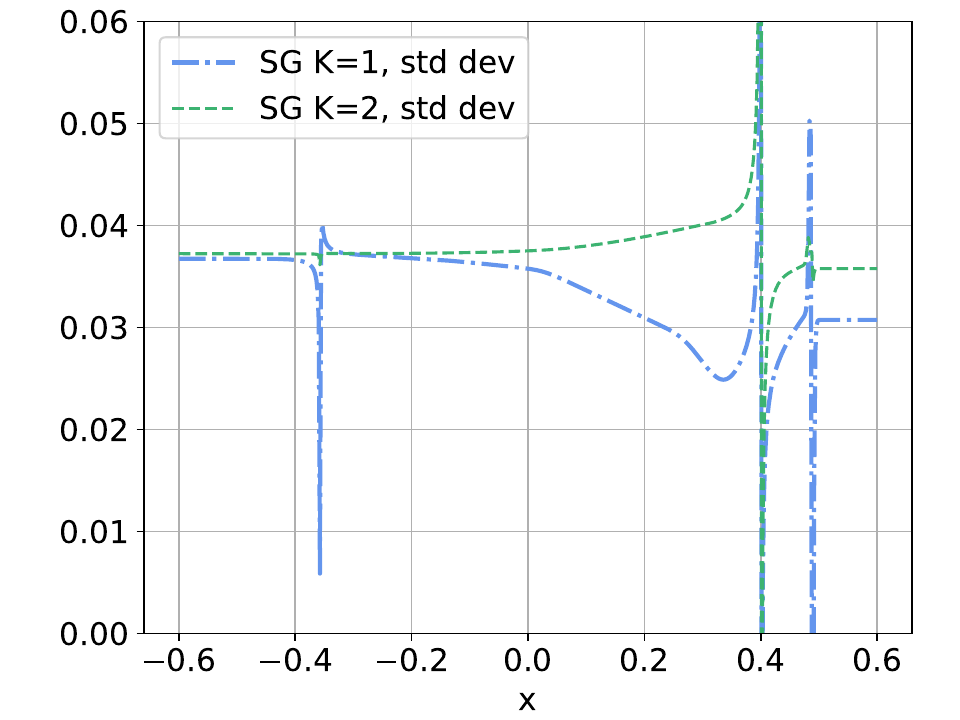}
        \vspace{-1em}
        \caption*{(i) Relative error std. dev. of $u_1$}
        \includegraphics[width=0.9\linewidth]{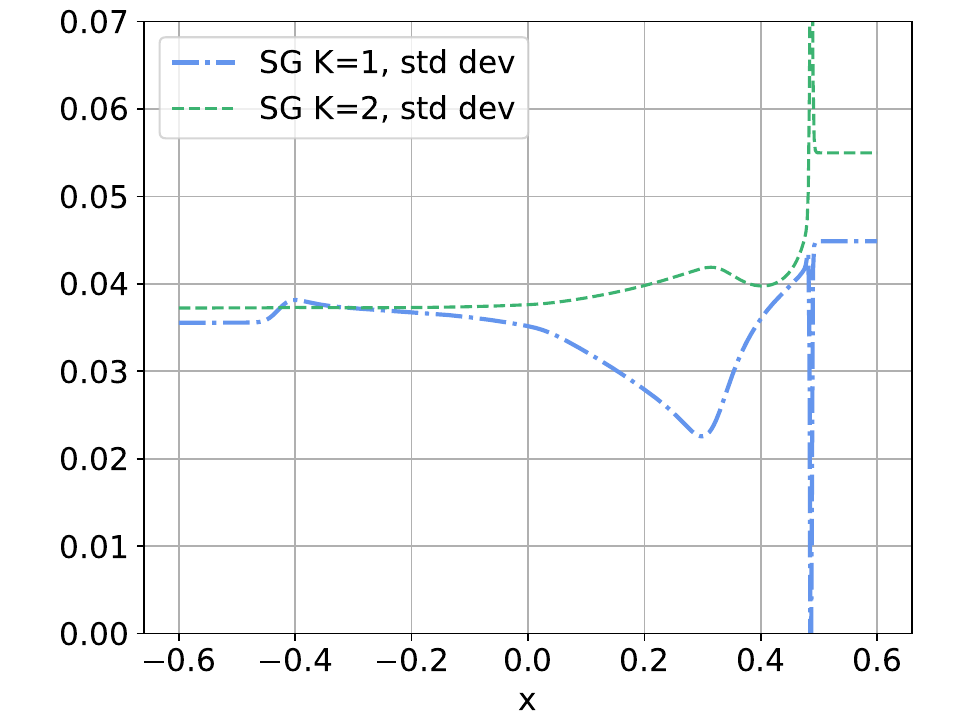}
        \vspace{-1em}
        \caption*{(l) Relative error std. dev. of $u_2$}
    \endminipage
    \caption{Stochastic Galerkin accuracy test for the high dam break test case with (SG)SWLME $N=2$ using different stochastic Galerkin orders $K$. While increasing the order generally enhances the accuracy of the simulation, its effectiveness varies across different variables.}\label{RES-fig:SG_N2_high}
\end{figure*}

\newpage
In summary, the dam break test cases demonstrate the ability of the stochastic Galerkin solution to produce a relatively accurate solution in a single run, compared to the Monte Carlo reference solution, which requires many (albeit faster) runs. However, the accuracy is not uniform across all variables and the relative errors remain large around the discontinuities.

For the smooth wave test case (Table \ref{RES-tab:smooth_wave}) with $N=1$ systems, as shown in Figure \ref{RES-fig:SG_N1_wave}, inaccuracy peaks around $x=0.4$, at the wave crest. For all three variables - $h$, $u_m$ and $u_1$ - the maximal relative error clearly decreases as the order $K$ increases. It is also evident that the mean for $K=1$ and $K=2$ does not coincide with the deterministic result ($K=0$). Additionally, the inaccuracy for $K=1$ and $K=2$ remains larger in the standard deviation than in the mean.

\begin{figure*}[!htb]
    \centering
    \minipage{0.33\textwidth}
        \centering
        \includegraphics[width=\linewidth]{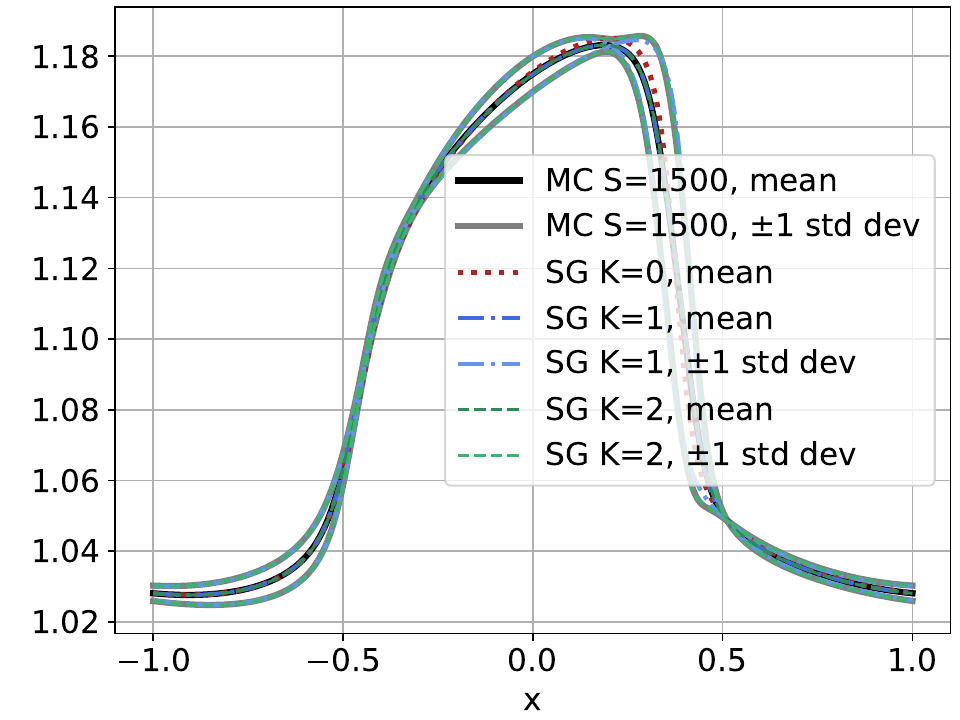}
        \vspace{-2em}
        \caption*{(a) Water height $h$}
        \includegraphics[width=\linewidth]{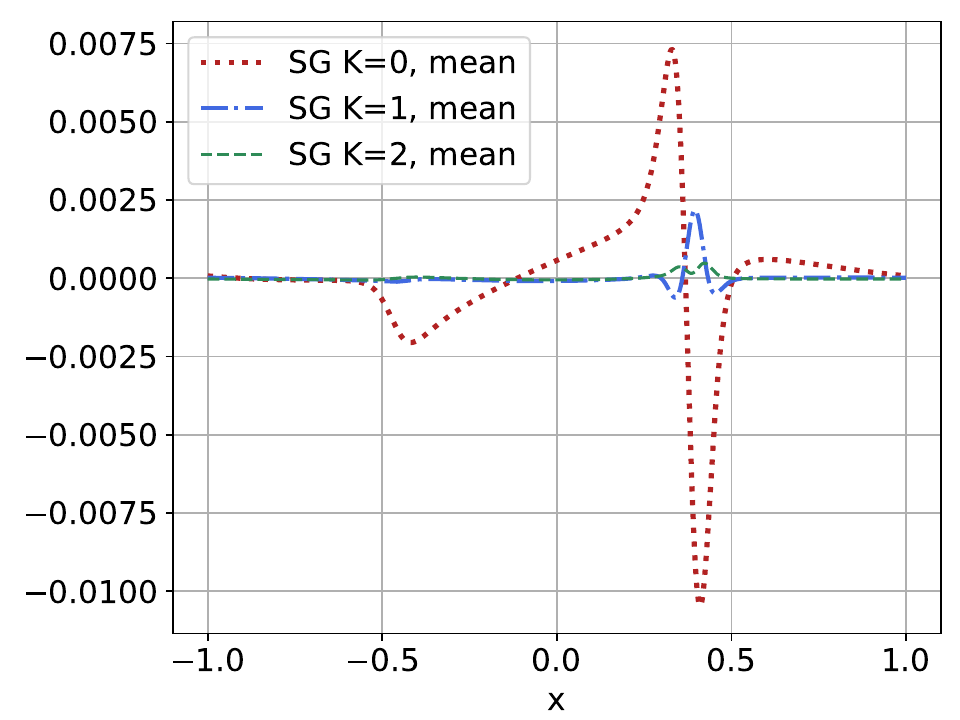}
        \vspace{-2em}
        \caption*{(d) Relative error mean of $h$}
        \includegraphics[width=\linewidth]{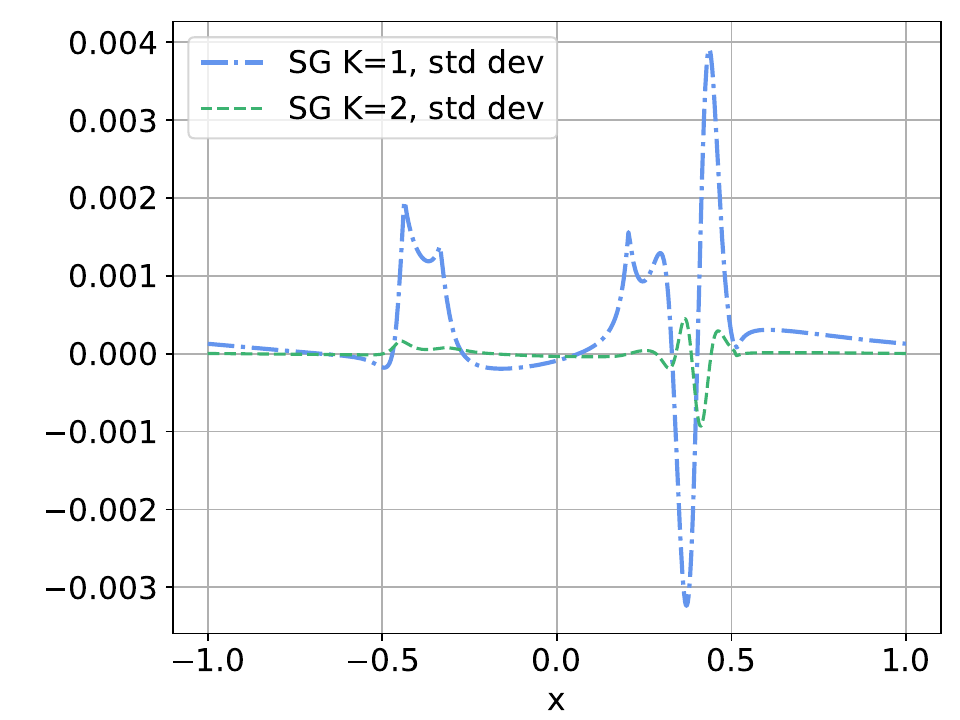}
        \vspace{-2em}
        \caption*{(g) Relative error std. dev. of $h$}
    \endminipage
    \minipage{0.33\textwidth}
        \centering
        \includegraphics[width=\linewidth]{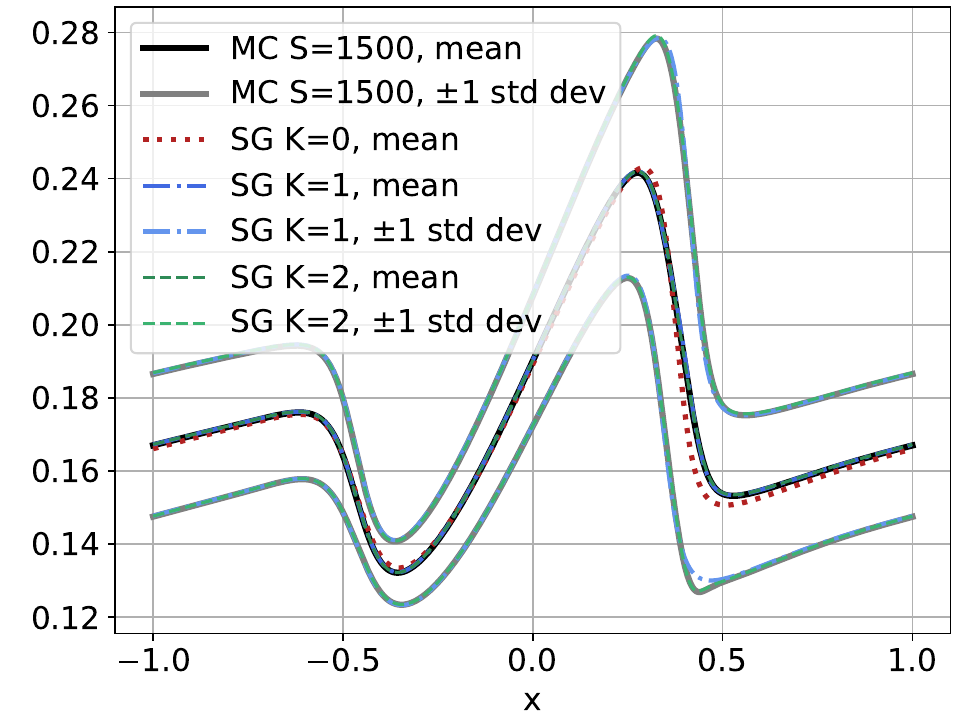}
        \vspace{-2em}
        \caption*{(b) Average velocity $u_m$}
        \includegraphics[width=\linewidth]{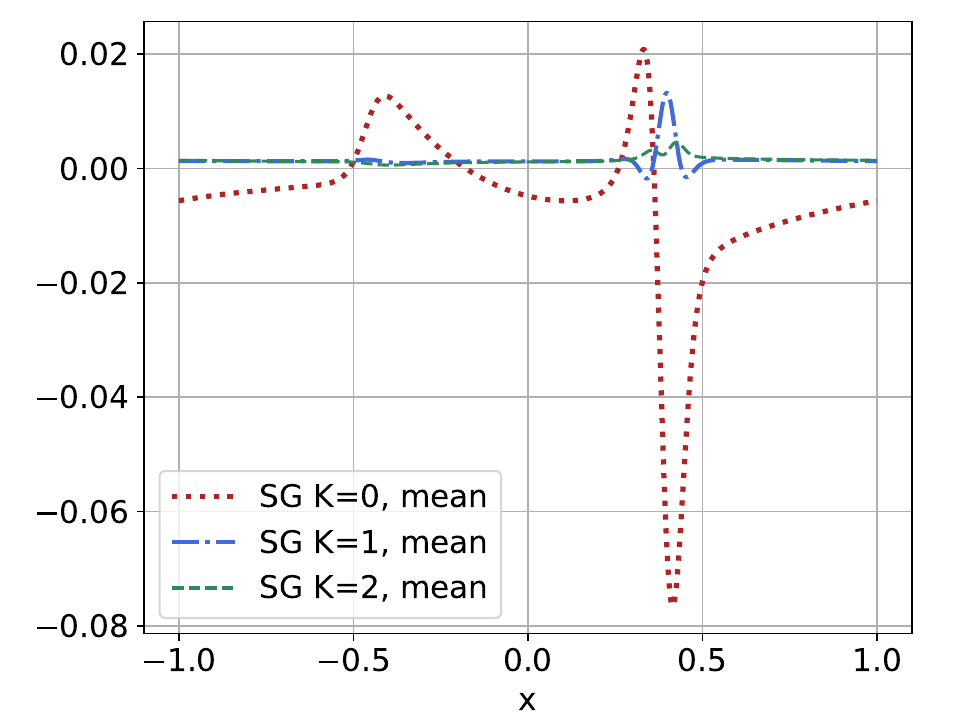}
        \vspace{-2em}
        \caption*{(e) Relative error mean of $u_m$}
        \includegraphics[width=\linewidth]{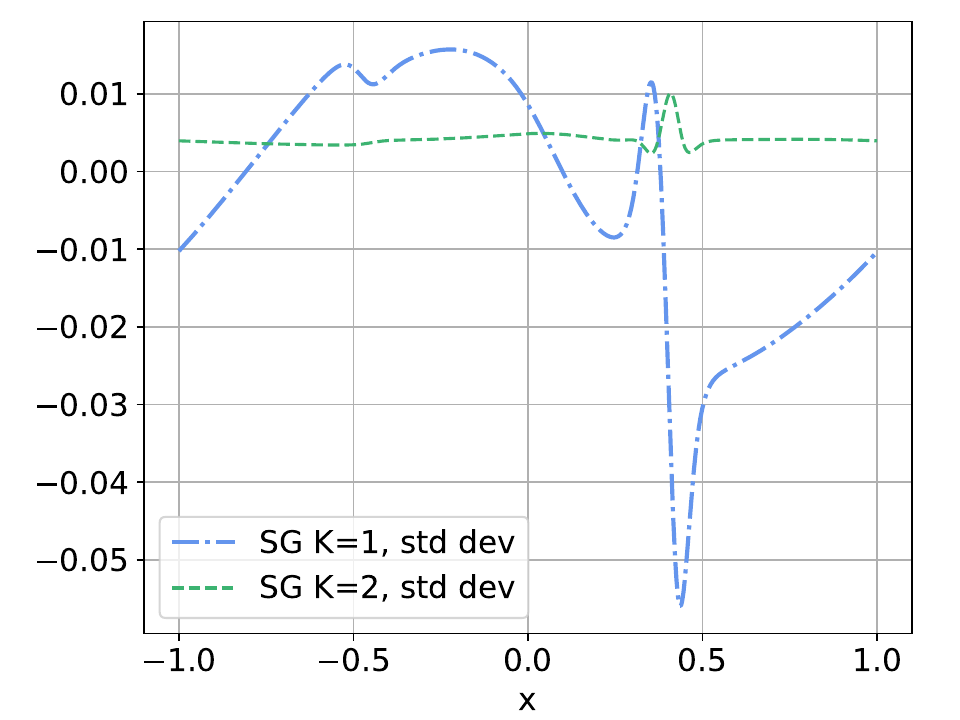}
        \vspace{-2em}
        \caption*{(h) Relative error std. dev. of $u_m$}
    \endminipage
    \minipage{0.33\textwidth}
        \centering
        \includegraphics[width=\linewidth]{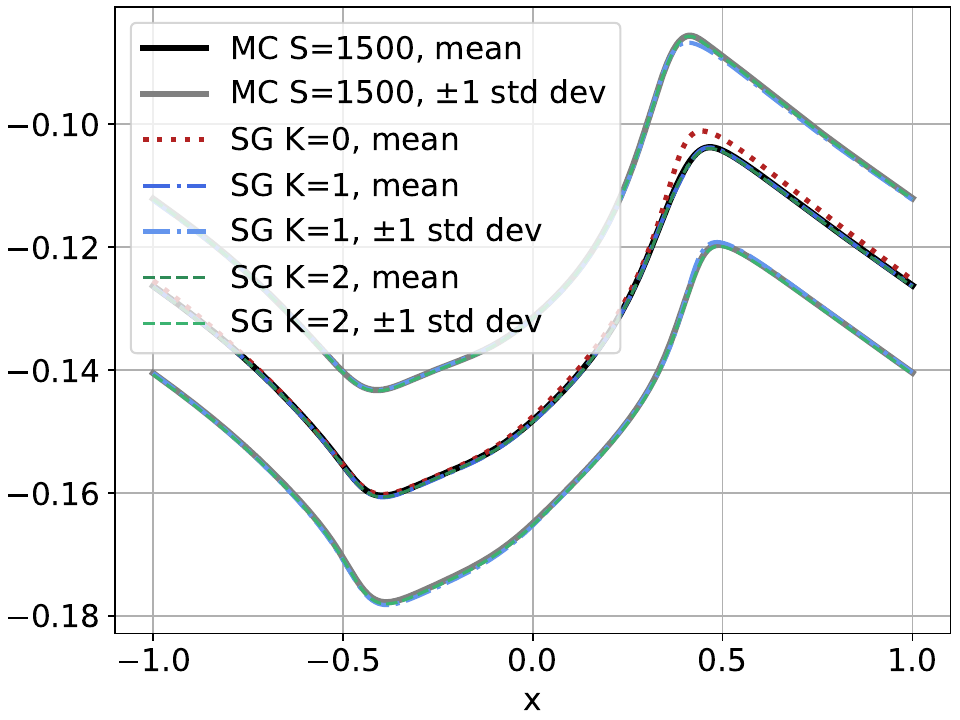}
        \vspace{-2em}
        \caption*{(c) First moment $u_1$}
        \includegraphics[width=\linewidth]{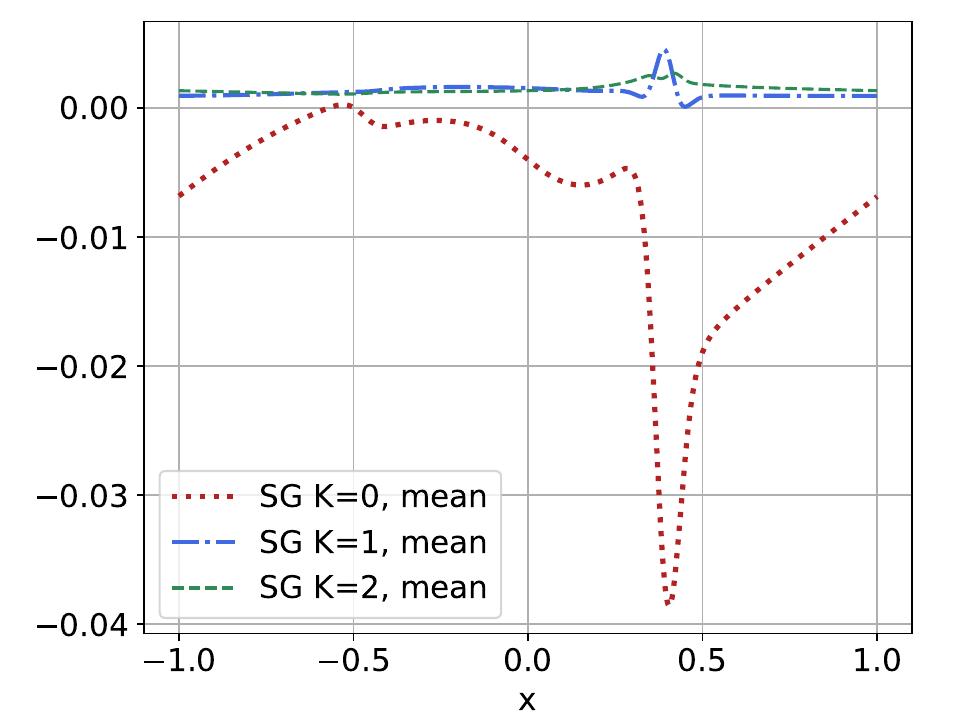}
        \vspace{-2em}
        \caption*{(f) Relative error mean of $u_1$}
        \includegraphics[width=\linewidth]{Figures/SG_N1_wave_u1_difv.pdf}
        \vspace{-2em}
        \caption*{(i) Relative error std. dev. of $u_1$}
    \endminipage
    \caption{Stochastic Galerkin accuracy test for the smooth periodic wave test case with (SG)SWLME $N=1$ using different stochastic Galerkin orders $K$. Higher orders improve the accuracy of the solution.}\label{RES-fig:SG_N1_wave}
\end{figure*}

In the smooth wave test case for $N=2$ systems, as shown in Figure \ref{RES-fig:SG_N2_wave}, we observe a behaviour similar to that in Figure \ref{RES-fig:SG_N1_wave}, with peaks in the relative error of the mean around $x=0.4$, near the wave crest. The $K=1$ solution yields small errors for all functions, both in mean and standard deviation, except for the standard deviation of the second moment $u_2$. However, the second moment $u_2$ is a very small quantity, with a range of only 0.05.

In general, we again observe that the deterministic case ($K=0$) does not always provide a good approximation of the mean of a function. This is due to the non-linear influence of the friction coefficient $\nu$ on the four functions $h$, $u_m$, $u_1$, and $u_2$. Additionally, for the second moment $u_2$, the $K=2$ curve begins to deviate significantly from the Monte Carlo solution, although the absolute scale remains small. Consequently, we conclude that the $K=1$ curve is more reliable than the $K=2$ curve.

\begin{figure*}[!htb]
    \centering
    \minipage{0.33\textwidth}
        \centering
        \includegraphics[width=0.9\linewidth]{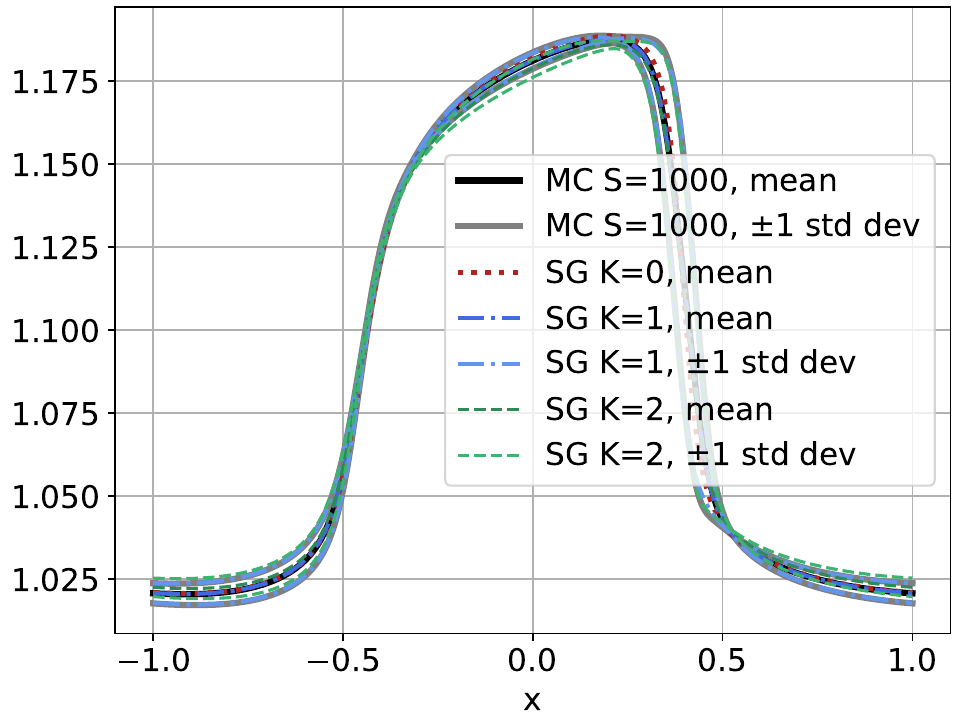}
        \vspace{-1em}
        \caption*{(a) Water height $h$}
        \includegraphics[width=0.9\linewidth]{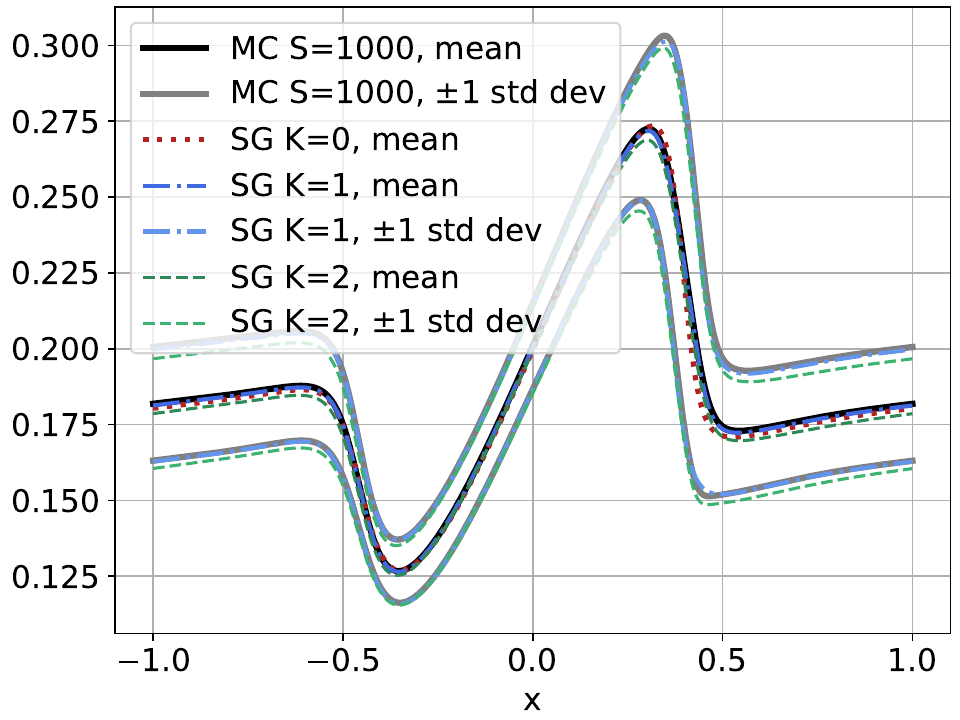}
        \vspace{-1em}
        \caption*{(d) Average velocity $u_m$}
        \includegraphics[width=0.9\linewidth]{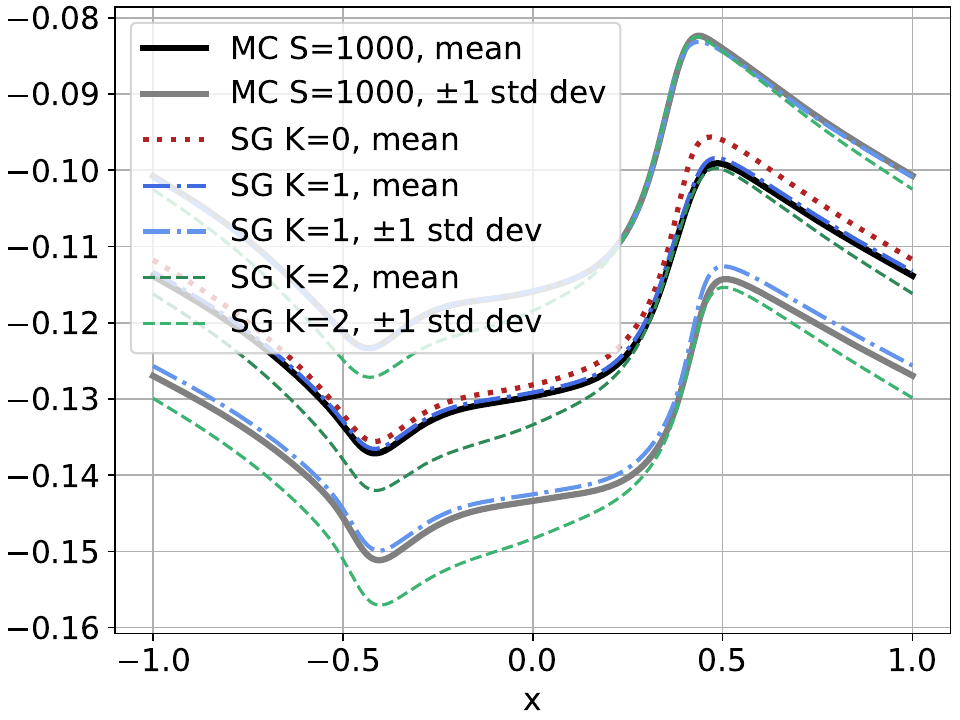}
        \vspace{-1em}
        \caption*{(g) First moment $u_1$}
        \includegraphics[width=0.9\linewidth]{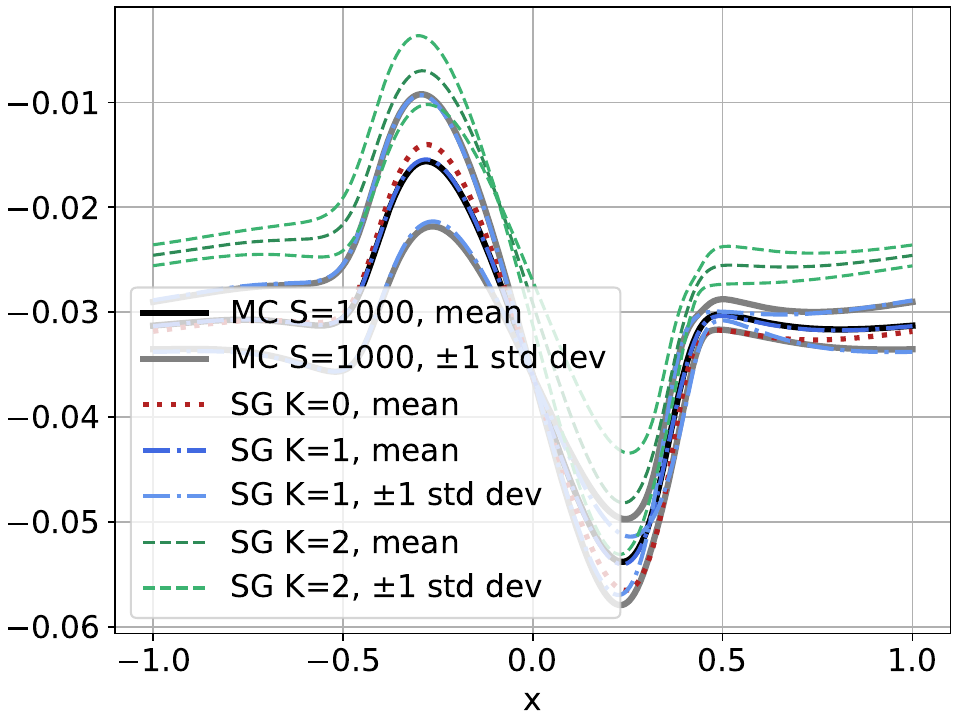}
        \vspace{-1em}
        \caption*{(j) Second moment $u_2$}
    \endminipage
    \minipage{0.33\textwidth}
        \centering
        \includegraphics[width=0.9\linewidth]{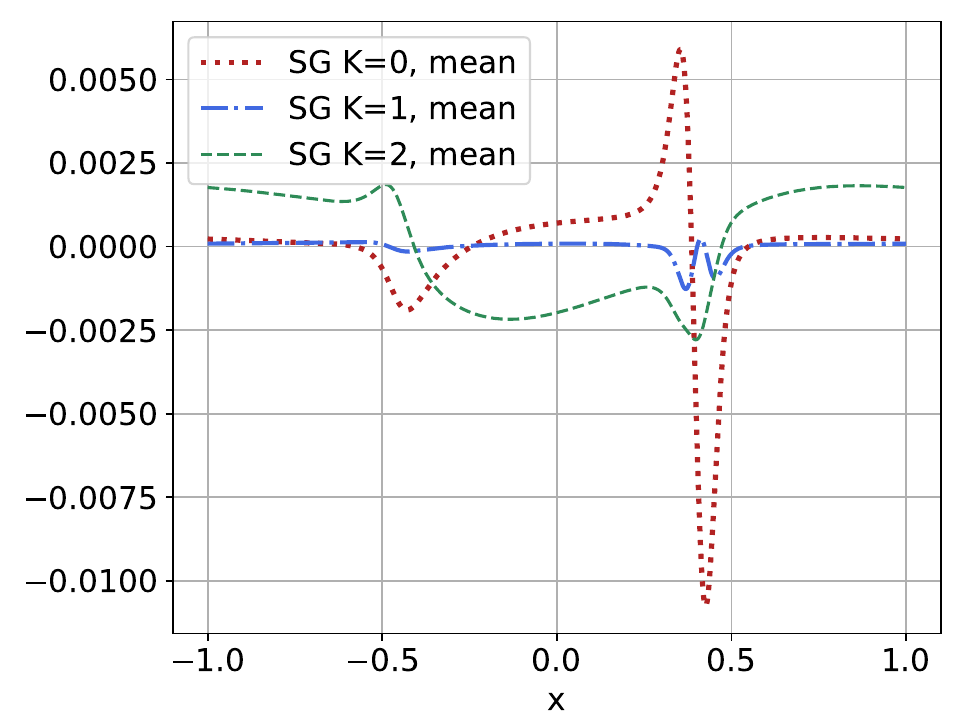}
        \vspace{-1em}
        \caption*{(b) Relative error mean of $h$}
        \includegraphics[width=0.9\linewidth]{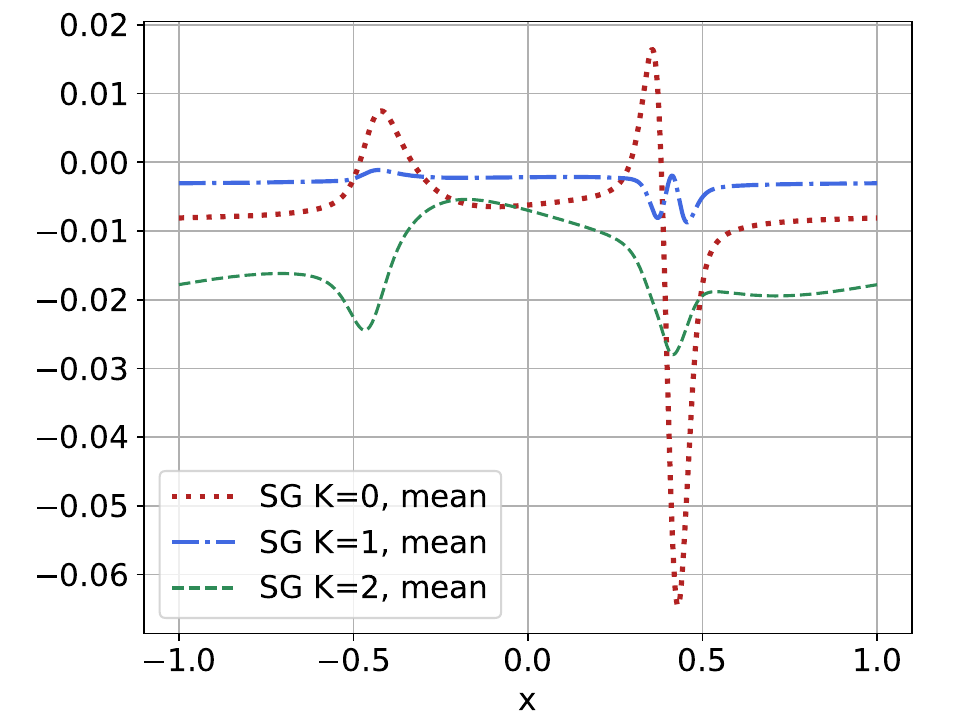}
        \vspace{-1em}
        \caption*{(e) Relative error mean of $u_m$}
        \includegraphics[width=0.9\linewidth]{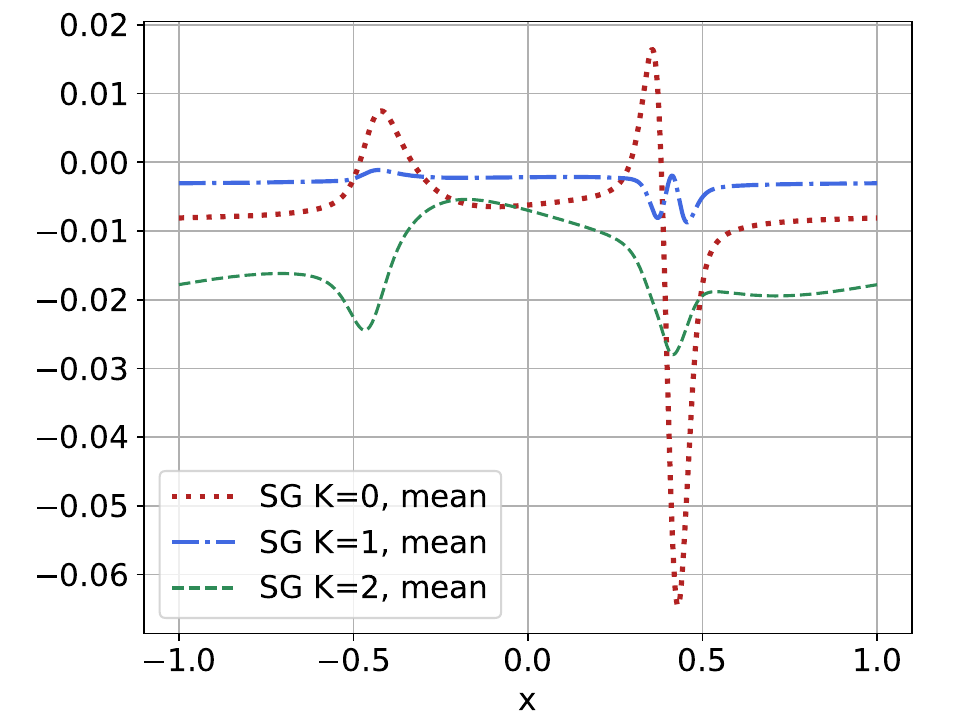}
        \vspace{-1em}
        \caption*{(h) Relative error mean of $u_1$}
        \includegraphics[width=0.9\linewidth]{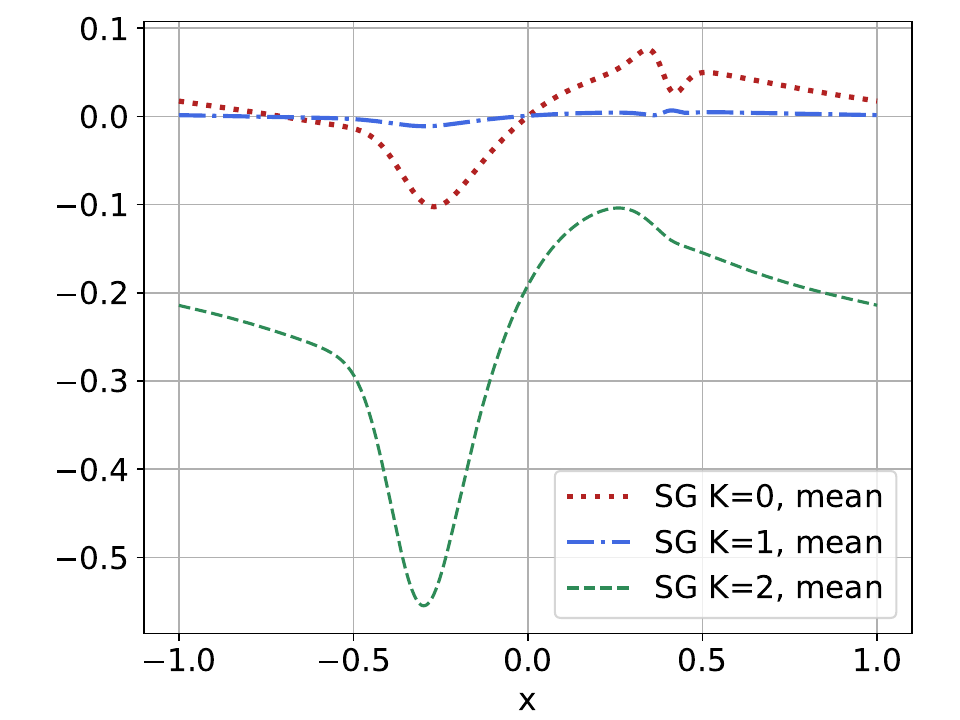}
        \vspace{-1em}
        \caption*{(k) Relative error mean of $u_2$}
    \endminipage
    \minipage{0.33\textwidth}
        \centering
        \includegraphics[width=0.9\linewidth]{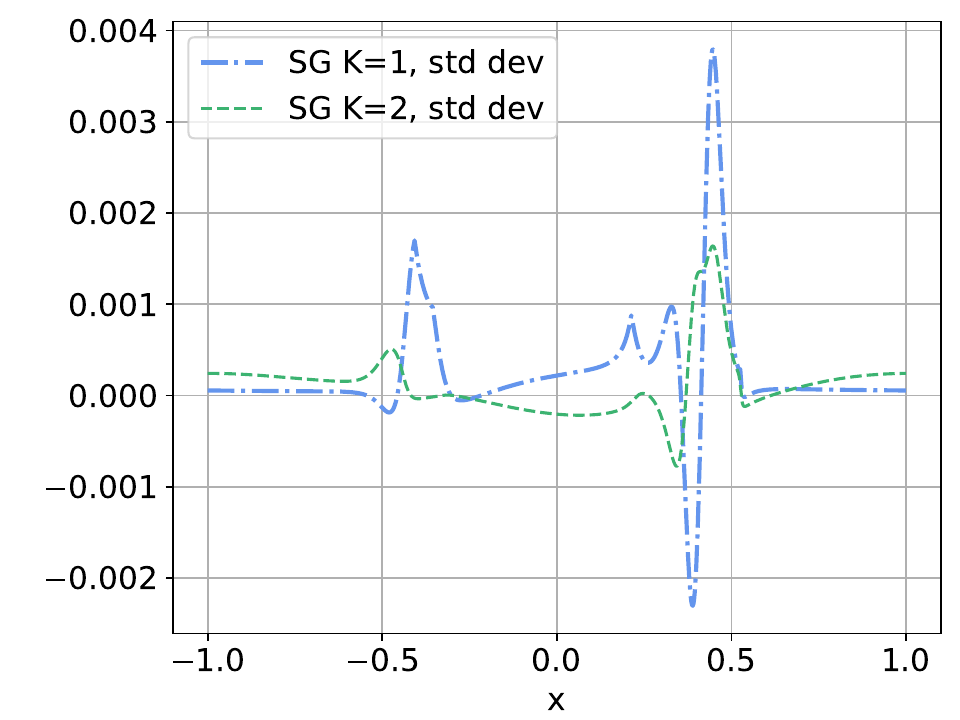}
        \vspace{-1em}
        \caption*{(c) Relative error std. dev. of $h$}
        \includegraphics[width=0.9\linewidth]{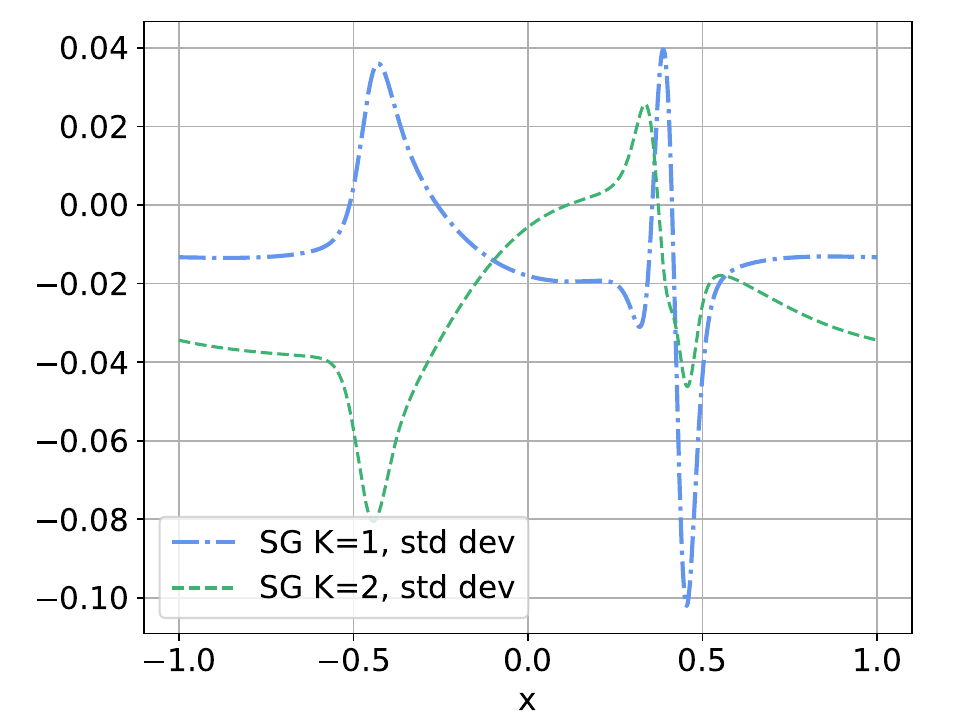}
        \vspace{-1em}
        \caption*{(f) Relative error std. dev. of $u_m$}
        \includegraphics[width=0.9\linewidth]{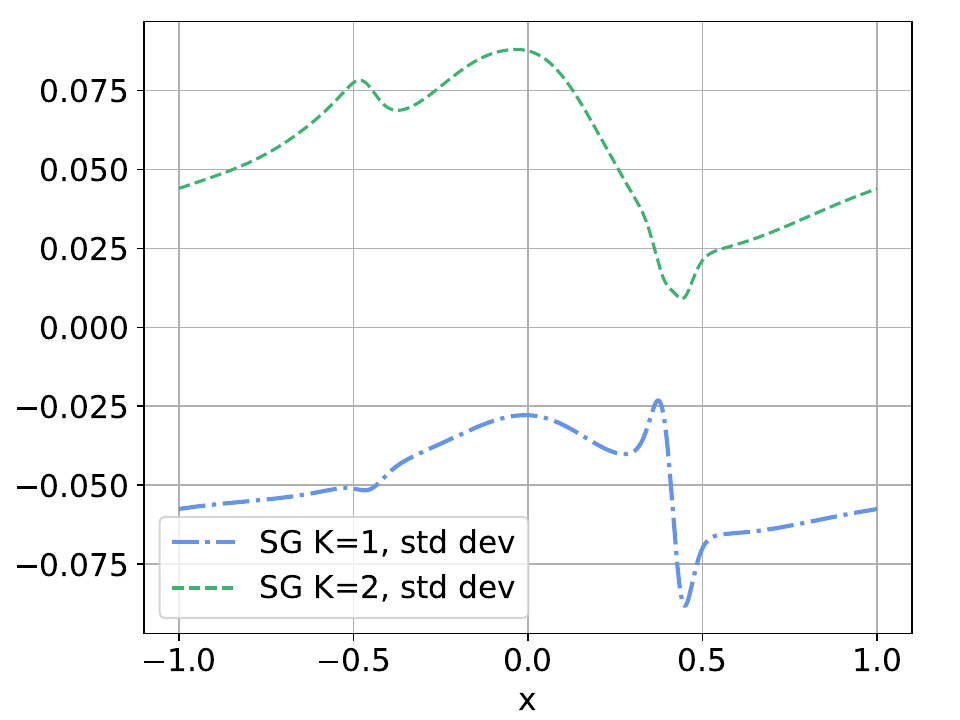}
        \vspace{-1em}
        \caption*{(i) Relative error std. dev. of $u_1$}
        \includegraphics[width=0.9\linewidth]{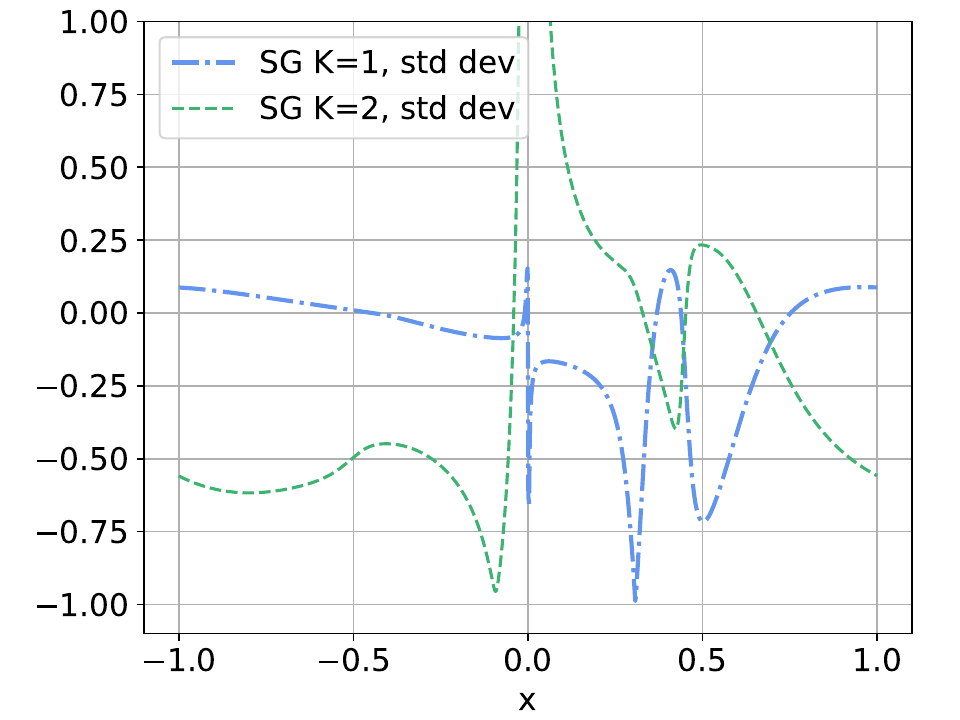}
        \vspace{-1em}
        \caption*{(l) Relative error std. dev. of $u_2$}
    \endminipage
    \caption{Stochastic Galerkin accuracy test for the smooth periodic wave test case with (SG)SWLME $N=2$ using different stochastic Galerkin orders $K$. Not for all variables does increasing the order improve the accuracy of the simulation.}\label{RES-fig:SG_N2_wave}
\end{figure*}


\newpage
In Table \ref{RES-tab:rel_runtimes}, we compare the run time of Monte Carlo and stochastic Galerkin solutions relative to the time required to run a single Monte Carlo sample for $K=0$. Although increasing the stochastic Galerkin order from $K=0$ to $K=1$ results in only a modest increase in run time, increasing the order from $K=1$ to $K=2$ significantly impacts performance. This is due to the larger system size and the greater complexity of the matrix and vector entries involved. Nevertheless, the stochastic Galerkin approach consistently achieves a substantial speedup compared to Monte Carlo with $S=100-350$ samples for the dam break test case and $S=550-900$ samples for the smooth periodic wave test case.

For a global estimate of mean and standard deviation, the $K=1$ system can sometimes offer a better balance between run time and accuracy than $K=2$. For the $N=2$ case (see Figures \ref{RES-fig:SG_N2_low}, \ref{RES-fig:SG_N2_high}, and \ref{RES-fig:SG_N2_wave}), we found that the $K=1$ solutions were at times even more accurate than the $K=2$ solutions.
    
\begin{table}[H]
    \centering
    \begin{tabular}{||c|c|c|c||}
        \hhline{#=|=|=|=#} \textbf{Model} & \textbf{Method} & \textbf{Dam Break} & \textbf{Smooth Wave} \\
        \hhline{#=|=|=|=#} $N=1$ & Deterministic & 1 & 1 \\
        \hline $N=1$ & Monte Carlo & 100 & 900 \\
        \hline $N=1$ & Stochastic Galerkin $K=1$ & 2.0 & 2.0 \\
        \hline $N=1$ & Stochastic Galerkin $K=2$ & 38.1 & 37.9 \\
        \hhline{#=|=|=|=#} $N=2$ & Deterministic & 1 & 1 \\
        \hline $N=2$ & Monte Carlo & 100-350 & 550 \\
        \hline $N=2$ & Stochastic Galerkin $K=1$ & 1.9 & 2.1 \\
        \hline $N=2$ & Stochastic Galerkin $K=2$ & 58.0 & 58.3 \\
        \hhline{#=|=|=|=#}
    \end{tabular}
    \caption{Comparison of relative run times. Stochastic Galerkin yields substantial speedup compared to Monte Carlo.}
    \label{RES-tab:rel_runtimes}
\end{table}

In this particular case, where the uncertainty is limited to a one-dimensional parameter - the friction coefficient $\nu$ - stochastic collocation methods could also have been employed. These methods offer faster convergence than Monte Carlo and allow the use of existing deterministic solvers without modification, much like Monte Carlo. However, stochastic collocation methods are still limited by the curse of dimensionality, as computational costs grow rapidly with the number of uncertain parameters. For scenarios involving uncertain bottom topography, such as those in \cite{dai_hyperbolicity-preserving_2021,dai_hyperbolicity-preserving_2022}, where multiple uncertain parameters are involved, stochastic collocation becomes less suitable.

\subsection{Regularisation of shallow water linearised moment equations \texorpdfstring{$N=1$}{N=1}}
In Theorem \ref{ANAL-th:hyp_N=1}, we proved that the system matrix \eqref{SGSWME-eq:SGSWLME_flux} for $N=1$ is hyperbolic after the regularisation given in equation \eqref{ANAL-eq:linearisation} for $K>1$. In Figures \ref{RES-fig:HYP_N1_low}, \ref{RES-fig:HYP_N1_high}, and \ref{RES-fig:HYP_N1_wave}, we compare simulations using the regularised SGSWLME model with those using the standard SGSWLME model for $K=2$. For the two dam break test cases, the accuracy is almost unaffected by the regularisation. However, for the smooth periodic wave test case, the regularised solution deviates slightly more from the standard solution and exhibits a minor loss of accuracy.

\begin{figure}[!htb]
    \centering
    \minipage{0.33\textwidth}
        \centering
        \includegraphics[width=\linewidth]{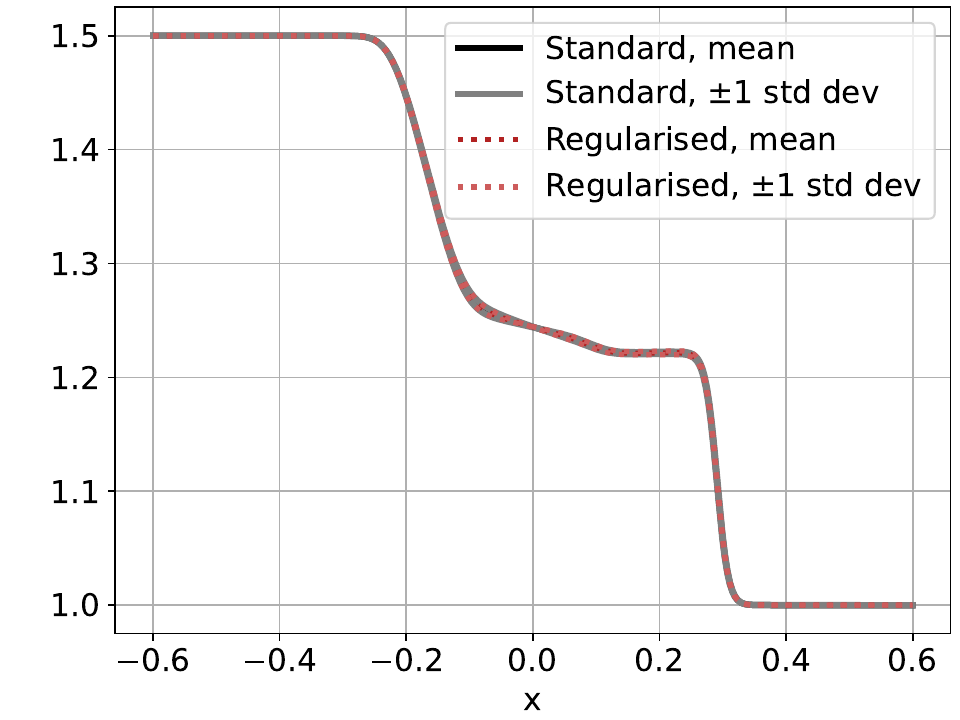}
        \vspace{-2em}
        \caption*{(a) Water height $h$}
    \endminipage\hfill
    \minipage{0.33\textwidth}
        \centering
        \includegraphics[width=\linewidth]{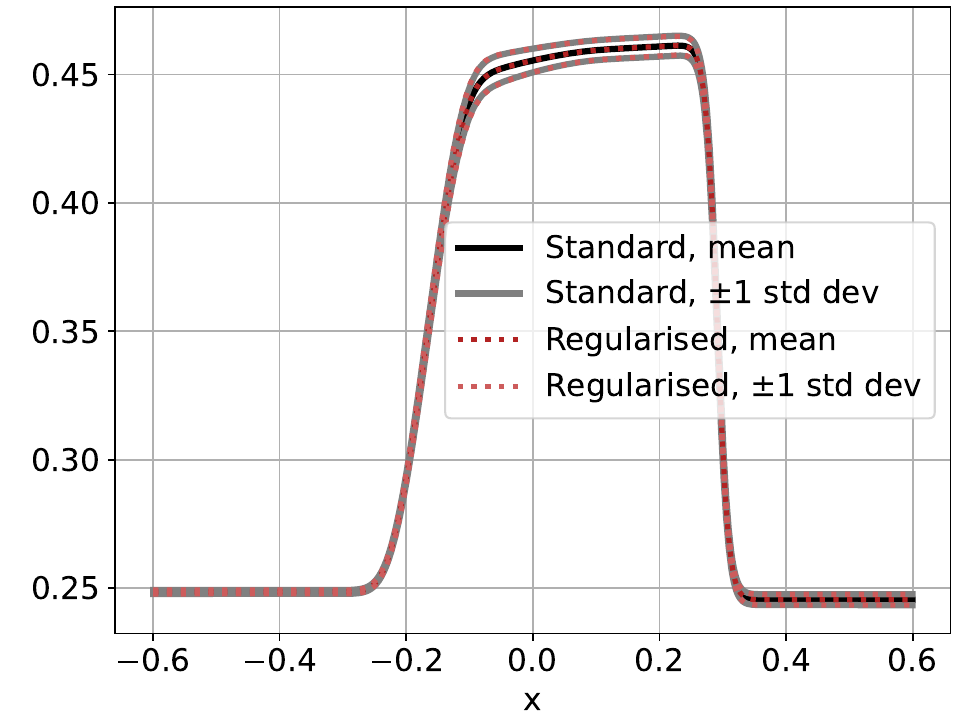}
        \vspace{-2em}
        \caption*{(b) Average velocity $u_m$}
    \endminipage\hfill
    \minipage{0.33\textwidth}
      \includegraphics[width=\linewidth]{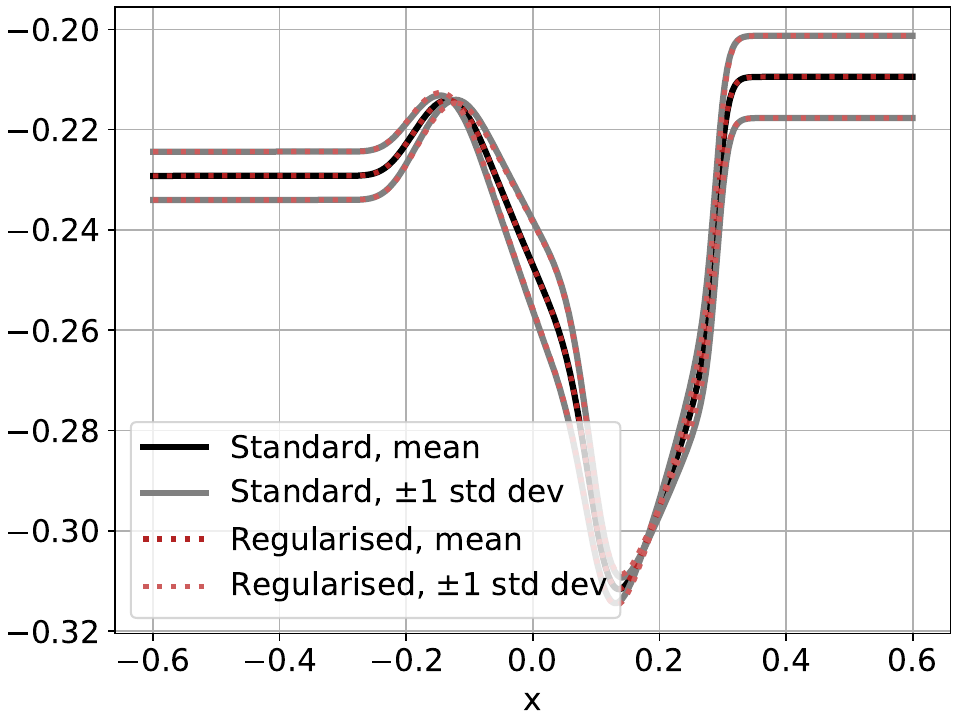}
      \vspace{-2em}
      \caption*{(c) First moment $u_1$}
    \endminipage
    \caption{Regularisation accuracy test for the low dam break test case with SGSWLME $N=1$ and $K=2$. Regularisation does not affect the accuracy of the simulation.}\label{RES-fig:HYP_N1_low}
\end{figure}
\newpage
\begin{figure}[!htb]
    \centering
    \minipage{0.33\textwidth}
        \centering
        \includegraphics[width=\linewidth]{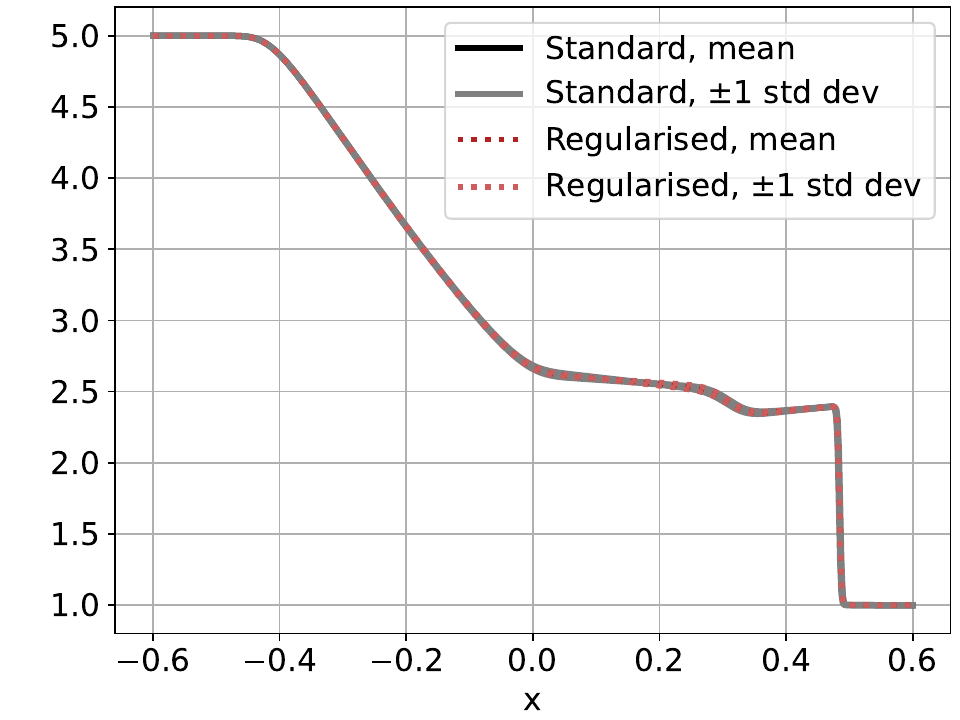}
        \vspace{-2em}
        \caption*{(a) Water height $h$}
    \endminipage\hfill
    \minipage{0.33\textwidth}
        \centering
        \includegraphics[width=\linewidth]{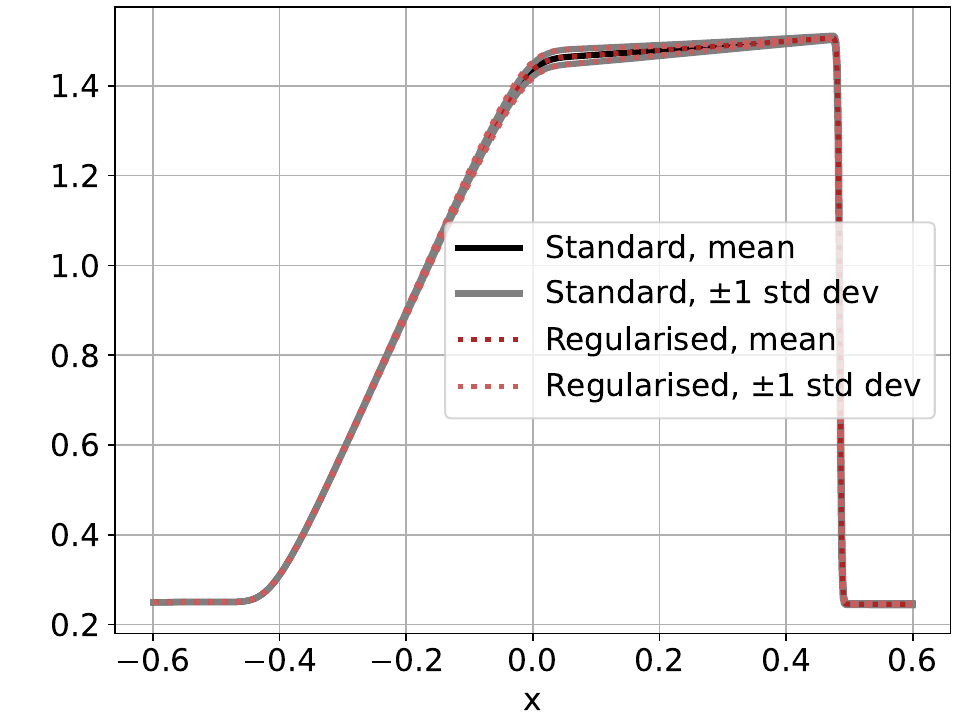}
        \vspace{-2em}
        \caption*{(b) Average velocity $u_m$}
    \endminipage\hfill
    \minipage{0.33\textwidth}
        \centering
        \includegraphics[width=\linewidth]{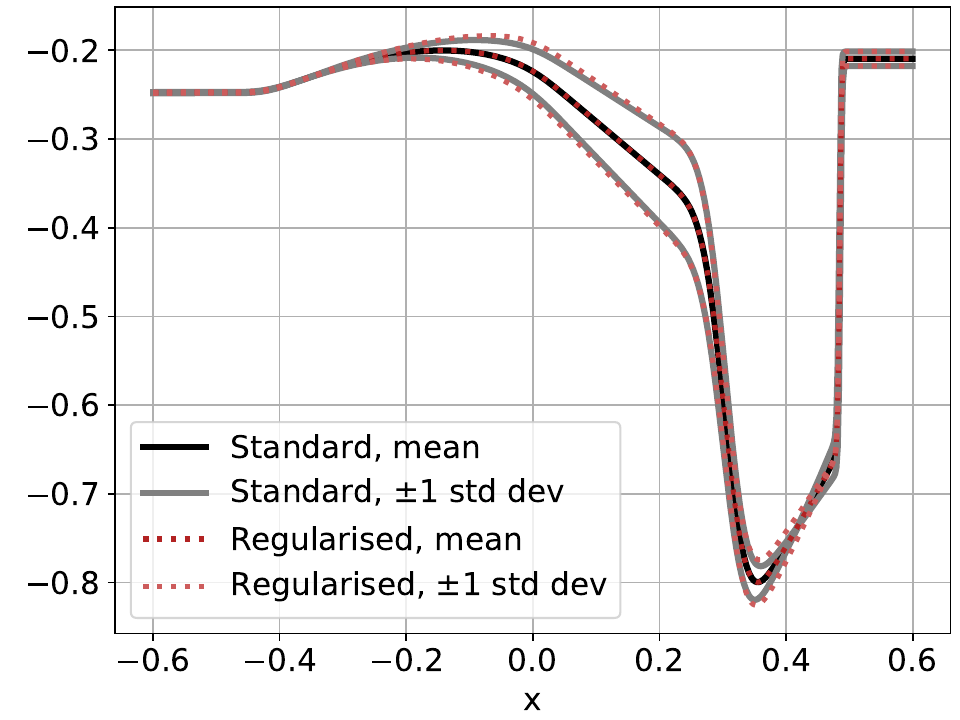}
        \vspace{-2em}
        \caption*{(c) First moment $u_1$}
    \endminipage
    \caption{Regularisation accuracy test for the high dam break test case with SGSWLME $N=1$ and $K=2$. Regularisation slightly affects the accuracy of the simulation.}\label{RES-fig:HYP_N1_high}
\end{figure}
\begin{figure}[!htb]
    \minipage{0.33\textwidth}
      \includegraphics[width=\linewidth]{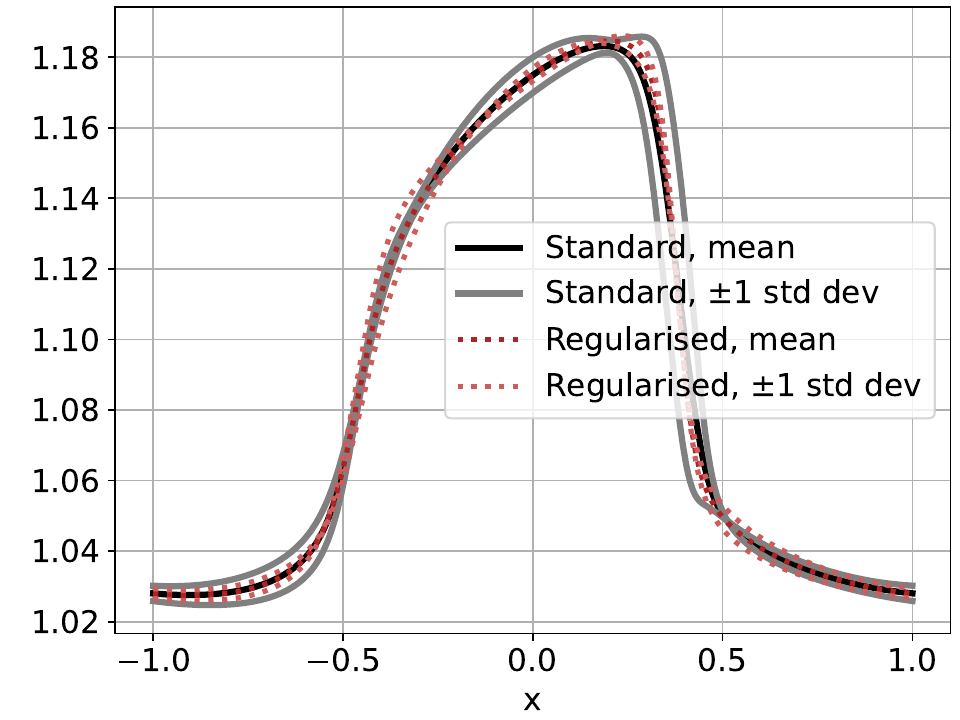}
      \caption*{(a) Water height $h$}
    \endminipage\hfill
    \minipage{0.33\textwidth}
      \includegraphics[width=\linewidth]{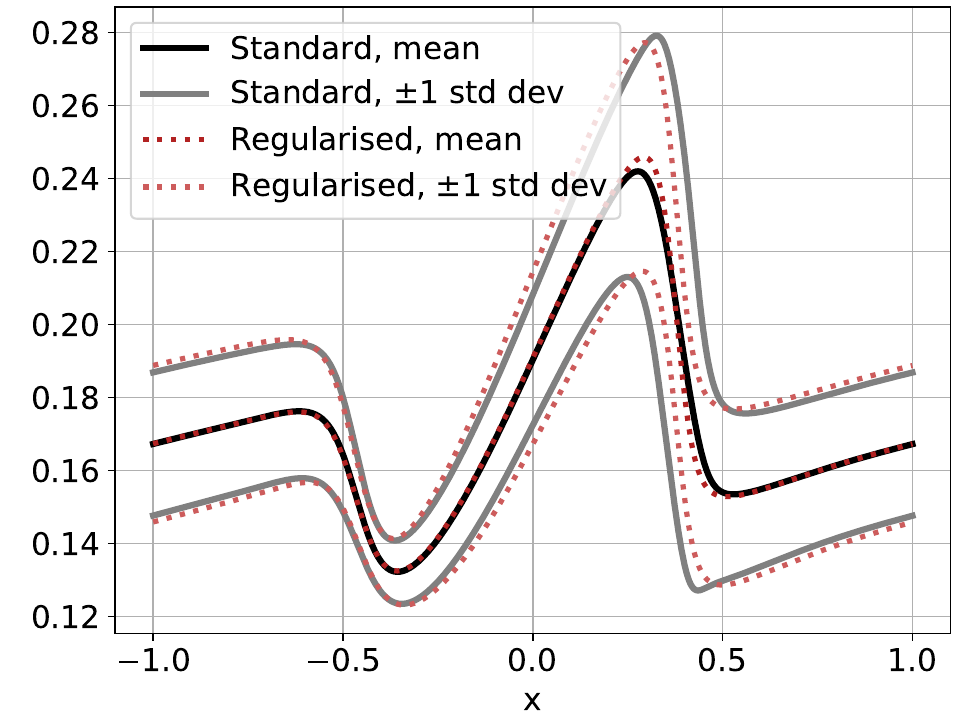}
      \caption*{(b) Average velocity $u_m$}
    \endminipage\hfill
    \minipage{0.33\textwidth}%
      \includegraphics[width=\linewidth]{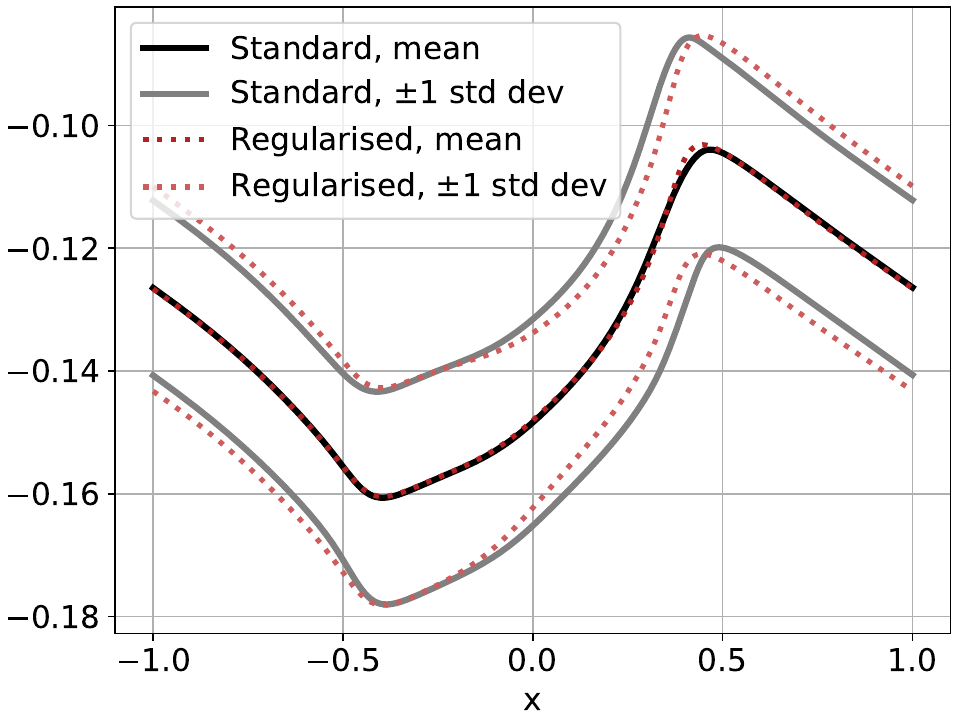}
      \caption*{(c) First moment $u_1$}
    \endminipage
    \caption{Regularisation accuracy test for the smooth periodic wave test case with SGSWLME $N=1$ and $K=2$. Regularisation results in a minor loss of accuracy.}\label{RES-fig:HYP_N1_wave}
\end{figure}

\subsection{Comparison of friction coefficients with varying uncertainty}
In this subsection, we compare different values of $\mu$ and $\sigma$ in the distribution of the friction coefficient $\nu\sim\mathcal{U}([\mu-\sigma,\mu+\sigma])$ for the $N=2$, $K=1$ SGSWLME system of equation \eqref{SGSWME-eq:final}.

In the low dam break case (Figure \ref{RES-fig:MU_N2_low}), we observe that changing the mean $\mu$ of the friction coefficient distribution has little effect on the water height $h$. However, a higher friction coefficient slows the water flow, as evidenced by a lower average velocity $u_m$. The standard deviation of the first moment $u_1$ decreases with increasing $\mu$, and the same applies to the second moment $u_2$. A higher friction coefficient $\nu$ leads to faster relaxation of the first moment $u_1$ to zero, consistent with friction acting on velocity gradients. In contrast, the second moment $u_2$ increases with the mean friction.

\begin{figure*}[!htb]
    \centering
    \minipage{0.33\textwidth}
        \centering
        \includegraphics[width=\linewidth]{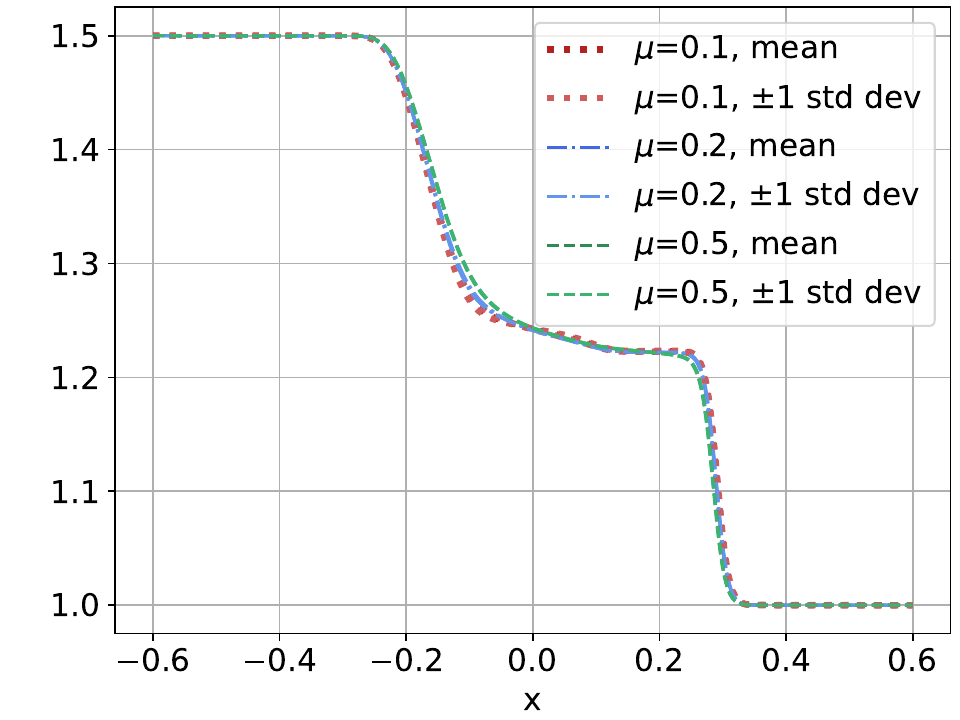}
        \vspace{-2em}
        \caption*{(a) Water height $h$}
        \includegraphics[width=\linewidth]{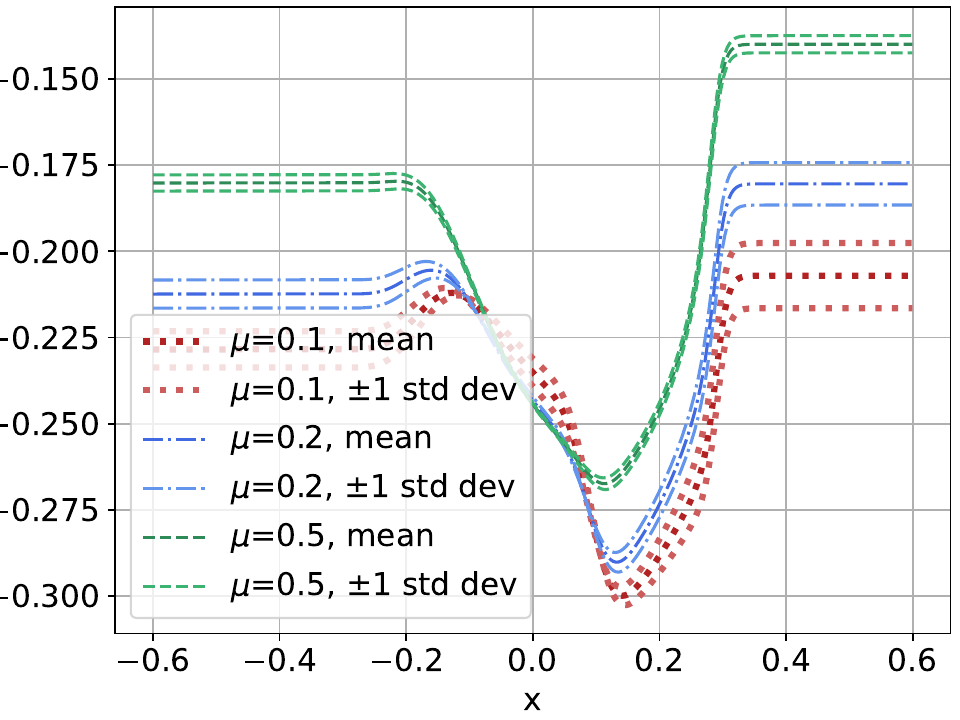}
        \vspace{-2em}
        \caption*{(c) First moment $u_1$}
    \endminipage
    \minipage{0.33\textwidth}
        \centering
        \includegraphics[width=\linewidth]{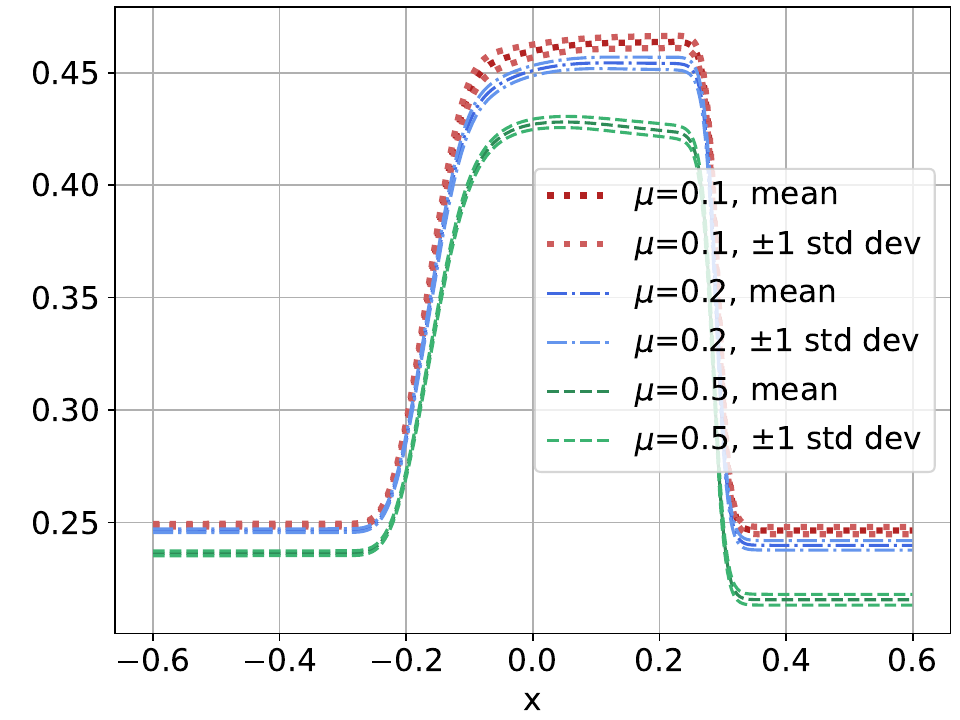}
        \vspace{-2em}
        \caption*{(b) Average velocity $u_m$}
        \includegraphics[width=\linewidth]{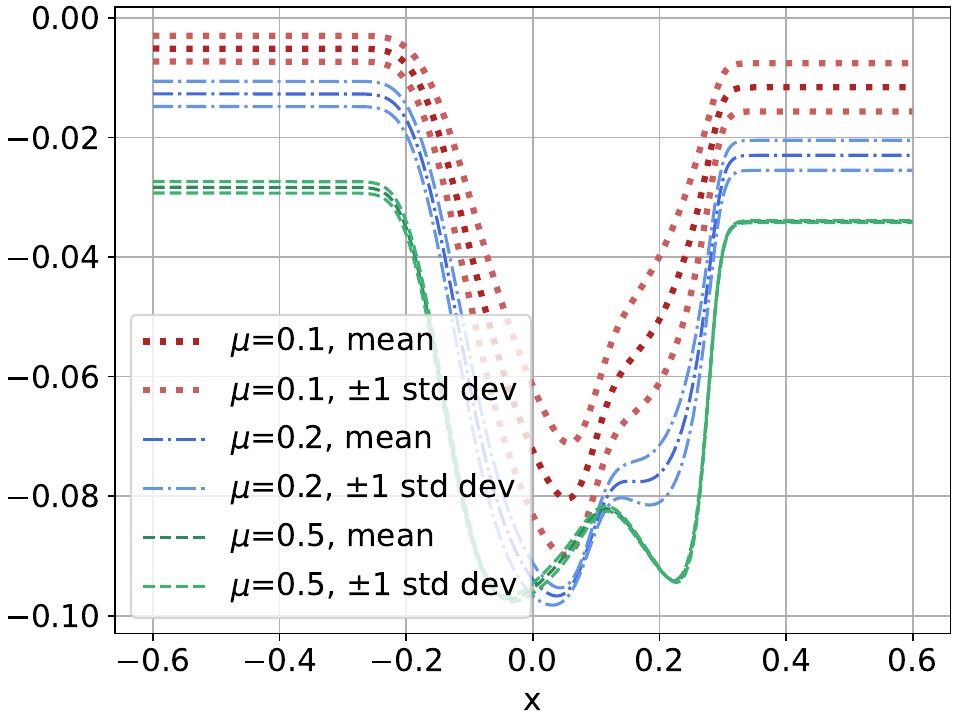}
        \vspace{-2em}
        \caption*{(d) Second moment $u_2$}
    \endminipage
    \caption{Influence of the mean $\mu$ of the friction coefficient $\nu$ test for the low dam break test case with SGSWLME $N=2$ and $K=1$. The used standard deviation is $\sigma=0.05$.}\label{RES-fig:MU_N2_low}
\end{figure*}

For the high dam-break case in Figure \ref{RES-fig:MU_N2_high}, we observe the same characteristics as in Figure \ref{RES-fig:MU_N2_low}: changing the mean of the friction coefficient distribution has little effect on the water height $h$. Similarly, a higher friction coefficient slows the water flow, as reflected in a lower average velocity $u_m$ and the second moment $u_2$.

While the first moment $u_1$ increases with increasing mean friction coefficient in regions away from the shock, the opposite behaviour occurs in the region of higher average velocity $u_m$. In this central region, the first moment $u_1$ decreases with increasing friction coefficient. This indicates that the uncertainty affects areas of equilibrium and non-equilibrium flow differently.

\begin{figure*}[!htb]
    \centering
    \minipage{0.33\textwidth}
        \centering
        \includegraphics[width=\linewidth]{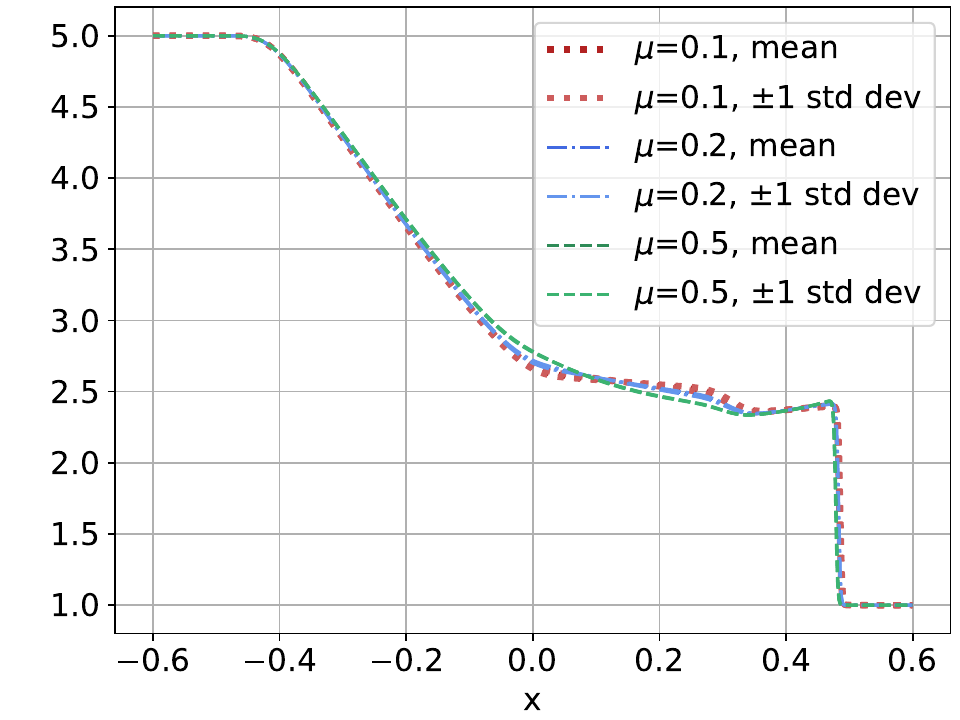}
        \vspace{-2.3em}
        \caption*{(a) Water height $h$}
        \includegraphics[width=\linewidth]{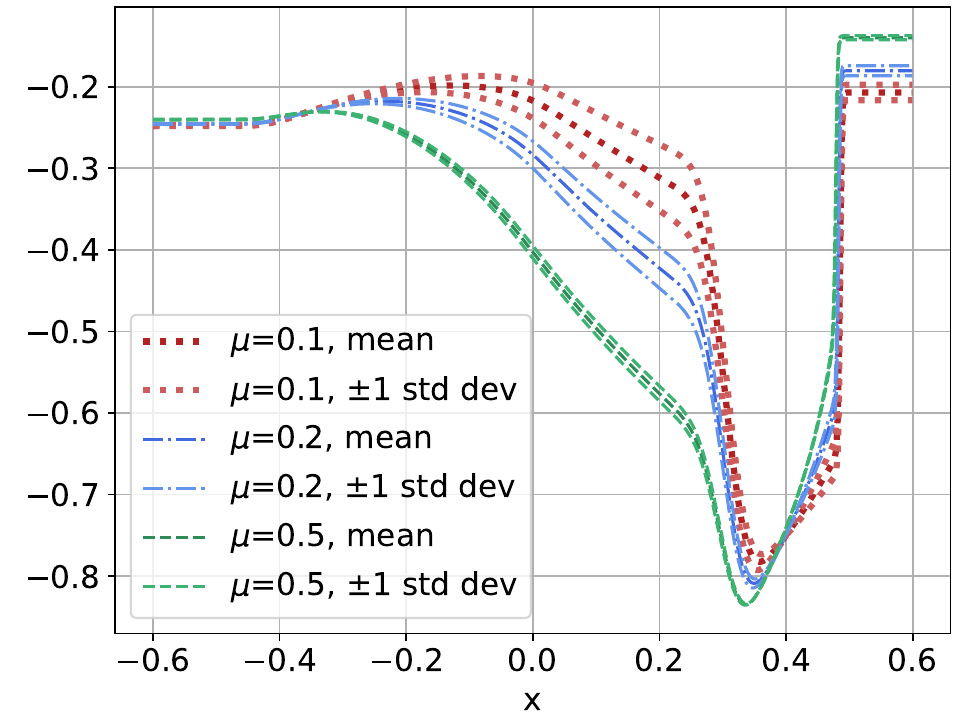}
        \vspace{-2.3em}
        \caption*{(c) First moment $u_1$}
    \endminipage
    \minipage{0.33\textwidth}
        \centering
        \includegraphics[width=\linewidth]{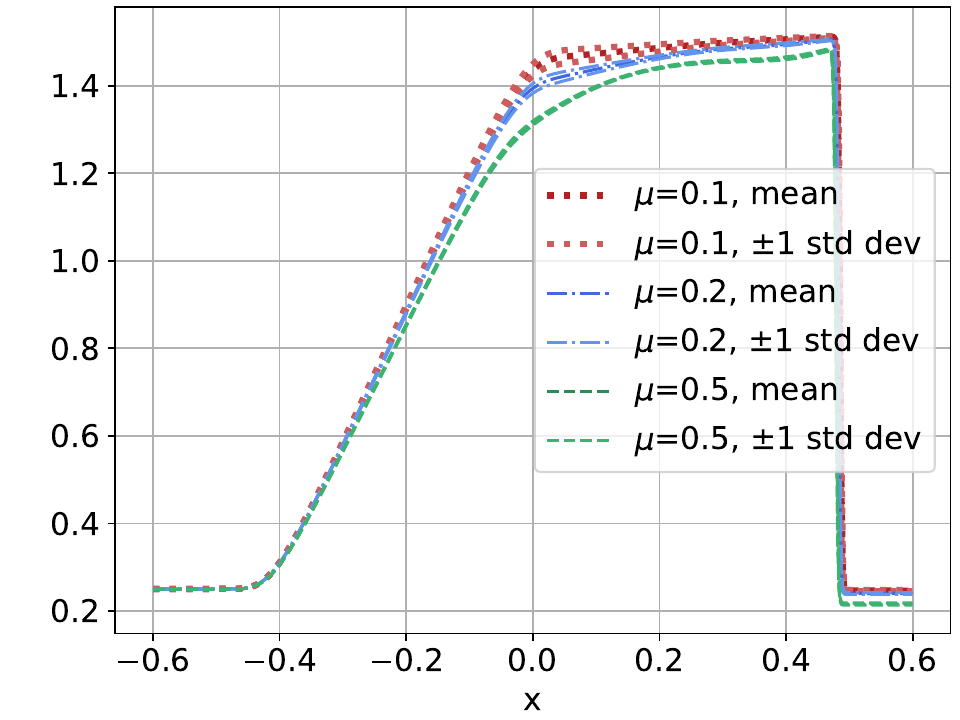}
        \vspace{-2.3em}
        \caption*{(b) Average velocity $u_m$}
        \includegraphics[width=\linewidth]{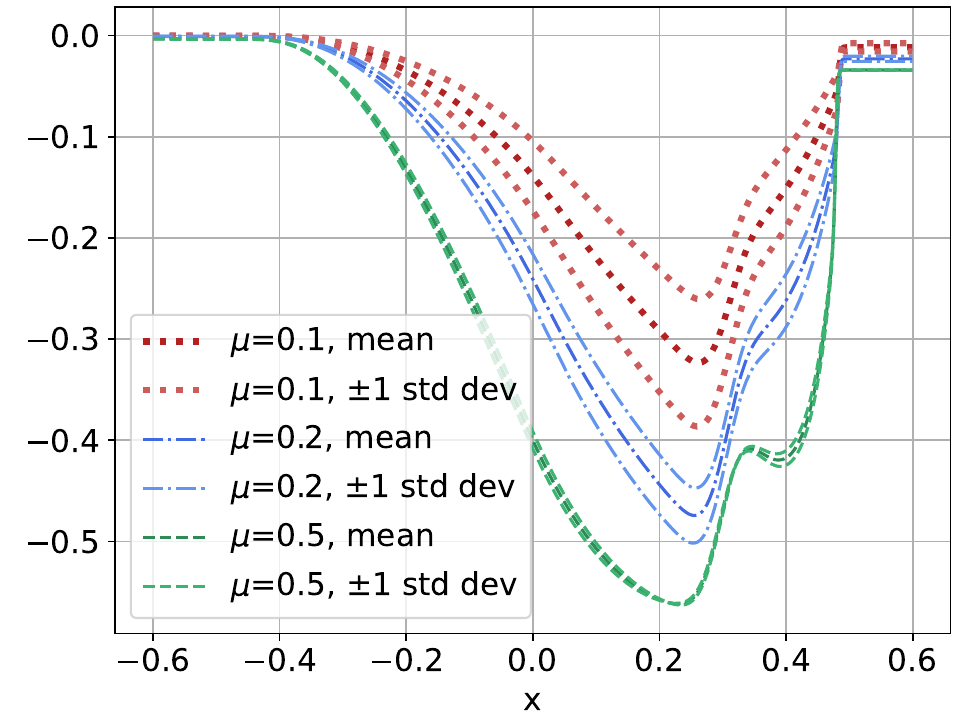}
        \vspace{-2.3em}
        \caption*{(d) Second moment $u_2$}
    \endminipage
    \vspace{-0.9em}
    \caption{Influence of the mean $\mu$ of the friction coefficient $\nu$ test for the high dam break test case with SGSWLME $N=2$ and $K=1$. The used standard deviation is $\sigma=0.05$.}\label{RES-fig:MU_N2_high}
\end{figure*}

\begin{figure*}[!htb]
    \centering
    \minipage{0.33\textwidth}
        \centering
        \includegraphics[width=\linewidth]{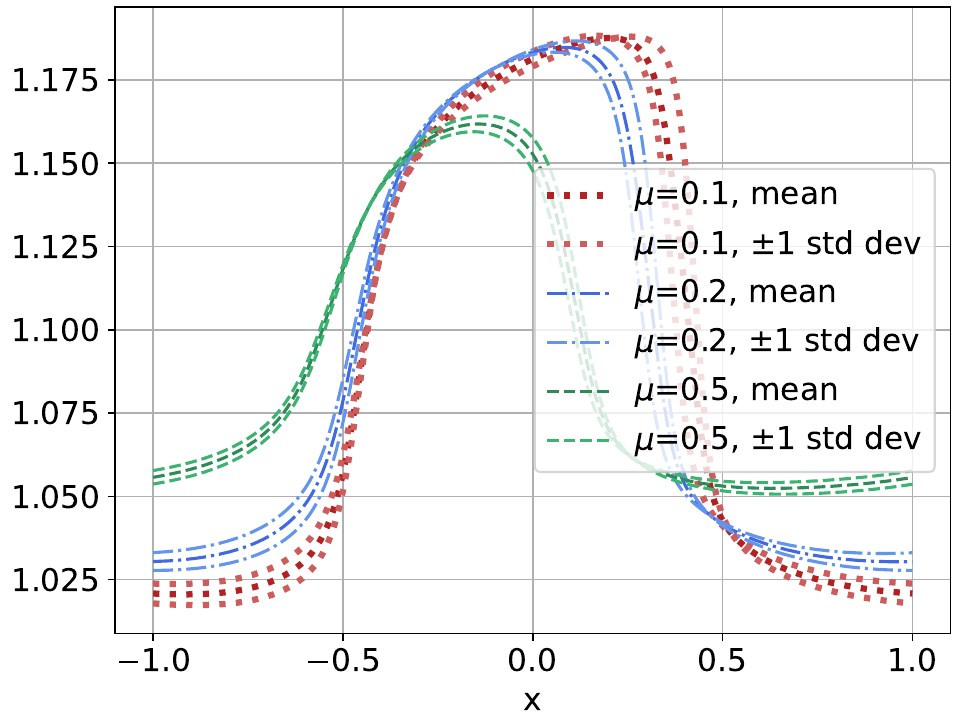}
        \vspace{-2.3em}
        \caption*{(a) Water height $h$}
        \includegraphics[width=\linewidth]{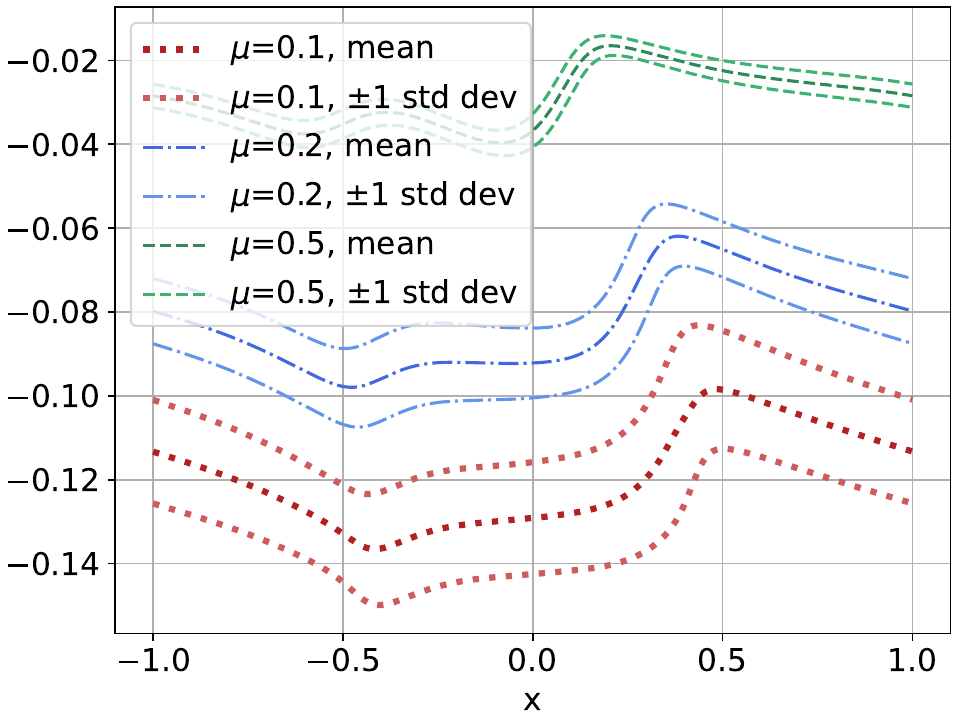}
        \vspace{-2.3em}
        \caption*{(c) First moment $u_1$}
    \endminipage
    \minipage{0.33\textwidth}
        \centering
        \includegraphics[width=\linewidth]{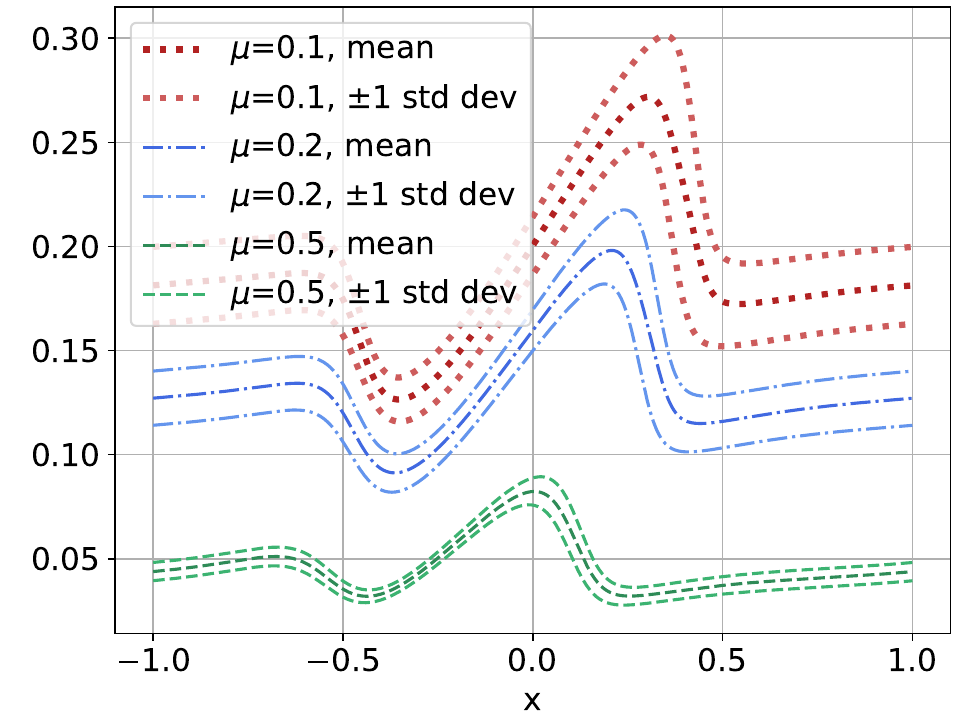}
        \vspace{-2.3em}
        \caption*{(b) Average velocity $u_m$}
        \includegraphics[width=\linewidth]{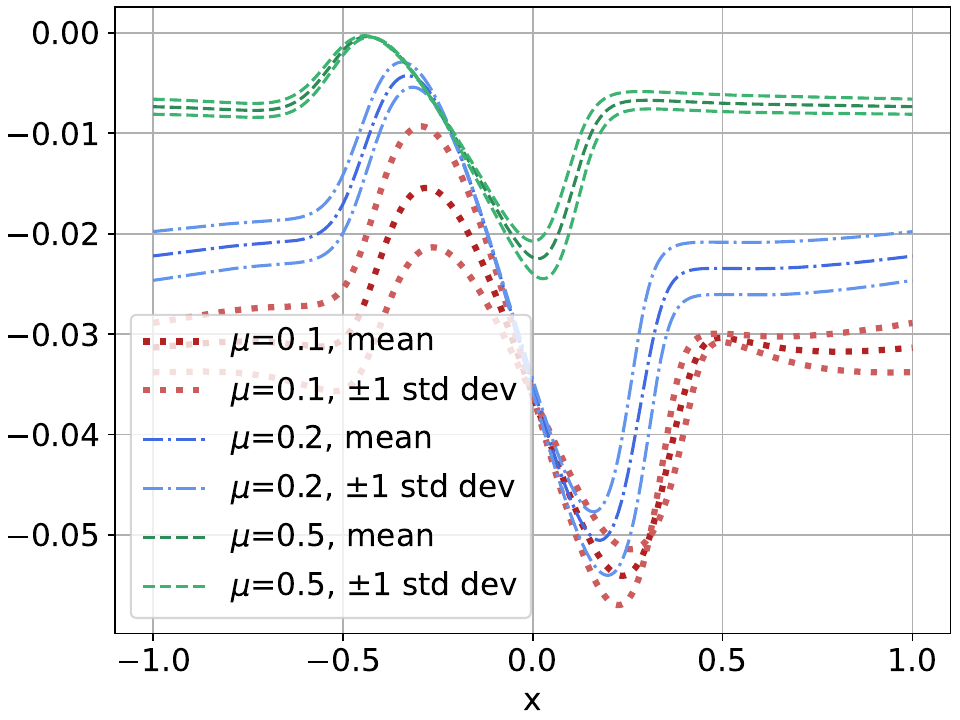}
        \vspace{-2.3em}
        \caption*{(d) Second moment $u_2$}
    \endminipage
    \vspace{-0.9em}
    \caption{Influence of the mean $\mu$ of the friction coefficient $\nu$ test for the smooth periodic wave test case with SGSWLME $N=2$ and $K=1$. The used standard deviation is $\sigma=0.05$.}\label{RES-fig:MU_N2_wave}
\end{figure*}

\newpage
In Figure \ref{RES-fig:MU_N2_wave}, for the smooth periodic wave, we observe that changing the mean of the friction coefficient distribution now significantly affects the water height $h$. Due to the higher friction coefficient, the wave reaches a lower height and travels more slowly, as reflected by the lower average velocity $u_m$. In the absence of shocks, all velocity variables - $u_m$, $u_1$, and $u_2$ - decrease in absolute terms with increasing friction. Additionally, the standard deviation of all three velocity variables decreases with increasing $\mu$.

We repeat the previous test cases, this time varying the standard deviation $\sigma$ of the friction coefficient while keeping its mean $\mu=0.2$ constant. For the low dam break test case in Figure \ref{RES-fig:SIG_N2_low}, we observe that the water height $h$ is largely unaffected by changes in $\sigma$. Similarly, the mean of the average velocity $u_m$, the first moment $u_1$, and the second moment $u_2$ remain almost constant when $\sigma$ is varied. As expected, only the standard deviation increases with increasing $\sigma$.

\begin{figure*}[!htb]
    \centering
    \minipage{0.33\textwidth}
        \centering
        \includegraphics[width=\linewidth]{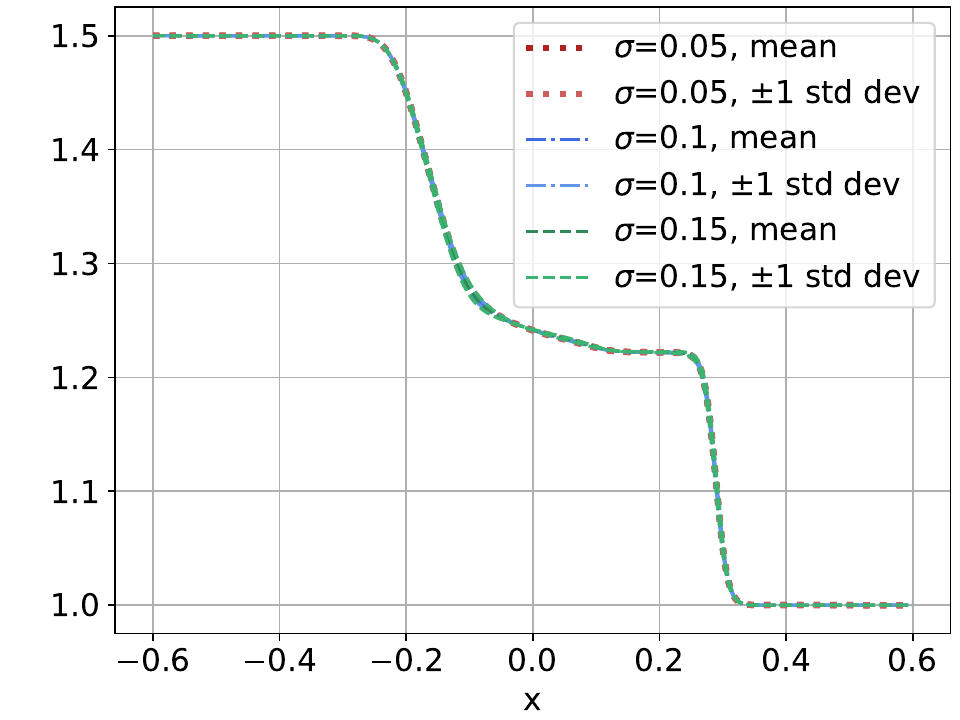}
        \vspace{-2em}
        \caption*{(a) Water height $h$}
        \includegraphics[width=\linewidth]{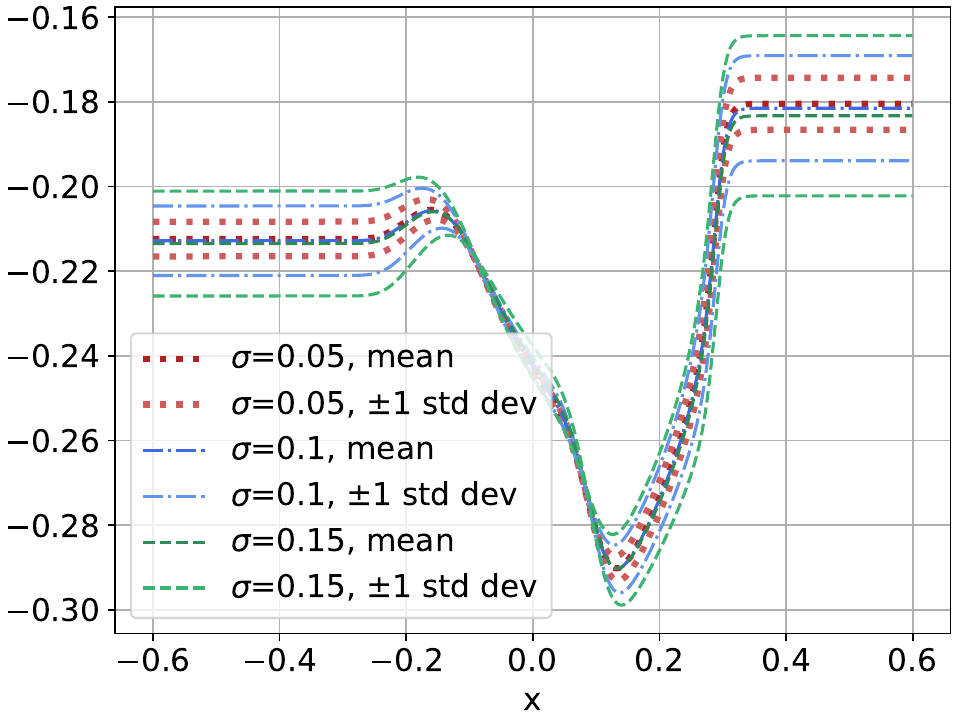}
        \vspace{-2em}
        \caption*{(c) First moment $u_1$}
    \endminipage
    \minipage{0.33\textwidth}
        \centering
        \includegraphics[width=\linewidth]{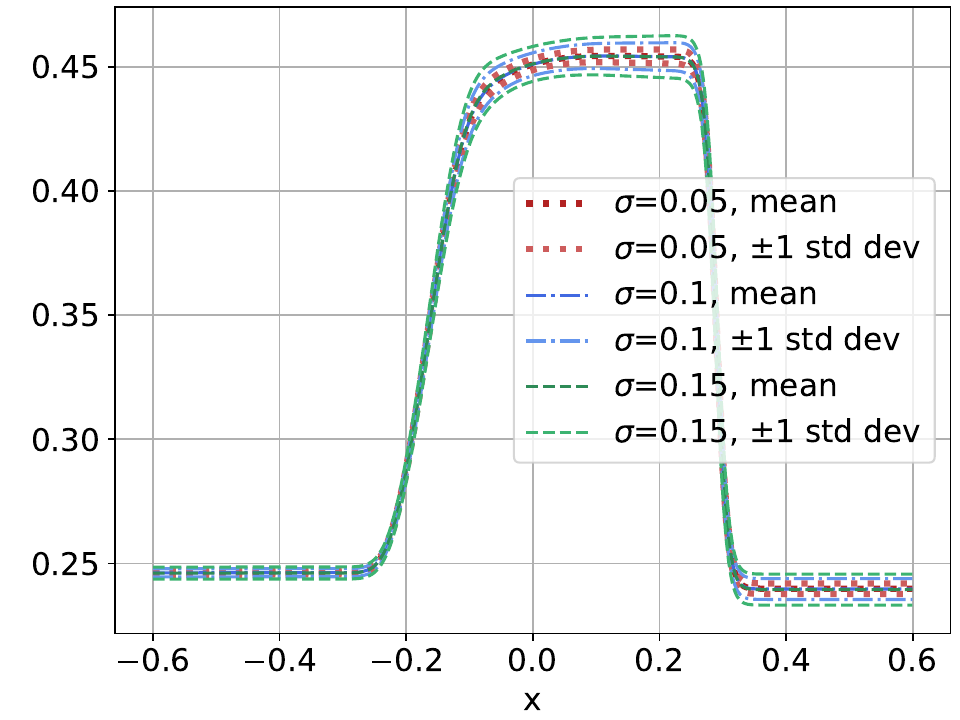}
        \vspace{-2em}
        \caption*{(b) Average velocity $u_m$}
        \includegraphics[width=\linewidth]{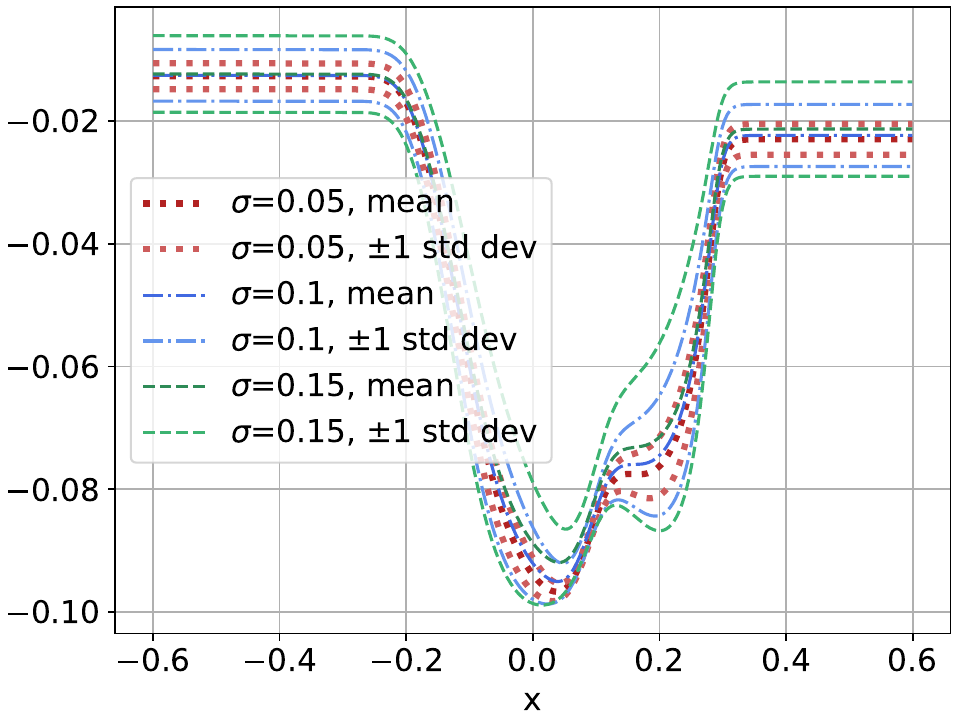}
        \vspace{-2em}
        \caption*{(d) Second moment $u_2$}
    \endminipage
    \caption{Influence of the standard deviation $\sigma$ of the friction coefficient $\nu$ test for the low dam break test case with SGSWLME $N=2$ and $K=1$. The used mean is $\mu=0.2$.}\label{RES-fig:SIG_N2_low}
\end{figure*}

In Figure \ref{RES-fig:SIG_N2_high}, for the high dam break test case, we observe a trend comparable to that in Figure \ref{RES-fig:SIG_N2_low}. The water height $h$ is largely unaffected by changes in $\sigma$, and the mean of the average velocity $u_m$ and the first moment $u_1$ remain almost constant as $\sigma$ varies. A slight increase in the mean with increasing $\sigma$ is noticeable in the second moment $u_2$. As expected, the most significant effect is the increase in standard deviation with increasing $\sigma$.

\begin{figure*}[!htb]
    \centering
    \minipage{0.33\textwidth}
        \centering
        \includegraphics[width=\linewidth]{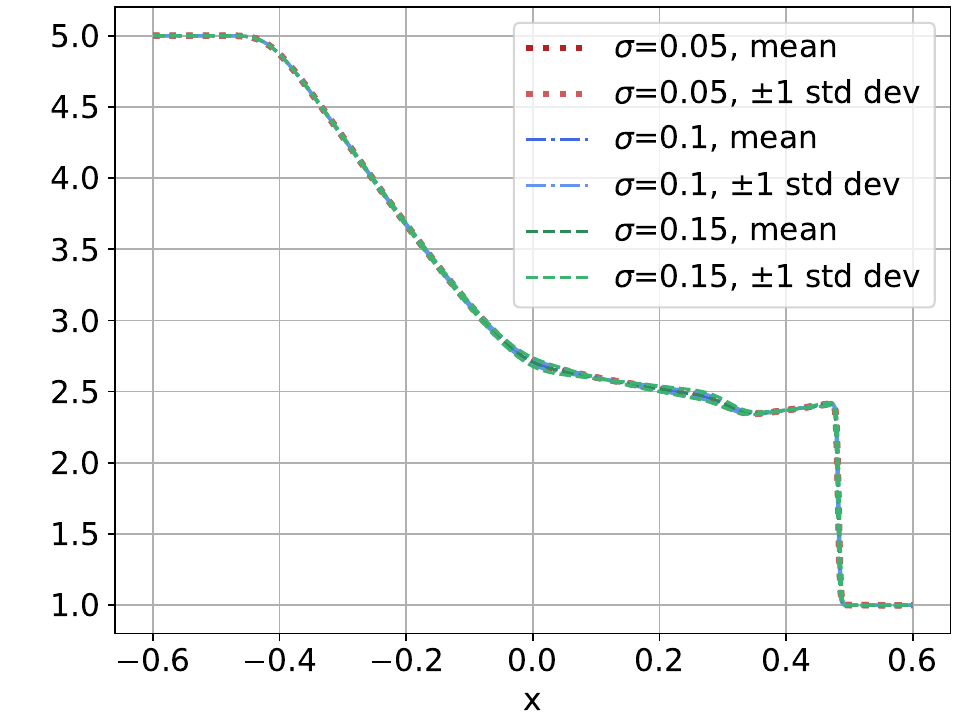}
        \vspace{-2.3em}
        \caption*{(a) Water height $h$}
        \includegraphics[width=\linewidth]{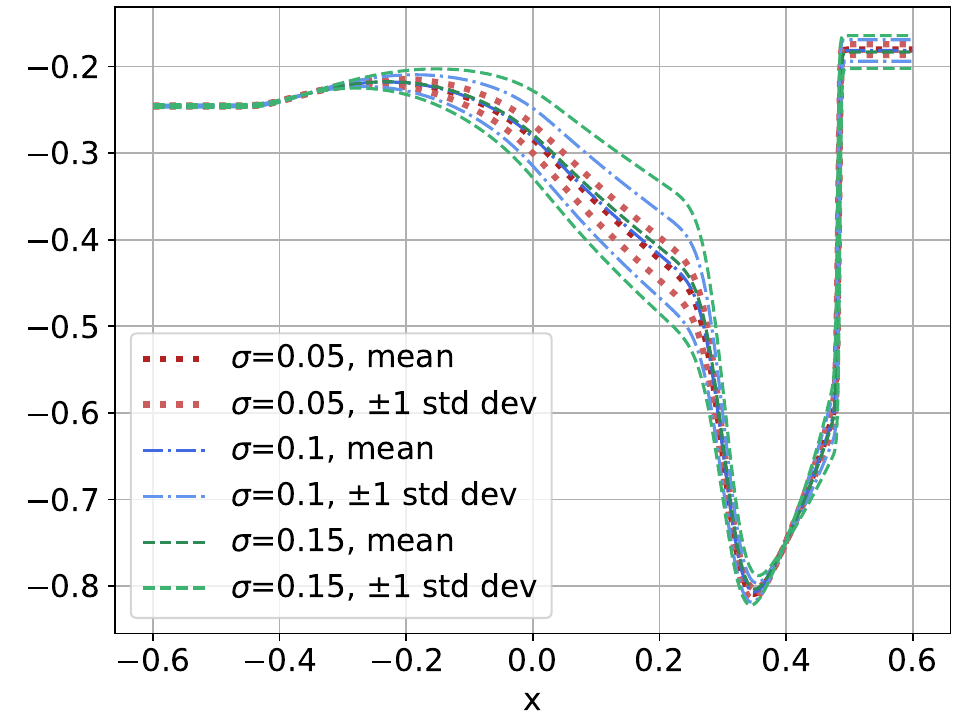}
        \vspace{-2.3em}
        \caption*{(c) First moment $u_1$}
    \endminipage
    \minipage{0.33\textwidth}
        \centering
        \includegraphics[width=\linewidth]{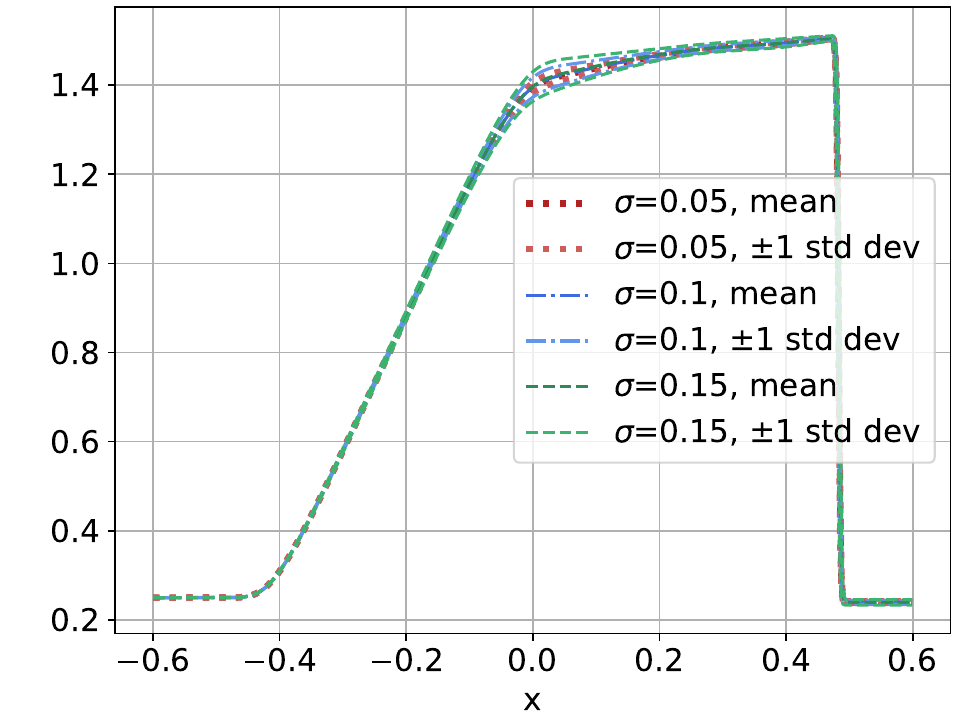}
        \vspace{-2.3em}
        \caption*{(b) Average velocity $u_m$}
        \includegraphics[width=\linewidth]{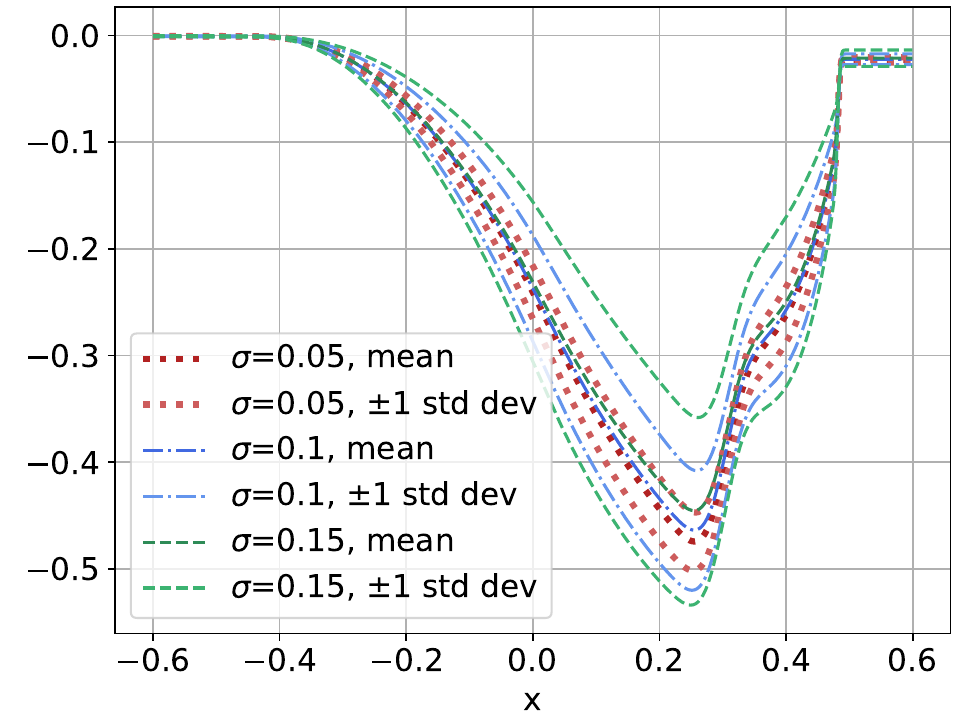}
        \vspace{-2.3em}
        \caption*{(d) Second moment $u_2$}
    \endminipage
    \vspace{-0.9em}
    \caption{Influence of the standard deviation $\sigma$ of the friction coefficient $\nu$ test for the high dam break test case with SGSWLME $N=2$ and $K=1$. The used mean is $\mu=0.2$.}\label{RES-fig:SIG_N2_high}
\end{figure*}

\begin{figure*}[!htb]
    \centering
    \minipage{0.33\textwidth}
        \centering
        \includegraphics[width=\linewidth]{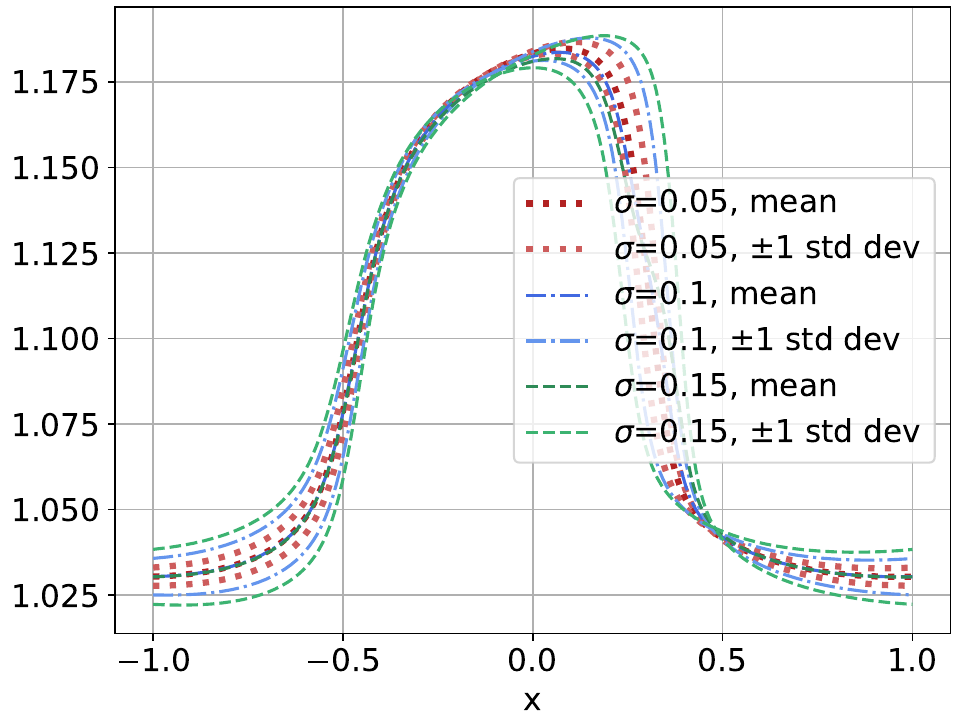}
        \vspace{-2.3em}
        \caption*{(a) Water height $h$}
        \includegraphics[width=\linewidth]{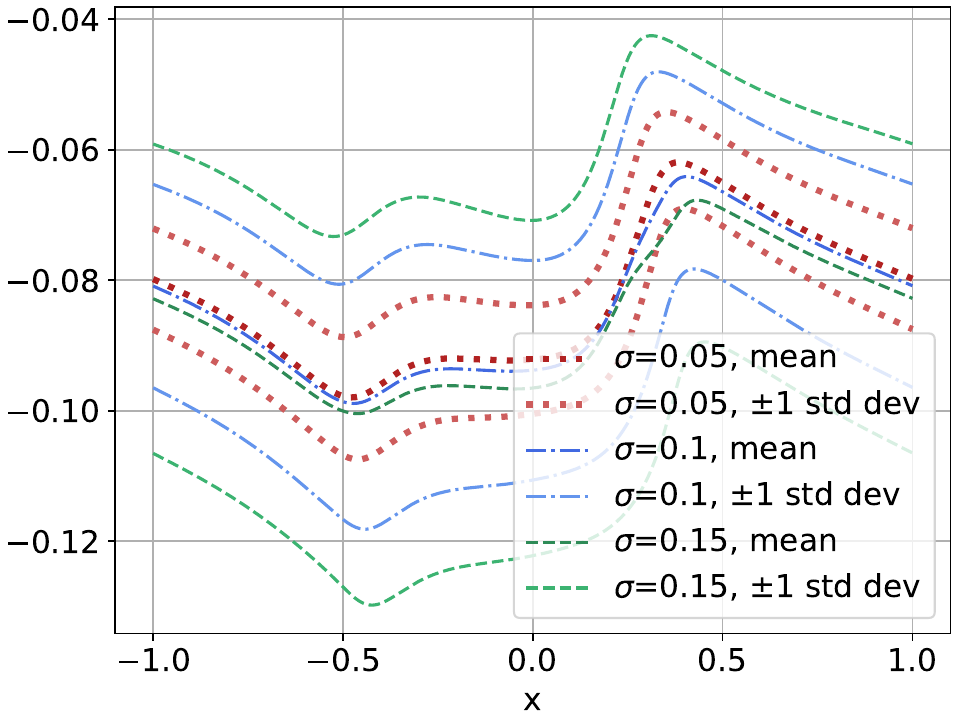}
        \vspace{-2.3em}
        \caption*{(c) First moment $u_1$}
    \endminipage
    \minipage{0.33\textwidth}
        \includegraphics[width=\linewidth]{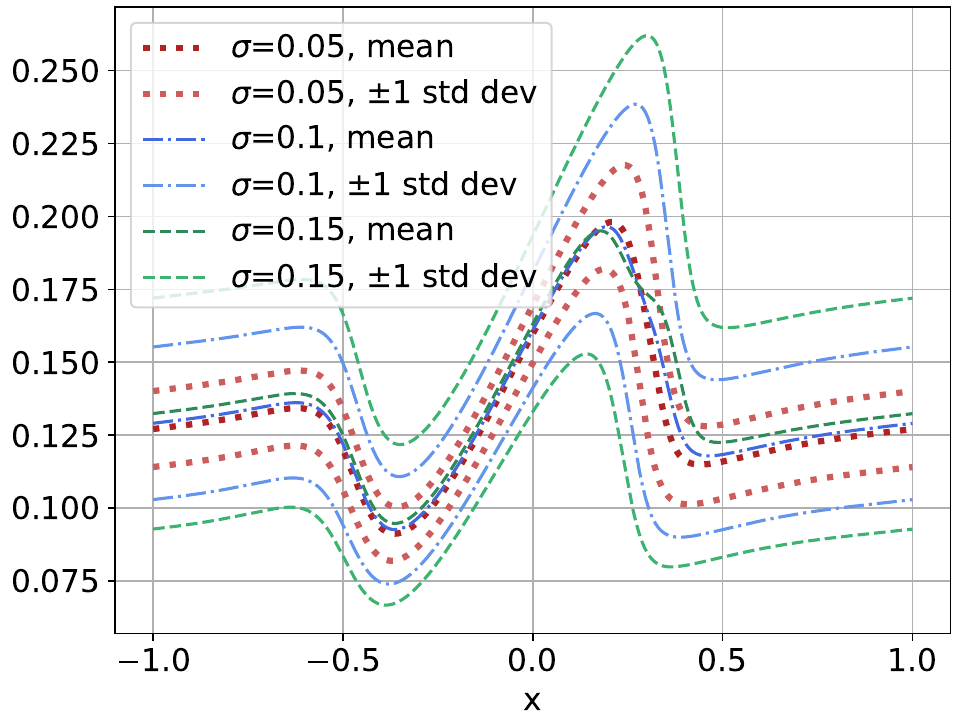}
        \vspace{-2.3em}
        \caption*{(b) Average velocity $u_m$}
        \includegraphics[width=\linewidth]{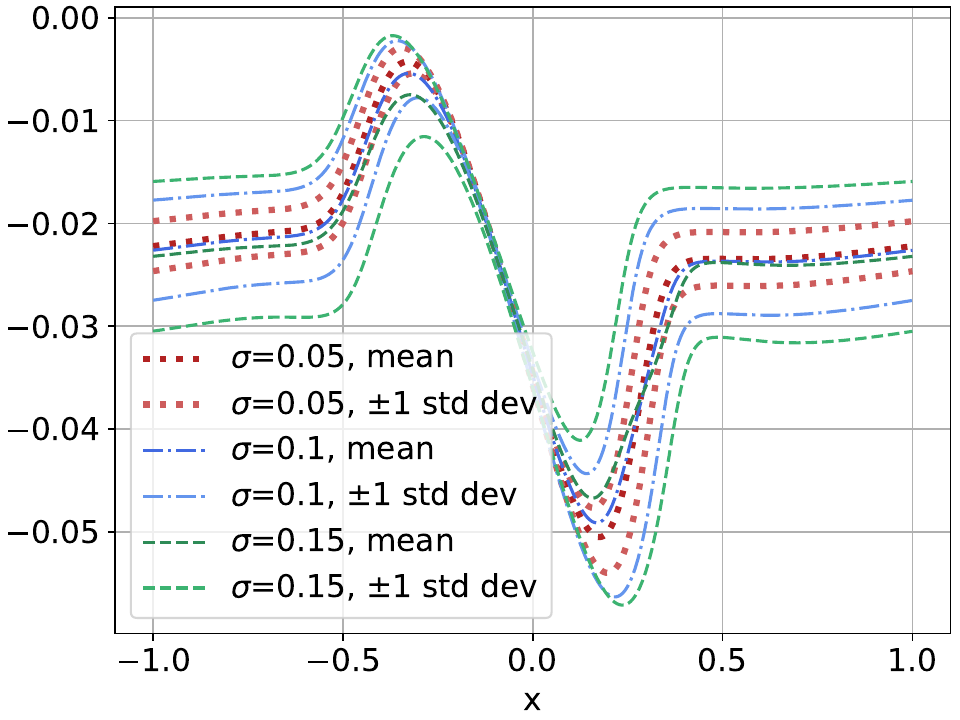}
        \vspace{-2.3em}
        \caption*{(d) Second moment $u_2$}
    \endminipage
    \vspace{-0.9em}
    \caption{Influence of the standard deviation $\sigma$ of the friction coefficient $\nu$ test for the smooth periodic wave test case with SGSWLME $N=2$ and $K=1$. The used mean is $\mu=0.2$.}\label{RES-fig:SIG_N2_wave}
\end{figure*}

\newpage
For the smooth periodic wave test case in Figure \ref{RES-fig:SIG_N2_wave}, we observe that the water height $h$ is more affected by changes in $\sigma$ than in the dam break test cases. Additionally, varying $\sigma$ in this scenario slightly decreases the mean of the water height $h$, the first moment $u_1$, and the second moment $u_2$, while increasing the standard deviation. It also slightly increases the average velocity $u_m$.

In summary, Figures \ref{RES-fig:MU_N2_low}-\ref{RES-fig:SIG_N2_wave} illustrate the nonlinear effects of varying $\mu$ and $\sigma$ in $\nu \sim \mathcal{U}([\mu-\sigma,\mu+\sigma])$ on the water height $h$ and velocity variables $u_m$, $u_1$, and $u_2$. The stochastic Galerkin formulation of the SWLME enables a rapid assessment of the influence of these parameter settings.

%% file: Sections/08_Conclusion.tex
In this paper, we derived, analysed, and numerically solved the stochastic Galerkin formulation of the one-dimensional shallow water linearised moment equations. This was made possible using conservative variables and the definition of the pseudospectral product. We derived an energy equation and analysed hyperbolicity in two distinct ways: for specific moment order $N$ with general stochastic Galerkin order $K$, and for specific stochastic Galerkin order $K$ with general moment order $N$. The simulations already show good results for low orders of $K$ and provide a significant speedup compared to Monte Carlo methods.

The use of stochastic Galerkin methods opens possibilities for future extensions, including the design of adaptivity in stochastic space \cite{meyer_posteriori_2020, burger_hybrid_2017} and the development of numerical schemes that are well-balanced at any location in space \cite{jin_well-balanced_2016}. Furthermore, other methods to overcome the loss of hyperbolicity could be explored, such as: Roe variable transformations combined with Haar wavelets and projection techniques, together with high-order quadrature rules \cite{bender_entropy-conservative_2024}, limiting strategies \cite{schlachter_hyperbolicity-preserving_2018,durrwachter_hyperbolicity-preserving_2020}, filtering techniques \cite{kusch_filtered_2020}, linearisation methods \cite{wu_stochastic_2017}, and entropic variable representations \cite{poette_uncertainty_2009}.

Additionally, the energy equation could be used to derive a tailored numerical method for discrete energy preservation during numerical simulations \cite{tadmor_numerical_1987, bohm_entropy_2020, ersing_entropy_2024, ersing_entropy_2025, fan_well-balanced_2026, gassner_well_2016}. Future work could also explore the hyperbolicity of systems with $N\geq2$ for higher orders of $K$, as well as their accuracy in numerical simulations. Finally, a natural extension of this work involves investigating the two-dimensional case.

%% file: Sections/A_Convergence_MC.tex
This appendix presents figures illustrating the number of Monte Carlo samples $S$ required to compute the reference solutions in Figures \ref{RES-fig:SG_N1_low}, \ref{RES-fig:SG_N1_high}, \ref{RES-fig:SG_N1_wave}, \ref{RES-fig:SG_N2_low}, \ref{RES-fig:SG_N2_high}, and \ref{RES-fig:SG_N2_wave}, as well as the relative run times in Table \ref{RES-tab:rel_runtimes}. Figures \ref{RES-fig:MC_N1_low}, \ref{RES-fig:MC_N1_high}, and \ref{RES-fig:MC_N1_wave} show curves for different numbers of samples $S$ for $N=1$, while Figures \ref{RES-fig:MC_N2_low}, \ref{RES-fig:MC_N2_high}, and \ref{RES-fig:MC_N2_wave} show analogous results for $N=2$. The Monte Carlo samples are independent of each other. Convergence is visually assessed by comparison with a significantly larger $S$.

In Figure \ref{RES-fig:MC_N1_low}, the low dam break test case of Table \ref{RES-tab:dambreak} with $N=1$ demonstrates visual convergence of the solution in approximately $S=100$ Monte Carlo samples. For the considered range of the uncertain friction coefficient $\nu\sim\mathcal{U}([0.05,0.15])$, the standard deviation bounds of the computed functions remain relatively narrow. The water height $h$ and the average velocity $u_m$ exhibit minimal sensitivity to variations in $\nu$, while the first moment $u_1$ shows a noticeable dependence. 

\begin{figure}[H]
    \minipage{0.33\textwidth}
        \includegraphics[width=1\linewidth]{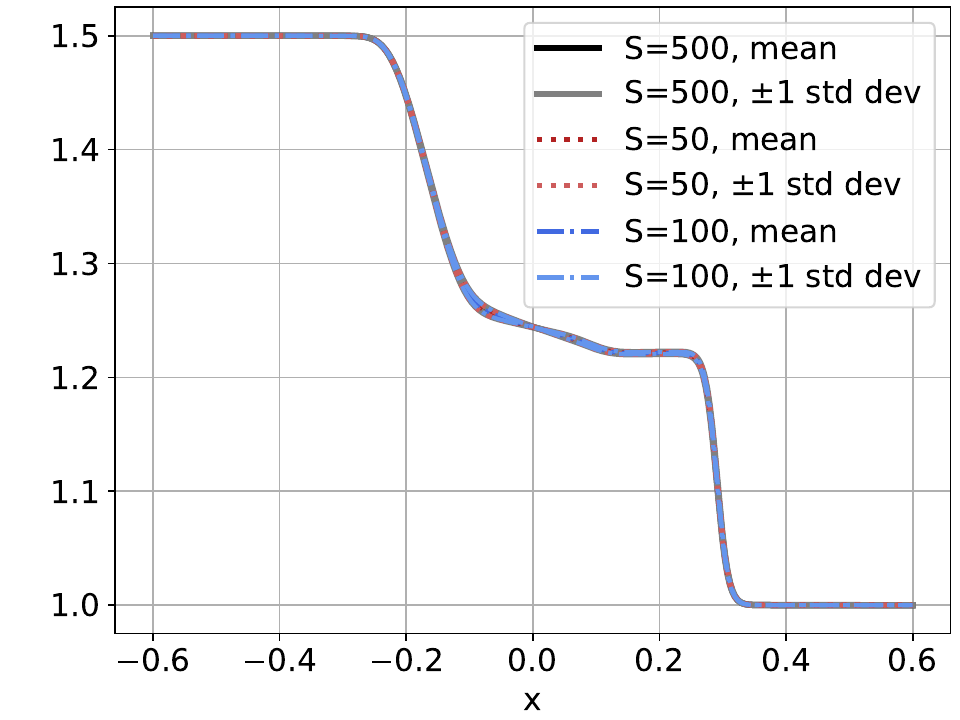}
        \vspace{-2em}
        \caption*{(a) Water height $h$}
    \endminipage\hfill
    \minipage{0.33\textwidth}
        \includegraphics[width=\linewidth]{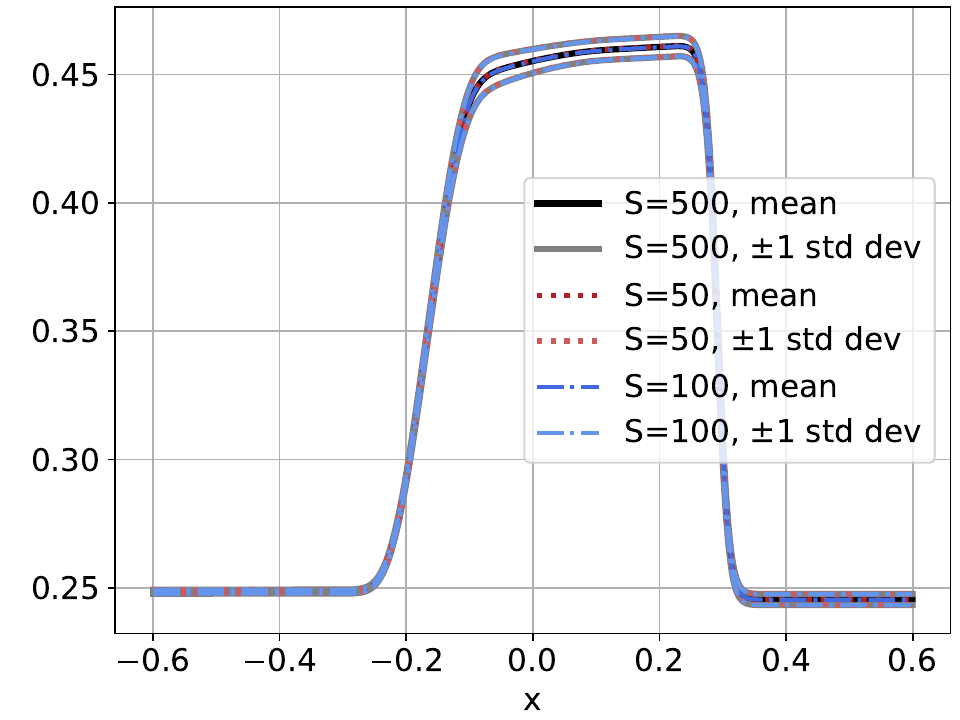}
        \vspace{-2em}
        \caption*{(b) Average velocity $u_m$}
    \endminipage\hfill
    \minipage{0.33\textwidth}%
        \includegraphics[width=\linewidth]{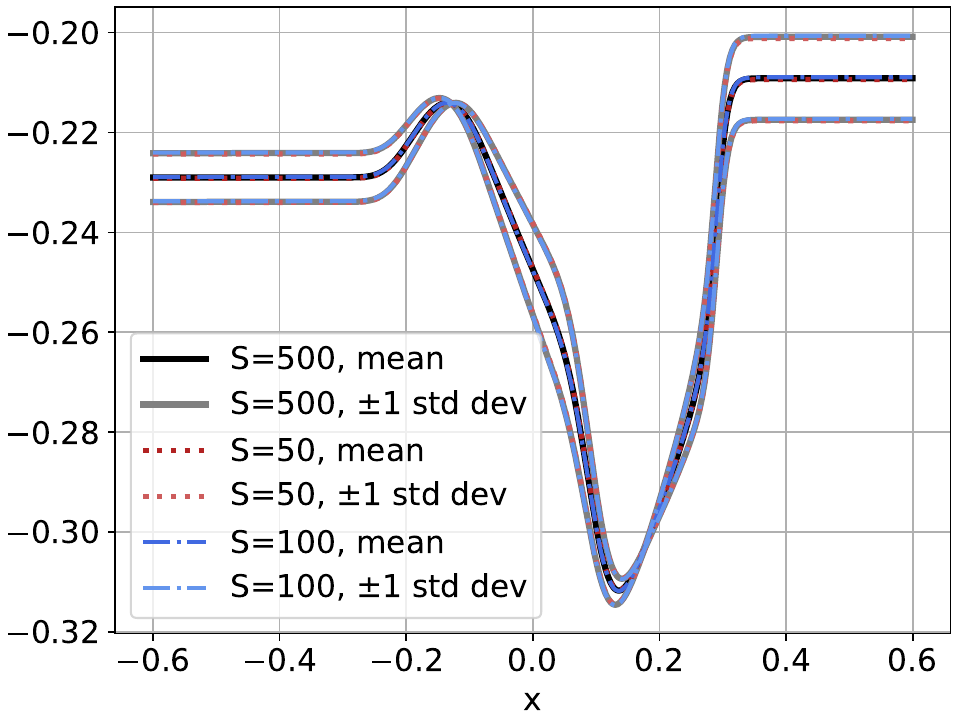}
        \vspace{-2em}
        \caption*{(c) First moment $u_1$}
    \endminipage
    \caption{Monte Carlo convergence test for the low dam break test case with SWLME $N=1$ using different number of samples $S$. At $S=100$ the Monte Carlo solution yields a converged result.}\label{RES-fig:MC_N1_low}
\end{figure}

In Figure \ref{RES-fig:MC_N2_low}, the same test case with $N=2$ yields analogous trends for $h$, $u_m$, and $u_1$, but introduces significant relative uncertainty in the second moment $u_2$. For $N=2$, visual convergence of Monte Carlo samples requires substantially larger $S$, with convergence observed at approximately $S=350$.

\begin{figure*}[!htb]
    \centering
    \minipage{0.33\textwidth}
        \centering
        \includegraphics[width=\linewidth]{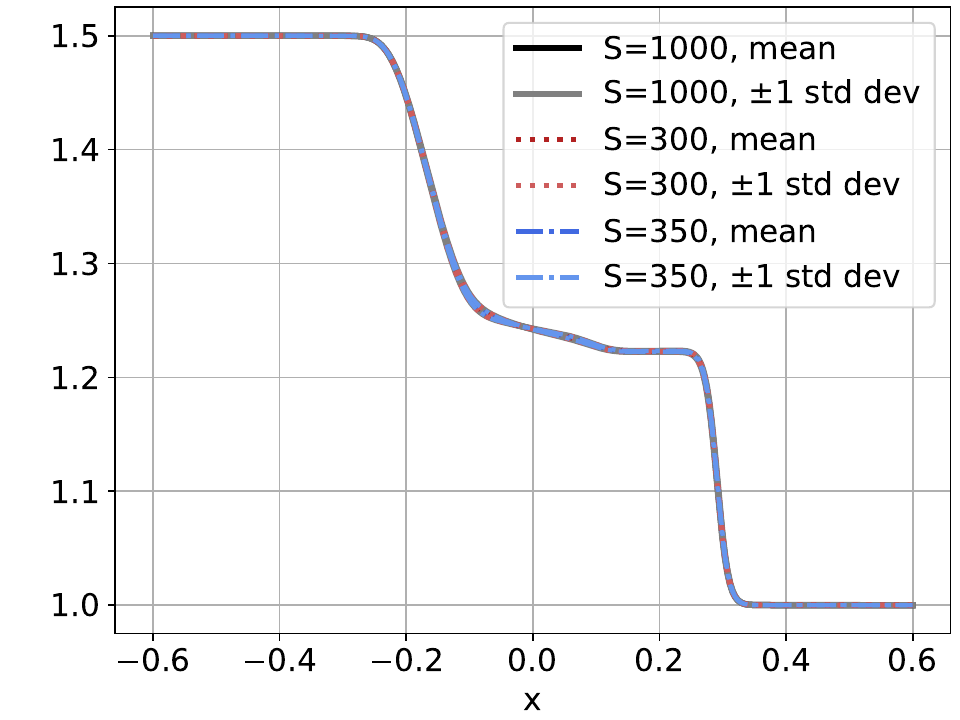}
        \vspace{-2em}
        \caption*{(a) Water height $h$}
        \includegraphics[width=\linewidth]{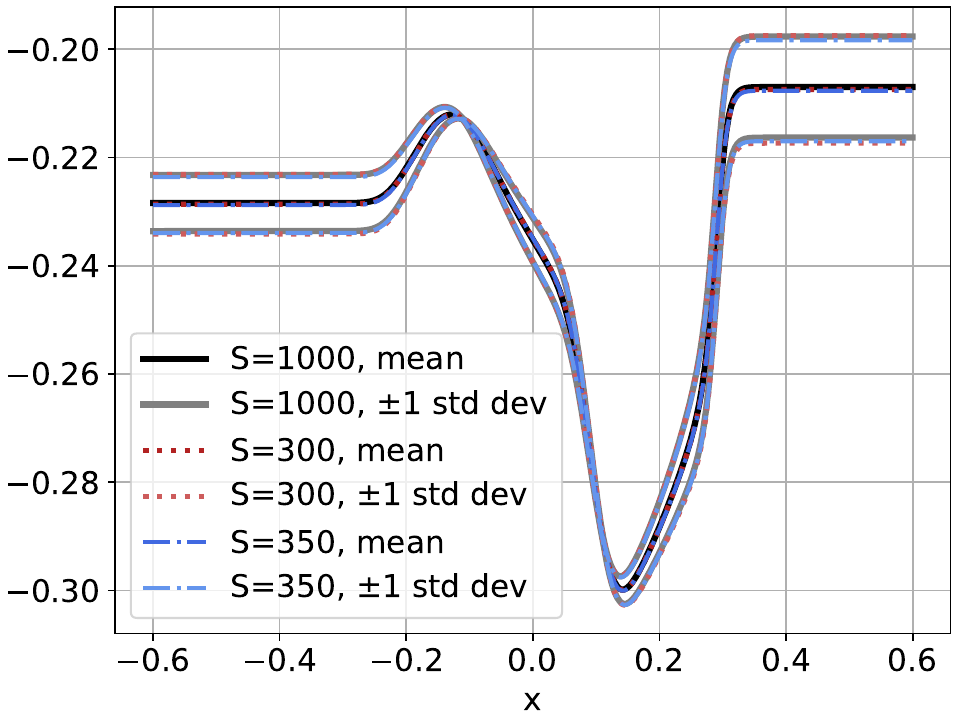}
        \vspace{-2em}
        \caption*{(c) First moment $u_1$}
    \endminipage
    \minipage{0.33\textwidth}
        \centering
        \includegraphics[width=\linewidth]{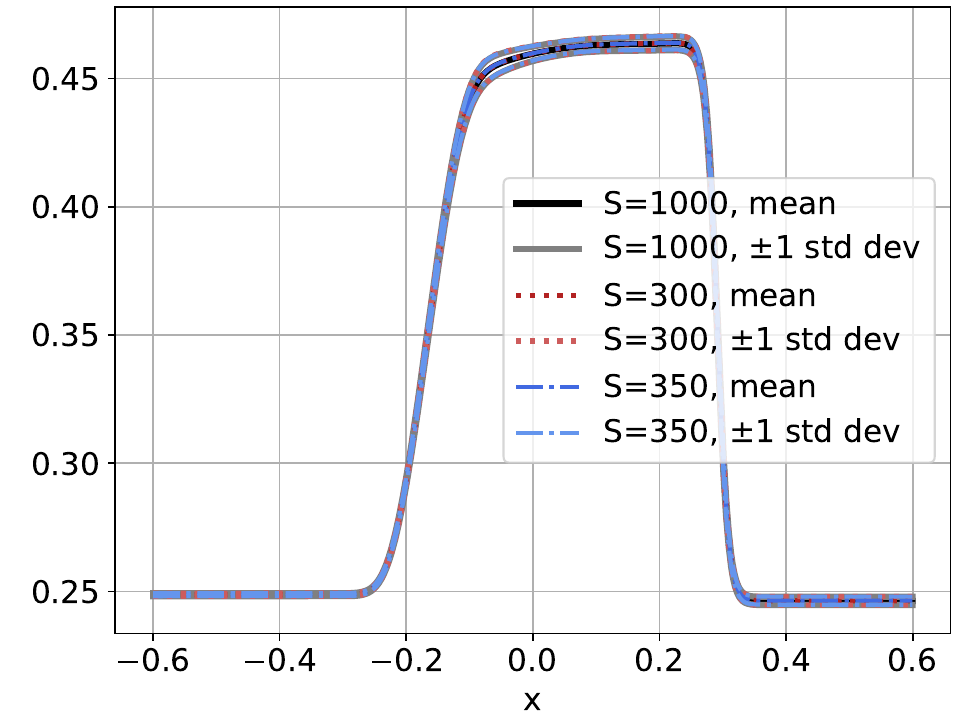}
        \vspace{-2em}
        \caption*{(b) Average velocity $u_m$}
        \includegraphics[width=\linewidth]{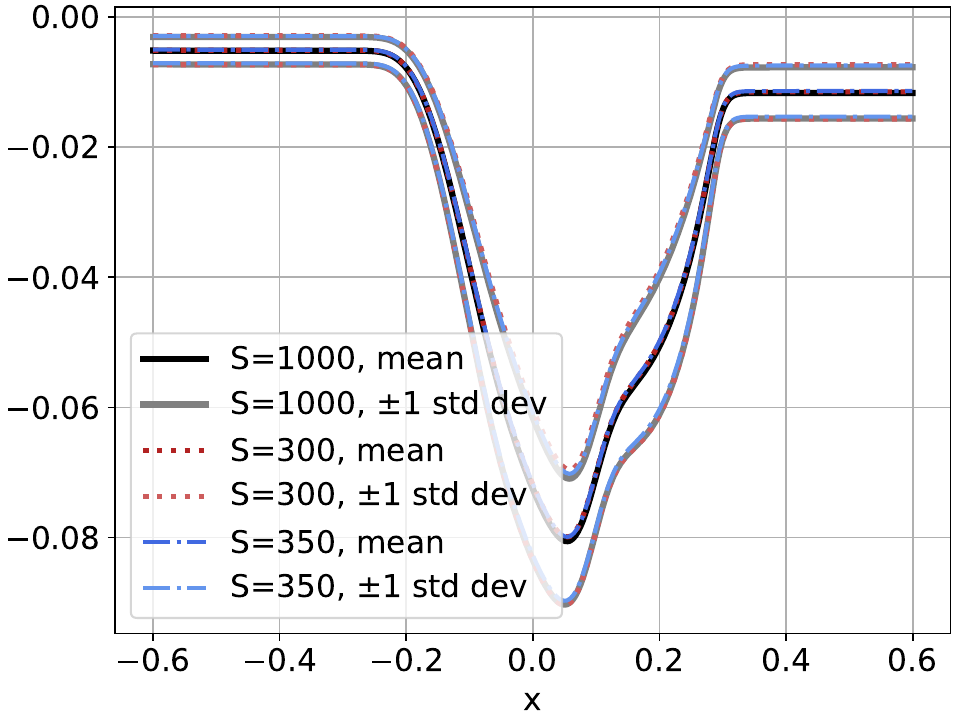}
        \vspace{-2em}
        \caption*{(d) Second moment $u_2$}
    \endminipage
    \caption{Monte Carlo convergence test for the low dam break test case with SWLME $N=2$ using different number of samples $S$. At $S=350$ the Monte Carlo solution yields a converged result.}\label{RES-fig:MC_N2_low}
\end{figure*}

In the high dam break test case with $N=1$ shown in Figure \ref{RES-fig:MC_N1_high}, the uncertainty in the first moment $u_1$ increases compared to the low dam, particularly in regions of higher average velocity $u_m$ and near the leading shock. The water height $h$ and the average velocity $u_m$ remain highly insensitive to variations in the friction coefficient $\nu$. Convergence is achieved rapidly, with visual convergence observed in approximately $S=100$ Monte Carlo samples.
\begin{figure}[H]
    \minipage{0.33\textwidth}
        \includegraphics[width=\linewidth]{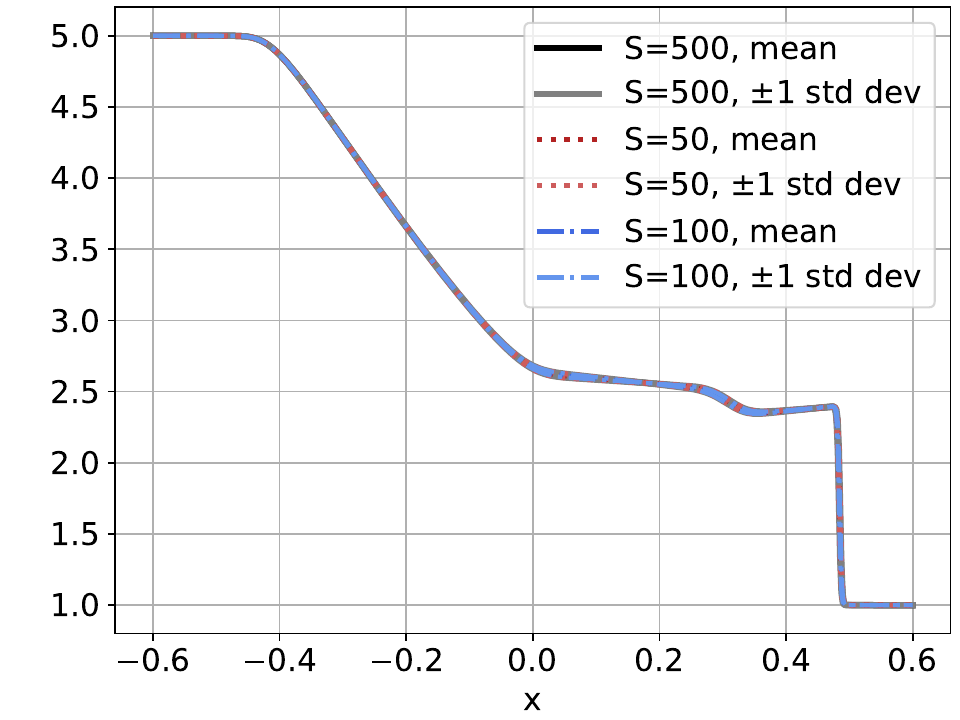}
        \vspace{-2em}
        \caption*{(a) Water height $h$}
    \endminipage\hfill
    \minipage{0.33\textwidth}
        \includegraphics[width=\linewidth]{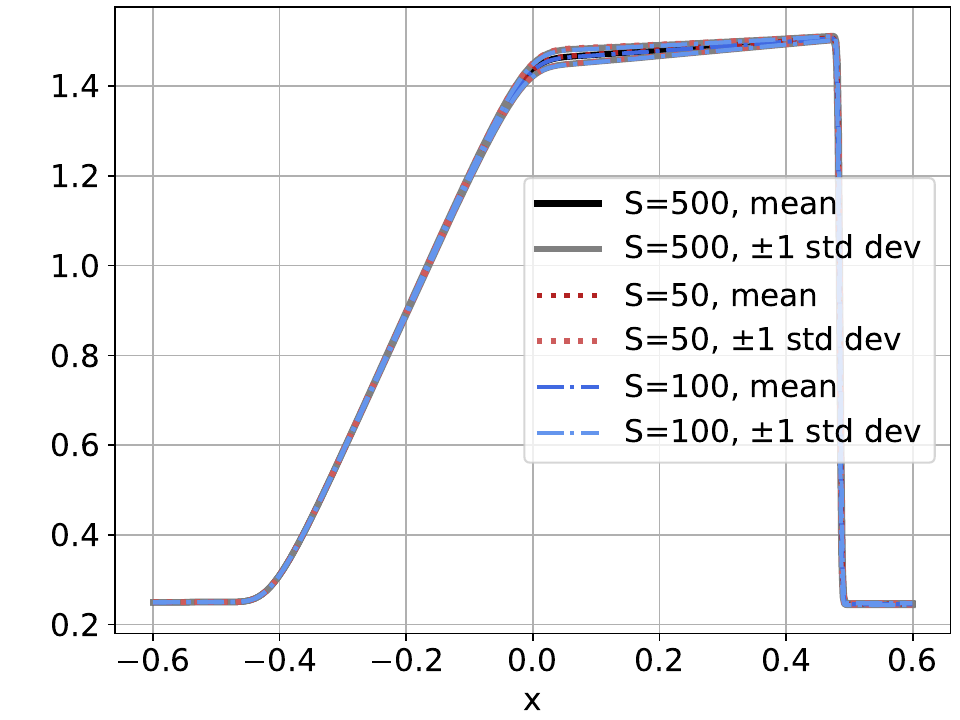}
        \vspace{-2em}
        \caption*{(b) Average velocity $u_m$}
    \endminipage\hfill
    \minipage{0.33\textwidth}%
        \includegraphics[width=\linewidth]{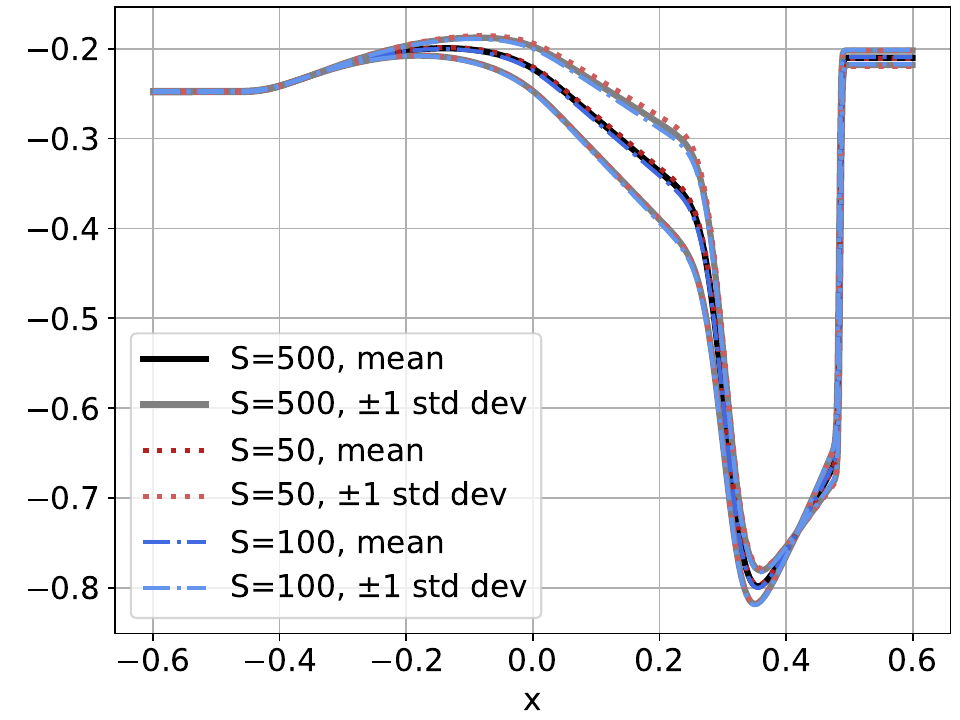}
        \vspace{-2em}
        \caption*{(c) First moment $u_1$}
    \endminipage
    \caption{Monte Carlo convergence test for the high dam break test case with  SWLME $N=1$ using different number of samples $S$. At $S=100$ the Monte Carlo solution yields a converged result.}\label{RES-fig:MC_N1_high}
\end{figure}

Increasing the order to $N=2$ in the high dam break test case, shown in Figure \ref{RES-fig:MC_N2_high}, yields effects analogous to those observed for the low dam break case. The subfigures for the water height $h$, the average velocity $u_m$, and the first moment $u_1$ remain visually unchanged. However, significant relative uncertainty emerges in the second moment $u_2$. Notably, the number of Monte Carlo samples $S$ required for visual convergence does not increase with the order $N$ in this instance.

\begin{figure*}[!htb]
    \centering
    \minipage{0.33\textwidth}
        \centering
        \includegraphics[width=\linewidth]{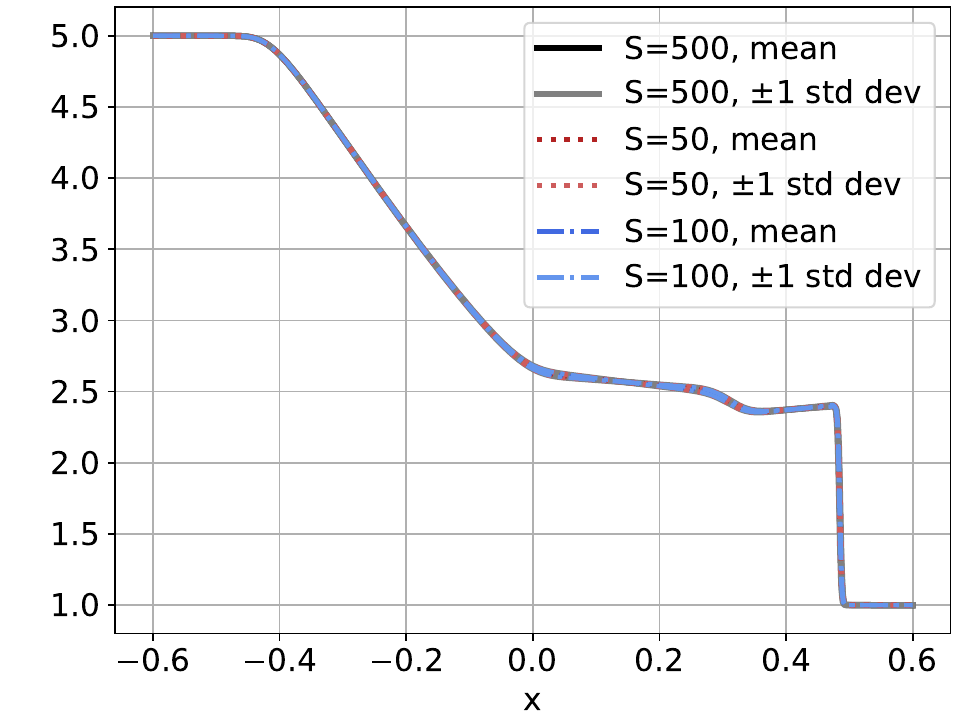}
        \vspace{-2em}
        \caption*{(a) Water height $h$}
        \includegraphics[width=\linewidth]{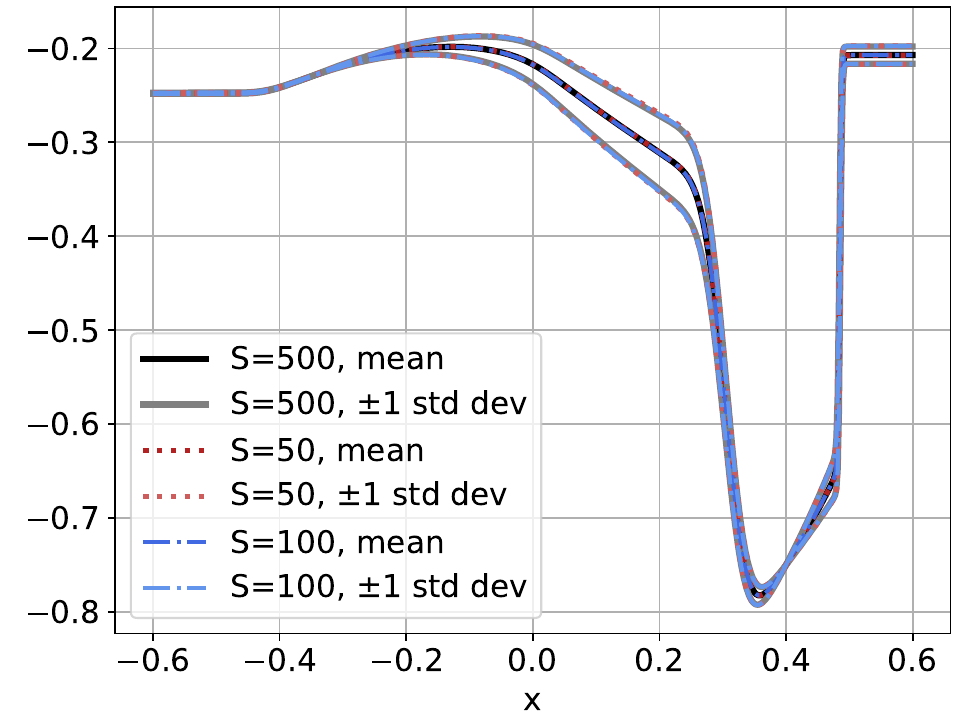}
        \vspace{-2em}
        \caption*{(c) First moment $u_1$}
    \endminipage
    \minipage{0.33\textwidth}
        \centering
        \includegraphics[width=\linewidth]{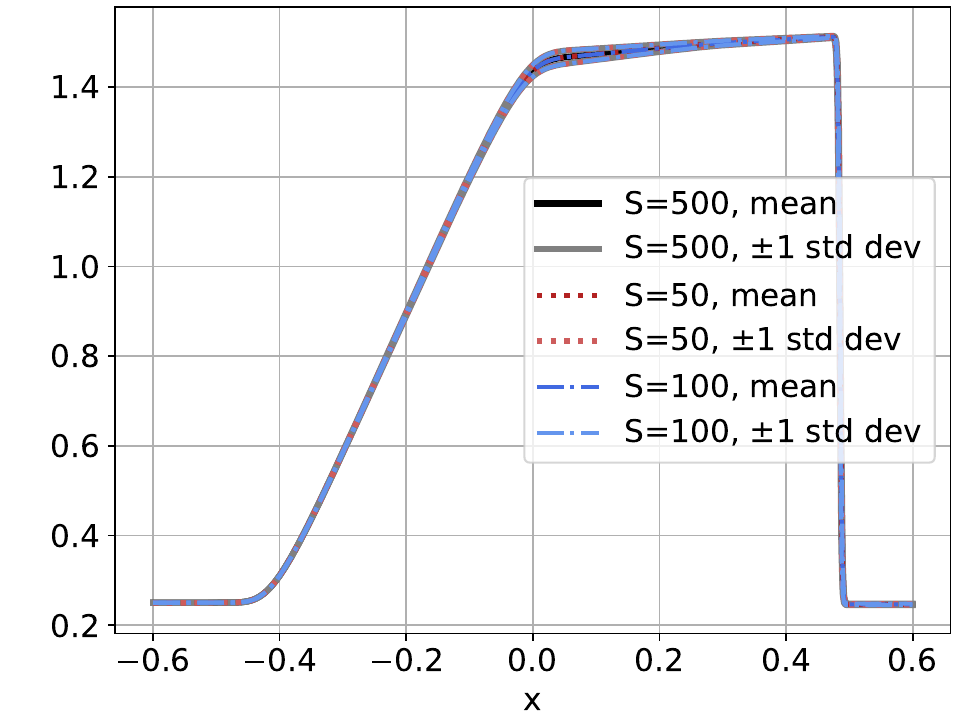}
        \vspace{-2em}
        \caption*{(b) Average velocity $u_m$}
        \includegraphics[width=\linewidth]{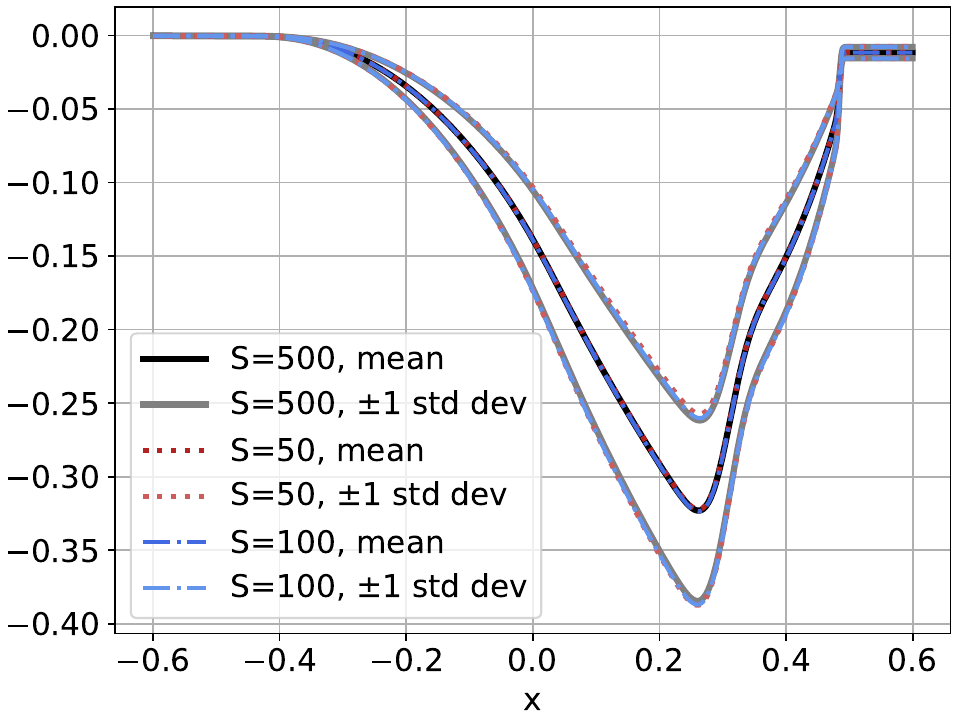}
        \vspace{-2em}
        \caption*{(d) Second moment $u_2$}
    \endminipage
    \caption{Monte Carlo convergence test for the high dam break test case with SWLME $N=2$ model for different number of samples $S$. At $S=100$ the Monte Carlo solution yields a converged result.}\label{RES-fig:MC_N2_high}
\end{figure*}

In Figure \ref{RES-fig:MC_N1_wave}, it is evident that for the smooth periodic wave test case of Table \ref{RES-tab:smooth_wave} with $N=1$, uncertainty in the friction coefficient $\nu$ significantly affects all three functions: the water height $h$, average velocity $u_m$, and first moment $u_1$. Convergence in this scenario requires more Monte Carlo samples than in the dam break cases shown in Figures \ref{RES-fig:MC_N1_low}, \ref{RES-fig:MC_N2_low}, \ref{RES-fig:MC_N1_high}, and \ref{RES-fig:MC_N2_high}. Visual convergence is only achieved at $S=900$ samples. The greater sensitivity to the uncertain friction coefficient and the increase in the sample requirement for convergence, compared to the dam break test cases, can be partially attributed to the longer simulation run time. Over the duration of $t_{\text{end}}=2.0$ seconds, the smooth periodic wave traverses the computational domain twice. This order-of-magnitude longer run time allows the uncertain parameter more time to influence the final solution.

\begin{figure}[H]
    \minipage{0.33\textwidth}
        \includegraphics[width=\linewidth]{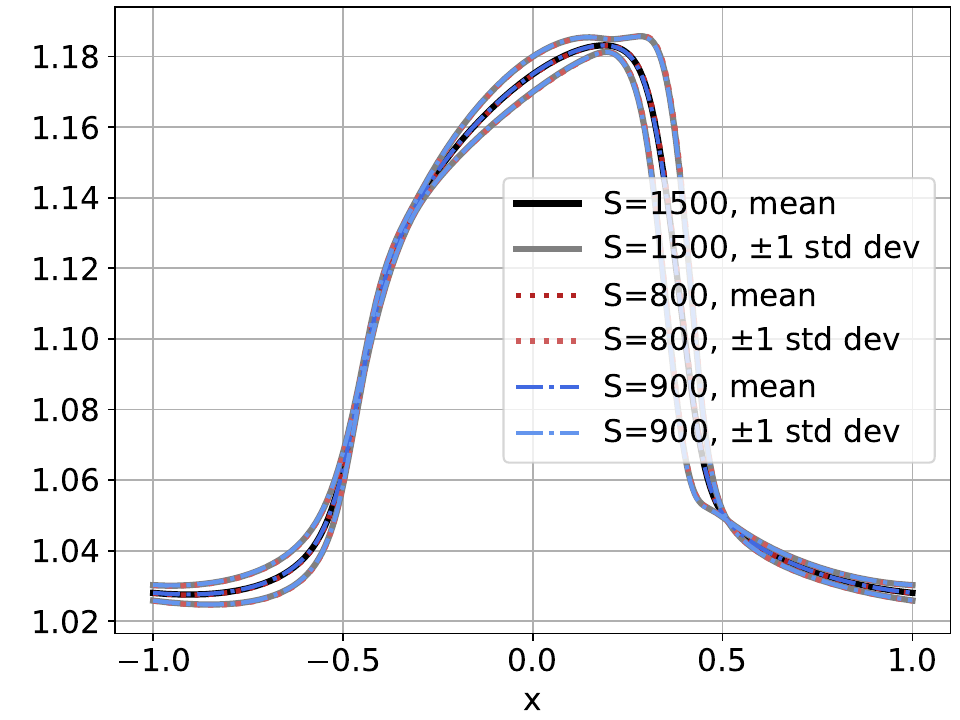}
        \vspace{-2.3em}
        \caption*{(a) Water height $h$}
    \endminipage\hfill
    \minipage{0.33\textwidth}
        \includegraphics[width=\linewidth]{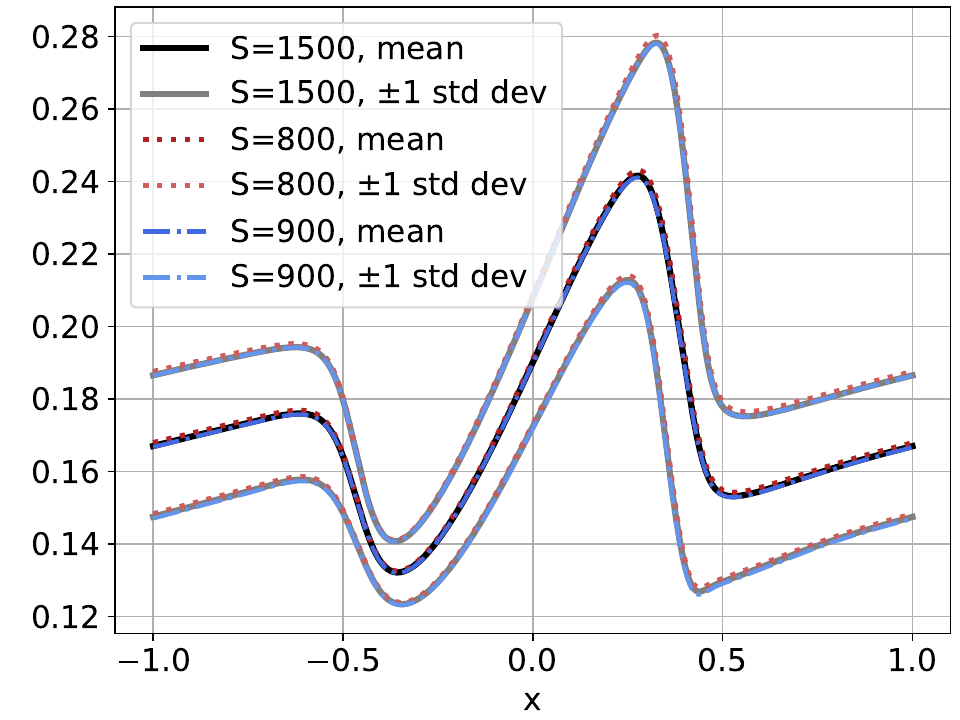}
        \vspace{-2.3em}
        \caption*{(b) Average velocity $u_m$}
    \endminipage\hfill
    \minipage{0.33\textwidth}
        \includegraphics[width=\linewidth]{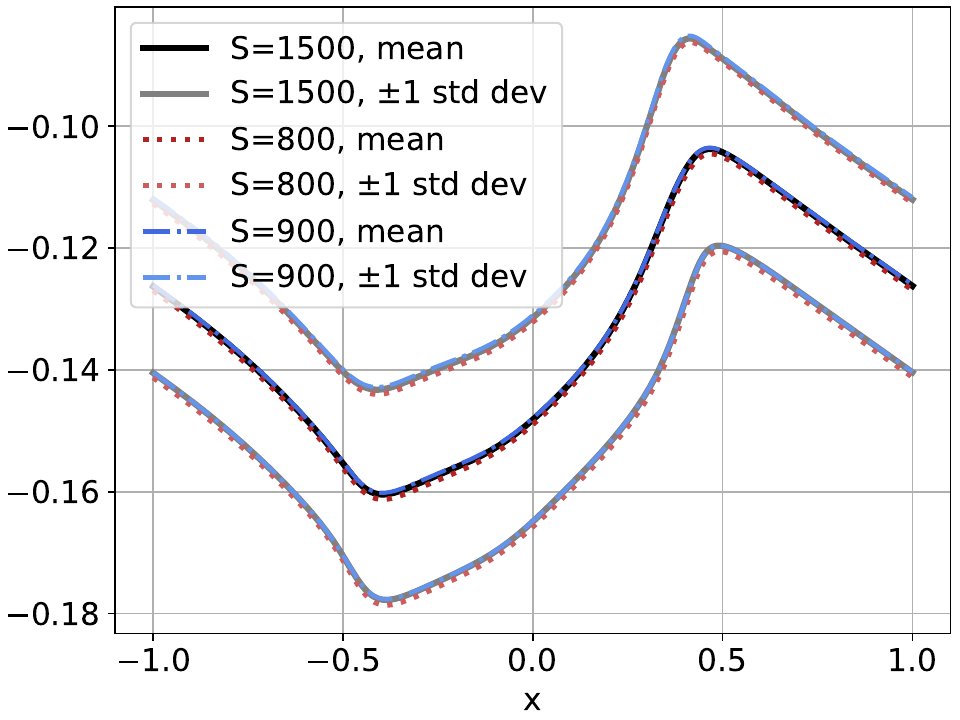}
        \vspace{-2.3em}
        \caption*{(c) First moment $u_1$}
    \endminipage
    \vspace{-0.9em}
    \caption{Monte Carlo convergence test for the smooth periodic wave test case with SWLME $N=1$ using different number of samples $S$. At $S=900$ the Monte Carlo solution yields a converged result.}\label{RES-fig:MC_N1_wave}
\end{figure}

As in Figure \ref{RES-fig:MC_N1_wave}, the uncertainty in the friction coefficient $\nu$ affects all functions in the $N=2$ scenario: the water height $h$, the average velocity $u_m$, the first moment $u_1$, and the second moment $u_2$, as shown in Figure \ref{RES-fig:MC_N2_wave}. Convergence in this case also requires more samples than in the dam break scenarios, though fewer than in the $N=1$ case. Visual convergence is achieved in $S=550$ Monte Carlo samples.

\begin{figure*}[!htb]
    \centering
    \minipage{0.33\textwidth}
        \centering
        \includegraphics[width=\linewidth]{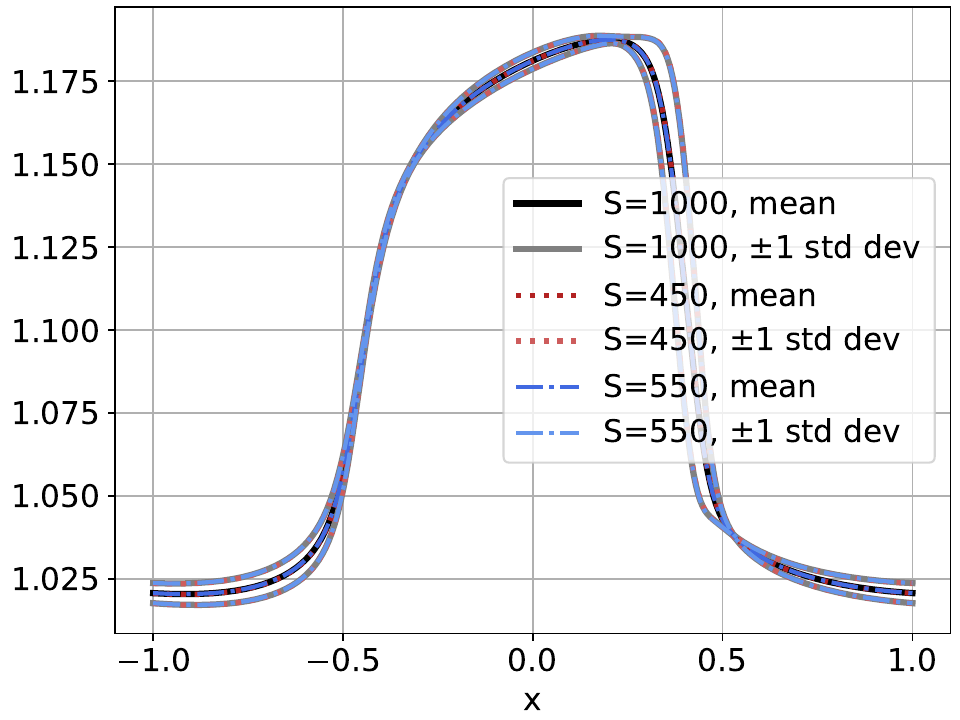}
        \vspace{-2.3em}
        \caption*{(a) Water height $h$}
        \includegraphics[width=\linewidth]{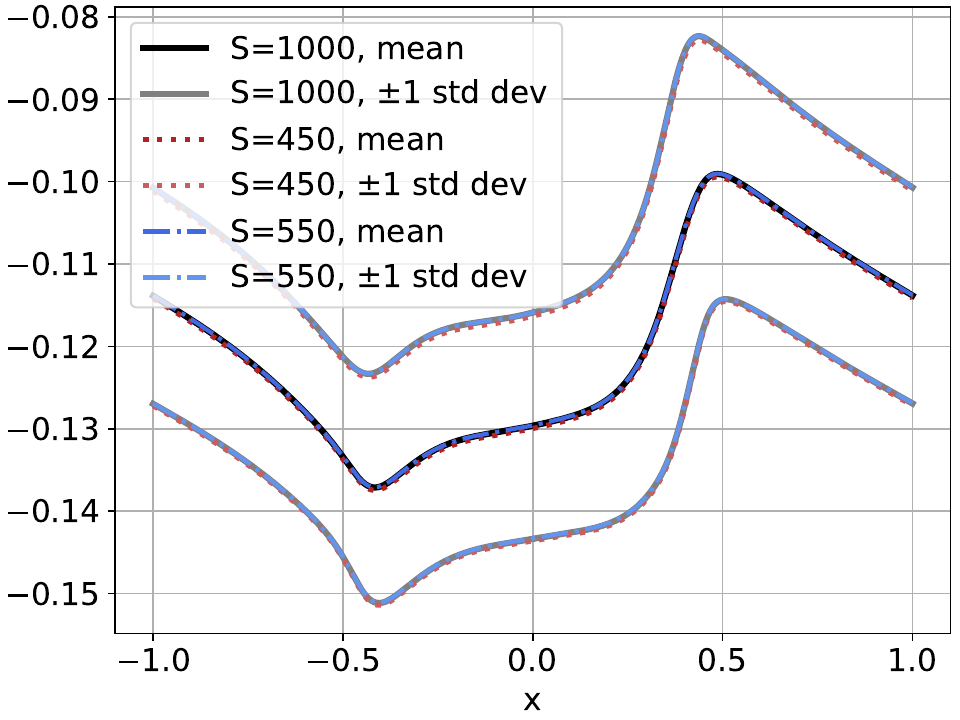}
        \vspace{-2.3em}
        \caption*{(c) First moment $u_1$}
    \endminipage
    \minipage{0.33\textwidth}
        \centering
        \includegraphics[width=\linewidth]{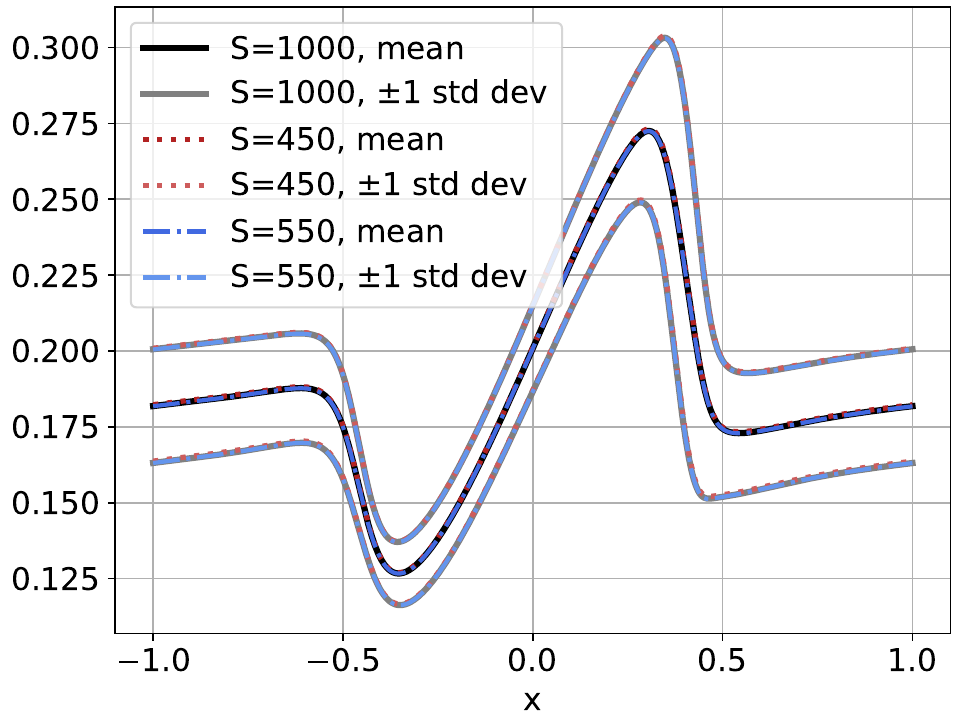}
        \vspace{-2.3em}
        \caption*{(b) Average velocity $u_m$}
        \includegraphics[width=\linewidth]{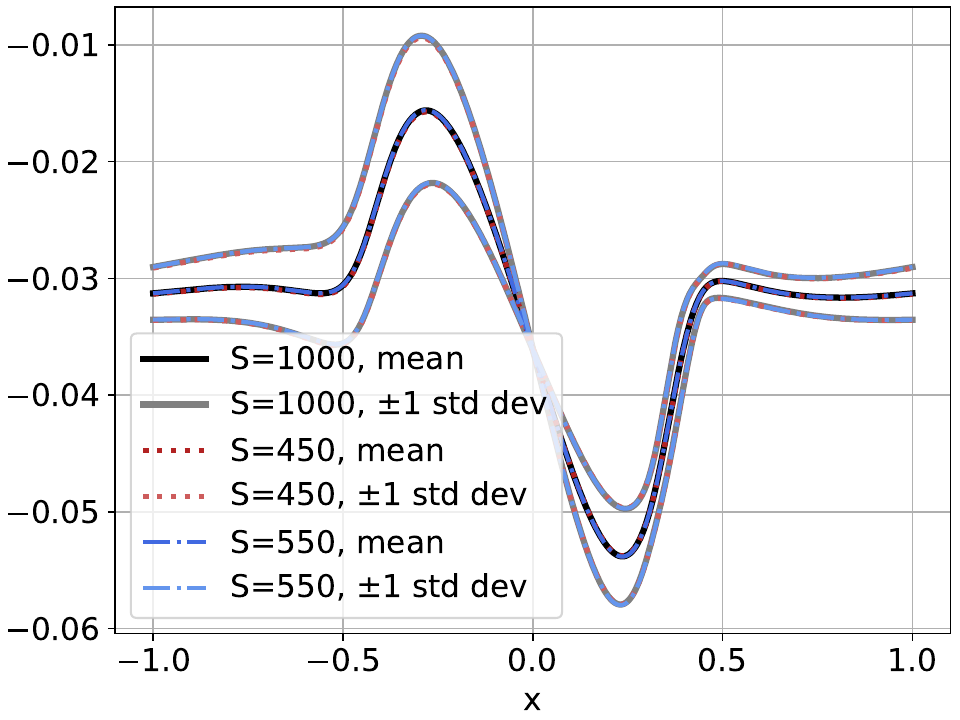}
        \vspace{-2.3em}
        \caption*{(d) Second moment $u_2$}
    \endminipage
    \vspace{-0.9em}
    \caption{Monte Carlo convergence test for the smooth periodic wave test case with SWLME $N=2$ using different number of samples $S$. At $S=550$ the Monte Carlo solution yields a converged result.}\label{RES-fig:MC_N2_wave}
\end{figure*}